\newcommand{\leqc}{\lesssim}
\newcommand{\grad}{\nabla}
\newcommand{\norm}[1]{\left|\left| #1 \right|\right|}
\newcommand{\abs}[1]{\left| #1 \right|}
\newcommand{\set}[1]{\left\{ #1 \right\}}
\newcommand{\brak}[1]{\left\langle #1 \right\rangle} 
\newcommand{\R}{\mathbb{R}}
\newcommand{\N}{\mathbb{N}}
\newcommand{\C}{\mathbb{C}}
\renewcommand{\S}{\mathbb{S}}
\newcommand{\dee}{\mathrm{d}}
\renewcommand{\P}{\mathbf{P}}
\newcommand{\E}{\mathbf{E}}
\newcommand{\EE}{\mathbf E}
\newcommand{\PP}{\mathbf P}
\newcommand{\Pt}{\mathcal{P}}
\newtheorem{theorem}{Theorem}[section]
\newtheorem{proposition}[theorem]{Proposition}
\newtheorem{corollary}[theorem]{Corollary}
\newtheorem{lemma}[theorem]{Lemma}
\newtheorem*{lemma*}{Lemma}
\newtheorem{assumption}{Assumption}
\theoremstyle{definition}
\newtheorem{remark}{Remark}
\numberwithin{equation}{section}
\begin{document}

\title{Stationary measures for stochastic differential equations with degenerate damping}
\author{Jacob Bedrossian\thanks{\footnotesize Department of Mathematics, University of Maryland, College Park, MD 20742, USA \href{mailto:jacob@math.umd.edu}{\texttt{jacob@math.umd.edu}}. J.B. was supported by NSF CAREER grant DMS-1552826 and NSF Award DMS-2108633.}  \and Kyle Liss\thanks{\footnotesize Department of Mathematics, Duke University, Durham, NC 27710, USA \href{mailto:kyle.liss@duke.edu}{\texttt{kyle.liss@duke.edu}}. K.L was supported by NSF Award No. DMS-2038056 and DMS-1552826}}

\maketitle

\begin{abstract}
  A variety of physical phenomena involve the nonlinear transfer of energy from weakly damped modes subjected to external forcing to other modes which are more heavily damped. 
  In this work we explore this in (finite-dimensional) stochastic differential equations in $\mathbb R^n$ with a quadratic, conservative nonlinearity $B(x,x)$ and a linear damping term $-Ax$ which is degenerate in the sense that $\mathrm{ker} A \neq \emptyset$.
  
  We investigate sufficient conditions to deduce the existence of a stationary measure for the associated Markov semigroups. Existence of such measures is straightforward if $A$ is full rank, but otherwise, energy could potentially accumulate in $\mathrm{ker} A$ and lead to almost-surely unbounded trajectories, making the existence of stationary measures impossible.
We give a relatively simple and general sufficient condition based on time-averaged coercivity estimates along trajectories in neighborhoods of $\mathrm{ker} A$ and many examples where such estimates can be made. 
\end{abstract}

\setcounter{tocdepth}{2}
{\small\tableofcontents}


\section{Introduction}\label{sec:Intro}

A variety of physical phenomena involve the nonlinear transfer of energy from weakly damped modes subjected to external forcing to other modes which are more heavily damped.
In hydrodynamic turbulence for example, the forcing is considered to act at large scales whereas in the high Reynolds number limit, the viscous dissipation is only strong at very high frequencies.
This leads to the phenomenon known as anomalous dissipation (see e.g. \cite{Frisch1995,BMOV05}). 
A study of such phenomena in infinite-dimensional systems remains largely out of reach (with a few exceptions, for example some simplified shell models \cite{MSE07,FGHV16} and Batchelor-regime passive scalar turbulence \cite{BBPS19}).
As suggested in e.g. \cite{Majda16}, it is natural to first study the analogues in finite-dimensional systems.
In this setting we will study systems with damping which only acts on a proper subset of the degrees of freedom and ask the question of whether or not a statistical equilibrium, i.e. a stationary measure, can still be shown to exist.
If the undamped modes are directly forced at least, for this to be possible the nonlinearity must continually pump energy away from the modes without damping into modes with damping.  

We study the following prototypical class of stochastic differential equations (SDEs) for $x_t \in \R^n$ 
\begin{equation} \label{eq:SDE}
\begin{cases}
dx_t = B(x_t,x_t)dt -Ax_tdt + \sigma dW_t \\
x_t|_{t=0} = x_0 \in \R^n. 
\end{cases}
\end{equation}
Here, $W_t = (W_t^{(1)}, \ldots, W_t^{(n)})$ is an $n$-dimensional canonical Brownian motion on a complete probability space $(\Omega, \mathcal{F}, \P)$, $A \in \R^{n \times n}$ is positive semi-definite (with $\mathrm{ker} A \neq \emptyset$), and $\sigma \in \R^{n\times n}$. We will assume for simplicity throughout this introduction that $\sigma$ is full rank, though, as discussed in the main body of the text, weaker conditions are possible for the examples we study.
The nonlinear term $B$ is bilinear such that the energy $|x|^2$ is conserved:
\begin{align} \label{eq:Bassumption}
\quad x \cdot B(x,x) = 0.  
\end{align}
Many of the specific examples we study also satisfy $\grad \cdot B = 0$, but this is not required for our methods. 
This class of systems contains Galerkin truncations of both the 2d and 3d Navier-Stokes equations, as well as Lorenz-96 \cite{Lorenz1996}, and the classical shell models of hydrodynamic turbulence, GOY \cite{Gledzer1973,YO87} and Sabra \cite{LvovEtAl98}; see e.g. \cite{Majda16} for further discussions on the motivations for studying this class of SDEs.
It is straightforward to show that the SDEs are globally well-posed and the associated Markov semigroups are well-behaved; see e.g. [Appendix A; \cite{BL21}]. 
We will refer to the ODE
\begin{align*}
\frac{d}{dt} z_t = B(z_t,z_t)
\end{align*}
as the \emph{conservative dynamics}.
This deterministic ODE plays a distinguished role, as it is the leading order dynamics at high energies, i.e. when $\abs{x} \gg 1$.

Denote the generator
\begin{equation} \label{eq:L}
\mathcal{L} = \frac{1}{2} \sigma \sigma^T : \grad^2  - Ax\cdot \grad + B(x,x)\cdot \grad
\end{equation}
and the associated Markov semigroups $\mathcal{P}_t = e^{t \mathcal{L}}$ and $\mathcal{P}_t^\ast = e^{t \mathcal{L}^\ast}$, the former acting on the space of bounded, Borel measurable observables $B_b(\R^n;\R)$ and the latter acting on Borel probability measures $\mathcal{P}(\R^n)$. 
When $A$ is positive definite, it is not hard to prove that there always exists at least one stationary measure, i.e. a measure $\mu \in \mathcal{P}(\R^n)$ such that $\mathcal{P}_t^\ast \mu = \mu$.
This is proved by the Krylov-Bogoliubov procedure (see e.g. \cite{DPZ96}) combined with the following energy balance obtained from It\^o's lemma:
\begin{align*}
\frac{1}{2}\EE \abs{x_t}^2 + \EE \int_0^t x_s \cdot A x_s \dee s = \frac{t}{2}\sum_{i,j=1}^n \sigma_{ij}^2 + \frac{1}{2}\EE \abs{x_0}^2.
\end{align*}
However, if $\mathrm{ker} A \neq \emptyset$, then there is the possibility that energy could accumulate in these degrees of freedom and the a priori estimate 
\begin{align} \label{eq:energy}
\limsup_{t \to \infty}\frac{1}{t}\EE \int_0^t x_s \cdot A x_s \dee s \leq \frac{1}{2}\sum_{i,j=1}^n \sigma_{ij}^2 
\end{align}
would not be sufficient to imply the compactness required for Krylov-Bogoliubov. 

It is well known that to prove the existence of a stationary measure it suffices to construct a \textit{Lyapunov function}, i.e., a $C^2$ function $V:\R^n \to [0,\infty)$ satisfying $\lim_{|x| \to \infty} V(x) = \infty$ and 
\begin{equation} \label{eq:Lyapgeneral}
\mathcal{L} V \le -\alpha V^p + \beta
\end{equation}
for some $\alpha, \beta > 0$ and $p \in (0,1]$. Indeed, this is a straightforward generalization of the argument recalled above using It\^{o}'s lemma and the Krylov-Bogoliubov procedure. Note that if the kernel of $A$ is trivial, then $V(x) = |x|^2$ is a Lyapunov function for \eqref{eq:SDE}, while if $\mathrm{ker}A \neq \emptyset$ then \eqref{eq:Lyapgeneral} holds only in regions where $|x| \leqc |\Pi_{\mathrm{ker}A^\perp} x|$. There are many works that have successfully constructed an invariant measure and/or obtained convergence rates to equilibrium for SDEs with partial dissipation or unstable deterministic dynamics by building a nontrivial Lyapunov function (see e.g. \cite{AKM12, HM15, FGHH21, herzog2019ergodicity, WilliamsonThesis, BirrellTransition, S93}). A general strategy for constructing a Lyapunov function is to patch together a sequence of local Lyapunov functions, each satisfying \eqref{eq:Lyapgeneral} in a different part of phase space. In regions where \eqref{eq:Lyapgeneral} is not obviously satisfied by some natural energy-type function, a common approach is to perform a scaling analysis and show \eqref{eq:Lyapgeneral} for a reduced generator, and then justify the full inequality by an approximation argument. For a discussion of scaling arguments and a meta-algorithm for constructing Lyapunov functions, see \cite{AKM12}. The Lyapunov functions obtained by such methods tend to be quite involved, even in low dimensional, relatively simple systems (see e.g. \cite{AKM12, HM15, FGHH21} and [section 2, \cite{WilliamsonThesis}]), and require a careful gluing of separate local Lyapunov functions. 

In this paper, we develop a framework for constructing invariant measures for partially damped systems based on returning to the simple a priori energy estimate \eqref{eq:energy}. Rather than directly building a Lyapunov function, the idea is to recover compactness by proving that the time-averaged dissipation controls the average of some simple coercive function. More precisely, our strategy is to prove the following time-averaged coercivity estimate for some $T \in (0,2)$ and $r \in (0,1]$, 
\begin{align}
\frac{1}{T}\EE \int_0^T \brak{x_t}^{2r} \dee t \lesssim 1 +  \frac{1}{T}\EE \int_0^T x_t \cdot Ax_t \dee t, \label{ineq:timeavcoer}
\end{align}
which we show is sufficient to imply existence in Lemma~\ref{lem:time-averaged_main} by a straightforward iteration procedure. In Lemma~\ref{lem:time-averaged_step1}, we reduce this to short-time coercivity estimates for trajectories starting in a relatively small neighborhood of $\mathrm{ker} A$ at high energy. Specifically, we show that it suffices to prove \eqref{ineq:timeavcoer} for initial conditions $x_0 \in \R^n$ satisfying $$|\Pi_{\mathrm{ker}A^\perp} x_0| \ll |\Pi_{\mathrm{ker}A}x_0|^r, \quad |x_0| \gg 1$$ 
and the time $T$ depending on the initial energy $|x_0|$. The goal is thus to prove that at high energies, where the conservative dynamics dominate, solutions that start near $\mathrm{ker}A$ must depart rapidly (on average) due to some kind of instability. Our strategy to prove the necessary time-averaged coercivity estimates is to use a suitable approximation of the solution when $|\Pi_{\mathrm{ker} A^\perp} x_t| \ll |\Pi_{\mathrm{ker} A} x_t|^r$, show that this approximate solution rapidly enters the region $|\Pi_{\mathrm{ker} A^\perp} x| \gtrsim |\Pi_{\mathrm{ker} A} x|^r$, and then argue that the approximation remains valid for as long as $|\Pi_{\mathrm{ker} A^\perp} x_t| \lesssim |\Pi_{\mathrm{ker} A} x_t|^r$. 

The time-averaged coercivity framework is convenient in that it allows one to leverage in a natural way assumptions on the instability of $\mathrm{ker}A$ under the dynamics to obtain existence of an invariant measure and an explicit convergence rate to equilibrium. Moreover, it avoids the need to carefully patch together separate local Lyapunov functions, which is required even if one uses a construction based on local exit times. We will showcase the flexibility of our methods by presenting a variety of examples to which they apply, in each case showing a different potential case that arises with degenerate damping. The examples below are chosen to show qualitatively distinct cases where the approximation procedure described above can be justified, although a different choice of approximate solution is used in each type of example. 

\subsection{Main results}
We now discuss our main results and their connection to some of the existing literature on related SDEs.

Below, denote the set of undamped configurations on the unit energy sphere by
\begin{align*}
\mathcal{U} = \mathrm{ker} A \cap \S^{n-1}. 
\end{align*}

The first theorem considers the case where $\mathcal{U}$ contains no sets which are invariant under the conservative dynamics.  
This case is analogous to the settings considered by hypocoercivity, which usually studies nontrivial interplay between degenerate elliptic operators and conservative first order operators (such as transport) to obtain decay estimates, despite the lack of coercivity; see discussions in e.g. \cite{HN04,Villani2009}.
Indeed, the results we are proving are quite similar to (sub-exponential) hypocoercivity results for the associated Markov semigroups (although here we use different, essentially probabilistic, methods). See \cite{BCG08,CCEH20} for further discussion on the relationship between Harris' theorems and commonly used hypocoercivity methods. 
The intuition is clear: if $\mathcal{U}$ contains no sets which are invariant under the conservative dynamics, then at high energies any trip to a small neighborhood of $\mathrm{ker} A$ must necessarily be short lived. 

\begin{theorem} \label{thm:hypo}
	Suppose that $\exists J \in \N$ such that $\forall x \in \mathcal{U}$, if $X_t$ solves the conservative dynamics
	\begin{equation} \label{eq:transversedeterm}
	\begin{cases}
	\frac{d}{dt} X_t = B(X_t,X_t) \\
	X_0 = x,
	\end{cases}
	\end{equation}
	then
	\begin{align}
	\exists j \leq J, \; \Pi_{\mathrm{ker} A^{\perp}} \frac{d^j}{dt^j} X_t |_{t = 0} \neq 0. \label{def:transverse}
	\end{align}
	Then, there exists at least one stationary measure $\mu$ and $\brak{x}^p \in L^1(\dee \mu)$ for all $p < \infty$. 
\end{theorem}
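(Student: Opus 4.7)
By Lemmas~\ref{lem:time-averaged_main} and~\ref{lem:time-averaged_step1}, it suffices to establish the time-averaged coercivity estimate~\eqref{ineq:timeavcoer} for initial data $x_0$ with $R := |x_0|$ large and $|\Pi_{\mathrm{ker} A^\perp} x_0| \leq R^r$, so that $y_0 := x_0/R \in \S^{n-1}$ lies within $R^{r-1} = o(1)$ of the compact set $\mathcal{U}$, on a time window $T = \tau/R$ for a small fixed $\tau > 0$ to be chosen.

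The core of the argument is a scaling analysis. Setting $y_t := R^{-1} x_{t/R}$ and $\widetilde W_s := \sqrt{R}\, W_{s/R}$, bilinearity of $B$ and the It\^o scaling yield
\begin{align*}
dy_t = B(y_t, y_t)\,dt \,-\, R^{-1} A y_t\,dt \,+\, R^{-3/2} \sigma\, d\widetilde W_t, \qquad |y_0| = 1.
\end{align*}
For any fixed $\tau > 0$, an elementary Gronwall argument using the local Lipschitz bound for $B$, the energy identity for $y_t$ (which, thanks to~\eqref{eq:Bassumption}, controls $\sup_{t \leq \tau} |y_t|^2 \lesssim 1$ in expectation), and the smallness of the damping and diffusion coefficients gives
\begin{align*}
\EE \sup_{0 \leq t \leq \tau} |y_t - Y_t|^2 \;\lesssim_\tau\; R^{-2}, \qquad Y_t := X_t(y_0),
\end{align*}
where $Y_t$ is the deterministic conservative flow from~\eqref{eq:transversedeterm}.

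Next, the transversality hypothesis~\eqref{def:transverse} delivers a uniform short-time coercivity estimate for the conservative flow. For fixed $\tau > 0$, the map
\begin{align*}
x \mapsto h_\tau(x) := \int_0^{\tau} |\Pi_{\mathrm{ker} A^\perp} X_t(x)|^2\,dt
\end{align*}
is continuous on $\S^{n-1}$ by smooth dependence of the ODE on initial conditions. For every $x \in \mathcal{U}$, some derivative $\tfrac{d^j}{dt^j} \Pi_{\mathrm{ker} A^\perp} X_t(x)|_{t=0}$ with $j \leq J$ is nonzero, so the smooth function $t \mapsto \Pi_{\mathrm{ker} A^\perp} X_t(x)$ cannot vanish identically near $0$, hence $h_\tau(x) > 0$ for every $x \in \mathcal{U}$. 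Compactness of $\mathcal{U}$ then yields $c := \inf_{\mathcal{U}} h_\tau > 0$, and continuity extends this to $h_\tau \geq c/2$ on an open $\S^{n-1}$-neighborhood $\mathcal{U}_\delta$ of $\mathcal{U}$.

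For $R$ sufficiently large, $y_0 \in \mathcal{U}_\delta$, and combining the previous two estimates gives $\EE\int_0^\tau |\Pi_{\mathrm{ker} A^\perp} y_t|^2\,dt \geq c/4$. Undoing the rescaling via $x_t = R y_{Rt}$ and using positive-definiteness of $A$ on $\mathrm{ker} A^\perp$,
\begin{align*}
\frac{1}{T}\EE\int_0^T x_t \cdot A x_t\,dt \;\gtrsim\; R^2,
\end{align*}
while $\frac{1}{T}\EE\int_0^T \brak{x_t}^{2r}\,dt \lesssim R^{2r} \leq R^2$ for $r \in (0,1]$, verifying~\eqref{ineq:timeavcoer}. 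I expect the main technical step to be the Gronwall-based approximation, since one must propagate moment bounds on $|y_t|$ on $[0,\tau]$ and handle the drift-plus-martingale error carefully; the extraction of uniform coercivity on $\mathcal{U}$, by contrast, is essentially soft once the transversality hypothesis is invoked, and the uniform order $J$ in~\eqref{def:transverse} is really only needed to produce a quantitative rate via a Taylor expansion if one also wants explicit convergence estimates.
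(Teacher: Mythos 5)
Your proposal follows essentially the same architecture as the paper's proof (Proposition~\ref{thm:transverse} via Lemmas~\ref{lem:transversegrowth} and~\ref{lem:transverseapprox}): reduce to Assumption~\ref{ass:time-average} via Lemma~\ref{lem:time-averaged_step1}, show that the conservative flow alone produces coercivity on a time scale $\sim K^{-1}$ when started near $\mathrm{ker}A$ at energy $K$, and then show that the true solution tracks the conservative flow closely enough on that time scale. The genuine simplification is in the first of these. The paper's Lemma~\ref{lem:transversegrowth} walks quantitatively down the Taylor expansion from order $j$ to order $0$, producing an explicit rate $\gtrsim \gamma^{3^J}K$ on a window of length $\sim\gamma^2K^{-1}$, and this iteration uses the uniform order $J$. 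You instead observe that nonvanishing of some derivative makes $h_\tau(x)=\int_0^\tau|\Pi_{\mathrm{ker}A^\perp}X_t(x)|^2\,dt$ strictly positive for each $x\in\mathcal{U}$, and a continuity-plus-compactness argument on $\S^{n-1}$ gives $\inf_{\mathcal{U}_\delta}h_\tau>0$ directly. Both routes use compactness of $\mathcal{U}$ (the paper uses it implicitly to get the constant $C_J$ in~\eqref{eq:transverselbd2}), but yours skips the Taylor iteration at the cost of a non-quantitative constant; your remark that the \emph{uniform} bound $J$ is dispensable for the soft version is correct. The rescaling $y_t=R^{-1}x_{t/R}$ is cosmetic but organizes the estimates cleanly and reproduces the paper's $\tau\sim K^{-1}$ after undoing it.

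The point I would flag is the approximation step, which you yourself identify as the main technical content. The claimed moment bound $\EE\sup_{t\le\tau}|y_t-Y_t|^2\lesssim_\tau R^{-2}$ does not follow from a direct Gronwall argument: the bilinear structure gives $|B(y_t,y_t)-B(Y_t,Y_t)|\lesssim |y_t-Y_t|\left(|Y_t|+|y_t-Y_t|\right)$, so the error integrand contains a $|y_t-Y_t|^2$ term that cannot be closed against $\EE\sup|y_t-Y_t|^2$ without an a priori pathwise bound. The standard fix---and precisely what the paper's Lemma~\ref{lem:transverseapprox} does---is a bootstrap/stopping-time argument: run Gronwall up to the first time the error exceeds a fixed threshold and show this time exceeds $\tau$ with probability $\ge 1/2$, yielding a high-probability estimate of the form $\P\left(\sup_{t\le\tau}|y_t-Y_t|\le C/R\right)\ge 1/2$ rather than the stated $L^2(\Omega)$ bound. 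Since the conclusion only needs \emph{some} vanishing rate (one passes to the good event and uses Chebyshev plus the reverse triangle inequality), this does not sink the argument, but the bootstrap is genuinely necessary: ``elementary Gronwall'' as written has a gap because the Lipschitz constant of $B$ along $y_t$ is random, and the claimed $R^{-2}$ rate is stronger than what the method delivers.
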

\begin{remark}
	Condition \eqref{def:transverse} implies that solutions to \eqref{eq:transversedeterm} that start on $\mathrm{ker} A$ instantly depart it (at least at a rate like $\gtrsim (Kt)^{J}$ if $\abs{x} \approx K$; see Lemma \ref{lem:transversegrowth}).
	Note that the condition in \eqref{def:transverse} is purely algebraic, that is, in principle it could be investigated using methods from algebraic geometry, rather than being an abstract condition on trajectories. Related algebraic conditions describing the instability of a set under some conservative dynamics have appeared in \cite{WilliamsonThesis}.
\end{remark}

\begin{remark}
	As to be expected, Theorem~\ref{thm:hypo} requires no assumptions on $\sigma$.
\end{remark}

In \cite{FGHH21}, the authors consider the stochastically driven Lorenz-63 model, a classical three dimensional model introduced in \cite{lorenz1967nature}. This model does not take exactly the form of \eqref{eq:SDE} due to the presence of a non-dissipative linear term, but the setting is essentially the same since there still exists a natural energy function that yields an invariant measure when $\mathrm{ker}A = \emptyset$. The authors consider the case where $\mathrm{ker} A = \mathrm{span} \set{e_k}$ for some canonical unit vector $e_k$ and consists of conservative equilibria that to leading order at high energies exhibit a Jordan block instability. They prove using a Lyapunov function approach that if the noise directly excites the instability, then there always exists a stationary measure. The next theorem is a similar kind of result but generalized to higher dimensional systems in which $\mathrm{ker} A  = \mathrm{span}\set{e_k}$ for $e_k$ a general unstable equilibrium point of the conservative dynamics. Unlike in the setting of Theorem~\ref{thm:hypo}, in this case we cannot depend purely on the conservative dynamics to simply transport the $x_t$ away from $\mathrm{ker} A$.
Instead, we must rely on the noise to push the dynamics off of the equilibrium and its stable manifold so that $x_t$ is repelled quickly from neighborhoods of $\mathrm{ker} A$ at high energy. We denote the (instantaneous) linearization of the conservative nonlinearity around any fixed $x$ as
\begin{align} \label{eq:Lxdef}
L_x v = B(x,v) + B(v,x)
\end{align}
and for the restriction to $\mathrm{ker}A^\perp$ we write 
$$ L_x^\perp v = \Pi_{\mathrm{ker}A^\perp}L_x \Pi_{\mathrm{ker}A^\perp}.$$
Recall that for simplicity we assume for now unless otherwise stated that $\mathrm{rank}(\sigma) = n$.

\begin{theorem} \label{thm:basic1}
	Suppose that $\mathcal{U} = \set{x_0,-x_0}$ for some unit vector $x_0$  and that for each $x \in \mathcal{U}$ there holds
	\begin{align*}
	B(x,x) = 0, \quad \lim_{t \to \infty} \norm{e^{tL^\perp_{x}}} = \infty.
	\end{align*}
	Then, there exists at least one stationary measure $\mu$ and $\brak{x}^p \in L^1(\dee \mu)$ for all $ p < 1/3$. 
\end{theorem}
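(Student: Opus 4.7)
The strategy is to verify the short-time coercivity hypothesis of Lemma~\ref{lem:time-averaged_step1} by comparing $x_t$ to a frozen-coefficient linear Gaussian approximation inside a thin neighborhood of $\mathrm{ker}\,A$ at high energy, and using the instability of $L_{e_1}^\perp$ together with the full-rank noise to force that approximation out of the neighborhood on a short but not-too-short time scale. Setting $e_1:=x_0$ and decomposing $x_t = y_t e_1 + z_t$ with $y_t\in\R$ and $z_t\in\mathrm{ker}\,A^\perp$, the symmetry $\mathcal{U}=\set{x_0,-x_0}$ together with Lemma~\ref{lem:time-averaged_step1} reduces the proof to establishing \eqref{ineq:timeavcoer} for initial data with $y_0\approx K\gg 1$ and $\abs{z_0}\leq K^r$, with $r<1/6$ arbitrary, on a $K$-dependent horizon $T=T(K)$.

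Because $B(e_1,e_1)=0$, the decomposed SDE takes the form
\begin{align*}
dy_t &= \bigl(-z_t\cdot L_{e_1} z_t - y_t^{-1}z_t\cdot B(z_t,z_t)\bigr)dt + e_1\cdot\sigma\,dW_t,\\
dz_t &= \bigl(y_t L_{e_1}^\perp z_t - A z_t + \Pi_{\mathrm{ker}A^\perp}B(z_t,z_t)\bigr)dt + \Pi_{\mathrm{ker}A^\perp}\sigma\,dW_t,
\end{align*}
where the reduction of the $y_t$-drift to a quadratic-in-$z$ right-hand side uses the conservation identity $x\cdot B(x,x)=0$. I would then compare $z_t$ to the frozen-$y$ Ornstein--Uhlenbeck-type process
\begin{align*}
d\tilde z_t = y_0 L_{e_1}^\perp \tilde z_t\,dt + \Pi_{\mathrm{ker}A^\perp}\sigma\,dW_t,\qquad \tilde z_0 = z_0.
\end{align*}
The instability hypothesis $\norm{e^{tL_{e_1}^\perp}}\to\infty$ combined with $\mathrm{rank}(\sigma)=n$ (which makes $\Pi_{\mathrm{ker}A^\perp}\sigma\sigma^T\Pi_{\mathrm{ker}A^\perp}$ positive definite on $\mathrm{ker}\,A^\perp$) yields, via a Jordan block analysis of $L_{e_1}^\perp$ applied to the explicit Gaussian covariance, a lower bound $\EE\abs{\tilde z_t}^2\gtrsim t\,(y_0 t)^{2k_0}$ for some integer $k_0\geq 1$ controlled by the largest unstable Jordan block. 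Choosing $T=cK^{-\alpha}$ with $\alpha$ tuned to $r$ and $k_0$, this yields simultaneously
\begin{align*}
\EE\int_0^T\abs{\tilde z_t}^2\,dt\gtrsim TK^{2r},\qquad \EE\abs{\tilde z_T}^2\lesssim K^{2r}.
\end{align*}

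Introducing the stopping time $\tau:=\inf\set{t:\abs{z_t}>2K^r\text{ or }\abs{y_t-y_0}>K/2}$, one controls the error $z_t-\tilde z_t$ on $[0,T\wedge\tau]$ by a Duhamel expansion in the propagator $e^{sy_0 L_{e_1}^\perp}$: the perturbations $(y_t-y_0)L_{e_1}^\perp z_t$, $\Pi_{\mathrm{ker}A^\perp}B(z_t,z_t)$, and $-Az_t$ are all of much smaller order than the linear drift $y_0 L_{e_1}^\perp z_t\sim K^{1+r}$ throughout the window. The $y$-drift bound $\abs{y_t-y_0}\lesssim K^{2r}T+\sqrt{T}\ll K$ together with Chebyshev applied to $\tilde z_t$ makes $\PP(\tau<T)$ polynomially small in $K$. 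Combined with the coercivity of $A$ on $\mathrm{ker}\,A^\perp$ this delivers $\EE\int_0^T x_t\cdot A x_t\,dt\gtrsim TK^{2r}$, while $\brak{x_t}^{2r}\lesssim K^{2r}$ on $[0,T]$ by the same $y$-drift control, which is exactly \eqref{ineq:timeavcoer}. Lemma~\ref{lem:time-averaged_main} then produces a stationary measure $\mu$ with $\brak{x}^{2r}\in L^1(d\mu)$; letting $r\nearrow 1/6$ gives $\brak{x}^p\in L^1(d\mu)$ for all $p<1/3$.

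The principal obstacle is the simultaneous calibration of $T(K)$ and $r$: the window must be long enough for the possibly only polynomial growth of $\tilde z_t$ (worst case being a purely nilpotent unstable Jordan block) to reach the threshold $K^r$, yet short enough that the quadratic nonlinearity $B(z,z)\sim K^{2r}$, the residual $y$-drift, and the $-Az$ term remain negligible relative to the linear drift $y_0 L_{e_1}^\perp z\sim K^{1+r}$. It is this tradeoff, and not any finer algebraic feature of $B$ beyond the hypotheses, that caps $r$ at $1/6$ and hence $p$ at $1/3$.
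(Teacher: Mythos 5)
Your outline tracks the paper's strategy closely in its large-scale architecture (reduce to the time-averaged coercivity framework of Lemma~\ref{lem:time-averaged_step1}, decompose $x_t$ as $\Pi_{\mathrm{ker}A}x_t + \Pi_{\mathrm{ker}A^\perp}x_t$, linearize around the frozen equilibrium, use the noise plus the instability of $L^\perp_{x_0}$ to drive growth, and control the nonlinear error by Duhamel; the observation that the $y_t$-drift is purely quadratic in $z_t$ because $B(x,x)\cdot x=0$ is exactly what the paper's Lemma~\ref{lem:Bproperties} encodes). However there are several concrete gaps that keep the proposal from closing.

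\textbf{The threshold $r<1/6$ is wrong.} In the worst case (size-$2$ Jordan block with a purely imaginary eigenvalue), the Gaussian growth $\EE|\tilde z_t|^2\gtrsim y_0^2 t^3$ forces $T\gtrsim K^{(2r-2)/3}$. Controlling the error by Duhamel, the relevant propagator along the Jordan chain grows like $1+K(t-s)$ and the dominant perturbation $B(z,z)\sim K^{2r}$ contributes after convolution a factor $\sim KT^2\cdot K^{2r}$, which must be $\lesssim K^r$; this forces $T\lesssim K^{-(1+r)/2}$. Equating $(2-2r)/3=(1+r)/2$ gives $7r=1$, so the achievable exponent is $r<1/7$, not $1/6$. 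The paper's Lemma~\ref{lem:1dJordanapprox} exhibits exactly this constraint (the condition $(K\eta)\eta K^r\le 1$ with $\eta\sim K^{(2r-2)/3}$). With the moment bound you actually invoke ($\langle x\rangle^{2r}\in L^1$, i.e., $p=1$ in Lemma~\ref{lem:time-averaged_main}), $r<1/7$ only delivers $p<2/7$; the paper reaches $p<1/3$ by using the refined conclusion $q<2r/(1-r)$ from Lemma~\ref{lem:time-averaged_step1}, which sends $q\to 1/3$ as $r\to 1/7$. Your two errors happen to cancel, but neither step is right.

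\textbf{The spectrally unstable case is not handled.} The hypothesis $\lim_{t\to\infty}\|e^{tL^\perp_{x_0}}\|=\infty$ covers both the purely imaginary Jordan case \emph{and} the case where $L^\perp_{x_0}$ has an eigenvalue with strictly positive real part. In the latter, the propagator $e^{(t-s)y_0L^\perp_{x_0}}$ grows like $e^{\lambda_R K(t-s)}$; on the Jordan time window $T\sim K^{(2r-2)/3}$ the factor $e^{\lambda_R K T}$ is superpolynomial in $K$, so the Duhamel error cannot be controlled by the perturbation bound $|B(z,z)|\lesssim K^{2r}$. The paper treats this case separately (Lemmas~\ref{lem:1dspectralgrowth} and~\ref{lem:1dspectralapprox}) on the much shorter time scale $\tau\sim\log K/K$, precisely so that the exponential amplification is only a polynomial $K^{1/2+r+\epsilon}$ and the error remains negligible. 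A single window $T=cK^{-\alpha}$ cannot serve both regimes.

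\textbf{The probabilistic step is not justified.} You claim that Chebyshev makes $\PP(\tau<T)$ polynomially small, but the stopping time $\tau$ triggers in particular when $|z_t|>2K^r$, and Chebyshev applied to $\tilde z_t$ (whose variance must reach the order $K^{2r}$ for the growth lower bound to hold) only gives a probability bound of order one, not polynomially small. The paper avoids this entirely by arguing by contradiction: assume $\EE\int_0^\eta|\Pi_{\mathrm{ker}A^\perp}x_t|^2dt\le\delta_1\eta K^{2r}$, deduce that the approximation error is small with probability $\ge 1-\beta/2$ via Chebyshev \emph{on that contradiction hypothesis}, combine with the $\beta$-probability lower bound on the linearized growth, and derive a contradiction for $\delta_1$ small. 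Your direct stopping-time route would need a genuinely different control of $\PP(\tau<T)$ that the Chebyshev step does not supply.
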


\begin{remark}
	If $x_0$ and $-x_0$ are spectrally unstable, i.e. $L_{\pm x_0}$ has an eigenvalue $\lambda$ with $\mathrm{Re} \lambda > 0$, then the stationary measure satisfies $\brak{x}^p \in L^1(\dee \mu)$ for all $p < \infty$.
	Notice however, that in general we do \emph{not} require that $x_0$ is spectrally unstable, that is, it is sufficient for the equilibria to have an $O(t)$ growth coming from a non-trivial Jordan block. We did not take care in this paper to optimize the moment bounds on the stationary measures that we construct and in general they are probably far from sharp. For example, it is likely that $\mu$ has exponential moments in many cases. In fact, the existence of an invariant measure with exponential moments was proven for a 3d model satisfying the conditions of Theorem~\ref{thm:basic1} in \cite{WilliamsonThesis}.
\end{remark}

\begin{remark}
	The condition $\mathrm{rank}(\sigma) = n$ is not necessary. What is used in the proof is essentially that the range of $\sigma$ contains at least one eigenvector or generalized eigenvector associated with the fastest instability of $L^\perp_x$. For the precise statement of Theorem~\ref{thm:basic1} with weaker assumptions on $\sigma$, see Theorem~\ref{thm:basic1sigma}. In fact, none of the theorems we prove require the forcing to act on all variables.
	We expect that all of the theorems that rely on unstable equilibria hold only under the assumption that the forcing is hypoelliptic if all of the instabilities are spectral, however, we did not pursue this direction here. Similarly, we expect variations of these results to be valid with multiplicative stochastic forcing under suitable assumptions. 
\end{remark}

We can also treat cases with $\mathrm{dim}(\mathrm{ker}A)>1$ provided that $\mathcal{U}$ consists either entirely of spectrally unstable equilibria or Jordan block unstable equilibria. In the latter case we require an additional cancellation condition due to the slower timescale of the instability.
\begin{theorem} \label{thm:basic2}
	Suppose that $B(x,x) = 0$ for every $x \in \mathcal{U}$ and that there exists a constant $C > 0$ so that 
	\begin{equation} \label{eq:JNFAssumption}
	\sup_{x \in \mathcal{U}}(\|P_x\| + \|P^{-1}_x\|) \le C,
	\end{equation}
	where $J_x^\perp = P_x^{-1}L_x^\perp P_x$ is the Jordan canonical form of $L_x^\perp$. Then, we have the following results.
	\begin{itemize}
		\item If for every $x \in \mathcal{U}$ there is an eigenvalue of $L_x^\perp$ with positive real part, then there exists at least one stationary measure $\mu$ and $\brak{x}^p \in L^1(\dee \mu)$ for every $p < 2/3$.
		\item Assume that
		$$\Pi_{\mathrm{ker}A}\left(B(\Pi_{\mathrm{ker}A} x, \Pi_{\mathrm{ker}A^\perp} x) + B(\Pi_{\mathrm{ker}A^\perp} x, \Pi_{\mathrm{ker}A} x)\right) = 0 $$
		for every $x \in \R^n$. If for every $x \in \mathcal{U}$ there exists $J \in \{1,2,\ldots,n-2\}$ so that there holds
		$$t^J \leqc_x \|e^{tL_x^\perp}\| \leqc_x (1+t^J)$$
		for all $t \ge 0$, then there exists at least one stationary measure $\mu$ and $\brak{x}^p \in L^1(\dee \mu)$ for every $p < 1/3$.
	\end{itemize}
\end{theorem}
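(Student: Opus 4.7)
By Lemmas~\ref{lem:time-averaged_main} and~\ref{lem:time-averaged_step1}, it suffices to establish \eqref{ineq:timeavcoer} for initial data $x_0$ satisfying $|\Pi_{\mathrm{ker}A^\perp} x_0| \ll |\Pi_{\mathrm{ker}A} x_0|^r$ and $K := |x_0| \gg 1$, with $T$ allowed to depend on $K$. Since $B$ is bilinear and $B(x,x)=0$ for each $x \in \mathcal{U}$ by hypothesis, degree-two homogeneity extends this to $B(u,u)=0$ for every $u \in \mathrm{ker}A$. Writing $u_t = \Pi_{\mathrm{ker}A} x_t$ and $v_t = \Pi_{\mathrm{ker}A^\perp} x_t$, the decomposition of \eqref{eq:SDE} reads
\begin{align*}
du_t &= \Pi_{\mathrm{ker}A}\bigl(L_{u_t} v_t + B(v_t,v_t)\bigr)\, dt + \Pi_{\mathrm{ker}A}\sigma\, dW_t, \\
dv_t &= \bigl(L_{u_t}^\perp v_t + \Pi_{\mathrm{ker}A^\perp} B(v_t,v_t) - A v_t\bigr)\, dt + \Pi_{\mathrm{ker}A^\perp}\sigma\, dW_t.
\end{align*}
The plan is to approximate $v_t$ by the linear Ornstein--Uhlenbeck process
\[
\tilde v_t = e^{tL_{u_0}^\perp} v_0 + \int_0^t e^{(t-s)L_{u_0}^\perp}\Pi_{\mathrm{ker}A^\perp}\sigma\, dW_s,
\]
show that the instability of $L_{u_0}^\perp$ pushes $|\tilde v_t|$ above $K^r$ on a short timescale $\tau=\tau(K)\to 0$, and then choose $T$ comparable to $\tau$, so that $\int_0^T v_t\cdot A v_t\, dt \gtrsim K^{2r}(T-\tau) \asymp T K^{2r}$ yields \eqref{ineq:timeavcoer}.

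\emph{Spectrally unstable case.} Bilinearity gives $L_{u_0}^\perp = |u_0|\, L_{\hat u_0}^\perp$ with $\hat u_0 := u_0/|u_0| \in \mathcal{U}$. The uniform Jordan form bound \eqref{eq:JNFAssumption}, combined with the spectral instability at each point of $\mathcal{U}$, yields a constant $\lambda_\ast > 0$, independent of $\hat u_0$, such that $\|e^{t L_{\hat u_0}^\perp}\|$ grows like $e^{\lambda_\ast t}$ on the unstable subspace. Since $\mathrm{rank}(\sigma) = n$, the stochastic convolution in $\tilde v_t$ excites this direction, attaining typical magnitude $\sim K^{-1/2} e^{\lambda_\ast K t}$ and so reaching $K^r$ at $\tau \sim K^{-1}\log K$. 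A stopping-time argument terminated at the first time $|v_t|$ exceeds $2K^r$ or the error $|v_t - \tilde v_t|$ exceeds a fraction of $|\tilde v_t|$, with the error controlled by Gronwall using $|B(v,v)| \lesssim K^{2r}$ and $|A v| \lesssim K^r$, then shows that the true solution exits on the same timescale with probability close to one.

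\emph{Jordan-block case.} The cancellation hypothesis is precisely $\Pi_{\mathrm{ker}A} L_{u_t} v_t = 0$, so the drift equation for $u_t$ reduces to $du_t = \Pi_{\mathrm{ker}A} B(v_t,v_t)\, dt + \Pi_{\mathrm{ker}A}\sigma\, dW_t$; while $|v_t| \lesssim K^r$ this gives $|u_t - u_0| = O(K^{2r}\tau + \sqrt{\tau}) \ll K$, so $L_{u_t}^\perp$ stays close to $L_{u_0}^\perp$. The polynomial bound $\|e^{t L_{\hat u_0}^\perp}\| \sim (1+t)^J$ uniform over $\mathcal{U}$ via \eqref{eq:JNFAssumption} makes the stochastic convolution grow as $\sim K^J t^{J+1/2}$, producing an exit time $\tau \sim K^{(r-J)/(J+1/2)}$ that is polynomially small in $K$. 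Without the cancellation, the drift $\Pi_{\mathrm{ker}A} L_{u_t} v_t$ would be of order $K^{1+r}$, displacing $u_t$ by much more than $K$ on this window and destroying the approximation.

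The principal technical obstacle in both cases is maintaining the linearized picture up to the stopping time: one must simultaneously control $v_t - \tilde v_t$ and $u_t - u_0$ by Gronwall, with constants independent of the base point $\hat u_0 \in \mathcal{U}$, a uniformity which is exactly what \eqref{eq:JNFAssumption} supplies. The gap between $p < 2/3$ in the spectral case and $p < 1/3$ in the Jordan case reflects the logarithmic versus polynomial exit time in $K$, which through the iteration in Lemma~\ref{lem:time-averaged_main} determines the largest coercivity exponent $r$ admissible in \eqref{ineq:timeavcoer}.
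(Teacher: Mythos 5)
Your high-level strategy is the same as the paper's: verify Assumption~\ref{ass:time-average} by approximating the solution near $\mathrm{ker}A$ with the Ornstein--Uhlenbeck process driven by $L_{u_0}^\perp$ frozen at the initial base point, use the uniformity of the Jordan normal form to get constants independent of the base point, and then conclude via Lemma~\ref{lem:time-averaged_step1}. The spectrally unstable bullet matches the paper's Lemmas~\ref{lem:1dspectralgrowth} and~\ref{lem:1dspectralapprox}, including the two-stage ``diffusive then transport'' mechanism encoded in your remark that the convolution first attains size $\sim K^{-1/2}$ before the exponential growth takes over.

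For the Jordan block bullet, however, you deviate from the paper in a way that creates a gap. You propose to use the full $t^J$ polynomial growth of the semigroup, reaching $K^r$ at $\tau \sim K^{(r-J)/(J+1/2)}$. The paper instead projects onto the direction $\tilde V$ that is exactly one step into a Jordan chain of length $\geq 2$ (see \eqref{eq:jordantgrowth}), so that only linear growth $\sim Kt$ appears; this is why the proof of Lemma~\ref{lem:1dJordangrowth} sets $\tau = |z|^{-p}$ with $r = 1-3p/2$, which coincides with your formula only when $J=1$. The point of the paper's choice is that the \emph{error} projected onto $\tilde V$ also grows at most linearly, via the bound $\|\Pi_{\tilde V}P^{-1}e^{L^\perp t}\| \lesssim 1+|z_0|t$ used in Lemma~\ref{lem:1dJordanapprox}; if instead you project onto the end of the Jordan chain to collect $t^J$ growth of the signal, the semigroup bound there is $\sim (1+Kt)^J$ and the nonlinear error inherits the same $t^J$ factor, requiring a separate computation that you do not carry out. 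Your proposal would, if the bookkeeping closed, yield a stronger exponent than $p<1/3$ for $J>1$, but as written it is not a proof of the stated theorem.

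Your explanation for the necessity of the cancellation hypothesis is also incorrect. You claim that without it the drift $\Pi_{\mathrm{ker}A}L_{u_t}v_t \sim K^{1+r}$ would displace $u_t$ ``by much more than $K$,'' but with $\tau \sim K^{2(r-1)/3}$ the displacement is in fact $\sim K^{1+r}\tau = K^{(1+5r)/3} \ll K$ for $r < 2/5$. The actual problem is the cross term $|v_s||u_s - u_0|$ in the linearization error formula \eqref{eq:1dspectralerror}: without the cancellation this term has size $\sim K^r \cdot K^{(1+5r)/3} = K^{(1+8r)/3}$, which dominates the $\sim K^{2r}$ term from $B(v,v)$ and destroys the comparison to the signal $\sim K^r$. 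The cancellation reduces $\sup_{t\le\tau}|u_t - u_0|$ to $\lesssim \sqrt{\tau} \sim K^{(r-1)/3}$ (see \eqref{eq:frozenz2}), so that the cross term is $\sim K^{(4r-1)/3} \ll 1$ and the approximation remains valid.
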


\begin{remark}
	Analogous criteria to Theorems \ref{thm:basic1}, \ref{thm:basic2}, and Theorem \ref{thm:hypo} can be found for much more general nonlinearities, i.e. systems of the form $dx_t = F(x_t) dt - Ax_t + \sigma dW_t$ with $x \cdot F(x) = 0$, however, the lack of scaling invariance requires slightly more care.
\end{remark}

A first natural question is whether or not Theorem \ref{thm:hypo} and Theorems \ref{thm:basic1},\ref{thm:basic2} can be combined into one.
We do not know how to do this in reasonable generality due to difficulties in dealing with transitions between ``transverse'' regions as in Theorem \ref{thm:hypo}
and unstable equilibria as in Theorems \ref{thm:basic1} and \ref{thm:basic2}.  However, in Section~\ref{sec:combination} we prove Theorem~\ref{thm:combination}, which provides at least one general setting where this is possible. Specifically, we consider systems for which $\mathrm{ker}A = V_1 \oplus V_2$ for subspaces $V_1, V_2 \subseteq \R^n$ consisting of spectrally unstable equilibria and such that the region where $\Pi_{V_1}x$ and $\Pi_{V_2}x$ are both sufficiently large can be treated as a transverse zone. Note that in this setting the instability of $\Pi_{V_j}x$ need not cause growth of the damped modes directly, but could instead cause the solution to enter a transverse region, where it is then subsequently expelled from $\mathrm{ker}A$ in a manner similar to Theorem~\ref{thm:hypo}. While we require some additional structural assumptions to justify the approximations, Theorem~\ref{thm:combination} applies to several well-known examples,
for example the Sabra model with $\mathrm{ker} A$ given by the first two frequency shells (which means $\mathrm{dim} (\mathrm{ker} A) = 4$) and the 2d Galerkin-Navier-Stokes equations with $\mathrm{ker} A$ consisting of a four-dimensional subspace of suitably chosen shear flows. We will state here our result on the Navier-Stokes equations, and defer the general result and application to Sabra to Section~\ref{sec:combination}.

Recall the 2d Navier-Stokes equations in vorticity form on a square torus $\mathbb T^2$ subjected to stochastic forcing:
\begin{align*}
d  w + (u \cdot \grad w- \Delta w) dt & =  \sum_{k \in \mathbb Z^2 : k \neq 0} \sigma_k^{(1)} \cos(k \cdot x) dW_t^{(k;1)} + \sigma_k^{(2)} \sin(k \cdot x) dW_t^{(k;2)} \\
u & = \begin{pmatrix} -\partial_{x_2} \\ \partial_{x_1} \end{pmatrix} (-\Delta)^{-1} w. 
\end{align*}
Let $\Pi_{\leq N}$ be the projection to the modes such that $\max(\abs{k_1},\abs{k_2}) =: \abs{k}_\infty  \leq N$ (any choice of $\ell^p$ works). 
Then the Galerkin Navier-Stokes equations are given by the SDE defined for mean-zero $w \in \mathrm{Im} \Pi_{\leq N}$ by 
\begin{align*}
d w + (\Pi_{\leq N}(u \cdot \grad w) + Aw) dt  & =  \sum_{0 < \abs{k}_\infty \leq N} \sigma_k^{(1)} \cos(k \cdot x) dW_t^{(k;1)} + \sigma_k^{(2)} \sin(k \cdot x) dW_t^{(k;2)} \\
u & = \begin{pmatrix} -\partial_{x_2} \\ \partial_{x_1} \end{pmatrix} (-\Delta)^{-1} w, 
\end{align*}
where we have replaced the matrix $-\Pi_{\leq N} \Delta \Pi_{\leq N}$ with a general positive semi-definite matrix $A$. 
\begin{theorem}\label{thm:NS}
	Let $N \geq 3$ be arbitrary and define the two subspaces of $\mathrm{Im} \Pi_{\leq N}$
	\begin{align*}
	V_1 \oplus V_2 = \mathrm{span}(\cos \ell x_1, \sin \ell x_1) \oplus \mathrm{span}(\cos k x_2, \sin k x_2), 
	\end{align*}
	for two arbitrary integers $\ell,k \geq 2$ such that $\ell \neq k$ and $\max(\ell,k) \leq N$.
	Suppose further that the forcing coefficients $\sigma_p^{(j)}$ are all non-zero. If $\mathrm{ker}A = V_1 \oplus V_2$ then there exists a (unique) invariant measure $\mu_\ast$ of the Galerkin Navier-Stokes equations with truncation $N$ and for all $p < 2/3$ there holds 
	\begin{align*}
	\int_{\mathrm{Im} \Pi_{\leq N}} \abs{w}^p d\mu_\ast  < \infty. 
	\end{align*}
\end{theorem}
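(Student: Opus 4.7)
The strategy is to reduce the proof to an application of Theorem~\ref{thm:combination} for the decomposition $\mathrm{ker}A = V_1 \oplus V_2$. Three conditions must be verified: (a) every nonzero element of $V_1$ (resp.\ $V_2$) is a spectrally unstable equilibrium of the Galerkin--Euler conservative dynamics; (b) the region where $|\Pi_{V_1}w|$ and $|\Pi_{V_2}w|$ are simultaneously large is a transverse zone in the sense of Theorem~\ref{thm:hypo}, so that the nonlinear coupling between the two shears rapidly ejects trajectories from $\mathrm{ker}A$; and (c) the additional structural assumptions of Theorem~\ref{thm:combination} required to justify the approximate-solution construction near each $V_j$. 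The $p<2/3$ moment is then consistent with both $V_1$ and $V_2$ being in the spectrally unstable case of Theorem~\ref{thm:basic2}. Uniqueness of $\mu_\ast$ will be handled separately, exploiting the non-degeneracy of $\sigma$.

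For (a), any nonzero $w_1\in V_1$ has the form $r\cos(\ell x_1+\phi)$, which is a Kolmogorov flow and hence a stationary solution of 2d Euler. The classical Meshalkin--Sinai analysis shows that $\cos(\ell x)$ is linearly unstable in 2d Euler for every $\ell\geq 2$; the scaling $w\mapsto\lambda w$ (which merely rescales time) and the translation symmetry in $\phi$ then extend spectral instability to every nonzero element of $V_1$. The main point to verify is that the instability survives Galerkin truncation: the linearization of Euler around $\cos(\ell x_1)$ decouples across $x_2$-fibers $\{(p,m):p\in\Z\}$ of fixed $x_2$-wavenumber $m$, and on each such fiber the unstable eigenvector is supported on a few consecutive $p$-values (e.g.\ $p\in\{-1,1,3,-3,\ldots\}$ for $\ell=2$, $m=1$). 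Since $\max(\ell,k)\leq N$ and $N\geq 3$ retain enough of these modes, the unstable eigenvalue persists after truncation; the borderline case $N=\max(\ell,k)$ for large $\ell$ can be handled by direct finite-dimensional eigenvalue computation or by perturbation from the untruncated spectrum. The analysis of $V_2$ follows by the $x_1\leftrightarrow x_2$ symmetry.

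For (b), decompose $w=w_1+w_2$ with $w_j\in V_j$. Since each $w_j$ is a stationary Euler solution, $B(w_j,w_j)=0$, and hence $B(w,w)=B(w_1,w_2)+B(w_2,w_1)$. A short direct computation using that the velocity of $w_1$ points along $e_2$ and depends only on $x_1$ while the velocity of $w_2$ points along $e_1$ and depends only on $x_2$ yields $B(w_1,w_2)+B(w_2,w_1)\propto |\Pi_{V_1}w|\,|\Pi_{V_2}w|\,\sin(\ell x_1)\sin(k x_2)$ (up to phases and order-one factors), supported on the four Fourier modes $(\pm\ell,\pm k)$. Since $\ell,k\geq 2$ these lie off both coordinate axes, hence in $(V_1\oplus V_2)^\perp$, with magnitude $\gtrsim|\Pi_{V_1}w|\,|\Pi_{V_2}w|$, giving the required instant departure from $\mathrm{ker}A$. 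The structural assumptions in (c) are verifiable by similar Fourier bookkeeping, exploiting that $\mathrm{ker}A$ sits on the sparse axis frequency set $\{(\pm\ell,0),(0,\pm k)\}$ and that most shear-shear advections produce output off these axes. Uniqueness is then standard: since $\sigma_p^{(j)}\neq 0$ for every $0<|p|_\infty\leq N$, the diffusion is uniformly elliptic on the finite-dimensional state space $\mathrm{Im}\Pi_{\leq N}$, so the semigroup is strong Feller and topologically irreducible, which forces uniqueness of the invariant measure. The main technical obstacle is (a): robustly verifying spectral instability in the Galerkin truncation for all admissible $(\ell,k,N)$, especially the borderline case where the truncation could pinch the unstable eigenmode.
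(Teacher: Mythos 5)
Your proposal is correct and takes essentially the same route as the paper: reduce to Theorem~\ref{thm:combination}, invoke the Meshalkin--Sinai instability of $\cos\ell x_1$, use translation/rotation/scaling invariance for the uniform Jordan-form bound \eqref{eq:JCFassumption2}, and verify \eqref{eq:combcancellation}--\eqref{eq:combolbd} by computing the shear--shear product, which lands on the off-kernel modes $(\pm\ell,\pm k)$ with prefactor $(k/\ell-\ell/k)\neq 0$ precisely because $\ell\neq k$. If anything you are more explicit than the paper about persistence of the instability under Galerkin truncation, which the paper treats as a known consequence of Meshalkin--Sinai without further comment.
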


As an additional example in a setting similar to Theorem~\ref{thm:combination} described above, we consider the Lorenz-96 model, put forward by Lorenz in \cite{Lorenz1996}, for $n$ real-valued unknowns $u_1,...,u_n$ in a periodic ensemble $u_{i+kn} = u_i$:
\begin{align} 
d u_m = (u_{m+1}-u_{m-2})u_{m-1} dt - (Au)_m dt + q_m dW_t^{(m)}. \label{def:L96punch}
\end{align}
Here, $\set{W_t^{(m)}}$ are independent Brownian motions and $\set{q_m}$ are fixed parameters. This model has been studied as a prototypical high dimensional chaotic system (see e.g. \cite{Majda16,KP10,LK98}). We consider \eqref{def:L96punch} with 
$$ \mathrm{ker}A = \{u_1 = u_2 = 0\}.$$
Similar to the general setting of Theorem~\ref{thm:combination}, this example contains a mixture of all of Theorems \ref{thm:hypo}, \ref{thm:basic1}, and \ref{thm:basic2}
in the sense that $\mathcal{U}$ contains both unstable equilibria and a region in which the conservative dynamics expel from $\mathrm{ker} A$ as in Theorem \ref{thm:hypo}. However, the equilibria are only Jordan block unstable, so Theorem~\ref{thm:combination} (the proof of which relies crucially on the exponential instability of the equilibria in $V_j$) does not apply. The linear instability of the equilibria defined by $u = \alpha e_2$ (i.e. only supported in the second mode) causes growth of the $e_1$ direction, rather than a mode in $\mathrm{ker}A^\perp$.
In this region of $\mathrm{ker}A$, a careful (and somewhat nonlinear) argument is used to show that the linear instability moves the dynamics into a region where the nonlinearity can then transport the dynamics out of $\mathrm{ker} A$. Despite the lack of unstable eigenvalues, using the precise structure of \eqref{def:L96punch} we can justify the approximations needed to apply our methods and construct an invariant measure.

\begin{theorem} \label{thm:L96}
	Let $6 \leq n < \infty$ and suppose that $q_{n-1},q_n$ are both non-zero. Suppose that $\mathrm{ker} A = \set{u_1 = u_2 = 0}$. Then, \eqref{def:L96punch} admits at least one stationary measure $\mu$ and $\brak{x}^p \in L^1(\dee \mu)$ for all $p < 1/3$.
\end{theorem}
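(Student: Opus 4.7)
The plan is to apply the time-averaged coercivity framework developed in Lemmas \ref{lem:time-averaged_main} and \ref{lem:time-averaged_step1}, reducing the problem to establishing \eqref{ineq:timeavcoer} on a short window $T=T(K)$ for initial data $u_0$ with $K := |u_0|\gg 1$ and $|\Pi_{\mathrm{ker}A^\perp} u_0| \ll K^r$. Writing $u_0 = \alpha e_1 + \beta e_2 + w_0$ with $\alpha^2 + \beta^2 \approx K^2$ and $|w_0|\ll K^r$, a direct use of $B(u,u)_m = (u_{m+1}-u_{m-2})u_{m-1}$ gives
\[
B(\alpha e_1 + \beta e_2, \alpha e_1 + \beta e_2) = -\alpha\beta\, e_3,
\]
so the only conservative equilibria on $\mathcal{U}$ are $\{\pm e_1, \pm e_2\}$. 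Accordingly I split the near-kernel region into a \emph{bulk} part, where $|\alpha|, |\beta|\gtrsim K$, and a \emph{near-equilibrium} part, where $(\alpha/K,\beta/K)$ is within a small angle of one of $\{\pm e_1,\pm e_2\}$.

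In the bulk part the strategy of Theorem \ref{thm:hypo} applies essentially unchanged: approximating $u_t$ by the conservative flow $\dot z_t = B(z_t,z_t)$, the identity above gives $\dot z_3(0) = -\alpha\beta$ of size $\gtrsim K^2$, hence $z_3(t)\sim -\alpha\beta\,t$ and $|z_3(t)|\gtrsim K^r$ on most of a window of length $T\asymp K^{r-2}$. A Gronwall comparison controls $u_t - z_t$ by the (short-time negligible) martingale and dissipation contributions, yielding $\frac{1}{T}\E\int_0^T u_t\cdot A u_t\,dt \gtrsim K^{2r}$, which is the required estimate.

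The heart of the proof is the near-equilibrium part, for which neither Theorem \ref{thm:basic1} nor Theorem \ref{thm:basic2} applies directly. For concreteness take $u_0\approx \beta e_2$ with $|\beta|\approx K$. A direct computation shows that $L_{\beta e_2}^\perp$ is a pure rotation on $\mathrm{span}(e_3,e_4)$ and zero elsewhere, so $L_{\beta e_2}^\perp$ has no unstable eigenvalues. Nonetheless the full linearization couples
\[
(L_{\beta e_2} v)_1 = \beta v_n, \qquad (L_{\beta e_2} v)_3 = \beta(v_4 - v_1),
\]
connecting the damped mode $v_n$ to the kernel mode $v_1$ and then back to the damped mode $v_3$. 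Since $q_n\neq 0$ and $u_n$ is damped, $u_n$ reaches $O(1)$ typical size on $O(1)$ time and drives $du_1 \approx \beta u_n\,dt$, so that $u_1 \sim \beta\int_0^t u_n\,ds$, after which $du_3 \approx -\beta u_1\,dt$ produces $u_3 \sim -\beta^2 \int_0^t\!\!\int_0^s u_n\,dr\,ds$. Itô/Gaussian calculations then give a lower bound of the form $\E u_3(t)^2 \gtrsim \beta^4 t^p$ on an appropriate window, and choosing $T$ so that $\frac{1}{T}\int_0^T \E u_3^2\,ds \gtrsim K^{2r}$ while keeping the dropped quadratic terms—most importantly the $(u_3-u_n)v_1$ term in the equation for $v_2$, which is the undamped \emph{and} unforced direction—dominated by the retained linear growth via a Gronwall estimate, yields \eqref{ineq:timeavcoer}. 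A symmetric analysis near $u_0\approx \alpha e_1$ uses $(L_{\alpha e_1}v)_2 = \alpha(v_3-v_n)$ to drive $v_2$ from $v_n$, and then $B(u,u)_3 \approx -\alpha v_2$ to drive $u_3$ out of $\mathrm{ker}A$.

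The hardest step is the scale-matching inside the near-equilibrium analysis: $T$ must be large enough that the two-step noise-driven linear chain produces $\gtrsim K^r$ of $\mathrm{ker}A^\perp$ signal starting from an $O(1)$ damped-mode source, yet small enough that the dropped quadratic terms—especially those driving $v_2$ off its large value $\beta$—remain subordinate to the retained linear growth. The double integration in the Jordan-style chain loses two powers of $t$ compared with the instantaneous amplification $\beta$, so the window of admissible $T$ shrinks with $K$ in a way that, once fed back into Lemma \ref{lem:time-averaged_step1}, forces $r<1/3$, exactly the moment bound in the statement. The hypothesis $q_{n-1}\neq 0$ plays the analogous role in the parallel analysis near $\alpha e_1$, where it guarantees that the damped-mode source feeding the chain remains available after the indices are permuted.
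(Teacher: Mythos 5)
Your overall plan—verify Assumption~\ref{ass:time-average} by splitting the near-kernel region into a transverse zone plus neighborhoods of the equilibria $\pm e_1,\pm e_2$, using the conservative flow in the first and noise-driven linear chains in the second—is the same strategy the paper uses, and your treatment of the bulk zone and the near-$\pm e_2$ chain $u_n\to u_1\to u_3$ are both sound and essentially coincide with the paper's Case~2 and Case~3. The near-$\pm e_1$ analysis is where the proposal breaks down.

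The chain you propose near $\alpha e_1$, namely $u_n\to u_2\to u_3$ via $\dot u_2=\alpha(u_3-u_n)$ and $\dot u_3=-\alpha u_2$, is not a Jordan chain: eliminating $u_2$ gives $\ddot u_3+\alpha^2 u_3=\alpha^2 u_n$, a harmonic oscillator of frequency $\alpha\sim K$ driven by the (noisy but small) forcing $u_n$. On the time scales $T\gg K^{-1}$ needed here, the oscillatory Green's function $\sin(\alpha(t-s))$ kills the double integration you are counting on, and the response stays of size $\sqrt{T}$ rather than $\alpha^2 T^{5/2}$. Equivalently, from the point of view of the contradiction argument: the feedback that you must drop, $\alpha u_3$ in $\dot u_2$, loops through the \emph{kernel} mode $u_2$, which is \emph{not} controlled by the contradiction hypothesis $\int_0^\tau\sum_{j\ge3}|u_j|^2\,dt\le\delta\tau K^{2r}$. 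Tracking the error gives $|\bar x_2|\lesssim K\tau\sqrt{\delta}K^{r}$ and then $|\bar x_3|\lesssim K\tau|\bar x_2|\lesssim\sqrt{\delta}K^{2+r}\tau^2$, whose ratio to the putative growth $K^2\tau^{5/2}$ is $\sqrt{\delta}K^{r}\tau^{-1/2}$; this \emph{increases} as $\tau\to0$, so no admissible window exists. This is in sharp contrast to the paper's near-$\pm e_2$ chain $u_n\to u_1\to u_3$, where the only feedback to $u_3$ is through $u_4\in\mathrm{ker}A^\perp$, which \emph{is} bounded by the contradiction hypothesis, and the error ratio $\sqrt{\delta}K^{r-1}\tau^{-3/2}$ is $O(\sqrt{\delta})$ at $\tau=K^{-4/7}$.

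The correct strategy near $\alpha e_1$ (the paper's Case~1, $|x_1|\gtrsim K$) is to observe that $L^\perp_{\alpha e_1}$ is a genuine one-step Jordan block inside $\mathrm{ker}A^\perp$: $(L^\perp_{\alpha e_1}v)_n=\alpha v_{n-1}$, $(L^\perp_{\alpha e_1}v)_{n-1}=0$, with no return coupling. Forcing $u_{n-1}$ by $q_{n-1}dW^{(n-1)}$ then gives $u_n\sim\alpha\tau^{3/2}$ with error ratio $\sqrt{\delta}K^{r}\tau^{1/2}\to0$. This is precisely where $q_{n-1}\neq0$ enters; your remark that $q_{n-1}$ plays an ``analogous role after the indices are permuted'' points in the right direction—the cyclic shift of the near-$e_2$ chain $u_n\to u_1\to u_3$ is indeed $u_{n-1}\to u_n\to u_2$, which already exits $\mathrm{ker}A$ at the first step—but the chain you actually wrote down uses $q_n$, not $q_{n-1}$, and does not work. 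Finally, a minor numerical slip: the coercivity exponent that produces moments $p<1/3$ via Lemma~\ref{lem:time-averaged_step1} is $r=1/7$ (so that $2r/(1-r)=1/3$), not $r<1/3$.
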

\begin{remark}
	After completion of this work, we have been made aware of a similar result for Lorenz-96 in the upcoming thesis \cite{EvanThesis}, which considers the case where $n = 4$, $\mathrm{ker}A$ consists of two modes, and the forcing acts only on the two modes in $\mathrm{ker}A^\perp$. 
\end{remark}

The above theorems do not contain all of the interesting possible relationships between $\mathrm{ker}A$ and the dynamics of $B$. In particular, none of the above examples consider a case in which $\mathcal{U}$ contains a non-equilibrium invariant set for the conservative dynamics. We give one such example where our methods apply, based on the following simple ``stochastic triad'' model \cite{Majda16} defined by the nonlinearity
\begin{align}
B(x,x) =
\begin{pmatrix}
x_2x_3 \\ x_1x_3 \\ -2 x_1 x_2
\end{pmatrix}
. \label{def:Bbb}
\end{align}
The $x_3$-axis contains unstable equilibria and so Theorem \ref{thm:basic1} shows that if $\mathrm{ker}A = \mathrm{span}\set{e_3}$, then there exists a stationary measure (this result was already proven in \cite{WilliamsonThesis}). To contrast, the plane defined by $\set{x : x_1=x_2}$ consists of heteroclinic connections between the unstable equilibria with $x_3 > 0$ and those with $x_3 < 0$, and so neither Theorem \ref{thm:basic1} nor Theorem \ref{thm:basic2} apply to the case that $\mathrm{ker} A = \set{x : x_1=x_2}$.
Nevertheless, we are able to adapt our methods to cover this case since we can precisely describe the conservative dynamics restricted to $\mathrm{ker} A$.
\begin{theorem} \label{thm:BadBad}
	Consider the stochastic triad model defined by \eqref{def:Bbb} in $\R^3$ and suppose $\mathrm{ker} A = \set{x : x_1 = x_2}$. Then, there exists at least one stationary measure $\mu$ and $\brak{x}^p \in L^1(\dee \mu)$ for all $p < 2/3$.
\end{theorem}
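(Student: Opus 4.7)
The plan is to verify the time-averaged coercivity estimate \eqref{ineq:timeavcoer} using Lemma~\ref{lem:time-averaged_main} together with its reduction to short-time estimates in Lemma~\ref{lem:time-averaged_step1}. It is natural to work in the coordinates $\xi = (x_1 + x_2)/2$, $\eta = x_1 - x_2$, $v = x_3$, in which $\mathrm{ker}\,A$ is parametrized by $(\xi, v)$, $\mathrm{ker}\,A^\perp$ by $\eta$, and $x \cdot Ax \gtrsim \eta^2$. The conservative flow reads
\begin{align*}
\dot\xi = \xi v, \qquad \dot \eta = -\eta v, \qquad \dot v = -2\xi^2 + \tfrac{1}{2}\eta^2,
\end{align*}
so $\{\eta = 0\}$ is invariant and the restriction conserves $2\xi^2 + v^2$. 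Trajectories on $\mathrm{ker}\,A$ trace heteroclinic ellipses from the equilibria $(0,+v_\ast)$, which are unstable in the $\xi$-direction with rate $v_\ast$, to $(0,-v_\ast)$, at which $L_x^\perp$ has the unstable eigenvalue $v_\ast$.

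After reducing to large initial energy $|x_0| = K$ and $|\eta_0| \ll K^r$ with $r < 1/3$, I would carry out the argument in two regions. In the \emph{lower-equilibrium region} $|\xi_0| \ll K$, $v_0 \lesssim -K/2$, one is precisely in the spectrally unstable branch of Theorem~\ref{thm:basic2}: the linearized transverse equation $d\eta = -v_t \eta\,dt + \sigma_\eta dW_t + \text{l.o.t.}$ has instability rate $-v \asymp K$, noise-seeded $\eta \sim \sqrt{t}$ is amplified by $e^{Kt}$, and on a window $T \asymp \log K/K$ one obtains $\EE \int_0^T \eta_s^2\,ds \gtrsim K^{2r}\, T$. \emph{Elsewhere}, the trajectory is first driven into the lower-equilibrium region: near the upper equilibrium, the $\mathrm{ker}\,A$-flow $\dot\xi = v\xi$ amplifies noise-injected $\xi$ to scale $K$ in time $O(\log K/K)$, after which the conservative ellipse is traversed across $v = 0$ in the fast time $O(1/K)$ and one enters the first regime.

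The central approximation is the decomposition $x_t = z_t + w_t$, where $z_t \in \mathrm{ker}\,A$ solves the SDE with data and driving noise projected onto $\mathrm{ker}\,A$, and the transverse component $w_t$ satisfies
\begin{align*}
\dot w = -v_t w - A^\perp w + \sigma_\perp dW_t + (\text{corrections of order } w^2/K).
\end{align*}
The linear part admits the integrating factor $\exp\bigl(-\int_0^t v_s\,ds\bigr)$, which grows by a large factor as $v$ crosses zero along the heteroclinic; this is exactly what forces $\EE w_T^2$ to be large and produces the desired time-averaged coercivity. The approximation is justified by a Gronwall estimate on the remainder, stopped just before exit from the $K^r$-neighborhood of $\mathrm{ker}\,A$, using energy conservation to keep $|z_t| \lesssim K$ and the explicit structure of the conservative flow on $\mathrm{ker}\,A$.

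The main obstacle is sustaining the approximation through the entire heteroclinic passage, because the transit consists of three qualitatively different phases: slow $O(\log K/K)$ expansion of $\xi$ near the upper equilibrium, fast $O(1/K)$ crossing of $v = 0$, and slow $O(\log K/K)$ expansion of $\eta$ near the lower equilibrium. The slow phases are governed by linearizations with explicit integrating factors, while the fast phase is controlled by energy conservation and boundedness of the ellipse. A three-stage stopping-time argument keyed to $|\xi_t - \bar\xi_t| \ll K$, $|v_t - \bar v_t| \ll K$, and $|\eta_t| \ll K^r$ should close the estimate, and combined with the exponential instability in the lower-equilibrium phase yields the moment bound $p < 2/3$.
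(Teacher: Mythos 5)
Your proposal follows the paper's high-level strategy closely: verify the time-averaged coercivity condition of Lemma~\ref{lem:time-averaged_step1}, exploit the heteroclinic structure of the conservative flow on $\mathrm{ker}\,A$, show rapid exit from a neighborhood of the north-pole equilibria (where the transverse direction is \emph{stable}), and then harvest the spectral transverse instability at the south pole. This is exactly what Section~\ref{sec:transverse} through Section 7 of the paper does, and your identification of the two regions (near upper equilibrium vs.\ elsewhere) matches the paper's split into $x_0$ near $(0,0,K)$ and $x_0 \in B_K^{\delta,\delta_1}$ in the final proposition. The moment exponent $p<2/3$ coming from $r<1/4$ via Lemma~\ref{lem:time-averaged_step1} is also correct.

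The genuine difference is the exit-time estimate near the north pole. You propose a direct Gaussian analysis of $\dot\xi \approx K\xi + \text{noise}$, which indeed gives the correct $O(\log K/K)$ time scale; this is in the spirit of Lemma~\ref{lem:1dspectralgrowth}. The paper instead constructs an explicit local Lyapunov function (Lemma~\ref{lem:Lyapunov})
\[
V_K(x_1)=\frac{\chi_T(x_1)}{|x_1|}+\sqrt{K}\Bigl(1-\frac{K|x_1|^2}{32R^2}\Bigr)\chi_D(x_1)
\]
with cutoffs at the crossover scale $|x_1|\sim K^{-1/2}$, and extracts the exit-time bound via Dynkin's formula (Lemma~\ref{lem:bbstopping}). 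The Lyapunov function handles the noise-dominated regime $|x_1|\lesssim K^{-1/2}$ and the transport-dominated regime $|x_1|\gtrsim K^{-1/2}$ in a single object, and delivers not only the first-passage time but also the probabilistic control of the exit location (that one exits via $|x_1|\ge\delta K$ rather than via $x_2$ or $x_3$ leaving the box), which requires the supplementary Chebyshev estimates in the proof of Lemma~\ref{lem:bbstopping}. Your Gaussian route would need to reproduce all of this by gluing a diffusive phase to an exponential-growth phase with stopping times and controlling the linearization error across the crossover; plausible, but heavier. The other technical divergence: you propose an approximate solution $z_t\in\mathrm{ker}A$ driven by the \emph{projected noise}, whereas the paper's Lemma~\ref{lem:bbgrowth} lets the $\mathrm{ker}A$-component follow the \emph{deterministic} heteroclinic ODE and controls $Z_t-z_t$ via a propagator bound that exploits the contractivity of $\tilde L_{Z_t}$ once $Z_{t,3}<0$. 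Both choices can be made to work; the paper's is somewhat cleaner because it makes the south-pole growth argument a straightforward Gaussian computation on $X_{t,2}$ alone. Overall your plan is correct in structure; the main thing it leaves implicit is the matching across the diffusive/transport crossover near the north pole, which the paper resolves with the custom Lyapunov function.
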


In all of the above examples, existing results give uniqueness and regularity of the stationary measure once existence is proved; see e.g. the Doob-Khasminskii theorem \cite{DPZ96}.
Moreover, the proof yields a sub-geometric Lyapunov function and one can apply a suitable variation of Harris' theorem to obtain explicit convergence estimates on the Markov semigroups in the total variation norm \cite{DFG09}. 
\begin{corollary} \label{cor:SGD}
	If $\mathrm{rank}\,(\sigma) = n$ and if $x_t$ is irreducible, that is, if $\forall x \in \R^n$, open sets $\mathcal{O} \subset \R^n$ and $\forall t > 0$, 
	\begin{align*}
	\PP\left(x_t \in \mathcal{O} \right) > 0, 
	\end{align*}
	then in any of the above examples, there is a \emph{unique} stationary measure $\mu$, and this stationary measure is $C^\infty$.
	Moreover, if $V(x) = \brak{x}^2$, then for $T$ and $r$ as in \eqref{ineq:timeavcoer}, 
	\begin{align*}
	\tilde{V}(x) = \frac{1}{T}\int_0^T \mathcal{P}_t V(x) dt
	\end{align*}
	satisfies for $r$ as above and some constants $c,C > 0$,
	\begin{align*}
	\mathcal{L} \tilde{V} \leq -c\tilde{V}^r + C, 
	\end{align*}
	and hence by results in \cite{DFG09}, for any $x \in \R^n$, there holds (with the convention that if $r =1$, then the decay is exponential)
	\begin{align*}
	\norm{\mathcal{P}_t(x,\cdot) - \mu}_{TV} \lesssim \brak{t}^{-\frac{r}{1-r}}\tilde{V}(x). 
	\end{align*}
\end{corollary}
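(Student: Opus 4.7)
The plan splits into three pieces: (i) uniqueness and smoothness from standard ergodic arguments, (ii) verification of the sub-geometric drift inequality $\mathcal{L}\tilde V \leq -c\tilde V^r + C$, and (iii) invocation of the sub-geometric Harris theorem of \cite{DFG09}. For (i), since $\mathrm{rank}(\sigma) = n$ the generator $\mathcal{L}$ is uniformly elliptic, so by classical parabolic regularity each transition kernel $\mathcal{P}_t(x,\cdot)$ has a smooth, strictly positive density; this makes $\mathcal{P}_t$ strong Feller, and combined with the irreducibility assumption, Doob--Khasminskii yields uniqueness of $\mu$, while the invariance relation $\mathcal{P}_t^\ast\mu = \mu$ transfers the smoothness to $\mu$.

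For (ii), I would first apply It\^o's formula to $V(x) = \brak{x}^2$: the energy conservation $x\cdot B(x,x)=0$ gives $\mathcal{L}V = C_0 - 2\,x\cdot Ax$ with $C_0 := \mathrm{tr}(\sigma\sigma^T)$, and hence $\mathcal{P}_t V(x) = V(x) + C_0 t - 2\E\int_0^t x_s\cdot Ax_s\, ds$. This immediately yields both $\tilde V\leq V + C_0 T/2$ (since $x\cdot Ax\geq 0$) and the identity $\mathcal{L}\tilde V = T^{-1}(\mathcal{P}_T V - V)$, and combining the latter with the time-averaged coercivity \eqref{ineq:timeavcoer} gives
\begin{equation*}
\mathcal{L}\tilde V(x) = C_0 - \frac{2}{T}\E\!\int_0^T x_s\cdot Ax_s\, ds \leq C_0' - \frac{c_0}{T}\E\!\int_0^T \brak{x_s}^{2r}\, ds.
\end{equation*}
The remaining task, which I expect to be the main obstacle, is to lower bound this time-averaged $2r$-moment by $\tilde V(x)^r$ up to an additive constant. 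The direction is delicate because the naive Jensen bound $\E\brak{x_s}^{2r}\leq (\E\brak{x_s}^2)^r$ points the wrong way. Instead I would apply It\^o directly to $V^r = \brak{\cdot}^{2r}$: for $r\in(0,1]$ a short computation (again using $x\cdot B(x,x)=0$, and the fact that $\brak{x}^{2r-2}\leq 1$) yields the lower bound $\mathcal{L}V^r \geq -2r\norm{A}\brak{x}^{2r} - C_1$, whence
\begin{equation*}
\E\brak{x_s}^{2r} \geq \brak{x}^{2r} - C_1 s - 2r\norm{A}\E\!\int_0^s \brak{x_u}^{2r}\, du.
\end{equation*}
Integrating in $s\in[0,T]$ and absorbing the resulting Gronwall-type self-term then gives $T^{-1}\E\int_0^T \brak{x_s}^{2r}\, ds \geq c_1\brak{x}^{2r} - c_2$. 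Combining with $\tilde V\leq V + C_0 T/2$ and the elementary subadditivity $(a+b)^r\leq a^r + b^r$ valid for $r\in(0,1]$ gives $\brak{x}^{2r}\geq \tilde V(x)^r - C''$, and hence the desired $\mathcal{L}\tilde V\leq -c\tilde V^r + C$. Properness of $\tilde V$ drops out along the way: two applications of Jensen produce $\tilde V^r\geq T^{-1}\E\int_0^T\brak{x_s}^{2r}\, ds\geq c_1\brak{x}^{2r} - c_2$, so $\tilde V(x)\to\infty$ as $|x|\to\infty$.

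Step (iii) is then immediate: $\mathcal{L}\tilde V \leq -c\tilde V^r + C$ is exactly the sub-geometric Foster--Lyapunov drift condition required by \cite{DFG09}, and the corresponding Harris-type theorem delivers the stated $\brak{t}^{-r/(1-r)}\tilde V(x)$ total-variation bound, with the convention that $r=1$ produces exponential convergence. The structural reason the argument forces the use of the smoothed $\tilde V$ rather than $V$ itself is precisely the asymmetry noted above: it is the It\^o analysis of $V^r$ combined with Gronwall absorption---not any direct Jensen bound---that supplies the lower estimate $T^{-1}\E\int_0^T \brak{x_s}^{2r}\, ds \gtrsim \brak{x}^{2r}$ needed to match the time-averaged dissipation with the desired power of $\tilde V$.
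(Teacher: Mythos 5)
Your proof is correct and, at its core, is the same argument the paper uses: the drift inequality is already established inside the proof of Lemma~\ref{lem:time-averaged_main}, where it is deduced from \eqref{eq:Lyapunov1} together with the Gr\"onwall-based two-sided bound \eqref{eq:timeaverageequiv} stating that $\tfrac{1}{T}\int_0^T \Pt_t V^q(x)\,dt \approx_q \brak{x}^{2q}$; your It\^o-on-$V^r$-plus-Gr\"onwall step is exactly the lower half of that equivalence (and, usefully, you carry it out explicitly for $q=r<1$, which is the relevant range here even though the paper states \eqref{eq:timeaverageequiv} only for $q\ge 1$). Parts (i) and (iii) — Doob--Khasminskii via hypoellipticity and irreducibility, and the sub-geometric Harris theorem of \cite{DFG09} — are invoked exactly as the paper intends, with the implicit small-set/minorization requirement supplied by the strong Feller plus irreducibility assumptions.
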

\begin{remark}
	A simple energy estimate (Lemma \ref{lem:basicenergy}) shows that necessarily $\tilde{V}(x) \gtrsim \brak{x}^2$. 
\end{remark} 

\subsection{Discussion and related work}

As alluded to above, the proofs of Theorems \ref{thm:hypo}--\ref{thm:BadBad} are all about ruling out the possibility that energy accumulates into $\mathrm{ker} A$ which is be done by demonstrating a time-averaged coercivity estimate of the form \eqref{ineq:timeavcoer}. 
For \eqref{ineq:timeavcoer} to hold, we see that it would suffice to show that the solution does not spend a significant percentage of its time near $\mathrm{ker} A$.
In fact, at higher energies, we show that the dynamics are expelled from neighborhoods of $\mathrm{ker} A$ faster. 
This has a clear analogy with variations of hypocoercivity that emphasize this aspect (see discussions in \cite{Villani2009}), however, these previously existing works are all essentially in the case of Theorem \ref{thm:hypo}.

Section \ref{sec:Prelim} provides two important lemmas: Lemma \ref{lem:time-averaged_main} shows that \eqref{ineq:timeavcoer} suffices to prove the existence of stationary measures and Lemma \ref{lem:time-averaged_step1} reduces this to short-time coercivity estimates in a small region of $\mathrm{ker} A$ (see Assumption \ref{ass:time-average}). 
Moreover, \eqref{ineq:timeavcoer} implies the existence of a sub-geometric drift condition as pointed out in Corollary \ref{cor:SGD}. 

In order to prove \eqref{ineq:timeavcoer} (via Lemma \ref{lem:time-averaged_step1}), it makes sense to proceed by contradiction.
When \eqref{ineq:timeavcoer} fails at high energy, it is necessary for the majority of the energy to be concentrated in a small region around $\mathrm{ker} A$, which could allow a perturbative treatment for as long as the dynamics remain in the small region.
Theorem \ref{thm:hypo} simply uses the pure conservative dynamics as the approximate solution, whereas Theorems \ref{thm:basic1} and \ref{thm:basic2} use the linearization around $\Pi_{\mathrm{ker} A}x_0$ (frozen in time) to justify the expulsion. Notice that the noise remains important here near the stable manifold of the equilibria.
Theorems \ref{thm:NS}, \ref{thm:L96}, and \ref{thm:BadBad} use more careful approximations based on what region of $\mathrm{ker} A$ the solution is close to.
For example,  to prove Theorem \ref{thm:BadBad}, if one is near an equilibrium  $(0,0,x_3)$ with $x_3 > 0$, then we first prove that with high probability the solution is rapidly transported away along the heteroclinic connections that run through $\set{x:x_1 = x_2}$, and then show that it is likely to be expelled from $\mathrm{ker} A$ along the unstable manifold of the corresponding equilibrium at $(0,0,-x_3)$.  

We remark here that many aspects of our work are not specific to the system \eqref{eq:SDE} and could easily be adapted to various other regimes.
However, systems with an underlying conservative dynamics which is a homogeneous polynomial (and hence a scale invariance is available), linear damping, and additive noise seem to be the simplest case to consider. 

There are several works in the literature related to ours. The works that consider settings most similar to what we study here are \cite{FGHH21} and \cite{WilliamsonThesis}. In addition to the existence result discussed above around Theorem~\ref{thm:basic1}, it is proven in \cite{FGHH21} that if the noise does not excite the instability and $e_k$ is directly forced, then no stationary measure exists. The work \cite{WilliamsonThesis} considers \eqref{eq:SDE} with an additional structural assumption on $B$ motivated by the nonlinearity in the Navier-Stokes equations. When the deterministic invariant subset of $\mathrm{ker}A$, denoted by $\mathcal{N}$, consists only of spectrally unstable equilibria, existence of an invariant measure is proven under an algebraic assumption that describes growth of the damped modes for initial conditions near $\mathcal{N}$. In the context of our work, this main result of \cite{WilliamsonThesis} seems closely related to Theorem~\ref{thm:combination} and can be viewed essentially as sufficiently strict assumptions under which the combination of Theorems~\ref{thm:hypo} and~\ref{thm:basic2} is possible.

A set of works with close links to ours considers noise-induced stabilization for systems with deterministic dynamics that contain finite-time blow-up solutions; see e.g. \cite{S93,M20,AKM12,HM15}. In these works, despite the finite-time blow-up of certain deterministic trajectories, depending on the noise or whether the blow-ups are unstable, one can nevertheless obtain almost-sure global well-posedness and prove the existence of stationary measures.
The works using additive noise proceed by a Lyapunov function approach and so are closely related to \cite{FGHH21}.  
Another related work is that of Coti Zelati and Hairer \cite{CZH21}, which considers the Lorenz-63 system with  $\mathrm{ker} A = \emptyset$, but where the forcing \emph{only} acts on $\mathrm{span} \set{e_3}$.
This makes $\mathrm{span}\set{e_3}$ an almost-surely invariant set for the stochastically forced system, in which case, an argument based on transverse Lyapunov exponents can be made, providing another method for dynamically driving solutions away.

\section{Time-averaged coercivity near $\mathrm{ker} A$} \label{sec:Prelim}

The purpose of this section is to prove a useful general result that will be applied to construct an invariant measure in each of the examples discussed in Section~\ref{sec:Intro}.
The main abstract condition for the existence of invariant measures is stated below as Assumption~\ref{ass:time-average}.
Intuitively, the condition requires that if the process enters the vicinity of $\mathrm{ker}A$ at high energies, then it is quickly ejected and subsequently stays away from $\mathrm{ker}A$ for some amount of time.

\subsection{Time-averaged coercivity implies existence} \label{sec:TAC}

In what follows denote
\begin{align*}
D(x) := x \cdot A x. 
\end{align*}
Notice that
\begin{align*}
|\Pi_{\mathrm{ker}A^\perp} x|^2 \lesssim D(x) \lesssim |\Pi_{\mathrm{ker}A^\perp} x|^2.
\end{align*}

We begin with a preliminary lemma which reduces the existence of a stationary measure to the kind of time-averaged coercivity alluded to in \eqref{ineq:timeavcoer}. 
The proof follows in a straightforward way from the Krylov-Bogoliubov procedure and the energy conservation property of $B$, however, we include it for the sake of completeness.

\begin{lemma} \label{lem:time-averaged_main}
	Let $V(x) = \brak{x}^2$ and let $\Pt_t$ be the Feller Markov semigroup on $\R^n$ generated by $\mathcal{L}$ (defined in \eqref{eq:L}).
	If there exists $r \in (0,1]$, $1 \le p < (1-r)^{-1}$, $C > 0$ and $T \in (0,2)$ such that
	\begin{align}
	\frac{1}{T}\int_0^T \Pt_t V^{rp}(x) \dee t \leq C\left(1 + \frac{1}{T}\int_0^T \Pt_t D^p(x) \dee t\right) \quad \forall x \in \R^n, \label{ineq:TAC}
	\end{align}
	then there exists at least one stationary measure $\mu_\ast$ of $\Pt_t$ such that $V^{rp} \in L^1(\dee \mu_\ast)$. Moreover, there exist $\alpha, \beta > 0$ such that the function 
	$$ \tilde{V}(x) = \frac{1}{T} \int_0^T \Pt_t V^p(x)dt$$
	satisfies
	\begin{equation}\label{eq:subgeometric1}
	\mathcal{L} \tilde{V} \le -\alpha \tilde{V}^{r} + \beta.
	\end{equation}
	That is, $\tilde{V}$ is a sub-geometric (when $r < 1$) Lyapunov function.
\end{lemma}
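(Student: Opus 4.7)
The plan is to upgrade the time-averaged coercivity \eqref{ineq:TAC} into a pointwise sub-geometric Foster--Lyapunov inequality for $\tilde V$, from which existence of $\mu_\ast$ and the moment bound will follow by a Krylov--Bogoliubov argument.

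First, I would exploit the cancellation $x\cdot B(x,x)=0$: since the transport term vanishes in $\mathcal L V^p$, a direct computation gives $\mathcal L V^p \le C_1 V^{p-1} - c_1 V^{p-1} D(x)$, and using $D \le \|A\| V$ further yields $\mathcal L V^p \le C_1 V^{p-1} - c_1' D^p(x)$. Analogous computations produce the one-sided bounds $\mathcal L V \ge -2\|A\|V$ and $\mathcal L V^{rp} \ge -C_2 V^{rp}$. Applying Gronwall to $\partial_t \mathcal P_t f = \mathcal P_t \mathcal L f$, and Jensen's inequality to boost $\mathcal P_t V(x) \ge V(x) e^{-2\|A\|t}$ up to $\mathcal P_t V^p(x) \ge V^p(x) e^{-2\|A\|pt}$, then delivers (after averaging over $[0,T]$) the two-sided pointwise comparisons
\[
c_3 V^p(x) \le \tilde V(x) \le C_4 V^p(x), \qquad \frac{1}{T}\int_0^T \mathcal P_t V^{rp}(x)\,dt \ge c_5 V^{rp}(x) \ge c_6\, \tilde V(x)^{r}.
\]

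Next, I would extract the drift. Writing $\mathcal L \tilde V(x) = T^{-1}(\mathcal P_T V^p(x) - V^p(x)) = T^{-1}\int_0^T \mathcal P_t \mathcal L V^p(x)\,dt$, substituting the bound on $\mathcal L V^p$, and applying \eqref{ineq:TAC} to lower-bound the $\mathcal P_t D^p$ integral by the $\mathcal P_t V^{rp}$ integral (up to an additive constant) gives an expression of the form $-c\int \mathcal P_t V^{rp}\,dt + C\int \mathcal P_t V^{p-1}\,dt + \text{const}$. The hypothesis $p<(1-r)^{-1}$ forces $p-1<rp$, so Young's inequality $V^{p-1}\le \varepsilon V^{rp}+C_\varepsilon$ allows me to absorb the $V^{p-1}$ term with arbitrarily small coefficient. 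Combining with the pointwise lower bound from the first step yields the desired
\[
\mathcal L \tilde V(x) \le -c_7 \cdot \frac{1}{T}\int_0^T \mathcal P_t V^{rp}(x)\,dt + C_8 \le -\alpha\, \tilde V(x)^{r} + \beta.
\]

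Finally, existence and the moment bound follow by standard arguments. Since $\tilde V \ge c_3 V^p \to \infty$ as $|x|\to\infty$, Dynkin applied to $\tilde V$ (justified because $\tilde V \le C_4 V^p$ has polynomial growth) gives the uniform-in-$T$ bound $\frac{\alpha}{T}\int_0^T \mathbb E_{x_0}\tilde V^r(X_t)\,dt \le \tilde V(x_0)/T + \beta$, which provides tightness of the empirical measures $T^{-1}\int_0^T \mathcal P_t^\ast\delta_{x_0}\,dt$ and produces $\mu_\ast$ with $\int \tilde V^r\,d\mu_\ast \le \beta/\alpha$ via Fatou. For the target moment bound on $V^{rp}$, two applications of Jensen in the concave direction --- once to the expectation giving $\mathcal P_t V^{rp} \le (\mathcal P_t V^p)^r$, once to the time average --- yield $\tilde V(x)^{r} \ge T^{-1}\int_0^T \mathcal P_t V^{rp}(x)\,dt$; integrating against $\mu_\ast$ and exploiting invariance via Fubini then gives $\int V^{rp}\,d\mu_\ast \le \int \tilde V^r\,d\mu_\ast < \infty$. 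The main obstacle is reconciling the two uses of Jensen's inequality: the concave direction is needed for the moment bound, but the drift inequality requires pointwise-in-$x$ comparisons of $\mathcal P_t V^{q}$ against $V^q$ itself (not integrated versions), which is why the elementary Gronwall bounds on the generator (enabled by $V\ge 1$) are required as a bridge.
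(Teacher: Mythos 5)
Your proof is correct, but it takes the argument in the opposite order from the paper. The paper first proves existence directly: it iterates \eqref{ineq:TAC} via the semigroup property over the intervals $[mT,(m+1)T]$, combines this with the one-step dissipation bound $\mathcal{L}V^p\le C_1V^{p-1}-C_2D^p$, absorbs the $V^{p-1}$ term by Young's inequality (using $p-1<rp$), and obtains a uniform-in-$n$ bound on the Krylov--Bogoliubov averages $\frac{1}{nT}\int_0^{nT}\int V^{rp}\,\Pt_s^\ast\mu\,ds$, giving tightness; the sub-geometric Lyapunov inequality \eqref{eq:subgeometric1} is then established \emph{afterward} as a separate claim, using $\mathcal{L}\tilde V=\frac{1}{T}\int_0^T\Pt_t\mathcal{L}V^p\,dt$ together with a Gr\"onwall comparison $\frac{1}{T}\int_0^T\Pt_t V^q\,dt\approx\brak{x}^{2q}$. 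You instead establish the Lyapunov inequality \emph{first}: you derive the pointwise two-sided comparisons $\tilde V\approx V^p$ and $\frac{1}{T}\int_0^T\Pt_t V^{rp}\,dt\gtrsim V^{rp}$ via generator bounds and Gr\"onwall, substitute into $\mathcal{L}\tilde V=\frac{1}{T}\int_0^T\Pt_t\mathcal{L}V^p\,dt$, and then read off existence and the moment bound from \eqref{eq:subgeometric1} by the standard Foster--Lyapunov/Krylov--Bogoliubov machinery. This is a more modular route: one non-elementary estimate is established, and the rest is generic, whereas the paper's iteration argument is self-contained and avoids invoking the Lyapunov inequality to prove existence (even though it ultimately proves \eqref{eq:subgeometric1} anyway, using essentially the same pointwise comparison you use). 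Your final paragraph's concern about ``reconciling the two uses of Jensen'' is a non-issue; moreover the moment bound $\int V^{rp}\,d\mu_\ast<\infty$ follows immediately from $\tilde V^r\gtrsim V^{rp}$ and $\int\tilde V^r\,d\mu_\ast\le\beta/\alpha$, so the two Jensen steps there are not actually needed.
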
 

\begin{proof}
	Let $\mu$ be any Borel measure on $\R^n$ with $\int V^p(x) \mu(dx) < \infty$ and let $T$ be given as in the assumption.
	We first claim that for every $n \in \N$ there holds 
	\begin{equation} \label{eq:nT}
	\int_0^{nT} \int V^{rp}(x) \Pt_s^* \mu(dx) ds \le C\left(n T+ \int_0^{nT} \int D^p(x)\Pt_s^* \mu(dx)ds\right). 
	\end{equation}
	By the assumption \eqref{ineq:TAC} and Fubini's theorem, we have 		
	\begin{align*}
	\int_0^T \int V^{rp}(x)\Pt_s^* \mu(dx)ds = \int \left(\int_0^T \Pt_s V^{rp}(x)ds\right)\mu(dx) \le C\left(T + \int \left(\int_0^T \Pt_s D^p(x) ds\right)\mu(dx)\right),
	\end{align*}
	and hence 
	\begin{equation} \label{eq:1}
	\int_0^T \int V^{rp}(x)\Pt_s^* \mu(dx)ds\le C\left(T+\int_0^T \int D^p(x)\Pt_s^*\mu(dx)ds\right).
	\end{equation}
	By the semigroup property and \eqref{eq:1}, for any $m \in \N$ we have
	\begin{align}
	\int_{ m T }^{(m+1)T} \int  V^{rp}(x) \Pt_s^* \mu(dx) ds
	&  \le C\left(T + \int_0^T \int D^p(x) \Pt_s^*(\Pt_{mT}^* \mu)(dx) ds\right) \nonumber \\ 
	& = C\left(T + \int_{mT}^{(m+1)T} \int D^p(x)\Pt_s^* \mu(dx)ds\right). \label{eq:mtomplus1}
	\end{align}
	Summing \eqref{eq:mtomplus1} over $0 \le m < n$ yields \eqref{eq:nT}.
	
	Next, notice a direct computation using $B(x,x) \cdot x = 0$ shows that 
	\begin{align} \label{eq:LVp}
	\mathcal{L} V^{p}(x) \le C_1 V(x)^{p-1} - C_2 D^p(x)
	\end{align}
	for some constants $C_1, C_2 > 0$. Thus,
	\begin{equation}
	\frac{d}{dt} \int V^p(x) \Pt_t^* \mu(dx) \le C_1\int V(x)^{p-1}\Pt_t^* \mu(dx) - C_2 \int D^p(x) \Pt_t^* \mu(dx). 
	\end{equation}
	Let $n \in \N$. Integrating the previous inequality over $0 \le t \le nT$ we see that
	\begin{equation} \label{eq:dissipationaverage}
	\int_0^{nT} \int D^p(x)\Pt_s^*\mu(dx)ds \lesssim 1 + \int_0^{nT}\int V(x)^{p-1}\Pt_s^*\mu(dx)ds,
	\end{equation}
	where the implicit constant depends on $\int V^p(x) \mu(dx)$. 
	Applying \eqref{eq:nT} and \eqref{eq:dissipationaverage} gives 
	\begin{equation} \label{eq:tobeabsorbed}
	\int_0^{nT} \int V^{rp}(x) \Pt_s^*\mu(dx) ds \le C(1+nT) + C\int_0^{nT}\int V(x)^{p-1}\Pt_s^*\mu(dx)ds. 
	\end{equation}
	The choice $p < (1-r)^{-1}$ ensures that $p-1 < rp$, and so for every $\epsilon > 0$ there is $C_\epsilon$ such that
	$$V^{p-1} \le \epsilon V^{rp} + C_\epsilon.$$
	Hence, the integral on the right-hand side of \eqref{eq:tobeabsorbed} can be absorbed into the left-hand side, yielding
	$$ 	\int_0^{nT} \int V^{rp}(x) \Pt_s^*\mu(dx) ds \lesssim (1+nT).$$
	Therefore 
	\begin{equation} \label{eq:tight}
	\sup_{n \in \N} \frac{1}{nT} \int_0^{nT} \int V^{rp}(x)\Pt_s^*\mu(dx)ds < \infty.
	\end{equation}
	Using the tightness implied by \eqref{eq:tight}, the existence of a stationary measure $\mu_*$ with $V^{rp} \in L^1(d\mu_*)$ follows by the usual Krylov-Bogoliubov method (see e.g. \cite{DaPratoZabczyk1996}). 
	
	It remain to prove \eqref{eq:subgeometric1}. First, by \eqref{eq:LVp}, \eqref{ineq:TAC}, and $p<(1-r)^{-1}$ there exist constants $c,C > 0$ such that
	\begin{equation} \label{eq:Lyapunov1}
	\mathcal{L} \tilde{V} \le - \frac{c}{T}\int_0^T \Pt_t V^{rp} dt + C.
	\end{equation}
	Next, one can show using Gr\"{o}nwall's lemma that for any $q \ge 1$ there exists $C_q \ge 1$ such that 
	$$
	\brak{x}^{2q} - C_qt(\brak{x}^{2q}+1) \le \Pt_t V^q(x) \le C_q(\brak{x}^{2q} + 1) \quad \forall t\in [0,2].
	$$
	It follows that for all $x$ with $|x|$ sufficiently large and $T \in (0,2)$ there holds 
	\begin{equation} \label{eq:timeaverageequiv}
	\frac{1}{T} \int_0^T \Pt_t V^q(x) dt \approx_q \brak{x}^{2q}.
	\end{equation}
	Using \eqref{eq:timeaverageequiv} in \eqref{eq:Lyapunov1} completes the proof.
\end{proof}

\subsection{Short-time coercivity near $\mathrm{ker} A$}

Next, we formulate a sufficient condition for \eqref{ineq:TAC} based on short-time (time-averaged) coercivity of solutions near $\mathrm{ker} A$.
Intuitively, this is similar to estimating average exit times from the vicinity of $\mathrm{ker} A$, but not quite the same.  

\begin{assumption} \label{ass:time-average}
	Let $\Pt_t$ be the Feller Markov semigroup on $\R^n$ generated by $\mathcal{L}$ defined in \eqref{eq:L} . 
	We say that $\Pt_t$ satisfies Assumption~\ref{ass:time-average} if there exist $r \in (0,1]$, $K_* \ge 1$, $c_* > 0$,  $\delta \in (0,1)$, and a finite collection of times $\eta_j: [0,\infty) \to (0,1]$, $1 \le j \le m$, such that:
	\begin{itemize}
		\item $\lim_{K \to \infty} \sup_{1\le j \le m}\eta_j(K) = 0$;
		\item for every $K\ge K_*$ the set 
		$$ B_K = \{x \in \R^n: |\Pi_{\mathrm{ker}A^\perp} x|^2 \le \delta |\Pi_{\mathrm{ker}A}x|^{2r} \text{ and } (1-\delta) K^2 \le |x|^2 \le (1+\delta)K^2\}$$
		admits a decomposition 
		$$ B_K = \bigcup_{j=1}^m B_{K,j}$$
		for which $B_{K,j} \subseteq \R^n$ is such that $x \in B_{K,j}$ implies 
		\begin{equation} \label{eq:time-averaged_main}
		\frac{1}{\eta_j(K)} \int_0^{\eta_j(K)}\Pt_t D(x)dt \ge c_* K^{2r}. 
		\end{equation}
	\end{itemize}
\end{assumption}

\begin{remark}
	By the H\"{o}lder and Jensen inequalities, \eqref{eq:time-averaged_main} implies that for any $p \ge 1$ and $x \in B_{K,j}$ there holds
	\begin{equation}\label{eq:ass1expectation}
	\frac{1}{\eta_j(K)}\int_0^{\eta_j(K)}\Pt_t D^p(x)dt \ge c_*^p K^{2rp}.
	\end{equation}
\end{remark}

The main result of this section is the following lemma, which shows that Assumption~\ref{ass:time-average} implies \eqref{ineq:TAC}, which is a sufficient condition for the existence of an invariant measure due to Lemma \ref{lem:time-averaged_main}. 

\begin{lemma} \label{lem:time-averaged_step1}
	Suppose that Assumption~\ref{ass:time-average} holds for some $r \in (0,1]$. Then, there exists at least one stationary measure $\mu_*$ of $\Pt_t$ and $\brak{x}^q\in L^1(d \mu_*)$ for every $q < 2r/(1-r)$. Moreover, for every $1 \le p < (1-r)^{-1}$ the function $\tilde{V}$ defined in Lemma~\ref{lem:time-averaged_main} satisfies \eqref{eq:subgeometric1}.
\end{lemma}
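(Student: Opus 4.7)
The plan is to deduce the time-averaged coercivity estimate \eqref{ineq:TAC} from Assumption~\ref{ass:time-average}, after which Lemma~\ref{lem:time-averaged_main} immediately produces an invariant measure $\mu_*$ with $V^{rp} \in L^1(d\mu_*)$ (letting $p \uparrow (1-r)^{-1}$ yields $\brak{x}^q \in L^1(d\mu_*)$ for every $q < 2r/(1-r)$) and the sub-geometric drift inequality \eqref{eq:subgeometric1} for $\tilde V$.

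To verify \eqref{ineq:TAC} for a fixed $T \in (0,2)$ and each $1 \leq p < (1-r)^{-1}$, I would perform a case analysis on $x$ driven by the bad cone $\mathcal{B} := \{y : |\Pi_{\mathrm{ker}A^\perp} y|^2 \leq \delta |\Pi_{\mathrm{ker}A} y|^{2r}\}$. In the low-energy regime $|x| \leq K_*$ the estimate is trivial by a Gr\"onwall argument applied to $\mathcal{L} V^{rp} \lesssim V^{rp-1}$. For $|x| = K > K_*$ with $x \notin \mathcal{B}$, elementary algebra on the decomposition $|x|^2 = |\Pi_{\mathrm{ker}A^\perp} x|^2 + |\Pi_{\mathrm{ker}A} x|^2$ produces $D(x) \gtrsim K^{2r}$, hence $D^p(x) \gtrsim V^{rp}(x)$ pointwise. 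For $x \in B_{K,j}$ with some $1 \le j \le m$, Assumption~\ref{ass:time-average} combined with \eqref{eq:ass1expectation} gives
\begin{align*}
\frac{1}{\eta_j(K)}\int_0^{\eta_j(K)} \Pt_t D^p(x)\, dt \geq c_*^p K^{2rp}.
\end{align*}

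The main obstacle is that the coercivity from Assumption~\ref{ass:time-average} lives on the short interval $[0,\eta_j(K)]$ with $\eta_j(K)\to 0$ as $K\to\infty$, whereas \eqref{ineq:TAC} demands coercivity over a fixed interval $[0,T]$. I would resolve this by an iteration built on the strong Markov property: introduce successive stopping times $0 = \tau_0 < \tau_1 < \cdots$ delimiting the alternating excursions of $x_t$ into and out of $\mathcal{B}$ on its current energy shell, apply Assumption~\ref{ass:time-average} at each entrance into $B_{|x_{\tau_k}|, j_k}$ (contributing at least $c_*^p|x_{\tau_k}|^{2rp}\,\eta_{j_k}(|x_{\tau_k}|)$ to the dissipation integral), and use the pointwise lower bound $D(x_t) \gtrsim |x_t|^{2r}$ on the excursions outside $\mathcal{B}$. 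An a priori energy estimate of the type in Lemma~\ref{lem:basicenergy} keeps $|x_t|$ comparable to $K$ over the time scale $T \leq 2$, so aggregating the contributions across the partition of $[0,T]$ yields $\frac{1}{T}\int_0^T \Pt_t D^p(x)\,dt \gtrsim K^{2rp}$. A matching upper bound $\frac{1}{T}\int_0^T \Pt_t V^{rp}(x)\,dt \lesssim K^{2rp} + 1$ from Gr\"onwall on $\mathcal{L}V^{rp}$ then closes \eqref{ineq:TAC}.

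Invoking Lemma~\ref{lem:time-averaged_main} with the verified hypothesis delivers existence of $\mu_*$, the moment bound, and the sub-geometric drift inequality \eqref{eq:subgeometric1} for $\tilde V$ for every admissible $p$.
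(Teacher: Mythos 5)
Your proposal is correct and follows essentially the same strategy as the paper: reduce to a lower bound on $\frac{1}{T}\int_0^T \Pt_t D^p(x)\,dt$ in terms of the initial energy (the comparison $\frac{1}{T}\int_0^T \Pt_t V^{rp}(x)\,dt \approx \brak{x}^{2rp}$ coming from Gr\"onwall, which you call the matching upper bound, is exactly what the paper invokes as \eqref{eq:timeaverageequiv}); dispatch low energies trivially; and for high energies, iterate via stopping times marking returns to the bad cone $\tilde B$, applying Assumption~\ref{ass:time-average} with the strong Markov property on each short entrance window $[\tau_n,\tau_n+\eta_{j}(K)]$ and the pointwise bound $D\gtrsim K^{2r}$ on the intervening excursions, while using the energy-shell estimate of Lemma~\ref{lem:basicenergy} to keep $|x_t|$ near $K$. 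The one place where your sketch is looser than the paper's argument is the handling of the excursion structure: the paper does not literally alternate between in/out of $\tilde B$, but rather pauses for a full deterministic window $\eta_j(K)$ after each entrance before looking for the next entrance, and it introduces an auxiliary stopping time $\bar\tau$ for the first exit from the energy shell so that on $\{\bar\tau \ge T\}$ every entrance into $\tilde B$ lands in $B_K$. You would need to build both devices into your iteration to make the telescoping sum close, but these are exactly the formalizations your outline is reaching for, so the approach is the same.
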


Before proceeding to the details of the proof we give a few remarks on the intuition behind Lemma~\ref{lem:time-averaged_step1}. As we will see at the beginning of the proof below, the lemma reduces to showing that there are $T \in (0,2)$ and $C > 0$ so that for every $x \in \R^n$ with $|x| = K \gg 1$ there holds 
\begin{equation} \label{eq:time_average_intuition}
K^{2rp} \le \frac{C}{T}\int_0^T \Pt_t D^p(x)dt. 
\end{equation}
The idea behind proving \eqref{eq:time_average_intuition} is to first note that if the process ever enters the set
\begin{equation} \label{eq:tildeB}
\tilde{B} = \{z \in \R^n: |\Pi_{\mathrm{ker}A^\perp} z|^2 \le \delta |\Pi_{\mathrm{ker}A}z|^{2r}\}, 
\end{equation}
then, provided $T$ is small, by Lemma~\ref{lem:basicenergy} it is in $B_K$ with high probability.
Hence one can essentially assume that $x_t(\omega) \in B_K$ whenever $x_t(\omega) \in \tilde{B}$.
Now, the time average of $D(x_t)^p$ controls $K^{2rp}$ when the process is not in $\tilde{B}$ and, by the discussion above and Assumption~\ref{ass:time-average}, with high probability it controls $K^{2rp}$ on some short time interval if the process ever does enter $\tilde{B}$.
By tracking the return times of the process to $\tilde{B}$, \eqref{eq:time_average_intuition} follows by a suitable iteration of Assumption~\ref{ass:time-average}.

We now give the proof of Lemma~\ref{lem:time-averaged_step1}. 

\begin{proof}[Proof of Lemma~\ref{lem:time-averaged_step1}]
	First notice that by Lemma~\ref{lem:time-averaged_main} and \eqref{eq:timeaverageequiv} we just need to show that for every $1 \le p < (1-r)^{-1}$ there exists $C > 0$ and $T \in (0,2)$ so that for every $x \in \R^n$ 
	\begin{equation} \label{eq:time-averaged_step1_1}
	\brak{x}^{2rp} = V^{rp}(x) \le C\left(1 + \frac{1}{T}\int_0^{T} \Pt_t D^p(x)dt\right).
	\end{equation}

	We now set out to prove \eqref{eq:time-averaged_step1_1}. Let $\delta$, $c_*$, and $K_*$ be as in Assumption~\ref{ass:time-average} and let $x_t$ denote the solution to \eqref{eq:SDE} with $x_0 = x$. The bound \eqref{eq:time-averaged_step1_1} is trivial if $|x| < K_*$ because we can take $C$ sufficiently large depending on $K_*$, and so we need only consider when $|x| := K \ge K_*$. In this case we make precise the intuition described directly after the statement of the lemma. Let $\tilde{B}$ be as in \eqref{eq:tildeB} and define the sequence of stopping times $\tau_0(\omega) = 0$, $\tau_1(\omega) = \inf\{t \ge 0: x_t(\omega) \in \tilde{B}\} \wedge T$, and for $n \ge 1$ 
	$$ 
	\tau_{n+1}(\omega) =
	\begin{cases}
	\tau_n(\omega) & \text{if }x_{\tau_n(\omega)}(\omega) \not \in B_K \\ 
	\inf\{t \ge \tau_n(\omega)+\eta_1(K): x_{t}(\omega) \in \tilde{B}\} \wedge T & \text{if } x_{\tau_n(\omega)}(\omega) \in B_{K, 1} \\ 
	\vdots \\
	\inf\{t \ge \tau_n(\omega)+\eta_m(K): x_{t}(\omega) \in \tilde{B}\} \wedge T & \text{if } x_{\tau_n(\omega)}(\omega) \in B_{K, m}.
	\end{cases}
	$$
	Moreover, define
	$$ \bar{\tau}(\omega) = \inf\{t\ge 0: \left||x_t(\omega)|^2 - K^2\right| \ge \delta K^2\}.$$
	Due to Lemma~\ref{lem:basicenergy} applied with $\epsilon = \delta$, by taking $T$ sufficiently small and $K_*$ sufficiently large (both depending only on $\delta$) we may assume that 
	\begin{equation} \label{eq:taubarbound}
	\P(\bar{\tau} \ge T) \ge 1/2.
	\end{equation}
	With $T$ fixed, choosing $K_*$ perhaps even larger and recalling $\lim_{K' \to \infty} \sup_{1\le j \le m}\eta_j(K') = 0$ implies that we may assume
	\begin{equation} \label{eq:etabound}
	\sup_{1\le j \le m} \eta_j(K) \le T/2.
	\end{equation}
	Let now
	$$A_n = \{\omega \in \Omega: \tau_n(\omega) \le T/2\}$$
	and
	$$ A_{n,j} = \{\omega \in \Omega: x_{\tau_n(\omega)}(\omega) \in B_{K,j}\}.$$
	Using that $\tau_n$ is an increasing sequence with $\lim_{n \to \infty} \tau_n(\omega) \le T$ we have 
	\begin{align} 
	\int_0^T \Pt_t D^{p}(x)dt & \ge \int_\Omega \sum_{n=0}^\infty \int_{\tau_n(\omega)}^{\tau_{n+1}(\omega)} D^p(x_t(\omega))dt d\P \nonumber \\ 
	& = \int_\Omega \sum_{n=0}^\infty \int_{0}^{\tau_{n+1}(\omega) - \tau_n(\omega)} D^p(x_{\tau_n +t}(\omega))dt d\P \nonumber  \\ 
	& \ge \sum_{n=0}^\infty \sum_{j=1}^m \int_{A_{n,j} \cap A_n} \int_0^{\tau_{n+1}(\omega) - \tau_n(\omega)} D^p(x_{\tau_n +t}(\omega)) dt d\P. \label{eq:doublesum}
	\end{align}
	Now, if $x_{\tau_n(\omega)}(\omega) \in B_{K,j}$ and $\tau_n(\omega) \le T/2$, then $\tau_{n+1}(\omega) - \tau_n(\omega) \ge \eta_j(K)$ due to \eqref{eq:etabound} and the definition of $\tau_{n+1}$. For $1 \le j \le m$ we thus have
	\begin{equation} \label{eq:bothpieces}
	\begin{split} 
	&\int_{A_{n,j} \cap A_n} \int_0^{\tau_{n+1}(\omega) - \tau_n(\omega)} D^p(x_{\tau_n +t}(\omega)) dt d\P \\ 
	\quad \quad & = \int_{A_{n,j} \cap A_n}\int_0^{\eta_j(K)} D^p(x_{\tau_n +t}(\omega))dt d\P + \int_{A_{n,j} \cap A_n}\int_{\eta_j(K)}^{\tau_{n+1}(\omega) - \tau_n(\omega)} D^p(x_{\tau_n +t}(\omega))dt d\P.  
	\end{split}
	\end{equation}
	For the first piece, observe that $A_{n,j} \cap A_n$ is measurable with respect to $\mathcal{F}_{\tau_n}$, the $\sigma$-algebra of events determined prior to the stopping time $\tau_n$ (this agrees with the $\sigma$-algebra generated by $\{W_{s \wedge \tau_n}; s \ge 0\}$). Thus,
	$$
	\int_{A_{n,j} \cap A_n}\int_0^{\eta_j(K)} D^p(x_{\tau_n +t}(\omega))dt d\P  = \int_0^{\eta_j(K)} \int_{A_{n,j} \cap A_n} \E(D^p(x_{\tau_n+t})|\mathcal{F}_{\tau_n}) d\P dt.
	$$
	It then follows from the strong Markov property and Assumption~\ref{ass:time-average} in the form \eqref{eq:ass1expectation} that 
	\begin{equation}
	\label{eq:strongMarkov}
	\int_{A_{n,j} \cap A_n}\int_0^{\eta_j(K)} D^p(x_{\tau_n +t}(\omega))dt d\P =\int_{A_{n,j} \cap A_n} \int_0^{\eta_j(K)}  \Pt_t D^p (x_{\tau_n}) dt d\P \ge \eta_j(K) c_*^p K^{2rp}\P(A_{n,j} \cap A_n).
	\end{equation}
	For the second piece, first note that
	\begin{align*}
	\int_{A_{n,j} \cap A_n}\int_{\eta_j(K)}^{\tau_{n+1}(\omega) - \tau_n(\omega)} D^p(x_{\tau_n +t}(\omega))dt d\P & \ge \int_{A_{n,j} \cap A_n \cap \{\bar{\tau} \ge T\}}\int_{\eta_j(K)}^{\tau_{n+1}(\omega) - \tau_n(\omega)} D^p(x_{\tau_n +t}(\omega))dt d\P.
	\end{align*}
	Now, by construction, if $\omega \in A_{n,j} \cap A_n \cap \{\bar{\tau} \ge T\}$ then for each $t \in (\eta_{j}(K), \tau_{n+1}(\omega) - \tau_n(\omega))$ one has 
	$$ |\Pi_{\mathrm{ker}A^\perp} x_{\tau_n + t}(\omega)|^2 \ge \delta |\Pi_{\mathrm{ker}A} x_{\tau_n + t}(\omega)|^{2r}   \quad \text{and} \quad (1-\delta)K^2 \le |x_{\tau_n + t}(\omega)|^2 \le (1+\delta)K^2. $$
	Thus, there is a constant $c_0 \in (0,1)$ so that, for any $c \in (0,c_0)$, over the same time interval there holds 
	$$ D^p(x_{\tau_n+t}(\omega)) \ge c \delta^p K^{2rp}. $$
	Consequently, 
	\begin{equation}  \label{eq:piece2}
	\begin{split}
	\int_{A_{n,j} \cap A_n}&\int_{\eta_j(K)}^{\tau_{n+1}(\omega) - \tau_n(\omega)} D^p(x_{\tau_n +t}(\omega))dt d\P \\ 
	& \ge c \delta^p K^{2rp} \int_{A_{n,j} \cap A_n \cap \{\bar{\tau} \ge T\}} (\tau_{n+1}(\omega) - \tau_n(\omega))d\P - c\eta_j(K) \delta^p K^{2rp}\P(A_{n,j} \cap A_n).
	\end{split}
	\end{equation} 
	Choosing $c < \min(c_0, c_*^p)$ and then putting \eqref{eq:strongMarkov} and \eqref{eq:piece2} into \eqref{eq:bothpieces} we find
	\begin{align*}
	\int_{A_{n,j} \cap A_n} \int_0^{\tau_{n+1}(\omega) - \tau_n(\omega)} D^p(x_{\tau_n +t}(\omega)) dt d\P \ge c \delta^p K^{2rp} \int_{A_{n,j} \cap A_n \cap \{\bar{\tau} \ge T\}} (\tau_{n+1}(\omega) - \tau_n(\omega))d\P. 
	\end{align*}
	Using this bound in \eqref{eq:doublesum} and noting that if $\bar{\tau}(\omega) \ge T$ and $\tau_n(\omega) \le T/2$ then $x_{\tau_n(\omega)} \in B_K$  gives
	\begin{align*}
	\int_0^T \Pt_t D^p(x) dt &\ge c \delta^p K^{2rp}\sum_{n=0}^\infty\sum_{j=1}^m \int_{A_{n,j} \cap A_n \cap \{\bar{\tau} \ge T\}} (\tau_{n+1}(\omega) - \tau_n(\omega))d\P \\ 
	& = c\delta^p K^{2rp}\sum_{n=0}^\infty \int_{\{\bar{\tau} \ge T\}} \mathbf{1}_{\{\tau_n\le T/2\}} (\tau_{n+1}(\omega) - \tau_n(\omega))d\P. 
	\end{align*} 
	By the telescoping summation and the definition of $\tau_n$, we have  
	$$ \sum_{n=0}^\infty \mathbf{1}_{\{\tau_n\le T/2\}}  (\tau_{n+1}(\omega) - \tau_n(\omega)) \ge T/2  $$
	whenever $\bar{\tau}(\omega) \ge T$, and hence we conclude 
	$$ \frac{1}{T} \int_0^T \Pt_t D^p(x)dt \ge \frac{c}{2} \delta^p K^{2rp} \P(\bar{\tau} \ge T) \ge \frac{c}{4} \delta^p K^{2rp},$$
	which completes the proof. 
\end{proof}

\section{Conservative flow transverse to the kernel} \label{sec:transverse}

Theorem \ref{thm:hypo} is an immediate consequence of Lemma \ref{lem:time-averaged_step1} together with the following proposition. 
The proof consists of two main steps: the first is to deduce growth of the damped modes for a suitable approximate solution (in this case, the deterministic, conservative dynamics) and the second is to justify the approximation on a long enough time-scale to verify Assumption~\ref{ass:time-average} for the true solution. 

\begin{proposition} \label{thm:transverse}
	Let $B$ and $A$ satisfy the conditions of Theorem \ref{thm:hypo}.
	Then, Assumption~\ref{ass:time-average} holds for $r = 1$. 
\end{proposition}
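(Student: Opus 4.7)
The plan is to verify Assumption~\ref{ass:time-average} with $r = 1$. The natural timescale dictated by the quadratic homogeneity of $B$ is $\eta_k(K) \sim K^{-1}$, on which the stochastic trajectory $x_t$ should be treated as a perturbation of the conservative flow $z_t$. By hypothesis~\eqref{def:transverse}, the conservative flow is transverse to $\mathrm{ker}\,A$ from any point of $\mathcal U$, so we expect $|\Pi_{\mathrm{ker}\,A^\perp} z_t| \gtrsim K$ on this timescale, which is exactly what is needed.

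The first step is to upgrade the pointwise algebraic condition~\eqref{def:transverse} to a uniform transverse growth estimate on a finite cover. For each $v \in \mathcal U$ the quantity $\Pi_{\mathrm{ker}\,A^\perp}\frac{d^{j}}{dt^{j}}X_t|_{t=0}$ is a fixed polynomial expression in $v$ built from iterated applications of $B$, so~\eqref{def:transverse} selects a least index $j_v \le J$ at which it is nonzero, and that property persists on a relatively open neighborhood $\mathcal V_v \subseteq \mathbb S^{n-1}$ with a uniform quantitative lower bound on the derivative norm. A Taylor expansion then gives $|\Pi_{\mathrm{ker}\,A^\perp}Y_s^{v'}| \ge c_v s^{j_v}$ for all $v' \in \mathcal V_v$ and $s \in [0,\tau_v]$, where $Y_s^{v'}$ is the unit-energy conservative flow. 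Compactness of $\mathcal U$ extracts a finite subcover $\mathcal V_1, \dots, \mathcal V_m$ with associated $j_k \le J$, $c_k > 0$, $\tau_k > 0$. Undoing the rescaling $Y_s = z_{s/K}/K$ yields, for the conservative flow $\bar z_t$ starting from any $\bar x \in K\mathcal V_k$,
\[
|\Pi_{\mathrm{ker}\,A^\perp} \bar z_t| \ge c_k\, K\,(Kt)^{j_k} \qquad \text{for all } t \in [0, \tau_k/K].
\]

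The second step is to choose $\eta_k(K) = \lambda/K$ for a fixed $\lambda \le \min_k \tau_k$, define
\[
B_{K,k} = \Bigl\{x_0 \in B_K : \tfrac{\Pi_{\mathrm{ker}\,A} x_0}{|\Pi_{\mathrm{ker}\,A} x_0|} \in \mathcal V_k\Bigr\},
\]
and, for $x_0 \in B_{K,k}$, compare $x_t$ to the conservative flow $\bar z_t$ starting at $\bar x_0 = K\,\Pi_{\mathrm{ker}\,A} x_0/|\Pi_{\mathrm{ker}\,A} x_0| \in K\mathcal V_k$. The defining constraints of $B_K$ imply $|x_0 - \bar x_0| \lesssim \sqrt\delta\,K$. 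Writing $x_t - \bar z_t = (z_t - \bar z_t) + (x_t - z_t)$, where $z_t$ is the conservative flow from $x_0$: the first piece is controlled by a deterministic Gr\"onwall using $|B(a,a) - B(b,b)| \lesssim (|a|+|b|)|a-b|$ and energy conservation $|z_t|, |\bar z_t| \sim K$, giving $|z_t - \bar z_t| \lesssim \sqrt\delta\,K\, e^{CKt}$ on $[0,\lambda/K]$. The stochastic piece $e_t = x_t - z_t$ satisfies $de_t = [L_{z_t} e_t + B(e_t,e_t) - Ax_t]\,dt + \sigma dW_t$; exploiting the crucial identity $e_t \cdot B(e_t,e_t) = 0$ (from $x \cdot B(x,x)=0$), together with $|L_{z_t}e_t| \lesssim K|e_t|$ and $|Ax_t| \lesssim K$ on the high-probability event $\{|x_s|, |z_s| \lesssim K \text{ on } [0,T]\}$ furnished by Lemma~\ref{lem:basicenergy}, an It\^o--Gr\"onwall estimate on $\mathbb E|e_t|^2$ yields the uniform bound $\mathbb E|e_t|^2 \lesssim 1$ on $[0,\lambda/K]$.

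Combining these via $|\Pi_{\mathrm{ker}\,A^\perp} x_t|^2 \ge \tfrac12 |\Pi_{\mathrm{ker}\,A^\perp}\bar z_t|^2 - C(|z_t-\bar z_t|^2 + |e_t|^2)$, integrating over $[0,\lambda/K]$, and taking expectation gives a lower bound of the form $\tfrac{c_k^2 \lambda^{2j_k+1}}{2j_k+1}\,K - C\delta K - C/K$. For $\delta$ small enough (uniformly in $K$) and $K \ge K_*$ large enough, the main term dominates, and dividing by $\eta_k(K) = \lambda/K$ yields $\frac{1}{\eta_k(K)}\int_0^{\eta_k(K)} \Pt_t D(x_0)\,dt \gtrsim K^2 = K^{2r}$, verifying Assumption~\ref{ass:time-average}. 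The main obstacle is ensuring all constants ($c_k$, the Gr\"onwall amplification $e^{CK\eta_k}$, the approximation error for the direction $\bar x_0$ vs.\ $x_0$) stay uniform across $\mathcal U$ and $K$-independent in the rescaled time $Kt$; this is precisely where compactness of $\mathcal U$ together with the scale invariance of the conservative dynamics does the essential work.
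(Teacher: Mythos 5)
Your overall strategy is the same as the paper's: verify Assumption~\ref{ass:time-average} with $r=1$ by first establishing growth of the damped modes for a deterministic approximation and then controlling the stochastic error on the timescale $\eta \sim K^{-1}$. However, your Step~1 contains a genuine gap. The claimed pointwise estimate $|\Pi_{\mathrm{ker}A^\perp}Y^{v'}_s| \ge c_v s^{j_v}$ for all $v' \in \mathcal V_v$ and all $s \in [0,\tau_v]$ is \emph{false} in general: while the Taylor coefficients of order $j < j_v$ vanish at the center $v$ (since $j_v$ is the least index), for nearby $v' \in \mathcal V_v$ they are typically nonzero (if small), and nothing forces them to be aligned with the $j_v$-th coefficient. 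In the simplest scenario $\Pi_{\mathrm{ker}A^\perp}Y^{v'}_s = as + \tfrac{b}{2}s^2 + O(s^3)$ with $|a|$ small and $a$ anti-aligned with $b$, the expression vanishes at $s \approx 2|a|/|b|$, which can lie well inside $(0,\tau_v)$, contradicting the pointwise bound. This cancellation is exactly what the paper's Lemma~\ref{lem:transversegrowth} is designed to handle: it replaces your pointwise estimate with a time-averaged one, $\frac{1}{\tau}\int_0^\tau |\Pi_{\mathrm{ker}A^\perp}X_t|\,dt \ge c_* K$, and proves it via an iterative descent argument showing that if the $\ell$-th derivative is large at $t=0$, the $(\ell-1)$-th derivative is large at $t=0$ \emph{or} at a small positive time shift. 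Your approach can be repaired (e.g.\ shrink each $\mathcal V_v$ enough that the lower-order coefficients are small relative to $c_v \tau_v^{j_v-1}$ and then prove the bound only for $s\in[\tau_v/2,\tau_v]$, which suffices for the integral), but some version of this argument is needed and cannot be elided.

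Your Step~2, by contrast, is a valid and arguably cleaner alternative to the paper's. The paper estimates $X_t - x_t$ via a Duhamel formula with the time-inhomogeneous propagator for $L_{X_t}$, bounds the stochastic convolution with Doob and It\^o's isometry, and concludes with a pathwise continuity argument yielding $\sup_{t\le\tau}|X_t - x_t|\le C$ on an event of probability $\ge 1/2$. You instead run It\^o--Gr\"onwall directly on $\E|e_t|^2$, leveraging the cancellation $e_t\cdot B(e_t,e_t)=0$. Two small refinements are worth noting. First, you do not actually need Lemma~\ref{lem:basicenergy} or a ``high-probability event'' to bound the $Ax_t$ term: writing $e_t\cdot Ax_t = e_t\cdot Ae_t + e_t\cdot Az_t$ with $e_t\cdot Ae_t \ge 0$ and $|z_t| = |x_0|\lesssim K$ deterministic yields the needed drift bound unconditionally, which is cleaner and sidesteps the fact that restricting the It\^o integral to a non-adapted event does not preserve the martingale property (as written, your expectation step would need a stopping-time localization argument to be made rigorous). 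Second, when you split $x_t-\bar z_t = (z_t - \bar z_t) + e_t$ you introduce an intermediate deterministic comparison flow $\bar z_t$ seeded on $\mathrm{ker}A$; the paper avoids this by comparing $x_t$ directly to the conservative flow from $x_0$ and checking that the derivative lower bound transfers from $\Pi_{\mathrm{ker}A}x_0$ to $x_0$ via homogeneity when $\delta$ is small. Both routes work; yours isolates the deterministic perturbation more explicitly while the paper folds it into the derivative transfer estimate.
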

\begin{proof}
	Let $x_0 \in \R^n$ with $K/2 \le |x_0| \le 2K$ and $|\Pi_{\mathrm{ker}A^\perp} x_0| \le \delta |\Pi_{\mathrm{ker}A} x_0|$  for some $K \ge 1$ and $\delta \in (0,1)$. Note that the assumptions on $x_0$ imply that
	\begin{equation} \label{eq:transverselbd1}
	|\Pi_{\mathrm{ker}A}x_0| \ge \frac{K}{\sqrt{8}}.
	\end{equation}
	Let $X_t$ solve 
	\begin{equation} \label{eq:transversedeterm2}
	\begin{cases}
	\frac{d}{dt} X_t = B(X_t,X_t) \\ 
	X_t|_{t=0} = x_0
	\end{cases}
	\end{equation}
	and $\tilde{X}_t$ solve 
	\begin{equation} \label{eq:transversedeterm3}
	\begin{cases}
	\frac{d}{dt} \tilde{X}_t = B(\tilde{X}_t,\tilde{X}_t) \\ 
	\tilde{X}_t|_{t=0} = \Pi_{\mathrm{ker}A}x_0.
	\end{cases}
	\end{equation}
	By taking successive time derivatives of \eqref{eq:transversedeterm3}, we see that
	\begin{align*}
	\frac{d^j}{dt^j} \tilde{X}_t |_{t = 0} 
	\end{align*}
	is a homogeneous $j+1$ degree polynomial in $\Pi_{\mathrm{ker}A}x_0$.
	Therefore, \eqref{eq:transverselbd1} and the condition \eqref{def:transverse} imply that $\exists C_J \ge 1$ and $j \le J$ such that 
	$$
	\left|\Pi_{\mathrm{ker} A^\perp} \frac{d^j}{dt^j} \tilde{X}_t |_{t = 0}\right|  \geq \frac{2}{C_J} K^{j+1}. 
	$$
	It follows that for $\delta$ sufficiently small there holds
	\begin{align} \label{eq:transverselbd2}
	\left|\Pi_{\mathrm{ker} A^\perp} \frac{d^j}{dt^j} X_t |_{t = 0}\right|  \geq \frac{1}{C_J} K^{j+1}. 
	\end{align}
	
	\textbf{Step 1} (growth for the deterministic dynamics):
	The first step quantifies how condition \eqref{def:transverse} implies growth of $\Pi_{\mathrm{ker} A^\perp} X_t$ for short times.  
	\begin{lemma} \label{lem:transversegrowth}
		Let $\delta \in (0,1)$ be small enough so that \eqref{eq:transverselbd2} holds and fix $\tau = K^{-1}$. There is a constant $c_*>0$ depending on $C_J$ and $J$ such that
		\begin{equation} \label{eq:transversedetermgrowth}
		\frac{1}{\tau} \int_0^\tau |\Pi_{\mathrm{ker}A^\perp} X_t| dt \ge c_* K.
		\end{equation}
	\end{lemma}
	\begin{proof}
		We first claim that there exists $\gamma_0 \in (0,1)$ so that if for some $1 \le \ell \le J$ and $\gamma \in (0,\gamma_0)$ there holds
		\begin{equation} \label{eq:transvesegrowth1} \left|\Pi_{\mathrm{ker}A^\perp}\frac{d^\ell}{dt^\ell}X_t|_{t = 0}\right| \ge \frac{\gamma}{C_J}K^{\ell+1},
		\end{equation}
		then there is $t_0 \in \{0,3\gamma^2 K^{-1}\}$ such that
		\begin{equation} \label{eq:transversegrowth2}
		\left|\Pi_{\mathrm{ker}A^\perp}\frac{d^{\ell-1}}{dt^{\ell-1}}X_t|_{t = t_0}\right| \ge \frac{\gamma^3}{C_J}K^\ell.
		\end{equation}
		The claim is trivial if the desired bound holds for $t_0 = 0$. So, suppose otherwise and expand to first order to obtain
		$$ \left|\Pi_{\mathrm{ker}A^\perp}\frac{d^{\ell-1}}{dt^{\ell-1}}X_t\right| \ge \frac{\gamma}{C_J} K^{\ell+1}t - \frac{\gamma^3}{C_J} K^\ell - Ct^2 K^{\ell+2} $$
		for some constant $C\ge 1$ that does not depend on $\ell$ (it depends on $J$ and the size of $B$ on the unit sphere). Thus, for $t_0 = 3\gamma^2 K^{-1}$ we have 
		$$
		\left|\Pi_{\mathrm{ker}A^\perp}\frac{d^{\ell-1}}{dt^{\ell-1}}X_t|_{t=t_0}\right| \ge \frac{2\gamma^3}{C_J} K^\ell - 9C \gamma^4 K^\ell.
		$$
		The bound \eqref{eq:transversegrowth2} then follows for $\gamma_0 < (9C C_J)^{-1}$. 
		
		Using \eqref{eq:transverselbd2} and iterating the claim we just proved, it is straightforward to show that if $$\gamma < \min((9CC_J)^{-1}, 1/\sqrt{2}),$$ then there exists $t_0 \in [0,6\gamma^2 K^{-1}]$ such that 
		$$
		|\Pi_{\mathrm{ker}A^\perp}X_{t_0}| \ge \frac{\gamma^{3^J}}{C_J} K.
		$$
		Taylor expanding to first order at $t = t_0$ then gives 
		$$ |\Pi_{\mathrm{ker}A^\perp} X_{t_0+t}| \ge \frac{\gamma^{3^J}}{C_J} K - t C K^2 $$
		and hence for $\tau_1 = (1/2)\gamma^{3^J}(KCC_J)^{-1}$ we have 
		$$ \frac{1}{\tau_1}\int_0^{\tau_1} |\Pi_{\mathrm{ker}A^\perp}X_{t_0 + t}| dt \ge \frac{\gamma^{3^J}}{2C_J}K. $$
		Supposing that $\gamma$ is small enough so that $\tau_1 + t_0 \le K^{-1}$, it follows that for $\tau = K^{-1}$ there holds 
		$$ \frac{1}{\tau}\int_0^{\tau} |\Pi_{\mathrm{ker}A}^\perp X_{t}| dt \ge \frac{\gamma^{3^J}}{2C_J}K \frac{\tau_1}{\tau} \ge \frac{\gamma^{2\cdot 3^J}}{4 C C_J^2} K, $$
		which completes the proof. 
	\end{proof}
	
	\textbf{Step 2} (approximating with $X_t$):
	In order to make use of Lemma~\ref{lem:transversegrowth} we need to show that $X_t$ is a sufficiently good approximation of $x_t$ for $t \leqc K^{-1}$. To this end, we have the following lemma.
	\begin{lemma} \label{lem:transverseapprox}
		Let $x_0$ and $K$ be as defined at the beginning of proof and set $\tau = K^{-1}$. With $X_t$ given by \eqref{eq:transversedeterm2} and $x_t$ given by \eqref{eq:SDE}, there are $K_* \ge 1$ and $C > 0$ (both independent of $x_0$) so that for $K\ge K_*$ there holds 
		$$ \P\left(\sup_{0 \le t \le \tau}|X_t - x_t| \le C\right)\ge \frac{1}{2}. $$
	\end{lemma}
	\begin{proof}
		The error $X_t-x_t$ solves
		$$  d(X_t-x_t) = L_{X_t}(X_t - x_t)dt + A X_t dt - A(X_t - x_t)dt - B(X_t-x_t,X_t-x_t)dt -\sigma dW_t, $$
		where the operator $L_x$ is as defined in \eqref{eq:Lxdef}. In what follows, denote by $S_X(t,s)$ the two-time linear propagator of the time-inhomogeneous ODE associated to $L_{X_t}$, i.e. 
		\begin{align*}
		\frac{d}{dt} S_X(t,s)v & = L_{X_t} S_X(t,s)v \\ 
		S_X(s,s)v & = v. 
		\end{align*}
		Since $|X_t| \le 2K$ for all $t$ there is a constant $C_1$ that does not depend on $K$ such that for $t > s$, 
		\begin{equation} \label{eq:transversesemibound}
		\|S_{X_\cdot}(t,s)\| \le e^{C_1K(t-s)}.
		\end{equation}
		Therefore, there is $C_2 > 0$ independent of $K$ so that for $T \le \tau = K^{-1}$ we have
		\begin{align}
		\sup_{0 \le t \le T}|X_t - x_t| &\lesssim \int_0^{t} e^{CK(t-s)}(|X_s-x_s|^2 + |X_s| + |X_s - x_s|) ds + \sup_{0 \le t \le \tau}\left|\int_0^t S_{X_\cdot}(t,s)\sigma dW_s\right| \nonumber \\ 
		& \le C_2\left( K^{-1}\sup_{0 \le t \le T}|X_t - x_t|^2 + 1 + \sup_{0 \le t \le T}\left|\int_0^t S_{X_\cdot}(t,s)\sigma dW_s\right|\right).  \label{eq:transverseapprox1}
		\end{align}
		By the Doob martingale inequality, the It\^{o} isometry, and \eqref{eq:transversesemibound}, there is $C_3 > 0$ depending on $\sigma$ but independent of $K$ and so that for all $R \geq 1$ there holds
		\begin{align*} 
		\P\left(\sup_{0 \le t \le \tau} \left|\int_0^t S_{X}(t,s)\sigma dW_s\right| \ge R/\sqrt{K} \right) \leqc \frac{K}{R^2} \int_0^\tau e^{2K(\tau-s)} \|\sigma\|^2 ds \leq \frac{C_3}{R^2}. 
		\end{align*}
		Therefore, there is $R_* \ge 1$ independent of $K$ so that $\P(\Omega_0) \ge 1/2$ if we define $\Omega_0$ as  
		$$ \Omega_0 = \left\{\omega \in \Omega: \sup_{0 \le t \le \tau} \left|\int_0^t S_{X_\cdot}(t,s)\sigma dW_s\right| \le R_*/\sqrt{K}\right\}.$$
		Fix $\omega \in \Omega_0$ and let $T_\omega$ be the maximal time such that $\sup_{0 \le t \le T_\omega} |X_t(\omega) - x_t(\omega)| \le 2C_2$, where $C_2$ is as in \eqref{eq:transverseapprox1}.  Since $x_t$ and $X_t$ take values continuously in time, $T_\omega> 0$. Moreover, by \eqref{eq:transverseapprox1} and the definition of $\Omega_0$, for $T \le \min(T_\omega, \tau)$ we have
		\begin{equation}
		\sup_{0 \le t \le T} |X_t - x_t| \le \frac{4C_2^3}{K} + C_2 + \frac{C_2 R_*}{\sqrt{K}}.
		\end{equation}
		Thus, $T_\omega \ge \tau$ as soon as $K \ge \max(4R_*^2,8C_2^2)$. This completes the proof.
	\end{proof}
	
	With Lemma~\ref{lem:transversegrowth} and Lemma~\ref{lem:transverseapprox} in hand, the proof of Proposition~\ref{thm:transverse} follows quickly.
	Indeed, let $x_0$, $\delta > 0$, and $K \ge 1$ be as defined at the beginning of the proof and $K_*$ be as in Lemma \ref{lem:transverseapprox}. We need to show that there is $c > 0$ so that for all $K \ge K_*$ there holds 
	$$ \frac{1}{\tau} \int_0^\tau \E |\Pi_{\mathrm{ker}A^\perp} x_t|^2 dt \ge cK^2,  $$
	where as before $\tau = K^{-1}$. First, by Chebyshev's inequality we have for all $K \geq 1$ and $\delta' > 0$, 
	\begin{align*}
	\delta' K^2 \PP\left( \frac{1}{\tau}\int_0^\tau \abs{\Pi_{\mathrm{ker} A^\perp} x_t}^2 dt \ge \delta' K^2 \right) \le \frac{1}{\tau} \int_0^\tau \E |\Pi_{\mathrm{ker}A^\perp} x_t|^2 dt. 
	\end{align*}
	Let $\Omega_1$ be the set such that 
	\begin{align*}
	\sup_{0 \le t \le \tau}|X_t - x_t| \le C, 
	\end{align*}
	where $C$ is as in Lemma \ref{lem:transverseapprox}, which implies $\PP(\Omega_1) > 1/2$ (note that while $\Omega_1$ can depend on $K$ and $x_0$, the associated estimates do not). 
	By Lemma \ref{lem:transversegrowth}, for $\omega \in \Omega_1$ we have 
	\begin{align*}
	\int_0^\tau \abs{\Pi_{\mathrm{ker} A^\perp} x_t}^2 dt \geq \frac{c_*^2}{2} K^2 \tau - C \tau.
	\end{align*}
	Therefore, assuming also $K_* \ge 2\sqrt{C}/c_*$, for $\delta' < c_*^2/4$ and $K \geq K_*$
	we have 
	\begin{align*}
	\frac{1}{2}\delta' K^2 \le \frac{1}{\tau} \int_0^\tau \E |\Pi_{\mathrm{ker}A^\perp}x_t|^2 dt, 
	\end{align*}
	which implies Assumption \ref{ass:time-average}. 
\end{proof} 

\section{Unstable equilibria in the kernel} \label{sec:1dkernel}

In this section, we consider the case where $\mathcal{U} = \mathrm{ker}(A) \cap \S^{n-1}$ consists entirely of unstable equilibria of the conservative dynamics (i.e., $B(x,x) = 0$ for every $x \in \mathrm{ker}A$) and prove Theorems~\ref{thm:basic1} and~\ref{thm:basic2}. As in Section \ref{sec:transverse}, the proofs are based on a two-step procedure that consists of first deducing growth of the damped modes for a suitable approximate solution and second justifying the approximation on a long enough time-scale to verify Assumption~\ref{ass:time-average}. In the present setting, for an initial condition $x \in B_K$ (where $B_K$ is as in the statement of Assumption~\ref{ass:time-average}), the approximation of the damped modes that we consider is obtained simply by linearizing $B$ around the equilibrium $\Pi_{\text{ker}A}x$. 

Recall that for $x \in \R^n$ we define $L_x:\R^n \to \R^n$ by $L_x v = B(x,v) + B(v,x)$ and that we denote $L_x^\perp = \Pi_{\mathrm{ker}A^\perp} L_x \Pi_{\mathrm{ker}A^\perp}$. In the setting of Theorems~\ref{thm:basic1} and~\ref{thm:basic2}, for any $x \in \mathcal{U}$, $L_x^\perp$ either has an eigenvalue $\lambda$ with $\text{Re}(\lambda) > 0$ or an unstable Jordan block corresponding to $\text{Re}(\lambda) = 0$. In studying the properties of linearized solutions we must consider separately these two scenarios. In Section~\ref{sec:spectralestimates} we prove the necessary growth and approximation estimates in the spectrally unstable case, and in Section~\ref{sec:jordanestimates} we treat the Jordan block unstable case. In Section~\ref{sec:basicproof} we use the results of Sections~\ref{sec:spectralestimates} and~\ref{sec:jordanestimates} to complete the proofs of Theorems~\ref{thm:basic1} and~\ref{thm:basic2}.

\subsection{Spectrally unstable estimates} \label{sec:spectralestimates}

We begin by considering the case where for $z \in \mathcal{U}$, $L^\perp_{z}$ is spectrally unstable. In this setting, the result concerning growth of the damped modes for the linear approximation is given as follows. Recall from Section~\ref{sec:Intro} that we denote the Jordan normal form of $L^\perp_{z}$ by 
$$ L^\perp_{z} = P_{z}J_{z}^\perp P^{-1}_{z}.$$

\begin{lemma} \label{lem:1dspectralgrowth}
	Let $z \in \mathrm{ker}A$ and $r \in (0,1)$. Assume that the eigenvalue $\lambda = \lambda_R + i \lambda_I$ of $L_{z/|z|}^\perp$ with largest real part is such that $\lambda_R > 0$. Suppose further that there exists a generalized eigenvector $v = v_R + iv_I$ corresponding to eigenvalue $\lambda$ such that $\mathrm{Ran}(\sigma) \cap \{v_R,v_I\}\setminus\{0\} \neq \emptyset$ and, defining $V = \mathrm{span}\{P_{z/|z|}^{-1}v\}$, there holds
	\begin{equation} \label{eq:geigenassumption}
	\Pi_V J_{z/|z|}^\perp x = \lambda \Pi_V x \quad \forall x \in \C^n.
	\end{equation}
	Let $Y_t: [0,\infty) \to \mathrm{ker}A^\perp$ solve 
	\begin{equation}\label{eq:linearized1}
	\begin{cases}
	dY_t =  L_{z}^\perp Y_t dt + \Pi_{\mathrm{ker}A^\perp}\sigma dW_t \\ 
	Y_t|_{t=0} = Y_0 \in \mathrm{ker}A^\perp.
	\end{cases}
	\end{equation}
	For any $\epsilon \in(0,1)$, there is $K_*(\epsilon) \ge 1$ and constants $c_*, \beta > 0$ that do not depend on $\epsilon$, $r$, or $|z|$ so that for $|z|\ge K_*$ and 
	$$\tau = \frac{(1/2 + r+\epsilon)\log(|z|)}{\lambda_R |z|}$$ there holds 
	\begin{equation} \label{eq:1dspectralgrowth}
	\P\left(\frac{1}{\tau}\int_0^{\tau} |Y_t(\omega)|dt \ge c_* |z|^r\right)\ge \beta.
	\end{equation}
\end{lemma}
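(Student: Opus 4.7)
The plan is to exploit the structural condition $\Pi_V J_{z/|z|}^\perp = \lambda\Pi_V$ to reduce \eqref{eq:linearized1} to a one-dimensional complex linear SDE for which anti-concentration at the terminal time $\tau$ can be computed explicitly, and then to propagate that terminal estimate to a time-averaged lower bound using the exponential growth rate $\lambda_R|z|$. Writing $P=P_{z/|z|}$, $J^\perp=J_{z/|z|}^\perp$, and using $L_z^\perp=|z|L_{z/|z|}^\perp$ (bilinearity of $B$), set $\tilde Y_t := P^{-1}Y_t$ and $Z_t := \Pi_V \tilde Y_t \in \C$. Assumption \eqref{eq:geigenassumption} then collapses the dynamics to
\begin{equation*}
dZ_t = \lambda |z|\,Z_t\,dt + \alpha \cdot dW_t, \qquad \alpha := \Pi_V P^{-1}\Pi_{\mathrm{ker}A^\perp}\sigma \in \C^{1\times n}.
\end{equation*}
The hypothesis on $\mathrm{Ran}(\sigma)$ forces $\alpha\ne 0$: taking $u \in \R^n$ with $\sigma u\in\{v_R,v_I\}$, the entry $\alpha u$ equals the $V$-component of $P^{-1}v_R$ or $P^{-1}v_I$, one of which is nonzero since $V = \mathrm{span}\{P^{-1}v\}$ and $v = v_R + i v_I$.

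By Duhamel, $Z_t = e^{\lambda|z|t}(Z_0 + \tilde I_t)$, where $\tilde I_t := \int_0^t e^{-\lambda|z|s}\alpha\cdot dW_s$ is a complex Gaussian martingale with quadratic variation tending to $|\alpha|^2/(2\lambda_R|z|) =: \sigma^2$ as $t\to\infty$. Viewing $\tilde I_\tau$ as a centered Gaussian vector in $\R^2$ whose covariance has trace of size $\gtrsim \sigma^2$, projecting onto a unit eigenvector of that covariance associated with its largest eigenvalue reduces everything to the one-dimensional fact that a centered real Gaussian $X$ of variance $\sigma_0^2$ shifted by any deterministic $c\in\R$ satisfies $\P(|X+c|\ge \sigma_0)\ge \beta_0$ for an absolute constant $\beta_0>0$. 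This yields universal $c_0,\beta_0 > 0$ (independent of the deterministic offset $Z_0$) such that
\begin{equation*}
\P\bigl(|Z_0+\tilde I_\tau|\ge c_0\sigma\bigr)\ge \beta_0.
\end{equation*}

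To promote this terminal estimate to the time-averaged one, fix $\delta>0$ and let $t^* := \tau - \delta/(\lambda_R|z|)$. The It\^o isometry combined with Doob's maximal inequality gives
\begin{equation*}
\P\Bigl(\sup_{t\in[t^*,\tau]}|\tilde I_\tau - \tilde I_t|\ge c_0\sigma/2\Bigr)\lesssim \frac{|\alpha|^2 e^{-2\lambda_R|z|\tau}\delta e^{2\delta}/(\lambda_R|z|)}{\sigma^2}\lesssim \delta e^{2\delta}\,|z|^{-(1+2r+2\epsilon)},
\end{equation*}
which is negligible once $|z|$ is sufficiently large. Intersecting the two events yields an event of probability at least $\beta := \beta_0/2$ on which $|Z_0+\tilde I_t|\ge c_0\sigma/2$ throughout $[t^*,\tau]$, so that $|Z_t|\ge (c_0\sigma/2)e^{\lambda_R|z|t}$ there. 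Since $|Y_t|\ge \|P^{-1}\|^{-1}|Z_t|$ (and $\|P^{-1}\|$ depends only on $z/|z|$), integrating gives
\begin{equation*}
\frac{1}{\tau}\int_0^\tau |Y_t|\,dt \gtrsim \frac{\sigma e^{\lambda_R|z|\tau}(1-e^{-\delta})}{\lambda_R|z|\,\tau}\gtrsim \frac{|z|^{r+\epsilon}}{\log|z|},
\end{equation*}
which exceeds $c_*|z|^r$ once $K_*(\epsilon)$ is chosen large enough that $|z|^\epsilon$ dominates $\log|z|$. The main obstacle is precisely this last balancing: the window on which $|Z_t|$ is within a constant factor of its maximum has width only $O(1/|z|)$, while the prefactor in the time-average is $1/\tau \sim |z|/\log|z|$, which is why the definition of $\tau$ carries the extra $+\epsilon$ factor that ultimately absorbs the logarithm.
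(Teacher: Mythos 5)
Your proposal is correct and gives a clean proof, but the route is genuinely different from the paper's. The paper's argument is a two-stage Markov property decomposition: it waits a short initial time $t_* = (\lambda_R|z|)^{-1}$ for the noise to spread $\Pi_V \tilde{Y}_{t_*}$ out to size $\gtrsim R/\sqrt{\lambda_R|z|}$ with some probability $\beta_1(R)$, and then, conditionally, shows that the deterministic exponential propagation of this offset dominates the stochastic contribution over the whole interval $[0,\tau-t_*]$ via a Doob/It\^{o} isometry/Chebyshev estimate for which $R$ must be chosen large depending on $\|P^{-1}\|\|\sigma\|$. Your argument instead reduces via \eqref{eq:geigenassumption} to the scalar complex OU-type SDE for $Z_t = \Pi_V \tilde Y_t$, reads the law of $Z_\tau$ off from the exact Duhamel formula, and invokes a shift-uniform one-dimensional Gaussian anti-concentration bound; it then only needs to control the noise over the \emph{final} short window $[\tau - \delta/(\lambda_R|z|),\tau]$, which is negligible since $e^{-2\lambda_R|z|\tau} = |z|^{-(1+2r+2\epsilon)}$. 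This buys you an absolute $\beta = \beta_0/2$ (rather than one depending on a large parameter $R$ chosen later), and it makes the role of the $+\epsilon$ in the definition of $\tau$ transparent: it is there only to beat the factor $\log|z|$ produced by the prefactor $1/\tau$.

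One step you should tighten is the justification that $\alpha := \Pi_V P^{-1}\Pi_{\mathrm{ker}A^\perp}\sigma \neq 0$. You argue that since $\Pi_V P^{-1}v_R + i\,\Pi_V P^{-1}v_I = P^{-1}v \neq 0$, ``one of $\Pi_V P^{-1}v_R,\Pi_V P^{-1}v_I$ is nonzero.'' That is true but not yet enough: the hypothesis only places the \emph{specific} nonzero member of $\{v_R,v_I\}$ that happens to lie in $\mathrm{Ran}(\sigma)$ at your disposal, and your argument does not rule out that this particular one has zero $V$-component. The correct reason, which is also what the paper's identity $\Pi_V e^{|z|J^\perp t}P^{-1}v_R = \tfrac12 e^{\lambda|z|t}P^{-1}v$ encodes at $t=0$, is the following: when $\lambda_I \neq 0$, the realness of $L_{z/|z|}^\perp$ implies that $\bar v$ is a generalized eigenvector for $\bar\lambda \neq \lambda$, so $P^{-1}\bar v$ sits in a Jordan coordinate orthogonal to $V$; hence $\Pi_V P^{-1}v_R = \tfrac12\Pi_V(P^{-1}v + P^{-1}\bar v) = \tfrac12 P^{-1}v$ and $\Pi_V P^{-1}v_I = \tfrac{1}{2i}P^{-1}v$, so \emph{both} are nonzero. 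When $\lambda_I = 0$ one may take $v$ real so $v_I=0$ and the hypothesis forces $v_R\in\mathrm{Ran}(\sigma)$ with $\Pi_V P^{-1}v_R = P^{-1}v\neq 0$. With that observation, your argument is complete.
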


\begin{remark}
	The condition \eqref{eq:geigenassumption} just says that $v$ is the first generalized eigenvector in a Jordan chain corresponding to eigenvalue $\lambda$.
\end{remark}

\begin{remark}\label{rem:spectralconstants1}
	It follows directly from the proof below that if $\sigma$ is invertible, then for any $C > 0$ the constants $c_*$ and $\beta$ can be chosen uniformly for $\lambda_R \ge C^{-1}$ and $\|P_{z/|z|}\| + \|P^{-1}_{z/|z|}\| \le C$. The main observation here is that when $\sigma$ is invertible the constant $c_1$ in the proof below depends only on $\|\sigma^{-1}\|$ and $\|P_{z/|z|}\|$.
\end{remark}

\begin{proof}
	We will consider the case where $\lambda_I \neq 0$, as the situation where $\lambda_I = 0$ follows from the same argument. Without loss of generality, we may suppose that $v_R \in \mathrm{Ran}(\sigma)$. Let $\tilde{Y}_t = P_{z/|z|}^{-1} Y_t \in \C^n$. For simplicity of notation we write $J^\perp$ for $J_{z/|z|}^\perp$ and $P$ for $P_{z/|z|}$. Then, $\tilde{Y}_t$ solves
	\begin{equation}
	\begin{cases}
	d\tilde{Y}_t = |z| J^\perp \tilde{Y}_t dt + P^{-1} \Pi_{\mathrm{ker}A^\perp}\sigma dW_s \\ 
	\tilde{Y}_0 = P^{-1} Y_0 \in \C^n.
	\end{cases}
	\end{equation}
	Since
	\begin{equation} \label{eq:coordinateequiv}
	|\tilde{Y}_t| = |P^{-1}Y_t| \le \|P^{-1}\|_{\C^n \to \C^n} |Y_t|,
	\end{equation}
	it suffices to prove \eqref{eq:1dspectralgrowth} with $Y_t(\omega)$ replaced by $\tilde{Y}_t(\omega)$. Define the subspace $V = \mathrm{span}\{P^{-1}v\}$ of $\C^n$ and let $t_* = (\lambda_R |z|)^{-1}$. The plan is to first show that for every $R > 0$ there exists $\beta_1 > 0$ so that 
	\begin{equation} \label{eq:diffusive}
	\P\left(|\Pi_V \tilde{Y}_{t_*}| \ge \frac{R}{\sqrt{\lambda_R |z|}}\right)\ge \beta_1.
	\end{equation}
	We will then prove that there is $R \ge 1$ and $c_* > 0$ so that 
	\begin{equation} \label{eq:transport}
	\P\left( \frac{1}{\tau} \int_0^{\tau - t_*} |\tilde{Y}_t| dt \ge c_*|z|^r \text{ }\bigg|\text{ } |\Pi_V \tilde{Y}_0| \ge \frac{R}{\sqrt{\lambda_R |z|}} \right)\ge \frac{1}{2}.
	\end{equation} 
	Together, \eqref{eq:diffusive} and \eqref{eq:transport} yield the bound \eqref{eq:1dspectralgrowth} for $\tilde{Y}_t$.

	We now prove \eqref{eq:diffusive}. The formula for $\tilde{Y}_t$ reads
	\begin{equation}\label{eq:Ytbar}
	\tilde{Y}_t = e^{|z|J^\perp t}\tilde{Y}_0 + \int_0^t e^{|z|J^\perp(t-s)}P^{-1} \Pi_{\mathrm{ker}A^\perp}\sigma dW_s.
	\end{equation} 
	By the It\^{o} isometry, the variance of $\Pi_V \tilde{Y}_{t_*}$ is given by 
	\begin{equation} \label{eq:Ytilvar1}
	\text{Var}(\Pi_V \tilde{Y}_{t_*}) = \int_0^{t_*} \left\| \Pi_V e^{|z|J^\perp(t_*-s)}P^{-1}\Pi_{\mathrm{ker}A^\perp}\sigma \right\|_F^2 ds,
	\end{equation}
	where $\|\cdot\|_{F}$ denotes the Frobenius norm on $\C^{n\times n}$. Observe now that for any $t \ge 0$ there holds
	$$  \Pi_V e^{|z| J^\perp t} P^{-1}v_R = \frac{1}{2}e^{\lambda |z| t}P^{-1}v, $$
	which gives 
	\begin{equation} \label{eq:PiVsize1}
	\left| \Pi_V e^{|z| J^\perp t} P^{-1}v_R\right|^2 \ge \frac{e^{2|z|\lambda_R t}}{4}.
	\end{equation}
	Since $v_R \in \mathrm{Ran}(\sigma)$, by \eqref{eq:PiVsize1} we have
	\begin{equation} \label{eq:CntoCnbound}
	\left\| \Pi_V e^{|z|J^\perp(t_*-s)}P^{-1}\Pi_{\mathrm{ker}A^\perp}\sigma \right\|_{\C^n \to \C^n}^2 \ge c_1 e^{2|z| \lambda_R (t_*-s)} 
	\end{equation}
	for some $c_1 > 0$ depending only on $\sigma$ and $v_R$. Thus, from \eqref{eq:Ytilvar1} and the equivalence of norms in finite dimensions there holds 
	\begin{align} \label{eq:Ytilvar2}
	\text{Var}(\Pi_V \tilde{Y}_{t_*}) \gtrsim \int_0^{t_*} \left\| \Pi_V e^{|z|J^\perp(t_*-s)}P^{-1}\Pi_{\mathrm{ker}A^\perp}\sigma \right\|_{\C^n \to \C^n}^2 ds & \gtrsim c_1\int_0^{t_*} e^{2|z| \lambda_R (t_*-s)} ds \gtrsim \frac{c_1}{\lambda_R |z|}.
	\end{align}
	The claim \eqref{eq:diffusive} then follows from \eqref{eq:Ytilvar2} and the fact that the real and imaginary parts of $\Pi_V \tilde{Y}_{t_*}$ are both Gaussian.
	
	We now turn to \eqref{eq:transport}. First, note that 
	\begin{align*}
	\left|\Pi_V e^{|z| J^\perp t}\tilde{Y}_0\right|^2 & = \left|e^{|z| J^\perp t}\Pi_{V}\tilde{Y}_0\right|^2 = e^{2|z|\lambda_R t}\left|\Pi_{V}\tilde{Y}_0\right|^2.
	\end{align*} 
	Therefore, 
	$|\Pi_V \tilde{Y}_0| \ge R/\sqrt{\lambda_R |z|}$ implies that 
	\begin{equation}\label{eq:transport1}
	\frac{1}{\tau}\int_0^{\tau-t_*} |\Pi_V e^{|z| J^\perp t}\tilde{Y}_0| dt \ge \frac{R}{\tau \sqrt{\lambda_R|z|}}\int_0^{\tau-t_*} e^{|z| \lambda_R t}dt \ge \frac{R|z|^{r+ \epsilon}}{6e\log(|z|)\sqrt{\lambda_R}},
	\end{equation}
	where in the second inequality we have assumed that $K_* \ge 2e$. Taking $K_*(\epsilon)$ even larger to ensure that $|z|^\epsilon \ge \log(|z|)$, it follows from \eqref{eq:transport1} and
	\begin{align*} 
	\frac{1}{\tau}\int_0^{\tau - t_*}|\tilde{Y}_t| dt 
	& \ge \frac{1}{\tau} \int_0^{\tau - t_*} |\Pi_V e^{|z| J^\perp t}\tilde{Y}_0| dt - \frac{1}{\tau}\int_0^{\tau}\left|\int_0^t \Pi_V e^{|z|J^\perp(t-s)}P^{-1}\Pi_{\mathrm{ker}A^\perp} \sigma dW_s \right| dt, 
	\end{align*}
	that to complete the proof of \eqref{eq:transport} it suffices to show that
	\begin{equation} \label{eq:transport2}
	\P\left(\frac{1}{\tau}\int_0^{\tau}\left|\int_0^t \Pi_V e^{|z|J^\perp(t-s)}P^{-1}\Pi_{\mathrm{ker}A^\perp} \sigma dW_s \right| dt \le \frac{R |z|^{r+\epsilon}}{12 e \log(|z|)\sqrt{\lambda_R}} \right) \ge \frac{1}{2}
	\end{equation}
	for some $R\ge 1$. By the It\^{o} isometry, we have 
	\begin{align*} \frac{1}{\tau}\E \int_0^\tau \left|\int_0^t  \Pi_V e^{|z|J^\perp(t-s)}P^{-1}\Pi_{\mathrm{ker}A^\perp} \sigma dW_s \right| dt & \le \frac{1}{\tau}\int_0^\tau \left(\int_0^t \|\Pi_V e^{|z| J^\perp (t-s)}P^{-1}\Pi_{\mathrm{ker}A^\perp}\sigma\|_F^2 ds\right)^{1/2} dt \\ 
	& \leqc \frac{1}{\tau}\int_0^\tau \left(\int_0^t e^{2|z|\lambda_R(t-s)} \|P^{-1}\|^2 \|\sigma\|^2 ds\right)^{1/2} dt \\ 
	& \leqc \frac{\|P^{-1}\| \|\sigma\| |z|^{r + \epsilon}}{\sqrt{\lambda_R}\log(|z|)}.
	\end{align*}
	Then, \eqref{eq:transport2} follows by taking $R$ sufficiently large and using Chebyshev's inequality, completing the proof.
\end{proof}

We now use Lemma~\ref{lem:1dspectralgrowth} to prove the time-averaged growth estimate \eqref{eq:time-averaged_main} required by Assumption~\ref{ass:time-average} when the initial condition $x \in \R^n$ is such that $\Pi_{\mathrm{ker}A}x$ is a spectrally unstable equilibrium point for $B$. In what follows, for $x \in \R^n$ we write $z = \Pi_{\mathrm{ker}A}x$ and $y = x-z = \Pi_{\mathrm{ker}A^\perp}x$.

\begin{lemma} \label{lem:1dspectralapprox}
	Suppose that $B(x,x) = 0$ for every $x \in \mathrm{ker}A$ and let $x_0 \in \R^n$ be such that $L_{z_0/|z_0|}^\perp$ has  maximally unstable eigenvalue $\lambda = \lambda_R + i\lambda_I$ with $\lambda_R > 0$. Suppose further that there exists a generalized eigenvector $v = v_R + iv_I$ satisfying the conditions of Lemma~\ref{lem:1dspectralgrowth}. Fix $r \in (0,1/4)$ and for $K \ge 1$ set 
	$$ \eta(K) = 10 \left(\frac{(1/2 + r)\log(K)}{\lambda_R K}\right).$$
	There exist $K_* \ge 1$, $c_* > 0$, and a universal constant $\delta_* \in (0,1/4]$ so that if
	$$|y_0| \le \delta |z_0|^{r} \text{ and } (1-\delta) K \le |x_0| \le (1+\delta)K $$
	for $\delta \in (0,\delta_*]$ and $K \ge K_*$, then there holds
	\begin{equation} \label{eq:1dspectralgoal}
	\frac{1}{\eta(K)} \int_0^{\eta(K)} \E |y_t|^2 dt \ge c_* K^{2r}.
	\end{equation}
	Moreover, if $\sigma$ is invertible, $r_0 \in (0,1/4)$ is fixed, and $C_0 \ge 1$ is such that $\lambda_R \ge C_0^{-1}$ and 
	\begin{equation} \label{eq:spectralupb}
	\|P^{-1}_{z_0/|z_0|}\| + \|P_{z_0/|z_0|}\| \le C_0,
	\end{equation}
	then the constants $c_*$ and $K_*$ can be chosen to depend only on $C_0$ and $r_0$ for $r \le r_0$.
\end{lemma}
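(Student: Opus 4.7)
The plan is to combine Lemma~\ref{lem:1dspectralgrowth} with a stopping-time/bootstrap argument to justify that the projection $y_t = \Pi_{\mathrm{ker}A^\perp} x_t$ of the true SDE can be approximated over $[0,\eta(K)]$ by the linearized process $Y_t$ obtained by freezing the equilibrium at $z_0 = \Pi_{\mathrm{ker}A} x_0$. Set $\tau = (1/2+r)\log(K)/(\lambda_R K)$, so $\eta(K) = 10\tau$. Decomposing $x_t = z_t + y_t$ with $z_t \in \mathrm{ker}A$ and using the polarization identity $B(u,v) + B(v,u) = 0$ for $u,v\in \mathrm{ker}A$, one sees that
\begin{align*}
dz_t &= \Pi_{\mathrm{ker}A}\bigl[L_{z_t} y_t + B(y_t,y_t)\bigr]\,dt + \Pi_{\mathrm{ker}A}\sigma\,dW_t,\\
dy_t &= L_{z_0}^\perp y_t\,dt + F_t\,dt + \Pi_{\mathrm{ker}A^\perp}\sigma\,dW_t,
\end{align*}
where $F_t = \Pi_{\mathrm{ker}A^\perp}\bigl[L_{z_t - z_0} y_t + B(y_t,y_t)\bigr] - Ay_t$. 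Thus the error $e_t = y_t - Y_t$ solves $\dot e_t = L_{z_0}^\perp e_t + F_t$ with $e_0 = 0$.

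The core of the proof is a bootstrap. Introduce the stopping time $T_* = \inf\{t\ge 0 : |y_t| \ge MK^r \text{ or } |z_t - z_0| \ge K^{1/2-r}(\log K)^2\}$ for a large constant $M$ chosen later. On $[0, T_*\wedge\tau]$, the forcing satisfies $|F_t| \lesssim |z_t-z_0|\cdot|y_t| + |y_t|^2 + |y_t| \lesssim K^{1/2}(\log K)^2 + K^{2r}$. Since $|z_0|\le (1+\delta)K$ and $L_{z_0}^\perp = |z_0| L_{z_0/|z_0|}^\perp$ has Jordan form with largest real part $\lambda_R|z_0|$, the propagator bound $\|e^{L_{z_0}^\perp s}\| \lesssim (1+Ks)^{n}e^{K\lambda_R s}$ gives $\|e^{L_{z_0}^\perp \tau}\| \lesssim K^{1/2+r}(\log K)^{n}$. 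Duhamel's formula then yields
\begin{equation*}
\sup_{t\le T_* \wedge \tau}|e_t| \;\lesssim\; K^{1/2+r}(\log K)^{n} \cdot \tau \cdot K^{1/2}(\log K)^2 \;\lesssim\; \frac{K^{2r}(\log K)^{n+3}}{\lambda_R K^{1/2-r}},
\end{equation*}
which is $o(K^r)$ precisely because $r < 1/4$. For the $z$-drift, $|L_{z_t}y_t|\lesssim MK^{1+r}$ forces $|z_t - z_0| \lesssim MK^{r}\log K/\lambda_R$ plus a stochastic term whose supremum is controlled by Doob's inequality with $L^2$-size $\lesssim \sqrt{\tau}$. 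Again using $r < 1/4$, this is strictly smaller than $K^{1/2-r}(\log K)^2/2$ with probability tending to one. On the intersection of these high-probability events with the event $\Omega_{\mathrm{grow}}$ of probability $\ge \beta$ furnished by Lemma~\ref{lem:1dspectralgrowth}, we get both $|y_t|\le MK^r$ (since $|y_t|\le |Y_t|+|e_t| \lesssim K^r$ once $M$ is chosen larger than the a priori bound on $|Y_t|$, which follows from standard Gaussian estimates and \eqref{eq:Ytbar}) and $|z_t-z_0|\le K^{1/2-r}(\log K)^2/2$, so that $T_* \ge \tau$ by the usual continuity argument.

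On this intersection $\Omega_*$ of probability $\ge \beta/2$ (say), we therefore have $|y_t| \ge |Y_t| - K^r/100$ on $[0,\tau]$, so $\frac{1}{\tau}\int_0^\tau |y_t|\,dt \ge c_* K^r/2$ there. By Cauchy--Schwarz, $\frac{1}{\tau}\int_0^\tau |y_t|^2\,dt \ge c_*^2 K^{2r}/4$ on $\Omega_*$. Taking expectation and using $\eta(K) = 10\tau$ gives
\begin{equation*}
\frac{1}{\eta(K)}\int_0^{\eta(K)} \E|y_t|^2\,dt \;\ge\; \frac{\tau}{\eta(K)}\cdot \E\Bigl[\mathbf{1}_{\Omega_*}\cdot \tfrac{1}{\tau}\int_0^\tau |y_t|^2\,dt\Bigr] \;\ge\; \frac{\beta c_*^2}{80}\, K^{2r},
\end{equation*}
which is the desired bound.

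The main obstacle is the bootstrap step: four competing scales (the unstable eigenvalue blow-up $K^{1/2+r}$ of the semigroup, the drift of $z_t$, the size of $y_t$, and the target $K^r$) must be balanced, and it is precisely the requirement $r < 1/4$ that makes this possible. For the \emph{Moreover} statement, one inspects each estimate above: the constants in Lemma~\ref{lem:1dspectralgrowth} depend only on $C_0, r_0$ by Remark~\ref{rem:spectralconstants1}, while all remaining constants (the Jordan semigroup bound for $L_{z_0}^\perp$, the deterministic drift bounds, and the Doob estimates) depend only on $C_0, r_0$, $\|\sigma\|$, $\|\sigma^{-1}\|$, and the nonlinearity, which is what is asserted.
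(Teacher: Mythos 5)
There is a genuine gap, and it has two layers.

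First, the displayed error estimate contains an arithmetic error that masks a failure of the bootstrap. With $\tau\sim\log K/(\lambda_R K)$, the quantity $K^{1/2+r}(\log K)^n\cdot\tau\cdot K^{1/2}(\log K)^2$ equals $K^{r}(\log K)^{n+3}/\lambda_R$, not $K^{2r}(\log K)^{n+3}/(\lambda_R K^{1/2-r})=K^{3r-1/2}(\log K)^{n+3}/\lambda_R$. (Even using the sharper estimate $\int_0^\tau\|e^{L_{z_0}^\perp u}\|\,du\sim K^{r-1/2}(\log K)^n/\lambda_R$ instead of peak-times-length, one gets $|e_t|\lesssim K^{1/2}(\log K)^2\cdot K^{r-1/2}(\log K)^n=K^r(\log K)^{n+2}$.) This is not $o(K^r)$, so the claim $|y_t|\ge|Y_t|-K^r/100$ does not follow. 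The underlying cause is the choice of threshold $|z_t-z_0|\le K^{1/2-r}(\log K)^2$: that makes the forcing $|z_t-z_0|\,|y_t|\lesssim K^{1/2}(\log K)^2$ too large. In the paper's proof, the contradiction hypothesis $\int_0^\tau\E|y_t|^2\,dt\le\delta_1\tau K^{2r}$ yields the much sharper $\sup_{[0,\tau]}|z_t-z_0|\lesssim K^{1+r}\tau\sim K^r\log K/\lambda_R$ (equation \eqref{eq:frozenz}), and, crucially, the error is estimated in $L^1_t$ (via Young's convolution) against the \emph{time-integrated} forcing $\int(|y_s|^2+|y_s|+|y_s||z_0-z_s|)\,ds$, where the $\delta_1^{1/4}$ smallness factor appears. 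A purely pointwise bootstrap has no analogue of this small factor, which is why your estimate cannot close.

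Second, and more fundamentally, the stopping threshold $|y_t|\le MK^r$ is generically incompatible with the linear dynamics it is supposed to approximate. If $y_0$ has a component of order $\delta K^r$ along the unstable eigendirection of $L_{z_0}^\perp$, then by time $\tau=(1/2+r)\log K/(\lambda_R K)$ the deterministic part $e^{L_{z_0}^\perp t}Y_0$ grows by a factor $e^{\lambda_R K\tau}=K^{1/2+r}$, so $|Y_\tau|\sim\delta K^{1/2+2r}\gg K^r$. Hence $T_*<\tau$ with high probability, and the entire ``$T_*\ge\tau$ on $\Omega_*$'' step fails. This is not a mere quantitative problem: the pointwise hypothesis $\sup_{t\le\tau}|y_t|\le MK^r$ is strictly stronger than the averaged hypothesis $\int_0^\tau\E|y_t|^2\,dt\le\delta_1\tau K^{2r}$ used in the paper's proof by contradiction, and it is the \emph{averaged} hypothesis that is WLOG (otherwise the lemma is already proved) and that tolerates $|y_t|$ exceeding $K^r$ near $t=\tau$ while still controlling $\int|y_t|\,dt$, $\int|y_t|^2\,dt$, and $\int|y_t|\,|z_t-z_0|\,dt$ in the Duhamel estimate. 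To repair your argument you would essentially have to replace the stopping-time bootstrap by the contradiction reduction, at which point you recover the paper's proof.
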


\begin{proof}
	We first assume only that $\{v_R, v_I\}\setminus \{0\} \cap \mathrm{Ran}(\sigma) \neq \emptyset$. For $\epsilon \in (0,1)$ to be chosen, let
	\begin{equation} \label{eq:1dspectraltau}
	\tau = \frac{(1/2 + r + \epsilon)\log(|z_0|)}{\lambda_R |z_0|}
	\end{equation}
	and suppose that 
	\begin{equation} \label{eq:contradiction1}
	\int_0^\tau \E |y_t|^2 dt \le \tau \delta_1 K^{2r}
	\end{equation}
	for some $\delta_1 \in (0,1)$. We will obtain a contradiction for $\delta_1$ sufficiently small.
	
	The first step is to use the contradiction hypothesis \eqref{eq:contradiction1} to obtain bounds on $|z_t - z_0|$. Since $B(z_t,z_t) = 0$ by assumption, we have
	\begin{equation}\label{eq:frozenz2.1}
	dz_t = (\Pi_{\mathrm{ker}A}(B(y_t,z_t) + B(z_t,y_t) + B(y_t,y_t)-Ay_t)dt + \Pi_{\mathrm{ker}A}\sigma dW_t. 
	\end{equation}
	Using \eqref{eq:contradiction1}, the Cauchy-Schwarz inequality, $\E |x_t| \leqc K$ for $t \leqc 1$ (this follows from \eqref{eq:exptail2}), and Doob's  martingale inequality we obtain
	\begin{equation} \label{eq:frozenz}
	\E \sup_{0 \le t \le \tau}|z_t-z_0| \leqc \sqrt{\delta_1} K^{1+r}\tau + \sqrt{\tau} \leqc \max(\sqrt{\delta_1}, K_*^{-1/2})K^{1+r}\tau.
	\end{equation}
	Define 
	$$
	\Omega_0 = \left\{\omega \in \Omega: \int_0^\tau |y_t|^2 dt \le \sqrt{\delta_1} \tau K^{2r}, \sup_{0 \le t \le \tau}|z_t-z_0| \le K^{1+r}\tau \right\}
	$$ 
	and let $\beta, c_* > 0$ be as in Lemma~\ref{lem:1dspectralgrowth} applied with $z = z_0$ and the chosen $r \in (0,1/4)$. Recall here that $\beta$ and $c_*$ do not depend on $r$ or $\epsilon$. By \eqref{eq:contradiction1} and \eqref{eq:frozenz}, for $\delta_1$ sufficiently small and $K_*$ sufficiently large depending only on $\beta$ there holds \begin{equation} \label{eq:Omega_0}
	\P(\Omega_0) > 1-\beta/2.
	\end{equation}
	
	Let $Y_t$ solve 
	$$
	\begin{cases}
	dY_t =  L_{z_0}^\perp Y_t dt + \Pi_{\mathrm{ker}A^\perp}\sigma dW_t \\ 
	Y_t|_{t=0} = \Pi_{\mathrm{ker}A^\perp}x_0.
	\end{cases}
	$$
	We will show that the exact solution $y_t$ is well approximated by the linearized dynamics $Y_t$ on the set $\Omega_0$. The difference $Y_t - y_t$ solves 
	\begin{align*}
	\frac{d}{dt}(Y_t - y_t) &= \Pi_{\mathrm{ker}A^\perp}(B(Y_t,z_0) + B(z_0,Y_t) - B(y_t,z_t) - B(z_t,y_t) - B(y_t,y_t) + Ay_t) \\ 
	& = L_{z_0}^{\perp}(Y_t-y_t) + \Pi_{\mathrm{ker}A^\perp}(B(y_t,z_0-z_t) + B(z_0-z_t,y_t) - B(y_t,y_t) + Ay_t).
	\end{align*}
	Therefore,
	\begin{equation} \label{eq:1dspectralerror}
	Y_t - y_t = \int_0^t e^{L_{z_0}^\perp(t-s)}\Pi_{\mathrm{ker}A^\perp}(B(y_s,z_0-z_s) + B(z_0-z_s,y_s) - B(y_s,y_s) + Ay_s)ds.
	\end{equation}
	Now, from the Jordan canonical form, for $t \le \tau$ and 
	$$C_1 = \|P_{z_0/|z_0|}\|\|P^{-1}_{z_0/|z_0|}\|$$
	there holds 
	$$\|e^{L_{z_0}^\perp t}\| \leqc  C_1(1+(|z_0|t)^n)e^{\lambda_R |z_0|t} \leqc C_1(1+\lambda_R^{-n}) |\log(K)|^n e^{\lambda_R |z_0|t}. $$
	Thus, by applying Young's convolution inequality in \eqref{eq:1dspectralerror}, for $\omega_0 \in \Omega_0$ we have the estimate
	\begin{align} 
	\int_0^{\tau}|Y_t(\omega_0) - y_t(\omega_0)| dt &\leqc C_1(1+\lambda_R^{-n}) |\log(K)|^n \left(\int_0^\tau e^{\lambda_R |z_0| t}dt\right) \nonumber \\ 
	& \quad \times \left(\int_0^\tau (|y_t(\omega_0)|^2 + |y_t(\omega_0)|+ |y_t(\omega_0)||z_0-z_t(\omega_0)|)dt\right) \nonumber \\ 
	& \leqc C_1(1+\lambda_R^{-n-1}) |\log(K)|^n K^{r-1/2 + \epsilon}\left(\delta_1^{1/4}\tau K^{2r} + \delta_1^{1/4}\tau K^r + \delta_1^{1/4} \tau^2 K^{1+2r}\right) \nonumber  \\ 
	& \leqc C_1(1+\lambda_R^{-n-2})|\log(K)|^{n+1}K^{\epsilon}K^{2r - 1/2} (\delta_1^{1/4}K^{r} \tau), \label{eq:errorbound}
	\end{align} 
	where in the last inequality above we have assumed that $K$ is large enough so that $|z_0| \ge K/2$ (and consequently $K \tau \leqc \lambda_R^{-1} \log(K) $). Assuming $r \le r_0 < 1/4$, we may take $\epsilon = 1/4-r_0 > 0$ 
	to obtain
	\begin{equation} \label{eq:approx1}
	\int_0^{\tau}|Y_t(\omega_0) - y_t(\omega_0)| dt \le C_1 C_2 \delta_1^{1/4}\tau K^r
	\end{equation}
	for some constant $C_2 > 0$ satisfying 
	\begin{equation} \label{eq:C2}
	C_2 \leqc \left(1+\lambda_R^{-n-2}\right)\sup_{K \ge 1} \{K^{-\epsilon}\log(K)^{n+1}\}.
	\end{equation}
	
	With \eqref{eq:approx1} established we are now ready to use Lemma~\ref{lem:1dspectralgrowth} to complete the proof.
	Applying Lemma~\ref{lem:1dspectralgrowth} and using again $|z_0| \ge K/2$, we obtain that for $K$ sufficiently large depending only on $\epsilon$ there holds
	\begin{equation}
	\P\left(\int_0^\tau |Y_t(\omega)|dt \ge \frac{c_*}{2} \tau K^r\right) \ge	\P\left(\int_0^{\tau} |Y_t(\omega)| dt \ge c_* \tau |z_0|^r\right) \ge \beta,
	\end{equation}
	where the constants $c_*$ and $\beta$ are as defined after \eqref{eq:frozenz}. From $\P(\Omega_0) \ge 1-\beta/2$ we thus have 
	\begin{equation} \label{eq:beta/2}
	\P\left(\Omega_0 \cap \left\{\int_0^{\tau} |Y_t(\omega)| dt \ge \frac{c_*}{2}\tau K^r\right\}\right)\ge \frac{\beta}{2}.
	\end{equation}
	By \eqref{eq:beta/2}, the reverse triangle inequality, Cauchy-Schwarz, and \eqref{eq:approx1} we deduce that for $C_1 C_2 \delta_1^{1/4} \le c_*/4$ there holds 
	\begin{equation} \label{eq:spectralcontradiction1}
	\E \int_0^\tau |y_t|^2 dt \ge \frac{\beta c_*^2}{32} \tau K^{2r}.
	\end{equation}
	Taking $\delta_1$ even smaller to ensure $\delta_1 < \frac{\beta c_*^2}{32}$ gives the desired contradiction with \eqref{eq:contradiction1}.
	
	In the calculations above, $\delta_1$ is chosen small depending on $C_1$, $C_2$, $\beta$, and $c_*$, while $K$ is chosen sufficiently large depending only on $\beta$ and $\epsilon = 1/4-r_0$. We conclude that there is a constant $c_*'(C_1,C_2,\beta, c_*) > 0$ and $K_*(\beta, r_0)$ so that for $K \ge K_*$ there holds
	\begin{equation} \label{eq:c_*'}
	\E \int_0^\tau |y_t|^2 dt \ge c_*' \tau K^{2r}.
	\end{equation}
	To obtain \eqref{eq:1dspectralgoal} from \eqref{eq:c_*'}, observe that $|y_0| \le \delta |z_0|^{r}$ and $(1-\delta)K \le |x_0| \le (1+\delta)K$ imply that $(1-2\delta)K \le |z_0| \le (1+\delta)K$ for $\delta$ small enough. Therefore, taking $\delta_*$ sufficiently small and $K_*$ perhaps larger yields
	$$ \frac{1}{40} \eta(K) \le \tau \le  \eta(K), $$
	which when combined with \eqref{eq:c_*'} gives
	\begin{equation} \label{eq:c_*''}
	\E \int_0^{\eta(K)} |y_t|^2 dt \ge \frac{c_*'}{40} \eta(K) K^{2r}: = c_*'' \eta(K) K^{2r}.
	\end{equation} 
	
	It remains only to argue that if $\sigma$ is invertible and $C_0 \ge 1$ is such that $\lambda_R \ge C_0^{-1}$ and \eqref{eq:spectralupb} holds, then $K_*$ and the constant $c_*''$ in \eqref{eq:c_*''} can be taken to depend only on $C_0$ and $r_0$. Since in the proof of \eqref{eq:c_*'} we took $K_* = K_*(\beta,r_0)$ and $c_*' = c_*'(C_1,C_2,\beta,c_*)$, it suffices to show that $\beta$, $C_1$, $C_2$, and $c_*$ can be taken to depend only on $C_0$ and $r_0$. By Lemma~\ref{lem:1dspectralgrowth} and Remark~\ref{rem:spectralconstants1}, both $\beta$ and $c_*$ depend only on $C_0$ when $\sigma$ is invertible. Regarding $C_1$ and $C_2$, following the proof above we see that \eqref{eq:spectralupb} and $\lambda_R \ge C_0^{-1}$ imply 
	$C_1 \le C_0^2$
	and 
	$$ C_2 \leqc (1 + C_0^{n+2})\sup_{K \ge 1} \{K^{1/4 - r_0}\log(K)^{n+1}\}. $$
	This completes the proof.
\end{proof}

\subsection{Jordan block unstable estimates} \label{sec:jordanestimates}

In this section, we consider the case where for each $z \in \mathrm{ker}A$ the eigenvalues of $L_{z/|z|}^\perp$ all have non-positive real part, but there exists an unstable Jordan block of size greater than or equal to two corresponding to an eigenvalue $\lambda$ with $\text{Re}(\lambda) = 0$. In other words, there exists $1 \le J \le n-2$ such that
\begin{equation}\label{eq:jordangrowth}
t^J \leqc \|e^{J_{z/|z|}^\perp t}\|_{\R^n \to \R^n} \leqc (1+t^J)
\end{equation}
for all $t \ge 0$. Note that when \eqref{eq:jordangrowth} holds there necessarily exists a generalized eigenvector $v = v_R + iv_I$ of $L_{z/|z|}^\perp$ corresponding to eigenvalue $\lambda$ such that, defining $V = \mathrm{span}\{P_{z/|z|}^{-1}v\}$, there holds both
\begin{equation}\label{eq:jordaneigass}
\Pi_V J_{z/|z|}^\perp x = \lambda J_{z/|z|}^\perp \Pi_V x \quad \forall x \in \C^n
\end{equation}
and
\begin{equation}\label{eq:jordangrowthv}
\left|e^{J_{z/|z|}^\perp t}P^{-1}_{z/|z|}v\right| \gtrsim t.
\end{equation}

In this setting, the analogue of Lemma~\ref{lem:1dspectralgrowth} is stated as follows.

\begin{lemma}\label{lem:1dJordangrowth}
	Let $z \in \mathrm{ker}A$ and assume that $L_{z/|z|}^\perp$ has an unstable Jordan block in the sense that \eqref{eq:jordangrowth} holds. Suppose that there exists a generalized eigenvector $v = v_R + iv_I$ satisfying  \eqref{eq:jordaneigass} and \eqref{eq:jordangrowthv} above as well as $\{v_R,v_I\}\setminus \{0\} \cap \mathrm{Ran}(\sigma) \neq \emptyset$. Let $\tilde{v}$ be the generalized eigenvector such that $(L_{z/|z|}^\perp - \lambda)v = \tilde{v}$ and define $\tilde{V} = \mathrm{span}\{P_{z/|z|}^{-1}\tilde{v}\}$. For $p \in (0,2/3)$, set $r=1-3p/2 > 0$ and
	$$\tau(|z|) = |z|^{-p}.$$
	There are constants $c_*, \beta > 0$ that do not depend on $|z|$ so that the solution to \eqref{eq:linearized1} satisfies
	\begin{equation} \label{eq:1djordangrowth}
	\P\left(\frac{1}{\tau(|z|)} \int_0^{\tau(|z|)} |\Pi_{\tilde{V}} P^{-1}_{z/|z|}Y_t(\omega)| dt \ge c_* |z|^{r}\right) \ge \beta. 
	\end{equation}
\end{lemma}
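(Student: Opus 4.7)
My approach parallels the proof of Lemma~\ref{lem:1dspectralgrowth}, but with the exponential Jordan-block growth replaced by purely polynomial growth from the nilpotent part, since now $\mathrm{Re}\,\lambda = 0$ makes $|e^{|z|\lambda t}| = 1$. The plan is to pass to Jordan coordinates, run the noise for a short initial interval to accumulate a $\sqrt{\cdot}$-sized component in direction $V$, then let the Jordan shift transport this component into $\tilde V$ with a linear-in-time amplification factor $|z|(t-t_*)$. The choice $\tau = |z|^{-p}$ is precisely what balances the three effects to produce a $|z|^{r}$ lower bound with $r = 1 - 3p/2$.

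Concretely, set $\tilde Y_t = P_{z/|z|}^{-1} Y_t$, $J = J_{z/|z|}^\perp$, $\tilde\sigma = P_{z/|z|}^{-1}\Pi_{\mathrm{ker}A^\perp}\sigma$, and $t_* = \tau/2$, and assume WLOG $v_R \in \Range(\sigma)$. Since $P^{-1}\bar v$ lies in the conjugate Jordan block (annihilated by both $\Pi_V$ and $\Pi_{\tilde V}$), the assumption~\eqref{eq:jordaneigass} and $(L-\lambda)v = \tilde v$ give the two key identities
\begin{equation*}
\Pi_V\, e^{|z| J t}\, P^{-1} v_R = \tfrac12 e^{\lambda|z|t}\, P^{-1} v,
\qquad
\Pi_{\tilde V}\, e^{|z| J t}\, P^{-1} v = e^{\lambda|z|t}\, |z| t\, P^{-1}\tilde v.
\end{equation*}
The first, combined with $|e^{\lambda|z|s}| = 1$, yields $\mathrm{Var}(\Pi_V \tilde Y_{t_*}) \gtrsim t_*$, so a complex-Gaussian anti-concentration argument exactly as in the derivation of~\eqref{eq:diffusive} produces constants $R \ge 1$ and $\beta_1 > 0$, independent of $|z|$, such that $\P(|\Pi_V \tilde Y_{t_*}| \ge R\sqrt{t_*}) \ge \beta_1$.

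For the transport step, decompose $\tilde Y_t = e^{|z| J(t - t_*)} \tilde Y_{t_*} + N_t$, where $N_t = \int_{t_*}^t e^{|z| J(t-s)} \tilde\sigma\, dW_s$. On the event from Step~1, the second identity gives $|\Pi_{\tilde V}\, e^{|z| J(t-t_*)} \tilde Y_{t_*}| \gtrsim R \sqrt{t_*}\, |z| (t - t_*)$. Since $v$ sits at the top of its Jordan chain, the only $|z| s$-factor in $\Pi_{\tilde V} e^{|z| J s} \tilde\sigma$ comes from the $P^{-1} v$ component of each column of $\tilde\sigma$, so
\begin{equation*}
\E|\Pi_{\tilde V} N_t|^2 \lesssim \int_{t_*}^t \bigl(1 + |z|(t-s)\bigr)^2 ds \lesssim |z|^2 (t - t_*)^3,
\end{equation*}
and hence $\E\bigl[\tau^{-1}\int_{t_*}^\tau |\Pi_{\tilde V} N_t|\, dt\bigr] \lesssim |z| \tau^{3/2}$, with Chebyshev giving the corresponding tail control.

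Integrating the deterministic lower bound over $[t_*, \tau]$ with $t_* = \tau/2$ gives
\begin{equation*}
\tau^{-1}\!\int_{t_*}^\tau |\Pi_{\tilde V}\, e^{|z| J(t-t_*)} \tilde Y_{t_*}|\, dt
\gtrsim R |z| \sqrt{t_*}\, (\tau - t_*)^2 / \tau
\gtrsim R |z| \tau^{3/2}
= R |z|^{\,1 - 3p/2}
= R |z|^{r},
\end{equation*}
so choosing $R$ large enough (depending on $\beta_1$) makes the noise contribution at most $R/2$ times this quantity off of an event of probability at most $\beta_1/2$, and the bound for $\Pi_{\tilde V} P^{-1}_{z/|z|} Y_t$ follows with probability at least $\beta := \beta_1/2$. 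The main obstacle is that there is \emph{no} separation of scales between $D_t$ and $N_t$: both naturally live at size $|z|\tau^{3/2}$, in contrast to Lemma~\ref{lem:1dspectralgrowth} where the exponential $e^{\lambda_R |z| t}$ made the transported head-start dominate. The choice $t_* = \tau/2$ is what resolves this, making $\sqrt{t_*}$ and $\sqrt{\tau - t_*}$ comparable so that the constant $R$ from Step~1 is a genuine dimensionless parameter that can be tuned once and for all to beat the Gaussian fluctuations uniformly in $|z|$.
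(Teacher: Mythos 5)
The overall two-step structure of your argument — diffuse into the $V$-direction up to time $t_* = \tau/2$, then let the nilpotent Jordan shift transport into $\tilde V$ over $[t_*,\tau]$ — matches the paper's proof, and the scale balancing $\tau = |z|^{-p}$ giving $|z|\tau^{3/2} = |z|^{r}$ is identified correctly. However, Step 2 contains a genuine gap. You claim the pointwise bound $|\Pi_{\tilde V}\, e^{|z| J(t-t_*)} \tilde Y_{t_*}| \gtrsim R \sqrt{t_*}\, |z|(t - t_*)$ follows from the identity $\Pi_{\tilde V}e^{|z| J s}P^{-1}v = e^{\lambda|z|s}|z| s\, P^{-1}\tilde v$. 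But by \eqref{eq:jordantgrowth} the transported vector decomposes as
\[
\Pi_{\tilde V}\, e^{|z| J s}\, \tilde Y_{t_*} = e^{\lambda |z| s}\bigl(\Pi_{\tilde V}\tilde Y_{t_*} + s|z|\, \Pi_V \tilde Y_{t_*}\bigr),
\]
and the first term $\Pi_{\tilde V}\tilde Y_{t_*}$ is completely uncontrolled by your Step~1 event (which only lower-bounds $|\Pi_V\tilde Y_{t_*}|$). Since $\Pi_{\tilde V}\tilde Y_{t_*}$ naturally lives at the \emph{same} scale $|z|\tau^{3/2}$ as the Jordan-shift term, the two can cancel at some specific $s = s_0 \in [0,\tau/2]$; the pointwise lower bound then fails at $t = t_* + s_0$, and the integral bound you derive from it is unjustified.

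The missing ingredient is precisely the paper's elementary inequality \eqref{eq:noncancellation}: for all $a,b$ and $T>0$, $\int_0^T |a + bt|\, dt \gtrsim |b| T^2$ with a constant independent of $a$. Applied to the integrand $|\Pi_{\tilde V}\tilde Y_{t_*} + s|z|\,\Pi_V\tilde Y_{t_*}|$, it gives the time-averaged lower bound $\gtrsim R|z|\tau^{3/2}$ \emph{regardless} of the unknown $\tilde V$-head-start, which is exactly what you need. Replacing the false pointwise claim with this integral inequality would close the gap and make your argument coincide with the paper's.
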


\begin{remark}\label{rem:Jordan}
	The assumptions above imply that $v$ is the first vector in a Jordan chain of length greater than or equal to two corresponding to eigenvalue $\lambda$. Thus, $\tilde{v}$ is a generalized eigenvector in the same chain and for any $x \in \C^n$ there holds 
	\begin{equation} \label{eq:jordantgrowth}
	\Pi_{\tilde{V}} e^{J_{z/|z|}^\perp t} x = e^{\lambda t}(\Pi_{\tilde{V}}x + t\Pi_{V} x).
	\end{equation}
\end{remark}

\begin{remark}\label{rem:jordanconstants1}
	Similar to Lemma~\ref{lem:1dspectralgrowth}, if $\sigma$ is invertible and $\|P_{z/|z|}\| + \|P_{z/|z|}^{-1}\| \le C$, then $c_*$ and $\beta$ can be chosen depending only on $C$ and $\sigma$. This follows directly from the proof below after noting that when $\sigma$ is invertible, the constant $c_1$ in \eqref{eq:YtilGaussian} satisfes $c_1 \gtrsim (\|\sigma^{-1}\| \|P_{z/|z|}\|)^{-2}$.
\end{remark}

\begin{proof} 
	As in the proof of Lemma~\ref{lem:1dspectralgrowth}, we assume that $v_R \in \mathrm{Ran}(\sigma)$ and $\lambda \neq 0$ (so that $\lambda$ is pure imaginary); the case $\lambda = 0$ is a straightforward variation. Denote $P_{z/|z|}$ and $J_{z/|z|}^\perp$ by $P$ and $J^\perp$, respectively. Let $\tilde{Y}_t = P^{-1}Y_t \in \C^n$, which is given by the formula
	\begin{equation}
	\label{eq:jordanYtil}
	\tilde{Y}_t = e^{|z| J^\perp t}\tilde{Y}_0 + \int_0^t e^{|z| J^\perp (t-s)}P^{-1}\Pi_{\mathrm{ker}A^\perp}\sigma dW_s.
	\end{equation}
	We will follow the same general strategy as in the proof of Lemma~\ref{lem:1dspectralgrowth}. 
	
	We first show that for every $R > 0$ there exists $\beta_1 > 0$ so that 
	\begin{equation}\label{eq:1djordanstep1}
	\P\left( |\Pi_{V} \tilde{Y}_{\tau/2}| \ge R \sqrt{\tau} \right) \ge \beta_1.
	\end{equation}
	Since $v_R \in \mathrm{Ran}(\sigma)$ and 
	\begin{equation}
	|\Pi_V e^{|z| J^\perp (t-s)} P^{-1}v_R| \gtrsim 1,
	\end{equation}
	we have
	\begin{equation}\label{eq:YtilGaussian}
	\begin{aligned}
	\text{Var}(\Pi_V \tilde{Y}_{\tau/2}) &= \int_0^{\tau/2} \left\| \Pi_V e^{|z| J^\perp (t-s)}P^{-1}\Pi_{\mathrm{ker}A^\perp} \sigma \right\|_F^2 ds  
	\ge c_1 \tau
	\end{aligned}
	\end{equation}
	for a constant $c_1$ depending $v_R$ and $\sigma$. The bound \eqref{eq:1djordanstep1} now follows from the fact that $\Pi_V \tilde{Y}_{\tau/2}$ is Gaussian.
	
	Next, as in Lemma~\ref{lem:1dspectralgrowth}, to complete the proof it suffices to show that there is $c_* > 0$ and $R$ sufficiently large so that
	\begin{equation} \label{eq:jordantransport}
	\P\left( \frac{1}{\tau} \int_0^{\tau/2}|\Pi_{\tilde{V}} \tilde{Y}_t|dt \ge c_* |z|^r \bigg | |\Pi_V \tilde{Y}_0| \ge R\sqrt{\tau} \right) \ge \frac{1}{2}.
	\end{equation}
	First, note the elementary fact that for any $a,b \in \R$ and $T > 0$ there holds 
	\begin{equation} \label{eq:noncancellation}
	\int_0^{T}|a + bt| dt \gtrsim |b|T^2 
	\end{equation}
	with the implicit constant independent of $a$, $b$ or $T$. One can see this easily by dividing the integral into $t \leq \min(-\frac{a}{b},T)$ and $t \geq \min(-\frac{a}{b},T)$. By \eqref{eq:noncancellation}, \eqref{eq:jordantgrowth}, and $|e^{\lambda t}| = 1$ we have 
	\begin{equation} \label{eq:jordanstep2.1}
	\frac{1}{\tau}\int_0^{\tau/2}|\Pi_{\tilde{V}} e^{|z| J^\perp t}\tilde{Y}_0|dt = \frac{1}{\tau}\int_0^{\tau/2} |\Pi_{\tilde{V}} \tilde{Y}_0 + t|z| |\Pi_V \tilde{Y}_0| dt \gtrsim R |z|\tau^{3/2}.
	\end{equation}
	Moreover, by the It\^o isometry,  
	\begin{equation} \label{eq:jordanstep2.2}
	\begin{aligned}
	\frac{1}{\tau} \E \int_0^{\tau} \left|\int_0^t \Pi_{\tilde{V}} e^{|z| J^\perp (t-s)}P^{-1} \Pi_{\mathrm{ker}A^\perp} \sigma dW_s\right|dt &\leqc \frac{\|P^{-1}\| \|\sigma\|}{\tau} \int_0^\tau \left(\int_0^t (1+|z|s)^2ds\right)^{1/2}dt \\ 
	& \leqc \|P^{-1}\| \|\sigma\| |z| \tau^{3/2}.
	\end{aligned} 
	\end{equation}
	Using the reverse triangle inequality and Chebyshev's inequality as in the proof of \eqref{eq:transport}, the estimates \eqref{eq:jordanstep2.1} and \eqref{eq:jordanstep2.2} together yield, for $R \gg \|P^{-1}\| \|\sigma\|$,
	$$
	\P\left( \frac{1}{\tau} \int_0^{\tau/2}|\Pi_{\tilde{V}} \tilde{Y}_t|dt \gtrsim R |z| \tau^{3/2} \bigg | |\Pi_V \tilde{Y}_0| \ge R\sqrt{\tau} \right) \ge \frac{1}{2}.
	$$ 
	Since $r(p)$ is such that $|z| \tau^{3/2} = |z|^r$ we obtain \eqref{eq:jordantransport}, completing the proof.
\end{proof}

We now turn to the analogue of Lemma~\ref{lem:1dspectralapprox} in the Jordan block unstable case. The idea is the same as in the spectrally unstable case. However, due to the slower timescale of the instability (i.e., $p < 1$ in Lemma~\ref{lem:1dJordanapprox}) we need to make use of the cancellation 
\begin{equation} \label{eq:cancellation}
\Pi_{\mathrm{ker}A} (B(y,z) + B(z,y)) = 0 \quad \forall z \in \mathrm{ker}A, y \in \mathrm{ker}A^\perp
\end{equation}
assumed in Theorem~\ref{thm:basic2}. We have not assumed \eqref{eq:cancellation} in Theorem~\ref{thm:basic1} since, as we will show in Lemma~\ref{lem:Bproperties}, the cancellation condition is automatically satisfied in the case that $\mathrm{dim}(\mathrm{ker} A) =1$ due to $B(x,x)\cdot x = 0$. 

\begin{lemma} \label{lem:1dJordanapprox}
	Suppose that $B(x,x) = 0$ for every $x \in \mathrm{ker}A$ and that the cancellation condition \eqref{eq:cancellation} is satisfied. Let $x_0 \in \R^n$ be such that $L_{z_0/|z_0|}^\perp$ is Jordan block unstable in the sense that \eqref{eq:jordangrowth} holds and suppose that there exists a generalized eigenvector $v$ satisfying the conditions in Lemma~\ref{lem:1dJordangrowth}. Fix any $r \in (0,1/7)$ and for $K \ge 1$ set 
	\begin{equation} \label{eq:jordantaudef}
	\eta(K) = 4 K^{\frac{2r-2}{3}}.
	\end{equation}
	There exists $c_* > 0$ and a universal constant $\delta_* \in (0,1)$ so that if
	$$|y_0| \le \delta |z_0|^{r} \text{ and } K/2 \le |x_0| \le 2K $$
	for $\delta \in (0,\delta_*)$ and $K \ge 1$, then 
	$$ \frac{1}{\eta(K)} \int_0^{\eta(K)} \E |y_t|^2 dt \ge c_* K^{2r}.$$
	Moreover, if $\sigma$ is invertible and 
	\begin{equation} \label{eq:jordanuniformass2}
	\|P_{z_0/|z_0|}\| + \|P^{-1}_{z_0/|z_0|}\| \le C_0
	\end{equation}
	for some $C_0 \ge 1$, then $c_*$ can be chosen depending only on $\sigma$ and $C_0$.
\end{lemma}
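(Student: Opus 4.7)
The plan is to adapt the proof of Lemma~\ref{lem:1dspectralapprox} to the Jordan block setting, now using the polynomial-time growth estimate of Lemma~\ref{lem:1dJordangrowth} and critically exploiting the cancellation condition \eqref{eq:cancellation} to handle the substantially slower Jordan-block timescale. We proceed by contradiction, assuming that for some small $\delta_1 > 0$ there holds
$$ \int_0^\tau \E|y_t|^2\,dt \le \delta_1\, \tau K^{2r}, $$
where $\tau = |z_0|^{-p}$ with $p = (2-2r)/3 \in (4/7, 2/3)$. The choice of $p$ ensures both that $\tau$ is comparable to a fixed multiple of $\eta(K)$ and that it matches the timescale required in Lemma~\ref{lem:1dJordangrowth}.

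The first step uses \eqref{eq:cancellation} together with $B(z_t,z_t) = 0$ to rewrite the $z$-equation as
$$ dz_t = \Pi_{\mathrm{ker}A}\bigl(B(y_t,y_t) - Ay_t\bigr)\,dt + \Pi_{\mathrm{ker}A}\sigma\,dW_t, $$
whose drift is \emph{quadratic} in $y_t$ rather than the linear-in-$y$ size $|y_t|\cdot|z|$ one would naively expect. This is essential: without the cancellation, on the much longer timescale $\tau$ the drift would cause $|z_t - z_0|$ to exceed $K^r$ in size, ruining the linearization around $z_0$. Combining the contradiction hypothesis with Cauchy--Schwarz, the basic energy moment bound of Lemma~\ref{lem:basicenergy}, and Doob's martingale inequality yields $\E\sup_{[0,\tau]}|z_t - z_0|$ arbitrarily small relative to $K^r$ for $\delta_1$ small and $K_*$ large. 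I then define $\Omega_0$ to be the event on which $\int_0^\tau|y_t|^2\,dt \le \sqrt{\delta_1}\tau K^{2r}$ and $\sup_{[0,\tau]}|z_t - z_0|$ remains suitably controlled, and Markov's inequality gives $\P(\Omega_0) \ge 1 - \beta/2$ with $\beta$ the probability from Lemma~\ref{lem:1dJordangrowth}.

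Next, compare $y_t$ with the linearization $Y_t$ solving $dY_t = L^\perp_{z_0}Y_t\,dt + \Pi_{\mathrm{ker}A^\perp}\sigma\,dW_t$ and $Y_0 = \Pi_{\mathrm{ker}A^\perp}x_0$. The error satisfies a Duhamel identity of the same form as \eqref{eq:1dspectralerror}, and I bound it via Young's convolution inequality together with the polynomial propagator estimate $\|e^{L^\perp_{z_0}u}\| \lesssim 1 + (|z_0|u)^J$. Substituting the $L^2$-in-time control on $y_t$ and the uniform control on $|z_t - z_0|$ from step one, each of the nonlinear forcing contributions $|y|^2$, $|y||z_0-z|$, and $|Ay|$ produces an integrated error of order $\delta_1^{1/4}\tau K^r$ or smaller, the precise balancing of exponents hinging on the relations $p = (2-2r)/3$ and $r < 1/7$. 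This yields $\int_0^\tau|Y_t - y_t|\,dt \le C\delta_1^{1/4}\tau K^r$ on $\Omega_0$. Lemma~\ref{lem:1dJordangrowth} applied to $Y_t$ then provides, with probability at least $\beta$, the lower bound $\tau^{-1}\int_0^\tau|Y_t|\,dt \gtrsim K^r$; intersecting with $\Omega_0$ and applying the reverse triangle inequality and Cauchy--Schwarz we obtain $\E\int_0^\tau|y_t|^2\,dt \gtrsim \beta\, \tau K^{2r}$, contradicting the hypothesis for $\delta_1$ sufficiently small. Converting the bound from $\tau$ to $\eta(K)$ is a routine rescaling exactly as at the end of the proof of Lemma~\ref{lem:1dspectralapprox}.

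The main obstacle is closing the error estimate: because the Jordan instability manifests only on the long timescale $\tau \sim K^{-p}$ with $p < 2/3$, the polynomial factor $(|z_0|\tau)^J$ in the Duhamel propagator makes the linearization error large, and balancing it against $\tau K^r$ forces both the cancellation condition (without which $|z_t - z_0|$ would be too large by a factor of $K$) and the sharp constraint $r < 1/7$. Finally, the uniform version under invertibility of $\sigma$ and \eqref{eq:jordanuniformass2} follows by tracking constants through the argument, using Remark~\ref{rem:jordanconstants1} to conclude that $\beta$ and $c_*$ from Lemma~\ref{lem:1dJordangrowth} depend only on $\sigma$ and $C_0$ in this regime.
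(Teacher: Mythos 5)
The overall architecture (contradiction hypothesis, exploiting the cancellation to get an improved $z$-drift, defining $\Omega_0$, Duhamel for the error, reverse triangle + Cauchy--Schwarz to conclude) matches the paper. However, there is a genuine gap in the error estimate that is exactly the point where the Jordan case differs from the spectral case.

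You propose to bound the full error $\int_0^\tau |Y_t - y_t|\,dt$ by applying Young's convolution inequality with the crude propagator estimate $\|e^{L^\perp_{z_0} u}\| \lesssim 1 + (|z_0|u)^J$. This does not close for $J \ge 2$. Concretely, with $\tau \approx K^{(2r-2)/3}$ and $|z_0| \approx K$, the $L^1(0,\tau)$ norm of $u \mapsto 1+(|z_0|u)^J$ is $\approx K^J \tau^{J+1} = K^{J + (J+1)(2r-2)/3}$, and multiplying by the dominant forcing term $\int_0^\tau |y_s|^2 \, ds \lesssim \delta_1^{1/4}\tau K^{2r}$ gives a bound of order $\delta_1^{1/4}\tau K^{2r} \cdot K^{J+(J+1)(2r-2)/3}$. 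Demanding this be $\lesssim \delta_1^{1/4}\tau K^r$ forces $J(1+2r) + 5r - 2 \le 0$, which for $J=1$ reduces to $r \le 1/7$ but for $J=2$ already requires $r \le 0$ and gets worse with $J$. So your balance of exponents, ``hinging on $p = (2-2r)/3$ and $r < 1/7$,'' in fact only works when $J = 1$; the lemma is stated for arbitrary $1 \le J \le n-2$.

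The paper's actual proof avoids this by never estimating the full error $|Y_t - y_t|$. Instead, it controls only the projected error $|\Pi_{\tilde V} P_{z_0/|z_0|}^{-1}(Y_t - y_t)|$, where $\tilde V = \mathrm{span}\{P^{-1}\tilde v\}$ and $\tilde v$ is the second element of the Jordan chain (so that $(L^\perp_{z_0/|z_0|}-\lambda)v = \tilde v$). Because of \eqref{eq:jordantgrowth} in Remark~\ref{rem:Jordan}, $\Pi_{\tilde V}$ composed with the Jordan exponential sees only the first two coordinates of the block, so the relevant propagator bound is linear regardless of the block size: $\|\Pi_{\tilde V}P^{-1}_{z_0/|z_0|}e^{L^\perp_{z_0}t}\| \le C_0(1+|z_0|t)$. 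This reduces the exponent count to the $J=1$ scenario for every $J$, and the constraint $r < 1/7$ then closes the estimate. Correspondingly, the lower bound from Lemma~\ref{lem:1dJordangrowth} is phrased precisely in terms of $\int_0^\tau |\Pi_{\tilde V}P^{-1}Y_t|\,dt$, so the projected error estimate is exactly what is needed to conclude via the reverse triangle inequality; the final bound on $\E\int|y_t|^2$ then follows since $|\Pi_{\tilde V}P^{-1}y_t| \le C_0|y_t|$. You should revise the error step to project onto $\tilde V$ before applying Duhamel and Young.
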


\begin{proof}
	We will consider the case where $\sigma$ is invertible and \eqref{eq:jordanuniformass2} holds. The proof when one only assumes that $\{v_R,v_I\}\setminus \{0\} \cap \mathrm{Ran}(\sigma) \neq \emptyset$ follows from exactly the same argument. Let $K \ge 1$ be such that $K/2 \le |x_0| \le 2K$ and suppose for the sake of contradiction that 
	\begin{equation} \label{eq:jordancontr}
	\E \int_0^{\eta} |y_t|^2 dt \le \delta_1 \eta K^{2r}
	\end{equation}
	for $\delta_1 \in (0,1)$ and $\eta = \eta(K)$ given by \eqref{eq:jordantaudef}. As in the proof of Lemma~\ref{lem:1dspectralapprox}, we will obtain a contradiction for $\delta_1$ sufficiently small.
	
	By the cancellation condition \eqref{eq:cancellation}, the equation for $z_t$ is given by 
	\begin{equation} \label{eq:frozenz2.2}
	dz_t = \Pi_{\mathrm{ker}A}(B(y_t,y_t)-Ay_t) dt + \Pi_{\mathrm{ker}A}\sigma dW_t. 
	\end{equation}
	It follows that
	\begin{equation} \label{eq:frozenz2}
	\E \sup_{0 \le t \le \eta}|z_t - z_0| \leqc \eta \sqrt{\delta_1} K^{2r} + \sqrt{\eta} \leqc K^{\frac{r-1}{3}},
	\end{equation}
	where we have used that the choices of $\eta$ and $r$ are such that 
	$$\eta K^{2r} \leqc \sqrt{\eta} \leqc K^{\frac{r-1}{3}}.$$ Let $c_*, \beta > 0$ be as in Lemma~\ref{lem:1dJordangrowth} applied with $z = z_0$ and $p = (2-2r)/3$. Recall from Remark~\ref{rem:jordanconstants1} that $c_*$ and $\beta > 0$ depend only on $C_0$ and $\sigma$. By \eqref{eq:jordancontr} and \eqref{eq:frozenz2}, for $\delta_1$ sufficiently small and $R$ sufficiently large, both depending on $\beta$, we have 
	\begin{equation}
	\P\left(\Omega_0 = \left\{\omega \in \Omega: \int_0^\eta |y_t|^2 dt \le \sqrt{\delta_1}\eta K^{2r}, \sup_{0 \le t \le \eta}|z_t-z_0| \le RK^{\frac{r-1}{3}}\right\} \right) \ge 1-\frac{\beta}{2}.
	\end{equation}
	Now, as in the proof of Lemma~\ref{lem:1dspectralapprox}, let $Y_t$ solve 
	$$
	\begin{cases}
	dY_t =  L_{z_0}^\perp Y_t dt + \Pi_{\mathrm{ker}A^\perp}\sigma dW_t \\ 
	Y_t|_{t=0} = \Pi_{\mathrm{ker}A^\perp}x_0.
	\end{cases}
	$$
	Let $\tilde{v}$ and $\tilde{V}$ be as in Lemma~\ref{lem:1dJordangrowth}. By the Jordan canonical form, we have
	$$\|\Pi_{\tilde{V}}P^{-1}_{z_0/|z_0|} e^{L_{z_0}^\perp t}\| \le C_0 (1+|z_0|t) \le C_0(1+2Kt).$$
	Using this in \eqref{eq:1dspectralerror}
	we obtain, for $\omega_0 \in \Omega_0$, 
	\begin{align*}
	\int_0^\eta |\Pi_{\tilde{V}}P_{z_0/|z_0|}^{-1}(Y_t(\omega_0) - y_t(\omega_0))|dt &\leqc C_0\left(\int_0^\eta (1+Kt)dt\right) \left(\delta_1^{1/4} \eta K^{2r} + R\delta_1^{1/4}\eta K^{\frac{r-1}{3}}K^r\right) \\ 
	& \leqc C_0\delta_1^{1/4}\eta K^r (K\eta)\left(\eta K^r + R\eta K^{\frac{r-1}{3}}\right) \\ 
	& \leqc C_0 R\delta_1^{1/4}\eta K^r (K\eta) \eta K^r,
	\end{align*}
	where in the last line we noted that trivially $K^{\frac{r-1}{3}} \le K^r$. Observe now that the restriction $r < 1/7$ and the formula for $\eta$ imply that 
	$$ (K\eta)\eta K^r\le 1,$$
	and thus we have 
	\begin{equation} \label{eq:approx2}
	\int_0^\eta |\Pi_{\tilde{V}}P_{z_0/|z_0|}^{-1}(Y_t(\omega_0) - y_t(\omega_0))| dt \leqc C_0 R \delta_1^{1/4} \eta K^r.
	\end{equation}
	
	We now use \eqref{eq:approx2} and Lemma~\ref{lem:1dJordangrowth} to complete the proof. Suppose that $\delta$ is small enough so that $K/4 \le |z_0| \le 2K$. Then, the choice of $\eta(K)$ ensures that 
	$$|z_0|^{\frac{2r-2}{3}}\le  \eta(K) \le 8 |z_0|^{\frac{2r-2}{3}}. $$
	Thus, from Lemma~\ref{lem:1dJordangrowth} we have 
	\begin{equation} \label{eq:1dJordanapprox1}
	\P\left( \frac{1}{\eta(K)}\int_0^{\eta(K)}|\Pi_{\tilde{V}}P^{-1}_{z_0/|z_0|}Y_t| dt \ge \frac{c_*}{32}K^r \right) \ge \beta.
	\end{equation}
	It follows from \eqref{eq:approx2}, \eqref{eq:1dJordanapprox1}, and $\P(\Omega_0) \ge 1-\beta/2$ that for $\delta_1$ sufficiently small depending only on $c_*$, $\beta$, and $C_0$ there holds 
	$$
	\E \int_0^\eta |\Pi_{\tilde{V}}P^{-1}_{z_0/|z_0|} y_t|^2 dt \ge \left( \frac{c_*}{64}\right)^2 \frac{\beta}{2} \eta K^{2r}.
	$$
	Therefore,
	$$ \E \int_0^\eta |y_t|^2 dt \ge \frac{1}{C_0^2} \left( \frac{c_*}{64}\right)^2 \frac{\beta}{2} \eta K^{2r}. $$
	We obtain a contradiction by taking $\delta_1$ perhaps even smaller to guarantee 
	$$ \delta_1 \le \frac{1}{C_0^2} \left(\frac{c_*}{64}\right)^2 \frac{\beta}{4} .$$
	Since $c_*$ and $\beta$ depend only on $C_0$ and $\sigma$ we obtain 
	$$ \E \int_0^\eta |y_t|^2 dt \ge c\eta K^{2r}$$
	for a constant $c$ depending only on $C_0$ and $\sigma$, which completes the proof.
\end{proof}

\begin{remark} \label{rem:1dcancellation}
	If the cancellation condition \eqref{eq:cancellation} is assumed in the spectrally unstable case, so that \eqref{eq:frozenz2.1} can be replaced with \eqref{eq:frozenz2.2}, one can show that any $r \in (0,1)$ is permissible in   Lemma~\ref{lem:1dspectralapprox}.
\end{remark}

\subsection{Concluding the proofs of Theorems~\ref{thm:basic1} and~\ref{thm:basic2}} \label{sec:basicproof}

In this section we use the results from Sections~\ref{sec:spectralestimates} and~\ref{sec:jordanestimates} to prove Theorems~\ref{thm:basic1} and~\ref{thm:basic2}.

\subsubsection{$\text{dim}(\text{ker}A) = 1$}\label{sec:basic1dproof}

In this section we prove Theorem~\ref{thm:basic1}. It is a special case of the result below, stated for more general assumptions on $\sigma$. Recall that we denote $\mathcal{U} = \mathrm{ker}A \cap \S^{n-1}$ and for $x \in \mathcal{U}$ write 
$$ L_x^\perp = P_x J_x^\perp P_x^{-1} $$
for the Jordan normal form of $L_x^\perp$.
\begin{theorem} \label{thm:basic1sigma}
	Suppose that $\mathcal{U} = \set{x_0,-x_0}$ for some unit vector $x_0$  and that for each $x \in \mathcal{U}$ there holds
	\begin{align} \label{eq:1dassumption}
	B(x,x) = 0, \quad \lim_{t \to \infty} \norm{e^{tL^\perp_{x}}} = \infty.
	\end{align}
	Moreover, let $\sigma$ satisfy the following conditions (which hold trivially when $\mathrm{rank}(\sigma) = n$).
	\begin{itemize}
		\item If $x \in \mathcal{U}$ is such that $L_x^\perp$ has an eigenvalue with positive real part, then there is a generalized eigenvector $v = v_R + iv_I$ associated with the eigenvalue $\lambda$ of $L_x^\perp$ with maximal real part such that $\{v_R, v_I\}\setminus \{0\} \cap \mathrm{Ran}(\sigma) \neq \emptyset$ and 
		\begin{equation}\label{eq:endchain} \Pi_{\mathrm{span}\{P_x^{-1}v\}} J_x^\perp y = \lambda J_x \Pi_{\mathrm{span}\{P_x^{-1}v\}}y \quad \forall y\in \C^n. 
		\end{equation}
		\item If $x \in \mathcal{U}$ is such that $t^J \leqc \|e^{L_x^\perp t}\| \leqc 1 + t^J$ for some $1 \le J \le n-2$, then there is a generalized eigenvector $v = v_R + iv_I$ associated with an eigenvalue $\lambda$ of $L_x^\perp$ with $\mathrm{Re}(\lambda) = 0$ that satisfies $\{v_R, v_I\}\setminus \{0\} \cap \mathrm{Ran}(\sigma) \neq 0$, \eqref{eq:endchain}, and $|e^{J_x^\perp t} P_x^{-1} v| \gtrsim t$.
	\end{itemize}
	Then, there exists at least one stationary measure $\mu$ and $\brak{x}^p \in L^1(\dee \mu)$ for all $ p < 1/3$. 
\end{theorem}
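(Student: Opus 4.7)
The plan is to verify Assumption~\ref{ass:time-average} and then invoke Lemma~\ref{lem:time-averaged_step1}. Given $p < 1/3$, first choose $r \in (0,1/7)$ close enough to $1/7$ so that $p < 2r/(1-r)$; then Lemma~\ref{lem:time-averaged_step1} will deliver a stationary measure with $\langle x \rangle^p \in L^1(d\mu)$ once Assumption~\ref{ass:time-average} is verified for this $r$. Note that if $L_{x_0}^\perp$ turns out to be spectrally unstable one can take any $r < 1/4$ and get better moments, but to state a single theorem covering both eigenvalue scenarios we keep $r < 1/7$.

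Next, I would verify Assumption~\ref{ass:time-average}. Because $\mathrm{ker}A$ is one-dimensional, any $x \in B_K$ has $\Pi_{\mathrm{ker}A}x = \alpha x_0$ with either $\alpha > 0$ or $\alpha < 0$. Decompose
\[
B_K = B_{K,+} \cup B_{K,-}, \qquad B_{K,\pm} = \{x \in B_K : \pm (\Pi_{\mathrm{ker}A}x) \cdot x_0 > 0\},
\]
so that on $B_{K,\pm}$ the unit vector $z_0/|z_0|$ equals $\pm x_0 \in \mathcal{U}$. On each piece the hypothesis $\lim_{t\to\infty}\|e^{tL_{\pm x_0}^\perp}\| = \infty$ combined with the Jordan normal form forces either (a) $L_{\pm x_0}^\perp$ has an eigenvalue with positive real part, or (b) $t^J \lesssim \|e^{tL_{\pm x_0}^\perp}\| \lesssim 1 + t^J$ for some $1 \le J \le n-2$. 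In case (a) the hypotheses on $\sigma$ provide the generalized eigenvector required by Lemma~\ref{lem:1dspectralapprox}, which yields the short-time coercivity bound \eqref{eq:time-averaged_main} on $B_{K,\pm}$ with $\eta(K) \asymp \log(K)/K$. In case (b) the hypotheses on $\sigma$ are exactly those required by Lemma~\ref{lem:1dJordanapprox}, which yields \eqref{eq:time-averaged_main} on $B_{K,\pm}$ with $\eta(K) \asymp K^{(2r-2)/3}$; in either case $\eta(K) \to 0$ as $K \to \infty$, so the finite collection of time scales needed by Assumption~\ref{ass:time-average} is provided by the (at most) two values $\eta_\pm(K)$ associated with $B_{K,\pm}$.

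The only remaining input is the cancellation hypothesis \eqref{eq:cancellation} needed by Lemma~\ref{lem:1dJordanapprox}. This is automatic when $\dim(\mathrm{ker}A) = 1$: writing $T(a,b,c) = B(a,b) \cdot c$, the identity $T(x,x,x) = 0$ polarized in the direction $(z,z,y)$ with $z \in \mathrm{ker}A$, $y \in \mathrm{ker}A^\perp$ yields $T(z,z,y) + T(z,y,z) + T(y,z,z) = 0$, and since $B(z,z) = 0$ by assumption this simplifies to $(B(z,y) + B(y,z)) \cdot z = 0$. Because $\mathrm{ker}A = \mathrm{span}\{x_0\}$ this is exactly $\Pi_{\mathrm{ker}A}(B(z,y)+B(y,z)) = 0$, i.e.\ \eqref{eq:cancellation}. (This is the content of Lemma~\ref{lem:Bproperties} in the one-dimensional case.)

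The main substantive work has already been carried out in Lemmas~\ref{lem:1dspectralapprox} and~\ref{lem:1dJordanapprox}, so the only real task here is the bookkeeping just described; no step presents a serious obstacle beyond verifying that Lemma~\ref{lem:1dspectralapprox} and Lemma~\ref{lem:1dJordanapprox} can be applied uniformly over $x_0$ and $-x_0$, which is immediate since $\mathcal{U}$ is a two-point set (so the constants $C_0$ governing $\|P_{\pm x_0}\| + \|P_{\pm x_0}^{-1}\|$ are trivially uniform and the $\sigma$-hypotheses are imposed separately at each point of $\mathcal{U}$). Applying Lemma~\ref{lem:time-averaged_step1} then completes the proof.
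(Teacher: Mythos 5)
Your proposal is correct and follows essentially the same route as the paper's own proof: fix $r<1/7$, split $B_K$ according to the sign of $\Pi_{\mathrm{ker}A}x\cdot x_0$, invoke Lemma~\ref{lem:1dspectralapprox} or Lemma~\ref{lem:1dJordanapprox} on each piece (using the hypotheses on $\sigma$ to furnish the needed generalized eigenvector), and conclude via Lemma~\ref{lem:time-averaged_step1}. Your polarization derivation of the cancellation identity \eqref{eq:cancellation} in the one-dimensional kernel case is a clean equivalent of what the paper's Lemma~\ref{lem:Bproperties} establishes by differentiating $B(x,x)\cdot x=0$.
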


We begin by showing that the cancellation condition \eqref{eq:cancellation} is automatically satisfied in one dimension due to $B(x,x)\cdot x = 0$.

\begin{lemma} \label{lem:Bproperties}
	Let $\Pi: \R^n \to \R^n$ be a projection onto a one-dimensional subspace of $\R^n$. Suppose $B:\R^n \times \R^n \to \R^n$ is a bilinear function satisfying $B(x,x) \cdot x = 0$ and 
	$$ B(\Pi x, \Pi x) = 0 \quad \forall x\in \R^n. $$
	Then, 
	$$\Pi  B(\Pi x,\Pi^\perp x) +  \Pi B(\Pi^\perp x, \Pi x) = 0$$
	for every $x \in \R^n$.
\end{lemma}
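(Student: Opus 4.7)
Let $e \in \R^n$ be a unit vector spanning $\mathrm{Ran}(\Pi)$, so that $\Pi z = (z \cdot e)\, e$ for every $z \in \R^n$. Writing $y = \Pi x$ and $w = \Pi^\perp x$, the conclusion $\Pi B(y,w) + \Pi B(w,y) = 0$ is equivalent to the scalar identity $(B(y,w) + B(w,y))\cdot e = 0$. Since $y = \alpha e$ for some $\alpha \in \R$, bilinearity of $B$ reduces this to proving
\begin{equation*}
(B(e,w) + B(w,e))\cdot e = 0 \qquad \text{for every } w \in \R^n \text{ with } w \cdot e = 0.
\end{equation*}

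The plan is to extract this identity as the coefficient of $t$ in the polynomial identity obtained by applying the hypothesis $B(x,x)\cdot x = 0$ along the affine line $x = e + tw$ with $w \perp e$. Expanding $B(e+tw, e+tw)\cdot (e+tw)$ via trilinearity yields a cubic polynomial in $t$ whose coefficients must vanish identically. The constant term is $B(e,e)\cdot e$, which vanishes because $B(\Pi z, \Pi z) = 0$ for every $z$ (applied with $z = e$) gives $B(e,e) = 0$ outright. The coefficient of $t$ is precisely
\begin{equation*}
B(e,e)\cdot w + (B(e,w) + B(w,e))\cdot e = (B(e,w) + B(w,e))\cdot e,
\end{equation*}
again using $B(e,e) = 0$. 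Setting this coefficient equal to zero gives exactly the identity we need.

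There is no real obstacle here; the proof is a one-line consequence of the polynomial identity once one observes that the projection $\Pi$ being one-dimensional allows the reduction to the scalar statement above and that $B(e,e) = 0$ follows directly from $B(\Pi z, \Pi z) = 0$ with $z = e$. The only minor subtlety worth flagging is that we never needed to consider the coefficients of $t^2$ or $t^3$ (which involve $B(w,w)$ and encode other constraints), since only the linear-in-$t$ coefficient produces the desired cancellation.
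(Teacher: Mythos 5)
Your proof is correct, and the core idea is the same as the paper's: polarize the identity $B(x,x)\cdot x = 0$ and use $B(e,e)=0$ (which you correctly derive from $B(\Pi z,\Pi z)=0$) to isolate the cross term. The paper phrases the polarization as two partial derivatives with respect to the coordinate along $\mathrm{Ran}(\Pi)$, whereas you expand $B(e+tw,e+tw)\cdot(e+tw)$ as a cubic polynomial in $t$ and read off the linear coefficient; these extract the same trilinear coefficient (the paper's $t^2$-in-$w$ coefficient equals your $t$-in-$e$ coefficient), and your coordinate-free version is, if anything, slightly cleaner.
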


\begin{proof}
	The property $B(x,x) \cdot x = 0$ remains true after any orthogonal coordinate transform, and so without loss of generality we may assume that $\Pi$ is the projection onto the subspace $\{(x_1,0,\ldots,0):x_1 \in \R\}$. In this setting, we need to show that 
	$$B_1(\Pi x,\Pi^\perp x) + B_1(\Pi^\perp x,\Pi x) = 0$$
	for any $x \in \R^n$. The condition $B(\Pi x, \Pi x) = 0$ implies that $\partial_{x_1}^2 B(x,x) = 0$. Hence, differentiating $B(x,x) \cdot x = 0$ twice with respect to $x_1$ gives
	$$
	\partial_{x_1}[B_1(x,x)] = 0.
	$$
	Substituting $x = \Pi x + \Pi^\perp x$ we find
	$$\partial_{x_1}(B_1(\Pi x,\Pi^\perp x) + B_1(\Pi^\perp x,\Pi x)) = 0.$$
	Noting that $B_1(\Pi x,\Pi^\perp x) + B_1(\Pi^\perp x,\Pi x) = 0$ when $x_1 = 0$ completes the proof.
\end{proof}

We are now ready to prove Theorem~\ref{thm:basic1sigma}: 
\begin{proof}[Proof of Theorem~\ref{thm:basic1sigma}]
	Let $\mathcal{U} = \{x_0,-x_0\}$ for $x_0 \in \mathrm{ker}A \cap \S^{n-1}.$ Fix $r < 1/7$ and for $\delta \in (0,1)$ to be chosen let
	$$B^\delta_K = \{x \in \R^n: |\Pi_{\mathrm{ker}A^\perp}x|^2 \le \delta |\Pi_{\mathrm{ker}A}x|^{2r} \text{ and }(1-\delta)K^2 \le |x|^2 \le (1+\delta)K^2\}. $$
	For any $x \in \R^n\setminus\{0\}$ there is $c > 0$ and $j \in \{1,2\}$ such that $\Pi_{\mathrm{ker}A}x = c(-1)^jx_0$. Therefore, defining for $j \in \{1,2\}$ the sets
	$$B^\delta_{K,j} = \{x \in B^\delta_K: \Pi_{\mathrm{ker}A}x = c(-1)^jx_0 \text{ for some }c > 0\},$$
	we have $B^\delta_K = B^\delta_{K,1} \cup B^\delta_{K,2}$. The assumptions in \eqref{eq:1dassumption} imply that $x_0$ and $-x_0$ are both equilibria of $B$ with $L_{x_0}^\perp$ and $L_{-x_0}^\perp$ spectrally or Jordan block unstable. Observe now that $r < 1/7$ is always permitted in Lemmas~\ref{lem:1dspectralapprox} and \ref{lem:1dJordanapprox}. Moreover, the associated
	$\eta$ always satisfies 
	$$ \eta(K) \leqc K^{-4/7}.$$
	Therefore, by Lemmas~\ref{lem:1dspectralapprox} and \ref{lem:1dJordanapprox} (note that we may apply Lemma~\ref{lem:1dJordanapprox} in the present one-dimensional setting due to Lemma~\ref{lem:Bproperties}) there are constants $\delta > 0$, $c_* > 0$, and $K_* \ge 1$ along with functions $\eta_j(K)$ satisfying $\lim_{K \to \infty}\sup_{j = 1,2}\eta_j(K) = 0$ such that for $K \ge K_*$ there holds 
	$$ x_0 \in B_{K,j} \implies \frac{1}{\eta_j(K)}\int_0^{\eta_j(K)} |\Pi_{\mathrm{ker}A^\perp}x_t|^2 dt \ge c_* K^{2r}.$$
	Thus, Assumption~\ref{ass:time-average} is satisfied for any $r < 1/7$. Theorem~\ref{thm:basic1sigma} then follows from Lemma~\ref{lem:time-averaged_main}.
\end{proof}

\begin{remark} \label{rem:moments1}
	Let $\mu$ be the stationary measure constructed in Theorem~\ref{thm:basic1sigma}. If $L_{x_0}^\perp$ and $L_{-x_0}^\perp$ are both spectrally unstable, then by Remark~\ref{rem:1dcancellation} and Lemma~\ref{lem:Bproperties} it holds that $\brak{x}^p \in L^1(\dee \mu)$ for every $p > 0$.
\end{remark}

\subsubsection{$\text{dim}(\text{ker}A) > 1$} \label{sec:spectralgeneral}

\begin{proof}[Proof of Theorem~\ref{thm:basic2}]
	
	We will give the details only for the spectrally unstable case, i.e., the case where there exists $C_0 \ge 1$ so that for every $z \in \mathcal{U}$ there is a maximally unstable eigenvalue $\lambda(z)$ of $L_z^\perp$ satisfying $\text{Re}(\lambda(z)) > 0$. Fix $r < 1/4$ and for $\delta \in (0,1/4)$, $K\ge 1$, and $z \in \mathcal{U}$, define the sets
	$$ B^\delta_{K} = \left\{x \in \R^n: |\Pi_{\mathrm{ker}A^\perp} x|^2 \le \delta |\Pi_{\mathrm{ker}A}x|^{2r}, (1-\delta)K^2 \le |x|^2 \le (1+\delta)K^2\right\}$$
	and 
	$$B^\delta_{K,z} = B_{K}^\delta \cap \left\{x \in \R^n: \Pi_{\mathrm{ker}A}x/|\Pi_{\mathrm{ker}A}x| = z \in \mathcal{U} \right\}.$$
	Since $\mathcal{U}$ is compact and the eigenvalues of a matrix vary continuously with respect its entries, we have 
	$$0 < \lambda_-:= \min_{z \in \mathcal{U}} \text{Re}(\lambda(z)) \le \max_{z \in \mathcal{U}} \text{Re}(\lambda(z)):=\lambda_+ < \infty.$$
	Therefore, by Lemma~\ref{lem:1dspectralapprox} and \eqref{eq:JNFAssumption} there exist $\delta_* \in (0,1/4)$, $K_* \ge 1$, and $c_* > 0$ so that for every $z \in \mathcal{U}$ and $K \ge K_*$, defining 
	$$
	\eta(K,z) = 10\left( \frac{(1/2+r)\log(K)}{\text{Re}(\lambda(z))K} \right),
	$$
	there holds 
	$$
	x_0 \in B_{K,z}^{\delta_*} \implies \frac{1}{\eta(K,z)}\E \int_0^{\eta(K,z)}|\Pi_{\mathrm{ker}A^\perp} x_t|^2 dt \ge c_* K^{2r}.
	$$
	We used here the statement at the end of Lemma~\ref{lem:1dspectralapprox} about the dependence of the constants when $\sigma$ is invertible and \eqref{eq:jordanuniformass2} holds. Since 
	$$ 10\left( \frac{(1/2+r)\log(K)}{\lambda_+ K} \right) \le \eta(K,z) \le 10\left( \frac{(1/2+r)\log(K)}{\lambda_-K} \right) $$
	for every $z \in \mathcal{U}$ it follows that for 
	$$ \eta_*(K):= 10\left( \frac{(1/2+r)\log(K)}{\lambda_-K} \right) $$
	and $K \ge K_*$ there holds 
	$$ x_0 \in B_{K}^{\delta_*} \implies \frac{1}{\eta_*(K)} \E \int_0^{\eta_*(K)} |\Pi_{\mathrm{ker}A^\perp} x_t|^2 dt \ge \frac{\lambda_-}{\lambda_+}c_* K^{2r}. $$
	Thus, Assumption~\ref{ass:time-average} is satisfied for any $r < 1/4$, which due to Lemma~\ref{lem:time-averaged_step1} completes the proof.
\end{proof}

\section{Sabra and Galerkin Navier-Stokes}\label{sec:combination}
\subsection{Statement and proof of general result}

In this section we state and prove the general theorem that will be used to obtain Theorem~\ref{thm:NS} announced earlier. 

\begin{theorem} \label{thm:combination}
	Let $\mathrm{rank}(\sigma) = n$ and suppose that $\mathrm{ker}A = V_1 \oplus V_2$ for orthogonal subspaces of $\R^n$ satisfying the following properties.
	\begin{itemize}
		\item For any $x \in V_1 \cup V_2$, $B(x,x) = 0$, i.e., $V_1$ and $V_2$ consist of deterministic equilibria.
		\item There is $C > 0$ so that 
		\begin{equation} \label{eq:JCFassumption2}
		\max_{j=1,2} \sup_{x \in V_j \cap \S^{n-1}}(\|P_{x,j}\| + \|P_{x,j}^{-1}\|) \le C,
		\end{equation}
		where $P_{x,j}J^\perp_j P^{-1}_{x,j}$ denotes the Jordan canonical form of $\Pi_{V_j^\perp} L_x \Pi_{V_j^\perp}$, with $L_x$ as defined in \eqref{eq:Lxdef}.
		\item There is $\lambda_{\mathrm{min}} > 0$ such that for any $j \in \{1,2\}$ and $x \in V_j \cap \S^{n-1}$ there is an eigenvalue $\lambda$ of  $\Pi_{V_j^\perp} L_x \Pi_{V_j^\perp}$ with $\mathrm{Re}(\lambda) \ge \lambda_{\mathrm{min}}$.
		\item There exists $c > 0$ so that for any $v_1 \in V_1$ and $v_2 \in V_2$ there holds
		\begin{equation}  \label{eq:combcancellation}
		\Pi_{\mathrm{ker}A} B(v_1,v_2) + \Pi_{\mathrm{ker}A} B(v_2,v_1)  = 0 
		\end{equation}
		and 
		\begin{equation}\label{eq:combolbd}
		|\Pi_{\mathrm{ker}A^\perp} B(v_1,v_2)+\Pi_{\mathrm{ker}A^\perp} B(v_2,v_1)| \ge c |v_1| |v_2|.
		\end{equation}
	\end{itemize}
	Then, there exists at least one stationary measure $\mu$ and $\brak{x}^p \in L^1(\dee \mu)$ for every $p < 2/3$.
\end{theorem}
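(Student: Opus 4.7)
The plan is to verify Assumption~\ref{ass:time-average} for every $r \in (0,1/4)$ and then apply Lemma~\ref{lem:time-averaged_step1}; letting $r \uparrow 1/4$ in the conclusion $\brak{x}^q \in L^1(d\mu_\ast)$ for $q < 2r/(1-r)$ exactly produces the desired range $p < 2/3$. The decisive algebraic fact driving the proof is that for $v_j \in V_j$, $j=1,2$, the assumption $B(v_j,v_j) = 0$ combined with \eqref{eq:combcancellation} and \eqref{eq:combolbd} gives
\[
B(v_1+v_2,v_1+v_2) \;=\; B(v_1,v_2) + B(v_2,v_1) \;\in\; \mathrm{ker}A^\perp, \qquad |B(v_1+v_2,v_1+v_2)| \ge c|v_1||v_2|.
\]

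Fix $r \in (0,1/4)$ and an auxiliary exponent $a \in (5/8, 1)$ chosen so that $4a - 2 > 2r$. I would decompose $B_K = B_K^{\mathrm{tr}} \cup B_K^{(1)} \cup B_K^{(2)}$, where $B_K^{\mathrm{tr}} = \{x \in B_K : |\Pi_{V_1}x| \wedge |\Pi_{V_2}x| \ge K^a\}$ is the transverse region and $B_K^{(j)} = \{x \in B_K : |\Pi_{V_{3-j}}x| < K^a\}$, $j=1,2$, are the near-equilibrium regions. On $B_K^{\mathrm{tr}}$, the identity above shows that the deterministic conservative flow $X_t$ starting at $X_0 = \Pi_{\mathrm{ker}A}x_0$ satisfies $|\Pi_{\mathrm{ker}A^\perp}\tfrac{d}{dt}X_t|_{t=0}| = |B(X_0,X_0)| \ge c K^{2a}$, which is the transversality hypothesis \eqref{def:transverse} with $J = 1$ and an enhanced lower bound $K^{2a}$ in place of $K^{j+1}$. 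Re-running the two-step proof of Proposition~\ref{thm:transverse}—Lemma~\ref{lem:transversegrowth} for the deterministic linear growth and Lemma~\ref{lem:transverseapprox} for the stochastic approximation—on the short time $\tau_{\mathrm{tr}} = K^{-1}$ and plugging in the enhanced lower bound delivers
\[
\frac{1}{\tau_{\mathrm{tr}}}\int_0^{\tau_{\mathrm{tr}}} \E|\Pi_{\mathrm{ker}A^\perp}x_t|^2\, dt \;\gtrsim\; K^{4a-2} \;\ge\; K^{2r},
\]
which is exactly the coercivity demanded by Assumption~\ref{ass:time-average} on this piece.

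On $B_K^{(j)}$, the direction $\Pi_{\mathrm{ker}A}x_0/|\Pi_{\mathrm{ker}A}x_0|$ lies within $O(K^{a-1})$ of $V_j \cap \S^{n-1}$, and $z_0 := \Pi_{V_j}x_0$ is a genuine equilibrium of the conservative flow. The plan is to adapt the proof of Lemma~\ref{lem:1dspectralapprox}, with the role of $\mathrm{ker}A^\perp$ played by the larger subspace $V_j^\perp = V_{3-j}\oplus\mathrm{ker}A^\perp$: linearize around $z_0$ via the operator $\Pi_{V_j^\perp}L_{z_0}\Pi_{V_j^\perp}$, whose maximally unstable eigenvalue has real part $\gtrsim \lambda_{\mathrm{min}}K$ and whose Jordan normal form is uniformly controlled by \eqref{eq:JCFassumption2}, so that the constants of Lemma~\ref{lem:1dspectralapprox}, sharpened as in Remark~\ref{rem:spectralconstants1}, can be taken uniformly in $x_0$. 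The cancellation \eqref{eq:combcancellation} plays the role of Remark~\ref{rem:1dcancellation}, both enlarging the permissible range of $r$ (the condition $r < 1/4$ is well within it) and controlling the drift of $z_t := \Pi_{V_j}x_t$ away from $z_0$, so that the $z_0$-linearization remains accurate on the time scale $\eta_j(K) \approx \log(K)/K$. By a stopping-time argument I would then show that either (i) the $\mathrm{ker}A^\perp$-component of the linearized growth already produces $\tfrac{1}{\eta_j(K)}\int_0^{\eta_j(K)}\E|\Pi_{\mathrm{ker}A^\perp}x_t|^2\,dt \gtrsim K^{2r}$ outright, or (ii) the linearized growth is concentrated in $V_{3-j}$ and the trajectory enters $B_K^{\mathrm{tr}}$ at some stopping time $\sigma < \eta_j(K)$, in which case the strong Markov property applied at $\sigma$ together with Step~1 yields the required estimate on the slightly enlarged interval of length $\eta_j(K) + \tau_{\mathrm{tr}}$.

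The main obstacle is Step~2: because the unstable eigenspace of $\Pi_{V_j^\perp}L_{z_0}\Pi_{V_j^\perp}$ generically mixes $V_{3-j}\subset\mathrm{ker}A$ with $\mathrm{ker}A^\perp$, the spectral instability need not excite the damped modes directly, and one must carefully execute the two-phase stopping-time construction—spectral growth from $V_j$ into $V_{3-j}$ followed by transverse ejection into $\mathrm{ker}A^\perp$—while keeping all constants uniform across $B_K^{(j)}$. The uniform quantitative bounds $\lambda_{\mathrm{min}}$ and \eqref{eq:JCFassumption2} are indispensable here: they are what let the Lemma~\ref{lem:1dspectralapprox}-type constants be chosen independently of $z_0 \in V_j \cap \S^{n-1}$, and hence permit the decomposition into finitely many pieces required by Assumption~\ref{ass:time-average}.
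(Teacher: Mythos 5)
Your overall strategy (transverse region + two near-equilibrium regions, linearization around $\Pi_{V_j}x_0$ using $\Pi_{V_j^\perp}L\Pi_{V_j^\perp}$, two-phase stopping-time argument with strong Markov, uniformity via \eqref{eq:JCFassumption2} and $\lambda_{\min}$, and the role of \eqref{eq:combcancellation}) matches the paper's proof. However, there is a genuine gap in your choice of threshold for the transverse region, and it breaks the proof in two places.

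You set the transverse threshold at $K^a$ with $a \in (5/8,1)$, so that $B_K^{(j)}$ admits initial data with $|\Pi_{V_{3-j}}x_0|$ up to $K^a$. This is incompatible with the linearization around $z_0 = \Pi_{V_j}x_0$: the effective initial error $|Y_0| = |\Pi_{V_j^\perp}x_0|$ is of size $K^a$, whereas the Lemma~\ref{lem:1dspectralapprox}-type argument you invoke requires $|y_0| \lesssim \delta |z_0|^r$ with $r < 1/4$, and $a > 5/8 \gg r$. Moreover, the contradiction hypothesis controls only the $L^2$-time-average of $|\Pi_{\mathrm{ker}A^\perp}x_t|^2$ at level $K^{2r}$; it gives no control of $|\Pi_{V_{3-j}}x_t|$. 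The paper gets that control precisely because it keeps $|\Pi_{V_{\ell(j)}}x_0| \le \delta^{1/8}K^r$, so the $V_{\ell(j)}$-drift estimate \eqref{eq:frozenz3} stays $\lesssim K^r$. With your threshold $K^a$, the set $\Omega_0$ bounding $\int_0^{\tau_1}|\Pi_{V_j^\perp}x_t|^2 dt$ by $\approx K^{2r}$ simply cannot have probability close to $1$.

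The second failure is the hand-off between the two phases. On $B_K^{(j)}$, the linearized growth in Lemma~\ref{lem:1dspectralgrowth}, run for time $\tau \approx (1/2+\bar r)\log(K)/(\lambda K)$, takes $|\Pi_{V_j^\perp}Y_t|$ from the noise floor up to $\sim K^{\bar r}$ with $\bar r < 1/4$; running it longer to reach $K^a$ would require controlling linearization errors at scale $K^a$, which the contradiction hypothesis does not provide. So the trajectory lands in a region with $\min(|\Pi_{V_1}|,|\Pi_{V_2}|) \sim K^{\bar r}$, far below your transverse threshold $K^a$, and the stopping time $\sigma$ you invoke into $B_K^{\mathrm{tr}}$ never occurs. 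The paper resolves this by taking the transverse threshold at $\approx K^{\bar r}$ rather than $K^a$, and then proving in Lemma~\ref{lem:combotransverse} that the transverse growth is still $\gtrsim K^{2r}$ even at this much smaller threshold: the key is that, since $|x_0| \approx K$, one of $|\Pi_{V_j}X_0|$ is automatically $\gtrsim K$, so $|\Pi_{V_1}X_0||\Pi_{V_2}X_0| \gtrsim K^{1+\bar r}$, and a careful contradiction/Taylor argument over a shrunken time window extracts the $K^{2r}$ bound despite the initial $\mathrm{ker}A^\perp$-component already being of size $K^r$. Your cruder estimate $|B(X_0,X_0)| \ge cK^{2a}$ (which also undershoots the sharper $cK^{1+a}$ available) forces $a$ large and thereby creates the gap between the two regions that cannot be bridged.
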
	

The proof of Theorem~\ref{thm:combination} will proceed roughly as follows. As before we will verify Assumption~\ref{ass:time-average}. For initial conditions $x_0$ near $\mathrm{ker}A$ with $\min(|\Pi_{V_1} x_0|,|\Pi_{V_2} x_0|)$ sufficiently large, we use \eqref{eq:combolbd} and arguments similar to those in Section~\ref{sec:transverse} to obtain growth of the damped modes. If instead $x_0$ is concentrated in one of the $V_j$, we proceed similarly to Section~\ref{sec:1dkernel} and use the spectral instability to deduce growth into $V_j^\perp$. This either causes the damped modes to grow directly or the solution to enter a region where $\min(|\Pi_{V_1} x_t|,|\Pi_{V_2} x_t|)$ is large enough to subsequently apply \eqref{eq:combolbd} as in the first case. The cancellation \eqref{eq:combcancellation} is used throughout to justify certain approximations.

We begin with a lemma that describes growth of the damped modes for initial conditions with $|\Pi_{V_1} x_0|$ and $|\Pi_{V_2} x_0|$ both sufficiently large.  

\begin{lemma} \label{lem:combotransverse}
	Fix $r \in (0,1]$ and $\delta_0 \in (0,1)$. There are $c_*(\delta_0) > 0$ and $K_*(\delta_0) \ge 1$ so that for any $x_0 \in \R^n$ satisfying 
	$$ K/2 \le |x_0| \le 2K, \quad |\Pi_{\mathrm{ker}A^\perp} x_0|^2 \le \delta K^{2r}, \quad \min\left( |\Pi_{V_1} x_0|, |\Pi_{V_2} x_0| \right) \ge \delta_0^{1/8} K^r$$
	for $0 \le \delta \le \epsilon \delta_0^{3/4}$ and $K \ge K_*$, where $\epsilon$ is a sufficiently small constant independent of $\delta_0$, there holds
	$$ \frac{1}{K^{-1}} \E \int_0^{K^{-1}} |\Pi_{\mathrm{ker}A^\perp} x_t|^2 dt \ge c_* K^{2r}. $$
\end{lemma}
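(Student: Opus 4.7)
The strategy is the two-step deterministic-approximation scheme of Proposition~\ref{thm:transverse}. Let $X_t$ solve the conservative ODE $\dot X_t = B(X_t,X_t)$ with $X_0 = x_0$, decompose $X_t = z_t + y_t$ with $z_t = \Pi_{\mathrm{ker}A}X_t$ and $y_t = \Pi_{\mathrm{ker}A^\perp}X_t$, and write $z_t = v_1(t) + v_2(t)$ with $v_j(t) \in V_j$. The hypotheses give $|v_j(0)| \geq \delta_0^{1/8}K^r$, and since $|y_0|^2 \leq \delta K^{2r} \leq \epsilon\,\delta_0^{3/4}K^{2r}$ while $K/2 \leq |x_0|$, we have $|z_0| \geq K/\sqrt{8}$ for $K \geq K_*(\delta_0)$. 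Hence $\max(|v_1(0)|,|v_2(0)|) \gtrsim K$, and so
\begin{equation*}
M := |v_1(0)|\,|v_2(0)| \gtrsim \delta_0^{1/8}K^{r+1}.
\end{equation*}

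Step 1 is to show that $X_t$ exhibits the required time-averaged growth on some subinterval $[0,\alpha K^{-1}]$, where $\alpha \in (0,1)$ is a small constant depending only on $B$. The cancellation \eqref{eq:combcancellation} applied to $v_1(t),v_2(t)$ gives $\Pi_{\mathrm{ker}A}B(z_t,z_t) = 0$, so
\begin{equation*}
|\dot z_t| \leq C(K|y_t| + |y_t|^2),
\end{equation*}
i.e., $z_t$ drifts only in response to the damped modes. Meanwhile \eqref{eq:combolbd} makes $\xi := \Pi_{\mathrm{ker}A^\perp}B(z_0,z_0)$ satisfy $|\xi| \geq cM$. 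Passing to the order-one time variable $u = Kt$ and setting $\tilde X_u := X_{u/K}/K$, one obtains $\tilde X_u' = B(\tilde X_u,\tilde X_u)$ with $|\Pi_{V_j}\tilde X_0| \geq \delta_0^{1/8}K^{r-1}$ and $|\Pi_{\mathrm{ker}A^\perp}\tilde X_0| \leq \sqrt{\delta}\,K^{r-1}$. A bootstrap using the bound above on $|\dot z_t|$ shows that on $[0,\alpha]$ in rescaled time, $|\Pi_{\mathrm{ker}A}\tilde X_u - \tilde z_0|$ stays $\ll \delta_0^{1/8}K^{r-1}$ provided $\alpha$ is small and $\epsilon$ is chosen small. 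Consequently $|\Pi_{\mathrm{ker}A^\perp}B(\tilde z_u,\tilde z_u)| \geq cM/(2K^2)$ on this interval, so integrating the ODE for $\tilde Y_u := \Pi_{\mathrm{ker}A^\perp}\tilde X_u$ against $\hat \xi := \xi/|\xi|$ gives $\hat \xi \cdot \tilde Y_u \geq \hat \xi \cdot \tilde Y_0 + (cM/(2K^2))\,u$. Undoing the rescaling, time-averaging $|y_t|^2 \geq (\hat \xi \cdot y_t)^2$ over $[0,\alpha K^{-1}]$, and then extending by nonnegativity to $[0,K^{-1}]$, yields
\begin{equation*}
\frac{1}{K^{-1}}\int_0^{K^{-1}} |\Pi_{\mathrm{ker}A^\perp}X_t|^2\,dt \geq c_*(\delta_0)\,K^{2r}
\end{equation*}
with $c_*(\delta_0) \sim \delta_0^{1/4}$. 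The exponent $3/4$ in the assumption on $\delta$ is what is needed to absorb the contribution of $y_0$ into the linear-growth regime above.

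Step 2 is the stochastic approximation, essentially a copy of Lemma~\ref{lem:transverseapprox}: a Gr\"onwall estimate on $X_t - x_t$ combined with a Doob--It\^o bound on $\int_0^t S_{X_\cdot}(t,s)\,\sigma\,dW_s$ using the linear-propagator estimate $\|S_{X_\cdot}(t,s)\| \leq e^{CK(t-s)}$ produces an event of probability $\geq 1/2$ on which $\sup_{0 \leq t \leq K^{-1}}|X_t - x_t| \leq C$, with $C$ independent of $K$ and $x_0$. Combining this with Step 1 and applying Chebyshev's inequality as in the last paragraph of the proof of Proposition~\ref{thm:transverse} completes the argument. The principal technical obstacle is the bootstrap in Step 1: the naive bound $|\dot y_s| \lesssim K^2$ makes the Taylor remainder of order $K$ on $[0,K^{-1}]$, which is the \emph{same} order as the linear-in-$t$ growth one is trying to isolate. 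It is precisely the cancellation \eqref{eq:combcancellation}, by reducing $|\dot z_t|$ from $\lesssim K^2$ to $\lesssim K|y_t|$, that prevents $z_t$ from drifting away from the resonant configuration $(v_1(0),v_2(0))$ on $[0,\alpha K^{-1}]$ and allows the lower bound \eqref{eq:combolbd} to sustain the growth of $\hat \xi \cdot y_t$.
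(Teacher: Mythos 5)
Your Step 2 is a direct adaptation of Lemma~\ref{lem:transverseapprox} and is fine. In Step 1 you take a genuinely different route from the paper: the paper argues by \emph{contradiction}, supposing $\frac{1}{\tilde{\eta}}\int_0^{\tilde{\eta}}|\Pi_{\mathrm{ker}A^\perp}X_t|^2\,dt\le\delta_1 K^{2r}$ on the shorter interval $\tilde\eta=\gamma\delta_0^{1/4}K^{-1}$; by Cauchy--Schwarz this bounds $\int_0^t|\Pi_{\mathrm{ker}A^\perp}X_s|\,ds$ and hence $|z_t-z_0|$ a priori, which is then fed into a Taylor expansion. You instead run a direct bootstrap with a pointwise Gr\"onwall bound on $y_t$. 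That strategy can be made to work and is arguably cleaner, but the bootstrap threshold you assert is too strong and would fail.

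Concretely, you claim $|z_t-z_0|\ll\delta_0^{1/8}K^{r}$ on $[0,\alpha K^{-1}]$ with $\alpha$ a fixed constant. Take $|\Pi_{V_1}x_0|\sim|\Pi_{V_2}x_0|\sim K$, so $M:=|\Pi_{V_1}x_0||\Pi_{V_2}x_0|\sim K^2$. Then $|\dot y_t|$ is of order $M\sim K^2$ initially (and remains so while $z_t$ stays near $z_0$), so $|y_t|$ grows to $\sim\alpha K$ by time $\alpha K^{-1}$; your bound $|\dot z_t|\lesssim K|y_t|+|y_t|^2$ then yields $|z_t-z_0|\lesssim\alpha^2 K$, which is $\gg\delta_0^{1/8}K^r$ for $K$ large. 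So the bootstrap claim as stated is false in this regime. The argument is rescued by observing that the bound actually needed, both for $\hat\xi\cdot\Pi_{\mathrm{ker}A^\perp}B(z_t,z_t)\gtrsim M$ and for $\min_j|\Pi_{V_j}z_t|$ to remain comparable to $\min_j|\Pi_{V_j}z_0|$, is only $|z_t-z_0|\ll M/K$; note that $M/K\ge\delta_0^{1/8}K^{r}$ and $M/K\sim\min_j|\Pi_{V_j}z_0|$. That weaker threshold does close: $|y_t|\lesssim|y_0|+Mt$ gives $|z_t-z_0|\lesssim K(|y_0|t+Mt^2)\lesssim\alpha\sqrt{\delta}K^{r}+\alpha^2 M/K\ll M/K$ once $\alpha$ and $\epsilon$ are small, uniformly in $K$. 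With this correction the rest of Step 1 and your final $c_*\sim\delta_0^{1/4}$ go through.
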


\begin{proof}
	Let $X_t$ solve 
	\begin{equation}
	\begin{cases}
	\frac{d}{dt}X_t = B(X_t,X_t) \\ 
	X_0 = x_0
	\end{cases}
	\end{equation}
	and define $\eta(K) = K^{-1}$. We claim that there is $c_*(\delta_0) > 0$ so that for all $K \ge 1$, $\delta \ll \delta_0^{3/4}$, and $x_0$ as in the statement of the lemma there holds
	\begin{equation}\label{eq:combotransversegoal}
	\frac{1}{\eta(K)}\int_0^{\eta(K)} |\Pi_{\mathrm{ker}A^\perp}X_t|^2 dt \ge c_* K^{2r}.
	\end{equation}
	From here the lemma follows by taking $K_*$ large enough so that $c_* K_*^{2r} \gg 1$ and applying Lemma~\ref{lem:transverseapprox}. We now prove \eqref{eq:combotransversegoal}. For $\gamma \in (0,1)$ to be chosen sufficiently small and $\tilde{\eta}(K) = \gamma \delta_0^{1/4} K^{-1}$, suppose for the sake of contradiction that 
	\begin{equation} \label{eq:combocontradiction2}
	\frac{1}{\tilde{\eta}}\int_0^{\tilde{\eta}}|\Pi_{\text{ker}A^\perp}X_t|^2 dt \le \delta_1 K^{2r}
	\end{equation}
	for $\delta_1 \in (0,1)$. By performing a Taylor expansion and using \eqref{eq:combolbd}, $|\Pi_{\mathrm{ker}A^\perp}X_0| \le \sqrt{\delta}K^r$, and $|X_t| \le 2K$ we obtain, for $t \le \tilde{\eta}$,
	\begin{align}
	|\Pi_{\mathrm{ker}A^\perp} X_t| &= \left| \Pi_{\mathrm{ker}A^\perp} X_0 + t \Pi_{\mathrm{ker}A^\perp}B(X_0,X_0)+ \Pi_{\mathrm{ker}A^\perp}\int_0^t (t-s) \frac{d}{ds}B(X_s,X_s)ds\right| \nonumber \\
	& \ge  c t|\Pi_{V_1}X_0||\Pi_{V_2}X_0| - C\sqrt{\delta} K^{r} - C K \tilde{\eta} \int_0^t |B(X_s,X_s)| ds, \label{eq:integralerror}
	\end{align}
	where in the second inequality we used that 
	$$  \frac{d}{ds}B(X_s,X_s) = B(B(X_s,X_s),X_s) + B(X_s,B(X_s,X_s)). $$
	The goal is now to bound the integral in \eqref{eq:integralerror}. First, by writing 
	$$ X_s = \Pi_{\mathrm{ker}A}X_s + \Pi_{\mathrm{ker}A^\perp}X_s = \Pi_{V_1}X_s + \Pi_{V_2}X_s + \Pi_{\mathrm{ker}A^\perp}X_s $$
	and using the triangle inequality we deduce
	\begin{equation} \label{eq:integralerror1}
	|B(X_s,X_s)| \leqc K|\Pi_{\mathrm{ker}A}X_s - \Pi_{\mathrm{ker}A}X_0| + K|\Pi_{\mathrm{ker}A^\perp}X_s| + |\Pi_{V_1}X_0||\Pi_{V_2}X_0|.
	\end{equation}
	Now, by \eqref{eq:combocontradiction2} and \eqref{eq:combcancellation}, for all $t \le \tilde{\eta}$ there holds
	\begin{equation}
	\begin{aligned}
	|\Pi_{\mathrm{ker}A}X_t - \Pi_{\mathrm{ker}A}X_0| &\le \int_0^t |B(\Pi_{\mathrm{ker}A}X_s,\Pi_{\mathrm{ker}A^\perp} X_s)|ds + \int_0^t |B(\Pi_{\mathrm{ker}A^\perp}X_s,X_s)|ds \\ 
	& \leqc K\int_0^t |\Pi_{\mathrm{ker}A^\perp}X_s|ds  \leqc \gamma \delta_0^{1/4}K^r.
	\end{aligned}
	\end{equation}
	Putting this bound into \eqref{eq:integralerror1} and using \eqref{eq:combocontradiction2} again gives, for $t \le \tilde{\eta}$, 
	\begin{equation} \label{eq:integralerror2}
	\int_0^t |B(X_s,X_s)|ds \leqc \gamma \delta_0^{1/4} K^{r} + \tilde{\eta}|\Pi_{V_1}X_0||\Pi_{V_2}X_0|.
	\end{equation}
	Inserting \eqref{eq:integralerror2} in \eqref{eq:integralerror} and integrating the resulting bound over $[0,\tilde{\eta}]$ yields, for $\gamma$ sufficiently small (and new constants $c$ and $C$ which may change from line to line),
	\begin{align*}
	\frac{1}{\tilde{\eta}} \int_0^{\tilde{\eta}}|\Pi_{\mathrm{ker}A^\perp}X_t|dt &\ge c \tilde{\eta} |\Pi_{V_1}X_0| |\Pi_{V_2}X_0| - C\sqrt{\delta} K^r - C \gamma^2 \sqrt{\delta_0} K^r - C \gamma \tilde{\eta} |\Pi_{V_1}X_0| |\Pi_{V_2}X_0|  \\ 
	&  \ge \tilde{\eta} |\Pi_{V_1}X_0| |\Pi_{V_2}X_0| (c - C\gamma) - C\sqrt{\delta}K^r - C\gamma^2 \sqrt{\delta_0} K^r \\ 
	& \ge c \gamma \delta_0^{3/8}K^r - C\sqrt{\delta}K^r.
	\end{align*}
	In the last inequality we have used the fact that $|\Pi_{V_j} X_0| \gtrsim K$ for some $j \in \{1,2\}$ when $\delta \ll 1$. With $\gamma$ now fixed we may take $\delta \ll \gamma^2 \delta_0^{3/4}$ to obtain, for some new constant $c \in (0,1)$, 
	$$ \frac{1}{\tilde{\eta}}\int_0^{\tilde{\eta}}|\Pi_{\mathrm{ker}A^\perp} X_t|^2 dt \ge c \gamma^2 \delta_0^{3/4}K^{2r}.$$
	We obtain a contradiction with \eqref{eq:combocontradiction2} for $\delta_1 \le (c/2) \gamma^2 \delta_0^{3/4}$, and so we conclude 
	$$ \frac{1}{\eta} \int_0^\eta |\Pi_{\mathrm{ker}A^\perp} X_t|^2 dt \ge \frac{\tilde{\eta}}{\eta}  \frac{c}{2} \gamma^2 \delta_0^{3/4} K^{2r} = \frac{c}{2} \gamma^3 \delta_0 K^{2r}, $$ 
	which implies \eqref{eq:combotransversegoal} and completes the proof.
\end{proof}

\begin{proof}[Proof of Theorem~\ref{thm:combination}]
	Fix any $r \in (0,1/4)$ and for $\delta \in(0,1/100)$ and $K\ge 2$ define
	$$B_K^\delta = \{x \in \R^n:|\Pi_{\mathrm{ker}A^\perp}x|^2 \le \delta |\Pi_{\mathrm{ker}A}x|^{2r}, (1-\delta)K^2 \le |x|^2 \le (1+\delta)K^2 \}. $$
	Let
	$$\ell(j)=\begin{cases}
	1 & j=2 \\ 
	2 & j=1.
	\end{cases}
	$$
	We split the set $B_K^\delta$ as 
	$$ B_K^\delta = B_{K,1}^\delta \cup B_{K,2}^\delta \cup B_{K,3}^\delta, $$
	where 
	$$B_{K,j}^\delta = \{x \in B_K^\delta: |\Pi_{V_{\ell(j)}}x| \le \delta^{1/8}K^r \} \quad \text{ if }j\in\{1,2\} $$
	and 
	$$B_{K,3}^\delta = \{x \in B_K^\delta: \min\left(|\Pi_{V_1}x|, |\Pi_{V_2}x|\right)> \delta^{1/8}K^r\}. $$
	By Lemma~\ref{lem:time-averaged_step1}, to complete the proof it suffices to show that there are $K_* \ge 1$, $\delta>0$, $c_* > 0$, and times $\{\eta_j\}_{j=1}^3$ with $\lim_{K \to \infty}\sup_{j}\eta_j(K) = 0$ so that for $K \ge K_*$ there holds 
	\begin{equation} \label{eq:combogoal}
	x_0 \in B_{K,j}^{\delta} \implies \frac{1}{\eta_j(K)}\int_0^{\eta_j(K)} |\Pi_{\mathrm{ker}A^\perp}x_t|^2 dt \ge c_*K^{2r}. 
	\end{equation}
	Due to Lemma~\ref{lem:combotransverse}, for all $\delta$ sufficiently small there is $c_*(\delta) > 0$ and $K_*(\delta) \ge 1$ so that  \eqref{eq:combogoal} is satisfied for $j = 3$ by taking  $\eta_3(K) = K^{-1}$. Thus we must only consider the case where $j \in \{1,2\}$.
	
	Let $x_0 \in B_{K,j}^\delta$ for $j \in \{1,2\}$ and fix any $\bar{r}$ with $r < \bar{r} < 1/4$. Suppose that the maximally unstable eigenvalue of $\Pi_{V_j^\perp} L_{\Pi_{V_j}x_0/|\Pi_{V_j} x_0|} \Pi_{V_j^\perp}$ has real part $\lambda \ge \lambda_{\mathrm{min}} > 0$ and define, for $\epsilon \in (0,1)$ to be chosen,
	$$ \tau_1 = \frac{(1/2+\bar{r}+\epsilon)\log(|\Pi_{V_j} x_0|)}{\lambda |\Pi_{V_j} x_0|}$$
	and $\tau = \tau_1 + K^{-1}$. Let the approximate solution $Y_t:[0,\infty) \to V_j^\perp$ solve
	\begin{equation}
	\begin{cases}
	dY_t = (\Pi_{V_j^\perp} L_{\Pi_{V_j}x_0} \Pi_{V_j^\perp}) Y_tdt + \Pi_{V_j^\perp}\sigma dW_t \\ 
	Y_0 = \Pi_{V_j^\perp}x_0.
	\end{cases}
	\end{equation}
	By Lemma~\ref{lem:1dspectralgrowth} (with $\Pi_{V_j}x_0$ and $V_j$ playing the roles of $z$ and $\mathrm{ker}A$, respectively), there are $K_j(\epsilon) \ge 1$ and $c_j, \beta_j > 0$ that do not depend on $\epsilon$, $\bar{r}$, or $x_0$ so that for $|\Pi_{V_j}x_0| \ge K_j$ there holds 
	\begin{equation}\label{eq:modifiedgrowthlemma}
	\P\left(\frac{1}{\tau_1} \int_0^{\tau_1} |Y_t(\omega)|dt \ge c_j |\Pi_{V_j}x_0|^{\bar{r}}\right) \ge \beta_j. 
	\end{equation}
	Towards a contradiction, suppose that 
	\begin{equation} \label{eq:combocontr3}
	\E\int_0^\tau |\Pi_{\mathrm{ker}A^\perp} x_t|^2 dt \le \delta \tau K^{2r}
	\end{equation}
	for $K \ge K_* \ge \delta^{-1}$. Note that since $\tau/\tau_1 \leqc 1$ this implies
	\begin{equation}  \label{eq:combocontr4}
	\E \int_0^{\tau_1} |\Pi_{\mathrm{ker}A^\perp} x_t|^2 dt \leqc \delta \tau_1 K^{2r}. 
	\end{equation}
	The condition \eqref{eq:combcancellation} and the fact that $V_1 \cup V_2$ consists of deterministic equilibria imply that the equation for $z_t = \Pi_{\mathrm{ker}A}x_t$ is exactly \eqref{eq:frozenz2.1}. Thus, using \eqref{eq:combocontr4}, the proof of \eqref{eq:frozenz} applies and gives 
	\begin{equation} \label{eq:frozenz3}
	\E \sup_{0 \le t \le \tau_1}|z_t - z_0| \leqc \sqrt{\delta}K^{1+r}\tau_1 + \sqrt{\tau_1} \leqc \sqrt{\delta} \log(K) K^r,
	\end{equation}
	where in the second inequality we used the assumption that $K \ge \delta^{-1}$. Define 
	$$ \Omega_0 = \left\{ \omega \in \Omega: \int_0^{\tau_1}|\Pi_{V_j^\perp} x_t|^2 dt \le \delta^{1/8} \tau_1 (\log(K))^2 K^{2r}, \sup_{0 \le t \le \tau_1} |z_t - z_0| \le \delta^{1/4} K^r \log(K) \right\}.$$
	Since $|\Pi_{V_{\ell(j)}}x_0| \le \delta^{1/8}K^r$ by the definition of $B_{K,j}^\delta$, it follows from \eqref{eq:combocontr4} and \eqref{eq:frozenz3} that for $\delta$ sufficiently small depending on $\beta_j$ there holds
	\begin{equation}\label{eq:combocontradiction}
	\P(\Omega_0) \ge 1 - \frac{\beta_j}{2}. 
	\end{equation}	
	Obtaining estimates on $Y_t(\omega) - \Pi_{V_j^\perp} x_t(\omega)$ for $\omega \in \Omega_0$ as in proof of Lemma~\ref{lem:1dspectralapprox} (we make the choice $\epsilon = 1/4 - \bar{r}$) and then using \eqref{eq:modifiedgrowthlemma}, we deduce that  there is $c_0 \in (0,1)$ depending only on $c_j$ so that for $\delta$ sufficiently small and $K_*$ sufficiently large there holds
	\begin{equation} \label{eq:comboVjgrowth}
	\P\left( \frac{1}{\tau_1}\int_0^{\tau_1} |\Pi_{V_j^\perp} x_t(\omega)| dt \ge c_0 K^{\bar{r}} \right) \ge \frac{\beta_j}{2}.
	\end{equation}
	It follows that 
	\begin{equation} \label{eq:combocases}
	\P\left(\frac{1}{\tau_1} \int_0^{\tau_1} |\Pi_{\mathrm{ker}A^\perp}x_t(\omega)|dt \ge \frac{c_0}{2} K^{\bar{r}}\right) \ge \frac{\beta_j}{4} \quad \text{or} \quad \P\left(\frac{1}{\tau_1} \int_0^{\tau_1} |\Pi_{V_{\ell(j)}}x_t(\omega)|dt \ge \frac{c_0}{2} K^{\bar{r}}\right) \ge \frac{\beta_j}{4}.  
	\end{equation}
	In the first case, we immediately obtain a contradiction to \eqref{eq:combocontr4} for $\delta$ sufficiently small depending on $c_0$ and $\beta_j$. In the second case, define the stopping time 
	$$ \bar{\tau}(\omega) = \inf\left\{t\ge 0: \min(|\Pi_{V_1} x_t|, |\Pi_{V_2} x_t|) \ge \frac{c_0}{2} K^{\bar{r}}, K/2 \le |x_t| \le 2K, |\Pi_{\mathrm{ker}A^\perp} x_t|^2 \le \delta^{1/8} K^{2\bar{r}} \right\}. $$ 
	Now, using $|\Pi_{V_{\ell(j)}} x_0| \le \delta^{1/8} K^r$ and $|\Pi_{\mathrm{ker}A^\perp} x_0| \le \sqrt{\delta} K^{r}$ we can show 
	\begin{align*}
	\sup_{0 \le t \le \tau_1} |\Pi_{\mathrm{ker}A^\perp} x_t| &\leqc \sqrt{\delta}K^r + K^{1+r}\delta^{1/8}\tau_1 + \tau_1 K \sup_{0 \le t \le \tau_1}|z_t - z_0| + \sup_{0 \le t \le \tau_1}|W_t|\\ 
	& \quad + \tau_1 \left(\sup_{0 \le t \le \tau_1} |z_t - z_0|\right)^2 + \int_0^{\tau_1} |\Pi_{\mathrm{ker}A^\perp} x_t|^2 dt  \\ 
	& \quad + \sqrt{\tau_1}\left(\sup_{0 \le t \le \tau_1} |z_t - z_0| + K\right)\left( \int_0^{\tau_1} |\Pi_{\mathrm{ker}A^\perp}x_t|^2 dt\right)^{1/2}.
	\end{align*}
	It follows then from \eqref{eq:combocontr4}, \eqref{eq:frozenz3}, and $K \ge \delta^{-1}$ that for $\delta$ sufficiently small there holds 
	\begin{equation} \label{eq:combostop1}
	\P \left( \sup_{0 \le t \le \tau_1} |\Pi_{\mathrm{ker}A^\perp} x_t| \le \delta^{1/16} (\log(K))^3 K^r \right) \ge 1 - \frac{\beta_j}{32}.
	\end{equation}
	By Lemma~\ref{lem:basicenergy}, \eqref{eq:combostop1}, \eqref{eq:frozenz3}, and assuming the second case in \eqref{eq:combocases}, for $K_*$ sufficiently large depending on $\beta_j$, $r$, and $\bar{r}$ we have
	$$\P(\bar{\tau} \le \tau_1) \ge \frac{\beta_j}{8}.$$
	Thus, by Lemma~\ref{lem:combotransverse} and the strong Markov property, there is $c_0'$ depending on $c_0$ so that for all $\delta$ sufficiently small and $K$ sufficiently large (both depending only on $c_0$) there holds
	\begin{align*} 
	\frac{1}{\tau}\E \int_0^\tau |\Pi_{\mathrm{ker}A^\perp} x_t|^2 dt &\ge \frac{1}{\tau} \int_{\Omega} \int_0^{\tau - \tau_1 \wedge \bar{\tau}} D(x_{\tau_1 \wedge \bar{\tau}+t})dt d\P  \\ 
	& \ge \frac{1}{\tau} \frac{\beta_j}{8} \inf_{\bar{\tau}(\omega) \le \tau_1} \int_0^{K^{-1}} \Pt_t D(x_{\bar{\tau}}(\omega)) dt \\ 
	& \ge \frac{1}{\tau K} \frac{\beta_j}{8} c_0' K^{2\bar{r}} \gtrsim \frac{\beta_j c_0' K^{2 \bar{r}}}{\log (K)}.
	\end{align*}
	Taking $K$ large enough so that $K^{2\bar{r}} \log(K)^{-1} \ge K^{2r}$ yields a contradiction with \eqref{eq:combocontr3} for $\delta$ small enough. Overall, we have shown that for all $\delta$ sufficiently small there are $K_* \ge 1$ and $c_* > 0$ so that, for $j \in \{1,2\}$ and $K \ge K_*$,
	$$ x_0 \in B_{K,j}^\delta \implies \frac{1}{\tau(x_0)}\E \int_0^{\tau(x_0)} |\Pi_{\mathrm{ker}A^\perp} x_t|^2 dt \ge c_* K^{2r}. $$
	The desired bound \eqref{eq:combogoal} then follows for $j \in \{1,2\}$ by setting 
	$$ \eta_1(K) = \eta_2(K) = \frac{C \log(K)}{\lambda_{\mathrm{min}} K} $$
	for some $C$ sufficiently large.
\end{proof}

\subsection{Applications to the Sabra shell model and Galerkin Navier-Stokes}

In this section we first apply Theorem~\ref{thm:combination} to prove Theorem~\ref{thm:NS} on the 2d Galerkin Navier-Stokes equations and then we give an application of Theorem~\ref{thm:combination} to the Sabra shell model.

\begin{proof}[Proof of Theorem~\ref{thm:NS}] 
	Note that the nonlinear structure implies that $L_x = \Pi_{V_j^\perp}L_x \Pi_{V_j^\perp}$ for any $x \in V_j$ and $j \in \{1,2\}$. By translation invariance, the linearization around $A \cos \ell x_1 + B\sin \ell x_1$ is unitarily conjugate to the linearization around $\sqrt{A^2 + B^2}\cos \ell x_1$,
	and so the uniformity of eigenvalues and $\norm{P}$ of the Jordan canonical form follows immediately on each $V_j$ once it is verified for $\cos \ell  x_1$ (by discrete rotation invariance, the analysis in the $x_2$ direction is also the same as the $x_1$ direction).
	That for any $\ell \geq 2$, the linearization of $\cos \ell x_1$ in $\mathbb T^2$  has an eigenvalue with positive real part in the conservative system is a well-known variation of the classical results of Meshalkin and Sinai \cite{MS61}. It remains only to verify the conditions \eqref{eq:combcancellation} and \eqref{eq:combolbd}.
	For $v_1 = \alpha \cos \ell x_1 + \beta \sin \ell x_1$ and $v_2 = \sin k x_2$ (this is sufficient by translation invariance), we may compute 
	\begin{align*} 
	B(v_1,v_2) + B(v_2,v_1) &= \frac{k}{\ell}(-\alpha \sin \ell x_1 + \beta \cos \ell x_1) \cos k x_2 - \frac{\ell}{k}\cos k x_2 (-\alpha \sin \ell x_1 + \beta \cos \ell x_1) \\
	& = \left(\frac{k}{\ell} - \frac{\ell}{k}\right)(-\alpha \sin \ell x_1 + \beta \cos \ell x_1) \cos k x_2. 
	\end{align*}
	For our choices of $k$,$\ell$ conditions \eqref{eq:combcancellation} and \eqref{eq:combolbd} follow immediately and hence Theorem \ref{thm:combination} applies. 
\end{proof}

The Sabra shell model was first introduced in \cite{LvovEtAl98}. Here we consider the model truncated to finite dimensions. Denoting the dependent variable $(u_1, \ldots, u_J) \in \C^J$, the equation reads
\begin{equation}\label{eq:SABRAcomplex}
\begin{aligned}
d u_m  &= i2^m\left(\overline{u_{m+1}}u_{m+2} - \frac{\delta}{2} \overline{u_{m-1}}u_{m+1} - \frac{\delta -1}{4} u_{m-2}u_{m-1}\right) \\ 
& \quad  - \delta 2^{2m} u_m + q_m dW_t^{(m;R)} + ip_m dW_t^{(m;I)},
\end{aligned}
\end{equation}
where $q_m$, $p_m$ are real parameters and $\delta \in (0,2) \setminus \{1\}$. The boundary conditions are $u_{-1} = u_0 = u_{J+1} = u_{J+2} = 0$. When $\delta \in (0,1)$ the system has just one positive invariant and is considered a model for 3d turbulence. If instead $\delta \in (1,2)$ then there are two positive invariants and the equations are meant to capture properties of 2d turbulence. For additional discussion of Sabra and other shell models, see \cite{Ditlevsen2010}. Rewriting the system in real variables $u_m = a_m + ib_m$ and introducing parameters $c_m \in \{0,1\}$ that determine whether or not there is damping on shell $m$, we obtain the system
\begin{equation} \label{eq:SABRAreal}
\begin{aligned}
da_m & = 2^m(a_{m+2}b_{m+1} - a_{m+1}b_{m+2}) + \delta 2^{m-1}(a_{m-1}b_{m+1} - a_{m+1}b_{m-1}) \\ 
& \quad + (\delta - 1)2^{m-2}(a_{m-2}b_{m-1} + a_{m-1}b_{m-2}) - \delta 2^{2m} c_m a_m + q_m dW_t^{(m;R)}, \\ 
db_m & = 2^m(a_{m+1}b_{m+2} + b_{m+1}b_{m+2}) - \delta 2^{m-1}(a_{m-1}a_{m+1} + b_{m+1}b_{m-1}) \\ 
& \quad - (\delta - 1)2^{m-2}(a_{m-2}a_{m-1} - b_{m-1}b_{m-2}) - \delta 2^{2m} c_m b_m + p_m dW_t^{(m;I)}.
\end{aligned}
\end{equation}
\begin{theorem}\label{thm:SABRA}
	Assume that $\delta \in (1/4,1)$, $c_1 = c_2 = 0$, $c_m > 0$ for $3 \le m \le J$, and $q_m,p_m \neq 0$ for all $m$. Then, system \eqref{eq:SABRAreal} admits a unique invariant measure $\mu$ and 
	$$ \int_{\R^{2J}} (|a| + |b|)^p \mu(da,db) < \infty $$
	for every $p < 2/3$.
\end{theorem}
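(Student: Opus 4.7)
The plan is to apply Theorem~\ref{thm:combination} with $V_1$ and $V_2$ the real $2$-dimensional subspaces corresponding to shells $1$ and $2$ respectively, so that $\mathrm{ker}A = V_1 \oplus V_2$ has dimension $4$; the condition $c_1 = c_2 = 0$ places exactly these shells in $\mathrm{ker}A$ while $c_m > 0$ for $m \ge 3$ ensures nothing else is, and $q_m, p_m \neq 0$ makes $\sigma$ full rank. That $B(x,x) = 0$ for $x \in V_1 \cup V_2$ is immediate: if only shell $m$ is excited, then each of the three Sabra triads $\bar u_{m+1}u_{m+2}$, $\bar u_{m-1}u_{m+1}$, $u_{m-2}u_{m-1}$ vanishes for every output shell.

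For the cancellation~\eqref{eq:combcancellation} and the lower bound~\eqref{eq:combolbd}, I identify $v_j = c_j e_j$ with $c_j \in \C$ (matching the real $2$-vector in shell $j$ with a complex scalar). A direct inspection of the Sabra triads shows that no interaction between shells $1$ and $2$ produces output in shells $1$ or $2$: every candidate term requires an input from shell $0$, $-1$, $3$, or $4$, all of which vanish by the boundary conditions or by construction. This gives~\eqref{eq:combcancellation}. The only surviving component of $B(v_1, v_2) + B(v_2, v_1)$ sits in shell $3$ via the $u_{m-2}u_{m-1}$ interaction at $m=3$, evaluating to $-2i(\delta - 1) c_1 c_2$, so $|\Pi_{\mathrm{ker}A^\perp}(B(v_1,v_2)+B(v_2,v_1))| = 2|\delta-1|\,|v_1|\,|v_2|$, which is non-zero since $\delta \neq 1$.

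The main content is the spectral instability together with the uniformity~\eqref{eq:JCFassumption2} over $x \in V_j \cap \S^{n-1}$. For uniformity I use the two-parameter $U(1)\times U(1)$ phase symmetry $u_m \mapsto e^{i\theta_m} u_m$ of the complex Sabra equations, which is preserved whenever $\{\theta_m\}$ satisfies the Fibonacci-type recursion $\theta_{m+2} = \theta_{m+1} + \theta_m$ parameterized freely by $(\theta_1, \theta_2)$. This gives a transitive unitary action on each $V_j \cap \S^{n-1}$, so it suffices to compute at the canonical vector $x = e_m \in V_m$: for $x = e_1$, the restriction $L_{e_1}^\perp$ couples only shells $2$ and $3$ in a $2\times 2$ complex block whose eigenvalues are $\pm 2\sqrt{\delta(1-\delta)}$, real and non-zero for $\delta \in (0,1)$; for $x = e_2$, $L_{e_2}^\perp$ couples shells $1, 3, 4$ with characteristic polynomial $\lambda(\lambda^2 + 4(\delta-1)(4\delta-1))$, which has a positive real eigenvalue precisely when $\delta \in (1/4, 1)$. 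The assumption $\delta \in (1/4, 1)$ is exactly what the method requires.

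With the four bullets verified, Theorem~\ref{thm:combination} yields at least one stationary measure with moments of order $p < 2/3$. Uniqueness follows from the Doob--Khasminskii theorem once one observes that the full-rank noise delivers both the strong Feller property and irreducibility of the associated Markov semigroup. I expect the principal technical obstacle to be the careful verification of~\eqref{eq:JCFassumption2}: although the phase-symmetry picture is very clean in complex variables, translating the argument into the real formulation~\eqref{eq:SABRAreal} and confirming that the Jordan-structure bounds $\|P_x\|$ and $\|P_x^{-1}\|$ are preserved uniformly under the induced real-orthogonal transformations requires some bookkeeping.
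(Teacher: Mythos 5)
Your proposal is correct and follows the paper's approach: both apply Theorem~\ref{thm:combination} with $V_1, V_2$ the shell-1 and shell-2 subspaces, establish the equilibrium property and the cancellation/lower-bound conditions by the same shell-arithmetic, and identify the same eigenvalues ($\pm 2\sqrt{\delta(1-\delta)}$ for $L_{e_1}^\perp$; $0, \pm 2\sqrt{(4\delta-1)(1-\delta)}$ for $L_{e_2}^\perp$) with the same role for the hypothesis $\delta \in (1/4,1)$. The one genuine difference is how you handle the uniformity bound~\eqref{eq:JCFassumption2}: the paper writes down the eigenvectors of $\Pi_{V_j^\perp}L_x\Pi_{V_j^\perp}$ explicitly as functions of $(\bar a_j,\bar b_j)$ and reads off boundedness of $\|P_x\|$, $\|P_x^{-1}\|$ by inspection, whereas you invoke the $U(1)\times U(1)$ phase symmetry $u_m \mapsto e^{i\theta_m}u_m$ (with $\theta_{m+2}=\theta_{m+1}+\theta_m$), which acts transitively on each $V_j\cap\S^{n-1}$ by a unitary (hence real-orthogonal) transformation commuting with the shell projections. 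Since conjugation by an orthogonal matrix preserves the Jordan data and operator norms, this gives~\eqref{eq:JCFassumption2} with no eigenvector bookkeeping at all, and is arguably cleaner; the ``translation into real coordinates'' you flag as a possible obstacle is in fact automatic, since the real form of a unitary phase rotation is an orthogonal rotation in each shell's $(a_m,b_m)$-plane.
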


\begin{proof}
	We denote the solution $(a,b) = (a_1,\ldots,a_J,b_1,\ldots,b_J) \in \R^J \times \R^J$ and the natural canonical basis vectors by $\{\hat{a}_m\}_{m=1}^J$, $\{\hat{b}_m\}_{m=1}^J$. Observe that \eqref{eq:SABRAreal} takes the form of \eqref{eq:SDE} with 
	$$\mathrm{ker}A = \{(a_1,a_2,0,\ldots,0,b_1,b_2,0,\ldots,0): a_1,b_1,a_2,b_2 \in \R\},$$
	and the drift $B$ given by 
	\begin{equation} \label{eq:SABRAdrift}
	\begin{aligned}
	B\left((a,b),(\tilde{a},\tilde{b})\right) &= \sum_{m=1}^J [2^m (a_{m+2}\tilde{b}_{m+1} - a_{m+1}\tilde{b}_{m+2}) +\delta 2^{m-1}(a_{m-1}\tilde{b}_{m+1} - a_{m+1} \tilde{b}_{m-1}) \\ 
	& \quad + (\delta - 1)2^{m-2}(a_{m-2}\tilde{b}_{m-1}+ a_{m-1}\tilde{b}_{m-2})] \hat{a}_m \\ 
	& \quad + \sum_{m=1}^J [2^m (a_{m+1}\tilde{b}_{m+2} + b_{m+1}\tilde{b}_{m+2})-\delta 2^{m-1}(a_{m-1}\tilde{a}_{m+1} + b_{m+1}\tilde{b}_{m-1}) \\ 
	& \quad -(\delta-1)2^{m-2}(a_{m-2}\tilde{a}_{m-1} - b_{m-1}\tilde{b}_{m-2})]\hat{b}_m.
	\end{aligned}
	\end{equation}
	
	We will verify the conditions of Theorem~\ref{thm:combination}. For $j = 1,2$ let 
	$$V_j = \mathrm{span}\{\hat{a}_j, \hat{b}_j\}.$$
	It is immediate from the lack of self-interactions in \eqref{eq:SABRAdrift} that each $V_j$ consists entirely of deterministic equilibria, and so the first condition in Theorem~\ref{thm:combination} is satisfied. Similarly, since the $m$'th coordinates of $(a,b)$ and $(\tilde{a},\tilde{b})$ do not show up in $B \cdot \hat{a}_m$ or $B \cdot \hat{b}_m$ it is easy to see that \eqref{eq:combcancellation} is satisfied. To verify the lower bound \eqref{eq:combolbd} we compute, for $v_1 = a_1 \hat{a}_1 + b_1 \hat{b}_1 \in V_1$ and $v_2 = a_2 \hat{a}_2 + b_2 \hat{b}_2 \in V_2$, 
	\begin{align*}
	|\Pi_{\mathrm{ker}A^\perp}(B(v_1,v_2) + B(v_2,v_1))|& = |\Pi_{\mathrm{span}\{\hat{a}_3,\hat{b}_3\}}(B(v_1,v_2) + B(v_2,v_1))| \\ 
	& = |2(\delta-1)| |(a_1 b_2 + a_2 b_1)\hat{a}_3 - (a_1a_2 - b_1b_2)\hat{b}_3| \\ 
	& = |2(\delta-1)| \sqrt{(a_1 b_2 + a_2 b_1)^2 + (a_1a_2 - b_1b_2)^2} \\ 
	& =  |2(\delta-1)| \sqrt{(a_1^2 + b_1^2)(a_2^2 + b_2^2)} \\
	& = |2(\delta-1)||v_1| |v_2|.
	\end{align*}
	It remains only to check the second and third conditions. This requires computing the linearized operators $\Pi_{V_j^\perp} L_x \Pi_{V_j^\perp}$ for $x \in V_j$ with $|x| = 1$. For $j = 1$, let 
	$$x = (\bar{a}_1,0,\ldots,0,\bar{b}_1,0,\ldots,0)$$ 
	for $\bar{a}_1, \bar{b}_1 \in \R$ satisfying $\sqrt{\bar{a}_1^2 + \bar{b}_1^2} = 1$. For general $(a,b) \in \R^J \times \R^J$ we compute 
	$$
	\Pi_{V_1^\perp} L_x \Pi_{V_1^\perp} (a,b) = \begin{pmatrix}
	0 & 0 & -2\delta \bar{b}_1 & 2 \delta \bar{a}_1 \\ 
	0 & 0 & -2\delta \bar{a}_1 & -2\delta \bar{b}_1 \\ 
	2(\delta - 1)\bar{b}_1 & 2(\delta - 1) \bar{a}_1 & 0 & 0 \\ 
	-2(\delta - 1)\bar{a}_1 & 2(\delta - 1)\bar{b}_1 & 0 & 0
	\end{pmatrix}
	\begin{pmatrix}
	a_2 \\ b_2 \\ a_3 \\ b_3
	\end{pmatrix},
	$$
	with the components not shown being zero. The eigenvalues and associated eigenvectors of the matrix above are given by 
	\begin{equation} \label{eq:SABRAevector1}
	\lambda_{+,1} = 2 \sqrt{\delta(1-\delta)}, \quad E_{+,1} = \left\{ 
	\begin{pmatrix}
	-\bar{a}_1 \sqrt{\frac{\delta}{\delta-1}} \\ \bar{b}_1 \sqrt{\frac{\delta}{\delta-1}} \\ 
	0 \\
	1
	\end{pmatrix},
	\begin{pmatrix}
	\bar{b}_1 \sqrt{\frac{\delta}{\delta-1}} \\ \bar{a}_1 \sqrt{\frac{\delta}{\delta-1}} \\ 
	1 \\
	0
	\end{pmatrix}
	\right\},
	\end{equation}
	\begin{equation}
	\lambda_{-,1} = -2 \sqrt{\delta(1-\delta)}, \quad E_{-,1} = \left\{ 
	\begin{pmatrix}
	\bar{a}_1 \sqrt{\frac{\delta}{\delta-1}} \\ -\bar{b}_1 \sqrt{\frac{\delta}{\delta-1}} \\ 
	0 \\
	1
	\end{pmatrix},
	\begin{pmatrix}
	-\bar{b}_1 \sqrt{\frac{\delta}{\delta-1}} \\ -\bar{a}_1 \sqrt{\frac{\delta}{\delta-1}} \\ 
	1 \\
	0
	\end{pmatrix}
	\right\}.
	\end{equation}
	For $x = \bar{a}_2 \hat{a}_2 + \bar{b}_2 \hat{b}_2$ with $|x| = 1$ we similarly have  
	$$
	\Pi_{V_2^\perp}L_x \Pi_{V_2^\perp}(a,b) 
	=
	\begin{pmatrix}
	0 & 0 & 2\bar{b}_2 & -2 \bar{a}_2 & 0 & 0 \\ 
	0 & 0 & 2\bar{a}_2 & 2\bar{b}_2 & 0 & 0 \\ 
	2(\delta-1)\bar{b}_2 &  2(\delta-1)\bar{a}_2 & 0 & 0 &-4\delta \bar{b}_2 & 4 \delta \bar{a}_2 \\ 
	-2(\delta-1)\bar{a}_2 & 2(\delta-1)\bar{b}_2 & 0 & 0 & -4\delta \bar{a}_2 & -4\delta \bar{b}_2 \\ 
	0 & 0 & 4(\delta-1)\bar{b}_2 & 4(\delta-1)\bar{a}_2 & 0 & 0 \\ 
	0 & 0 & -4(\delta-1)\bar{a}_2 & 4(\delta-1)\bar{b}_2 & 0 & 0
	\end{pmatrix}
	\begin{pmatrix}
	a_1 \\ b_1 \\ a_3 \\ b_3 \\ a_4 \\ b_4
	\end{pmatrix}.
	$$
	Defining $c_\delta = 5\delta - 4\delta^2 - 1 > 0$, the eigenvalues and associated eigenvectors are given by 
	\begin{equation}
	\lambda_0 = 0, \quad E_0 = \left\{ \begin{pmatrix}
	\frac{-4 \bar{a}_2 \bar{b}_2 \delta}{\delta - 1} \\ \frac{2 \bar{b}_2^2 \delta - 2 \bar{a}_2^2 \delta}{\delta - 1} \\ 0 \\ 0 \\ 0 \\ 1
	\end{pmatrix}, 
	\begin{pmatrix}
	\frac{2 \bar{b}_2^2 \delta - 2 \bar{a}_2^2 \delta}{\delta - 1} \\ \frac{4 \bar{a}_2 \bar{b}_2 \delta}{\delta - 1} \\ 0 \\ 0 \\ 1 \\ 0
	\end{pmatrix}
	\right\},
	\end{equation}
	\begin{equation}
	\lambda_{+,2} = 2\sqrt{c_\delta}, \quad E_{+,2} = \left\{
	\begin{pmatrix} \frac{-\bar{a}_2 \bar{b}_2}{\delta - 1} \\ \frac{\bar{b}_2^2 -\bar{a}_2^2}{2(\delta - 1)} \\ \frac{-\bar{a}_2 \sqrt{c_\delta}}{2(\delta - 1)} \\ \frac{\bar{b}_2 \sqrt{c_\delta}}{2(\delta - 1)} \\ 0 \\ 1	
	\end{pmatrix}, 
	\begin{pmatrix} \frac{\bar{b}_2^2 - \bar{a}_2^2}{2(\delta - 1)} \\ \frac{\bar{a}_2 \bar{b}_2}{\delta - 1} \\ \frac{\bar{b}_2 \sqrt{c_\delta}}{2(\delta - 1)} \\ \frac{\bar{a}_2 \sqrt{c_\delta}}{2(\delta - 1)} \\ 1 \\ 0	
	\end{pmatrix}
	\right\},
	\end{equation}
	and 
	\begin{equation} \label{eq:SABRAevector5}
	\lambda_{-,2} = 2\sqrt{c_\delta}, \quad E_{-,2} = \left\{
	\begin{pmatrix} \frac{-\bar{a}_2 \bar{b}_2}{\delta - 1} \\ \frac{\bar{b}_2^2 -\bar{a}_2^2}{2(\delta - 1)} \\ \frac{\bar{a}_2 \sqrt{c_\delta}}{2(\delta - 1)} \\ \frac{-\bar{b}_2 \sqrt{c_\delta}}{2(\delta - 1)} \\ 0 \\ 1	
	\end{pmatrix}, 
	\begin{pmatrix} \frac{\bar{b}_2^2 - \bar{a}_2^2}{2(\delta - 1)} \\ \frac{\bar{a}_2 \bar{b}_2}{\delta - 1} \\ \frac{-\bar{b}_2 \sqrt{c_\delta}}{2(\delta - 1)} \\ \frac{-\bar{a}_2 \sqrt{c_\delta}}{2(\delta - 1)} \\ 1 \\ 0	
	\end{pmatrix}
	\right\}.
	\end{equation}
	Since, for each $j$, $\lambda_{+,j}$ is positive and independent of $x \in V_j \cap \S^{n-1}$, we see that the third condition of Theorem~\ref{thm:combination} is satisfied. Lastly, \eqref{eq:JCFassumption2} follows from the formula for the eigenvectors given in \eqref{eq:SABRAevector1}-\eqref{eq:SABRAevector5}. This completes the proof of Theorem~\ref{thm:SABRA}.
\end{proof}

\section{Lorenz-96 with a two-dimensional kernel} \label{sec:L96}

In this section, we consider the stochastic Lorenz-96 system for $x_t = (x_{t,1}, \ldots, x_{t,n}) \in \R^n$ (with $n \ge 6$) defined by
\begin{equation} \label{eq:L96sde}
dx_{t,j} = -a_j x_{t,j}dt + B_j(x_t,x_t)dt + \sigma_j dW_t^{(j)},
\end{equation}
where $x_{t,k} = x_{t,k+n}$ (periodic conditions), $a_j \ge 0$, and
\begin{equation}
B_j(x,x) = (x_{j+1} - x_{j-2})x_{j-1}.
\end{equation}
Consistent with our earlier notation, we write $A = \text{diag}(a_1,\ldots, a_n)$. A consequence of Theorem~\ref{thm:basic1sigma} is that \eqref{eq:L96sde} admits an invariant measure when $a_1 = 0$, $\sigma_{n-1} \neq 0$, and $a_j > 0$ for all $2 \le j \le n$. Indeed, in this case $B(x,x) = 0$ for every $x \in \mathrm{ker}A$ and moreover it is straightforward to check that for $z = (z_0,0,\ldots,0) \in \mathrm{ker}A$ the linearized operator $L_z^\perp$ is Jordan block unstable with 
$$ (|z| t)^3 \leqc \|e^{L_z^\perp t}\| \leqc 1 + (|z|t)^3 \quad \text{and} \quad |e^{L_z^\perp t} e_{n-1}| \gtrsim |z|t,$$
where $e_{n-1}$, which denotes the usual canonical basis vector, is a generalized eigenvector that is the last element of a Jordan chain. Our goal in this section is to show that an invariant measure can in fact also be constructed using our methods in the more degenerate case where $a_1 = a_2 = 0$. The main result is stated precisely as follows.
\begin{theorem} \label{thm:L96twomodes}
	Let $ 6 \le n < \infty$. The stochastic Lorenz-96 system with $a_1 = a_2 = 0$ and $a_j > 0$ for $3 \le j \le n$ admits an invariant measure $\mu_*$ provided that $\sigma_n, \sigma_{n-1} \neq 0$. Moreover, we have the moment bound 
	$$ \int_{\R^n} |x|^{p}\mu_*(dx) < \infty$$
	for every $0 < p < 1/3$.
\end{theorem}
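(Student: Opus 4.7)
The plan is to verify Assumption~\ref{ass:time-average} for $r$ arbitrarily close to $1/7$ from below, so that Lemma~\ref{lem:time-averaged_step1} together with Lemma~\ref{lem:time-averaged_main} produces a stationary measure $\mu_*$ with $\brak{x}^q \in L^1(\dee \mu_*)$ for every $q < 2r/(1-r)$, and hence for every $p < 1/3$. With $\mathrm{ker}A = \Span\{e_1,e_2\}$, the key algebraic identity for the Lorenz-96 nonlinearity is
\[
B(e_1,e_1) = B(e_2,e_2) = 0, \qquad B(e_1,e_2) + B(e_2,e_1) = -e_3 \in \mathrm{ker}A^\perp,
\]
so $\pm e_1,\pm e_2$ are the only equilibria of $B$ inside $\mathcal{U}$, while any $\alpha e_1 + \beta e_2$ with $\alpha\beta\neq 0$ is pushed out of $\mathrm{ker}A$ with velocity $-\alpha\beta e_3$ by the conservative flow. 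For small $\delta > 0$ I decompose
\[
B_K \;=\; B_{K,\mathrm{trans}}^\delta \cup B_{K,e_1}^\delta \cup B_{K,e_2}^\delta,
\]
with $B_{K,\mathrm{trans}}^\delta = \{x \in B_K : \min(|x_1|,|x_2|) \ge \delta^{1/8} K^r\}$ and $B_{K,e_j}^\delta = \{x \in B_K : |x_{3-j}| < \delta^{1/8} K^r\}$ for $j=1,2$. On the transverse piece, the choice $V_1 = \Span\{e_1\}$, $V_2 = \Span\{e_2\}$ immediately verifies the cancellation \eqref{eq:combcancellation} and the lower bound \eqref{eq:combolbd} from the identity above (with $c=1$), so Lemma~\ref{lem:combotransverse} supplies the required coercivity on the scale $\eta_{\mathrm{trans}}(K) = K^{-1}$.

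On $B_{K,e_1}^\delta$, where $z_0 \approx \alpha e_1$ with $|\alpha| \asymp K$, the restricted linearization $L_{\alpha e_1}^\perp$ is nilpotent with the single nonzero matrix element $L_{\alpha e_1}^\perp e_{n-1} = \alpha e_n$: a length-two Jordan chain entirely inside $\mathrm{ker}A^\perp$, since all other images of $L_{\alpha e_1}$ on basis vectors of $\mathrm{ker}A^\perp$ land in $\mathrm{ker}A$ and are projected out. Because $\sigma_{n-1}\neq 0$ excites the start of this chain, the calculation behind Lemma~\ref{lem:1dJordangrowth} gives $\mathrm{Var}([Y_t]_{e_n})\gtrsim K^2 t^3$ for the linearized process $Y_t$, producing $|[Y_t]_{e_n}|\gtrsim K^r$ with positive probability on time average over the window $\tau = K^{(2r-2)/3}$. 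Justifying $y_t\approx Y_t$ on $[0,\tau]$ is subtle because the cancellation condition \eqref{eq:cancellation} fails for Lorenz-96: a direct computation gives
\[
\Pi_{\mathrm{ker}A}\bigl(B(z,y)+B(y,z)\bigr) \;=\; z_2 y_n\, e_1 + z_1(y_3 - y_n)\, e_2,
\]
so $\Pi_{e_2} z_t$ drifts on the fast scale $\sim \alpha|y|\asymp K^{1+r}$ through the uncanceled term $z_1(y_3-y_n)$. The crucial structural observation is that $L_{e_2}^\perp$ is a rotation on $\Span\{e_3,e_4\}$ that annihilates $e_n$, so this fast drift perturbs $L_{z_t}^\perp-L_{z_0}^\perp$ only in the $\{e_3,e_4\}$ sector and not in the growing $e_n$ direction; a variant of Lemma~\ref{lem:1dJordanapprox} exploiting this separation yields the short-time coercivity on $B_{K,e_1}^\delta$.

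The main obstacle is $B_{K,e_2}^\delta$, where $z_0\approx\alpha e_2$: here $L_{\alpha e_2}^\perp$ is a \emph{pure} rotation on $\Span\{e_3,e_4\}$ and has \emph{no} unstable direction inside $\mathrm{ker}A^\perp$ at all. The instability acts instead through the kernel, since $L_{\alpha e_2}e_n = \alpha e_1 \in \mathrm{ker}A$, and the noise $\sigma_n dW^{(n)}$ together with this coupling produces $\mathrm{Var}([Y_t]_{e_1})\sim K^2 t^3$ in the linearized process. The estimate is a two-stage relay: on $[0,\tau_1]$ with $\tau_1 = K^{(2r-2)/3}$ the $e_1$-component of $x_{\tau_1}$ grows to $\gtrsim K^r$ with probability $\ge\beta$, placing the trajectory inside $B_{K,\mathrm{trans}}^\delta$; a further window of length $K^{-1}$, together with the strong Markov property and the transverse estimate of paragraph 1 (in the style of the proof of Theorem~\ref{thm:combination}), then produces the required $\mathrm{ker}A^\perp$ growth. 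The hardest step is controlling the linearization error on $[0,\tau_1]$ in the absence of cancellation: here the contribution $z_2 y_n e_1$ in the uncanceled drift above, with $z_2\approx\alpha$, is precisely the linearized forcing $L_{\alpha e_2}(y_n e_n) = \alpha y_n e_1$ captured by $Y_t$, while the remaining term $z_1(y_3-y_n)e_2$ enters at quadratic order in the small component $z_{t,1}$ and can be absorbed as an error. Assembling the three regional estimates verifies Assumption~\ref{ass:time-average} and Theorem~\ref{thm:L96twomodes} follows from Lemma~\ref{lem:time-averaged_step1}.
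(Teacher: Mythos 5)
Your plan attacks the theorem by the same overall mechanism as the paper — verify Assumption~\ref{ass:time-average} with $r$ just below $1/7$ and invoke Lemmas~\ref{lem:time-averaged_step1} and~\ref{lem:time-averaged_main} — and it correctly identifies all three dynamical mechanisms at play: the length-two Jordan chain $e_{n-1}\mapsto e_n$ excited by $\sigma_{n-1}$ when $x_1$ is large, the transverse expulsion $\dot{x}_3 = -x_1x_2 + \dots$ when both $x_1$ and $x_2$ are appreciable, and the relay $x_n \to x_1 \to x_3$ driven by $\sigma_n$ when $x_1$ is small. It also correctly computes the uncancelled kernel drift $\Pi_{\mathrm{ker}A}(B(z,y)+B(y,z)) = z_2 y_n e_1 + z_1(y_3-y_n)e_2$ and the block structure of $L_{e_1}^\perp$, $L_{e_2}^\perp$. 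However, the route differs from the paper's in two substantive ways, and in the hardest case it is noticeably less complete.

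First, the partition is different. The paper cuts $B_{K,\delta}$ purely by the size of $|x_1|$ into $B^1_{K,\delta}$ ($|x_1|\ge K/\sqrt{32}$), $B^2_{K,\delta}$ ($\delta_1 K^{1/7}\le |x_1| < K/\sqrt{32}$), and $B^3_{K,\delta}$ ($|x_1|<\delta_1 K^{1/7}$); on $B^1$ it runs a single direct estimate that needs no constraint on $|x_2|$, and on $B^2$ (where $|x_2|\gtrsim K$ automatically) it uses the transverse flow. Your partition $B_{\mathrm{trans}}^\delta \cup B_{e_1}^\delta \cup B_{e_2}^\delta$, cutting at scale $\delta^{1/8}K^r$ for both coordinates, is also a valid partition and lets you import Lemma~\ref{lem:combotransverse} verbatim on the transverse piece (the Lorenz-96 structure satisfies \eqref{eq:combcancellation} and \eqref{eq:combolbd} with $V_j = \mathrm{span}\{e_j\}$, as you noted). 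It means, however, that your $B_{e_1}^\delta$-case handles a strict subset of the paper's Case~1, so you must re-derive the Jordan-block approximation estimate that the paper simply does by hand; the key structural observation you make — that the fast drift on $\Pi_{e_2}z_t$ feeds only the $\{e_3,e_4\}$ block and thus does not pollute the $\Pi_{e_n}$ error — is correct and can be made to close, but it does not follow from \emph{any} variant of Lemma~\ref{lem:1dJordanapprox} as stated, since \eqref{eq:frozenz2} fails by a factor of $K^{1-3r}/K^{(r-1)/3}$.

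Second, and more importantly, in the hardest region ($B_{e_2}^\delta$, i.e. the paper's $B^3_{K,\delta}$) you propose a genuinely different argument: a two-stage relay in which a linearization around $\alpha e_2$ inflates $x_1$ to $\gtrsim K^r$ on a window $\tau_1\approx K^{-4/7}$, and then a second application of the transverse lemma via the strong Markov property finishes the job. This is plausible and is in the spirit of the proof of Theorem~\ref{thm:combination}, but it is sketchier than the paper's treatment. The paper instead tracks the closed reduced nonlinear system $(X_{t,n},X_{t,1},X_{t,3})$ in Lemma~\ref{lem:L96Case3_growth} — which is \emph{not} a linearization — and obtains growth of $X_{t,3}$ directly in a single pass, reading off $\mathrm{Var}(X_{t,1}) \sim K^2t^3$ and $X_{t,3}\sim K^2\tau^{5/2}$ via the support theorem. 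The single-pass argument is technically cleaner: it avoids having to verify that the trajectory actually lands inside the transverse set $B_{\mathrm{trans}}^\delta$ (which requires pointwise-in-time control of $|\Pi_{\mathrm{ker}A^\perp}x_{\tau_1}|$, not just time-averaged control supplied by the contradiction hypothesis, in the style of~\eqref{eq:combostop1}), and it avoids the subtle point that the linearization of $x_n$ around $\alpha e_2$ is accurate only up to an error of the same order as $Y_{t,n}$ unless the contradiction parameter is used carefully. Your plan would, if fully carried out, produce a correct proof, but the paper's direct nonlinear tracking of the relay is the cleaner route, and the technical estimates you defer to ``variants'' of Lemmas~\ref{lem:1dJordangrowth}--\ref{lem:1dJordanapprox} are nontrivial given the failure of \eqref{eq:cancellation}.
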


As in the earlier sections, we will prove Theorem~\ref{thm:L96twomodes} by verifying Assumption~\ref{ass:time-average} using suitable approximation arguments for solutions in the vicinity of $\mathrm{ker}A$. To this end, for $K \gg 1$ and $\delta \in (0,1)$ we split the set 
$$B_{K,\delta} = \{x \in \R^n: |\Pi_{\mathrm{ker}A^\perp} x| \le \delta |\Pi_{\mathrm{ker}A}x|^{1/7} \text{ and } K/2 \le |x| \le 2K \}$$
as 
$$ B_{K,\delta} = B_{K,\delta}^1 \cup B_{K,\delta}^2 \cup B_{K,\delta}^3,$$
where, for some small parameter $\delta_1 \in (0,1)$, 
$$ B_{K,\delta}^1 = \{x \in B_{K,\delta}: |x_1| \ge K/\sqrt{32}\}, $$
$$ B_{K,\delta}^2 = \{x \in B_{K,\delta}: \delta_1 K^{1/7} \le |x_1| < K/\sqrt{32}\},$$
and
$$ B_{K,\delta}^3 = \{x \in B_{K,\delta}: |x_1| < \delta_1 K^{1/7}\}. $$
Note that $|x_2| \ge K/\sqrt{32}$ for $x \in B_{K,\delta}^2 \cup B_{K,\delta}^3$.

In the region  $B_{K,\delta}^1$, we can use a treatment similar to that used for Jordan block unstable equilibria in Theorem \ref{thm:basic1}.
Specifically, we show that a large $x_1$ induces a significant growth in $x_{n}$ through the interaction $\dot{x}_n = x_1 x_{n-1} + ...$. He we rely on the fact that $x_{n-1}$ is being driven by a Brownian motion (since $\sigma_{n-1} \neq 0$), which ensures it is non-trivial with high probability.
In the region $B_{K,\delta}^2$, we can use a treatment similar to that used in Theorem \ref{thm:hypo}, by noting that $\dot{x}_3 = -x_1 x_2 + ...$ and hence if both $x_1$ and $x_2$ are sufficiently large, then $x_3$ will rapidly grow.
The region $B_{K,\delta}^3$ is the region that is most different from previous cases. Here, the Jordan block instability of the equilibrium $e_2$ excites $x_1$, which is still in $\mathrm{ker} A$.
Heuristically, we show that  solutions which start in $B_{K,\delta}^3$ are basically ejected into $B_{K,\delta}^2$, where they are subsequently ejected into $(\mathrm{ker} A)^\perp$. 

By Lemma~\ref{lem:time-averaged_step1}, Theorem \ref{thm:L96twomodes} is a direct consequence of the following time-averaged coercivity estimates.

\begin{proposition} \label{lem:L96Assumption1}
	Let $\tau_1(K) = \tau_3(K) = K^{-4/7}$ and $\tau_2(K) = K^{-1}$.
	There exist $K_* \ge 1$, $c_* > 0$, and $\delta, \delta_1 \in (0,1)$ so that if $K \ge K_*$ and $x_0 \in B_{K,\delta}^j$ then 
	$$ \frac{1}{\tau_j(K)} \int_0^{\tau_j(K)} |\Pi_{\mathrm{ker}A^\perp} x_t|^2 dt \ge c_* K^{2/7}. $$
	Consequently, Assumption~\ref{ass:time-average} is satisfied with $r = 1/7$.
\end{proposition}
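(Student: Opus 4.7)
The plan is to verify Assumption~\ref{ass:time-average} with $r = 1/7$ by handling each of the three regions $B_{K,\delta}^j$ via a tailored approximation argument, adapting techniques from Sections~\ref{sec:transverse} and~\ref{sec:1dkernel}. Throughout, write $z_t = \Pi_{\mathrm{ker}A}x_t$ and $y_t = \Pi_{\mathrm{ker}A^\perp}x_t$, and in each region argue by contradiction: assume $\E \int_0^{\tau_j(K)} |y_t|^2 dt \le \delta_1 \tau_j(K) K^{2/7}$ for a small $\delta_1 > 0$ and $K$ sufficiently large, then exploit the dynamical instability specific to that region to derive a conflicting lower bound.

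For $x_0 \in B_{K,\delta}^1$, the coordinate $|x_{0,1}|$ is comparable to $K$, so $\Pi_{\mathrm{ker}A}x_0$ is essentially supported on $e_1$; the coupling $\dot y_n = x_{0,1} y_{n-1} + \cdots$ in the linearization $L_{x_{0,1}e_1}^\perp$ produces a Jordan chain excited by the noise $\sigma_{n-1} dW^{(n-1)}$, and the argument proceeds in the spirit of Lemmas~\ref{lem:1dJordangrowth} and~\ref{lem:1dJordanapprox} on the window $\tau_1 = K^{-4/7}$. For $x_0 \in B_{K,\delta}^2$, both $|x_{0,1}| \gtrsim K^{1/7}$ and $|x_{0,2}| \gtrsim K$, so the interaction $\dot X_{t,3} = (X_{t,4} - X_{t,1})X_{t,2}$ in the conservative flow $\dot X_t = B(X_t,X_t)$ is of size $\gtrsim K^{1+1/7}$ at $t=0$, the competing term $X_{t,4}X_{t,2}$ being dominated by choosing $\delta \ll \delta_1$; this is the transverse setting of Proposition~\ref{thm:transverse}, and Lemmas~\ref{lem:transversegrowth} and~\ref{lem:transverseapprox} together yield the required estimate on $\tau_2 = K^{-1}$. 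In both cases the general cancellation condition \eqref{eq:cancellation} does not hold verbatim, but the sparsity of the Lorenz-96 nonlinearity confines the cross interactions $\Pi_{\mathrm{ker}A}(B(z,y) + B(y,z))$ to a small list of bounded bilinear terms, whose $L^2$ control under the contradiction hypothesis, combined with Doob's inequality for the martingale part of $z_t$, keeps the linearization faithful on the relevant window.

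The region $B_{K,\delta}^3$ is the principal obstacle and requires a genuinely two-stage argument: here $|x_{0,2}| \gtrsim K$ but $|x_{0,1}| < \delta_1 K^{1/7}$, and linearizing around the nearby equilibrium $x_{0,2}e_2$ produces an unstable Jordan block whose growing direction is $e_1 \in \mathrm{ker}A$ rather than a damped mode, so no direct growth of $y_t$ is produced by the linearization alone. The plan is first to adapt the contradiction scheme of Lemma~\ref{lem:1dJordanapprox}, with the chain excited by $\sigma_n dW^{(n)}$ and the growing direction living inside $\mathrm{ker}A$, to show that with probability bounded below there exists a stopping time $\tau^\star \le \tau_3(K)/2$ at which $|x_{\tau^\star,1}| \ge \delta_1 K^{1/7}$, $|x_{\tau^\star}| \approx K$, and $|y_{\tau^\star}|$ remains small — i.e., $x_{\tau^\star}$ lies at comparable energy in $B_{K,\delta}^2$. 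Second, invoking the strong Markov property at $\tau^\star$ and applying the estimate just established for $B_{K,\delta}^2$ over the residual window of length $\tau_2 = K^{-1} \le \tau_3(K)/2$ lower-bounds $\int_{\tau^\star}^{\tau^\star+\tau_2}|y_t|^2 dt \gtrsim \tau_2 K^{2/7}$, contradicting the initial hypothesis on $\tau_3(K) = K^{-4/7}$. The hard part is justifying the linearization of $x_{t,1}$ around the dynamics generated by $x_{0,2}e_2$ over the full window $\tau_3$ despite the failure of the cancellation condition; we exploit the explicit sparsity of $\Pi_{\mathrm{ker}A}B$ (the only cross term in the $e_1$-component is proportional to $z_2 y_n$, and in the $e_2$-component it is proportional to $y_3 z_1$) together with the $L^2$ smallness of $y_t$ to keep the $z$-drift errors below $\delta_1 K^{1/7}$ on $\tau_3$. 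Combining the three regional estimates with Lemma~\ref{lem:time-averaged_step1} then yields Theorem~\ref{thm:L96twomodes}.
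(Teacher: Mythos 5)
Your decomposition of $B_{K,\delta}$ and the proposed dynamics for each region all match the paper's structure, and the contradiction scheme you sketch is the right skeleton. Cases 1 and 2 are described a bit loosely (Lemma~\ref{lem:transversegrowth} does not apply directly in Case 2, since with $|x_{0,1}|$ only of size $\delta_1 K^{1/7}$ the first derivative of the transported solution is of order $K^{8/7}$, not $K^2$; the paper in fact uses a bespoke approximate system $dX_{t,3} = -X_{t,1}X_{t,2}\,dt$ and verifies the growth by hand), but the ideas there are essentially right and can be made to work.

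The genuine gap is in Case 3, and it is quantitative. Your two-stage plan is: (i)~hit a stopping time $\tau^\star \le \tau_3/2$ at which the process lies in $B_{K,\delta}^2$ with probability $\ge \beta > 0$, and (ii)~apply the Case~2 estimate after $\tau^\star$ on a window of length $\tau_2 = K^{-1}$, obtaining $\E \int_{\tau^\star}^{\tau^\star+\tau_2} |y_t|^2\,dt \gtrsim \beta\,\tau_2 K^{2/7}$. But the contradiction hypothesis you set up is $\E\int_0^{\tau_3}|y_t|^2\,dt \le \delta_1 \tau_3 K^{2/7}$ with $\tau_3 = K^{-4/7}$. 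Your lower bound is $\beta\,\tau_2 K^{2/7} = \beta K^{-5/7}$, while the upper bound you are trying to violate is $\delta_1 K^{-2/7}$; since $\tau_2/\tau_3 = K^{-3/7} \to 0$, no contradiction results for any fixed choice of $\beta, c_*, \delta_1$. In other words, a growth burst of length $K^{-1}$ inside an averaging window of length $K^{-4/7}$ is too diluted to matter, and the exit time to $B_{K,\delta}^2$ is genuinely of order $K^{-4/7}$ (the Brownian excitation of $x_n$ must first reach $\sim \sqrt{\tau_3}$ and then propagate to $x_1$), so you cannot shrink $\tau_3$ to $K^{-1}$ either.

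The paper avoids this mismatch by not splitting Case~3 into two stages. In Lemma~\ref{lem:L96Case3_growth} one tracks the coupled approximate system for $(X_{t,n},X_{t,1},X_{t,3})$ over the \emph{entire} window $[0,\tau_3]$, and shows that once $X_{t,1}$ has grown to size $K^{1/7}$ (by time $\sim \tau_3$), the component $X_{t,3}$ does not merely exceed $K^{1/7}$ for a duration $K^{-1}$; rather, the quadratic-in-time accumulation $X_{t,3} \approx -\tfrac{t^2}{2}X_{0,2}^2 X_{0,n}$ pushes $|X_{t,3}|$ up to $K^{4/7}$, and this large value persists for $t \in [\tau_3/2,\tau_3]$, a constant fraction of the window. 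That sustained growth gives $\E\int_0^{\tau_3}|y_t|^2\,dt \gtrsim \beta\,\tau_3 K^{8/7}$, far exceeding $\delta_1\tau_3 K^{2/7}$, and the contradiction closes. To repair your argument you would need to show that the damped modes stay large on an $O(\tau_3)$-portion of the window after $\tau^\star$, not just on a window of length $\tau_2$; that is precisely the extra input that re-invoking the short-window Case~2 estimate cannot provide.
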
 

As discussed above the region $B_{K,\delta}^3$ is the most involved.
The main difficulty here is to deduce growth of $\Pi_{\mathrm{ker}A^\perp}X_t$ for a suitable approximate solution when $X_0 \in B_{K,\delta}^3$, as proved in the next lemma. 

\begin{lemma} \label{lem:L96Case3_growth}
	Let $X_t$ solve 
	\begin{equation} \label{eq:L96approx}
	\begin{cases}
	dX_{t,n} = X_{t,1}X_{t,n-1}dt + \sigma_n dW_t^{(n)} \\ 
	dX_{t,1} = X_{t,2}X_{t,n}dt \\ 
	dX_{t,3} = -X_{t,1}X_{t,2}dt \\ 
	dX_{t,j} = 0 & j \not \in \{n,1,3\}.
	\end{cases}
	\end{equation}
	with initial condition $X_0 \in B_{K,\delta}^3$ and some $\sigma_n \neq 0$. For $\delta$ and $\delta_1$ chosen sufficiently small, there are constants  $\beta, c_* > 0$ (independent of $X_0$) so that for all $K$ sufficiently large and $\tau = K^{-4/7}$ there holds 
	\begin{equation}\label{eq:L96Case3_growth}
	\P\left(\frac{1}{\tau}\int_0^\tau |X_{t,3}| dt \ge c_* K^{4/7}\right)\ge \beta.
	\end{equation}
\end{lemma}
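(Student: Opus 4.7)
The plan is to exploit the fact that \eqref{eq:L96approx} is a linear SDE, so $X_{t,3}$ is a Gaussian process whose mean and variance can be computed essentially explicitly. Writing $Y = \tau^{-1}\int_0^\tau X_{t,3}\,dt$, the elementary inequality $\tau^{-1}\int_0^\tau |X_{t,3}|\,dt \ge |Y|$ reduces \eqref{eq:L96Case3_growth} to proving $\P(|Y| \ge c_* K^{4/7}) \ge \beta$ for constants $c_*,\beta > 0$ independent of $K$ and of the initial data in $B_{K,\delta}^3$. First I would note that in \eqref{eq:L96approx} the components $X_{t,2}$, $X_{t,n-1}$ and $X_{t,j}$ for $j\notin\{1,3,n\}$ are constant in time, so setting $\bar a = X_{0,2}$ and $\bar b = X_{0,n-1}$, the definition of $B_{K,\delta}^3$ together with $\delta,\delta_1$ small forces $|\bar a|\ge K/\sqrt{32}$ while $|\bar b|\le \delta(2K)^{1/7}$. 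The pair $(X_{t,n},X_{t,1})$ then solves a two-dimensional linear SDE driven by $W^{(n)}$ with drift matrix $M=\left(\begin{smallmatrix} 0 & \bar b \\ \bar a & 0\end{smallmatrix}\right)$ satisfying $M^2 = \bar a\bar b\,I$; setting $\lambda^2 = \bar a\bar b$ (possibly negative) one has $e^{Mt} = \cosh(\lambda t)I + \lambda^{-1}\sinh(\lambda t)M$ and the crucial bound $|\lambda|\tau \lesssim \sqrt{\delta}$.

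For the variance lower bound I would first treat the idealized case $\bar b=0$, in which the standard identity $\int_0^s W_r^{(n)}\,dr = \int_0^s(s-r)\,dW_r^{(n)}$ gives $X_{s,1}^{\mathrm{stoch}} = \bar a\sigma_n\int_0^s(s-r)\,dW_r^{(n)}$; combining with $X_{t,3}=X_{0,3}-\bar a\int_0^t X_{s,1}\,ds$ and two more Fubini/It\^o rearrangements yields
\begin{equation*}
\frac{1}{\tau}\int_0^\tau X_{t,3}^{\mathrm{stoch}}\,dt\,\Big|_{\bar b=0} = -\frac{\bar a^2\sigma_n}{\tau}\int_0^\tau \frac{(\tau-r)^3}{6}\,dW_r^{(n)},
\end{equation*}
whose variance equals $\bar a^4\sigma_n^2\tau^5/252 \ge c(\sigma_n)K^{8/7}$ using $|\bar a|\ge K/\sqrt{32}$ and $\tau=K^{-4/7}$. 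Reintroducing $\bar b$, the Taylor expansions $\cosh(\lambda t)=1+O(\delta)$ and $\lambda^{-1}\sinh(\lambda t)=t(1+O(\delta))$ show that the deterministic kernel defining the full $X_{t,3}^{\mathrm{stoch}}$ differs from its $\bar b=0$ counterpart pointwise by a relative $O(\delta)$ error, so for $\delta$ small enough $\mathrm{Var}(Y) \ge (c(\sigma_n)/2)K^{8/7}$.

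To conclude I decompose $Y = \E Y + Z$ with $Z$ mean-zero Gaussian of variance at least $(c(\sigma_n)/2)K^{8/7}$. If $|\E Y| \ge 2c_* K^{4/7}$, the symmetry of $Z$ about zero immediately gives $\P(|Y|\ge c_* K^{4/7}) \ge 1/2$. If instead $|\E Y| < 2c_* K^{4/7}$, then $|Z|\ge 3c_* K^{4/7}$ implies $|Y|\ge c_* K^{4/7}$, and the standard Gaussian tail estimate yields $\P(|Z|\ge 3c_* K^{4/7}) \ge \beta$ for $\beta>0$ provided $c_*$ is chosen small enough (depending on $\sigma_n$). In either case $\P(|Y|\ge c_* K^{4/7}) \ge \min(1/2,\beta)$, which is the desired bound.

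The principal obstacle is that the deterministic drift $-\bar a^2 X_{0,n}t^2/2$ in $\E X_{t,3}$ can, in the worst case $|X_{0,n}|\sim \delta K^{1/7}$, produce $|\E Y|$ of order $\delta K$ — much larger than the target $K^{4/7}$. The case analysis above handles this cleanly because a large $|\E Y|$ only helps via the symmetry of $Z$, but it forces one to track both the mean and variance of $Y$ rather than relying on the variance alone. The remaining technical work is the perturbative control of the $\bar b$-correction to the Gaussian kernel, which rests entirely on the identity $M^2=\lambda^2 I$ and the smallness $|\lambda|\tau\lesssim \sqrt{\delta}$.
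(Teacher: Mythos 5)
Your proposal is correct, and it takes a genuinely different route from the paper's proof. The paper argues perturbatively: it splits on the size of $|X_{0,n}|$ relative to $\sqrt{\tau}$, approximates $X_{t,n}$ by $X_{0,n}$ (Case 1) or by $X_{0,n}+W_t$ (Case 2), bounds the error $E_t$ via a Gr\"onwall argument, and then establishes a \emph{pointwise} lower bound $|X_{t,3}|\gtrsim K^{4/7}$ on an interval of the form $[\tau/2,\tau]$ (or $[3\tau/4,\tau]$), invoking the support theorem for Brownian motion in Case 2. You instead notice that after freezing the constant components $X_{t,2}\equiv\bar a$ and $X_{t,n-1}\equiv\bar b$, the triple $(X_{t,n},X_{t,1},X_{t,3})$ satisfies a linear autonomous SDE whose law is exactly Gaussian; this makes $Y=\tau^{-1}\int_0^\tau X_{t,3}\,dt$ an explicit scalar Gaussian whose variance you lower bound by $\gtrsim K^{8/7}$ using $M^2=\bar a\bar b\,I$ and the smallness $|\lambda|\tau\lesssim\sqrt\delta$ (which holds for either sign of $\bar a\bar b$, with $\cosh,\sinh$ replaced by $\cos,\sin$). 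Your dichotomy on $|\E Y|$ (large: symmetry of the centered Gaussian gives probability $\ge 1/2$; small: Gaussian anti-concentration gives probability $\ge\beta$) cleanly replaces the paper's dichotomy on $|X_{0,n}|$, and indeed the two splits are roughly equivalent since the dominant contribution to $\E Y$ is $-\bar a^2 X_{0,n}\tau^2/6$, so $|\E Y|\gtrsim K^{4/7}$ essentially corresponds to $|X_{0,n}|\gtrsim\sqrt\tau$. The exact-Gaussian route is shorter and avoids the Gr\"onwall bootstrap and the separate invocation of the support theorem, at the cost of giving only a lower bound on the time-averaged $|Y|$ rather than on $\sup|X_{t,3}|$; for the purposes of verifying \eqref{eq:L96Case3_growth} this is all that is needed, since $\tau^{-1}\int_0^\tau|X_{t,3}|\,dt\ge|Y|$. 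One small remark: it is worth saying explicitly that $Y$ is Gaussian because the initial condition is deterministic and the SDE for $(X_n,X_1,X_3)$ is linear with additive noise, so $Z=Y-\E Y$ is indeed a centered Gaussian, which is what your symmetry and tail arguments require.
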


\begin{proof}
	
	Without loss of generality we set $\sigma_n dW_t^{(n)} = dW_t$ for a standard Brownian motion $W_t$. We also write $r = 1/7$, so that $\tau = K^{-4r}$ and moreover from the definition of $B^3_{K,\delta}$ we have
	\begin{align}
	\sum_{j=3}^n |X_{0,j}|^2 & \le \delta^2 (|X_{0,1}|^2 + |X_{0,2}|^2)^r \le 4 \delta^2 K^{2r}, \label{eq:BK3boundX01} \\ 
	|X_{0,1}| &\le \delta_1 K^r. \label{eq:BK3boundX02}
	\end{align}
	For $R \gg 1$ to be chosen we split into the cases $|X_{0,n}| \ge R \sqrt{\tau}$ and $|X_{0,n}| < R \sqrt{\tau}$. In the former, we approximate $X_{t,n} \approx X_{0,n}$  and in the latter we approximate $X_{t,n} \approx X_{0,n} + W_t$.
	
	\textbf{Case 1} ($|X_{0,n}| \ge R\sqrt{\tau}$): Write  $X_{t,n} = X_{0,n} + E_t$, where $E_t$ is an error to be controlled. Substituting this into the system we have
	\begin{equation} 
	\begin{cases}
	dE_t = X_{t,1}X_{0,n-1}dt + dW_t \\
	dX_{t,1} = X_{0,2}(X_{0,n} + E_t)dt \\ 
	dX_{t,3} = -X_{t,1}X_{0,2}dt.
	\end{cases}
	\end{equation}
	Thus,
	\begin{equation} \label{eq:L96case3X1}
	X_{t,1} = X_{0,1} + tX_{0,2}X_{0,n} + X_{0,2}\int_0^t E_s ds
	\end{equation}
	and 
	$$
	E_t = tX_{0,n-1}X_{0,1} + \frac{t^2}{2}X_{0,n-1}X_{0,n}X_{0,2} + X_{0,n-1}X_{0,2}\int_0^t (t-s)E_s ds + W_t.
	$$
	Recalling $|X_0| \le 2K$ and using the bounds \eqref{eq:BK3boundX01} and \eqref{eq:BK3boundX02} we have 
	\begin{equation}
	|E_t| \le 2\delta \delta_1 t K^{2r} + 2\delta t^2 K^{r+1}|X_{0,n}| + 4\delta t K^{r+1}\int_0^t |E_s|ds + |W_t|,
	\end{equation}
	and therefore
	\begin{equation}
	\E \sup_{0 \le t' \le t} |E_{t'}| \le 2t\delta \delta_1 K^{2r}  + 2\delta t^2 K^{1+r}|X_{0,n}| + 4 \delta t K^{1+r}\int_0^t \E\sup_{0 \le t' \le s}|E_{t'}|ds + \sqrt{t}
	\end{equation}
	By Gr\"{o}nwall's Lemma, $|X_{0,n}| \ge RK^{-2/7}$, and the definitions of $r$ and $\tau$, it follows that 
	\begin{align*}
	\E \sup_{0 \le t \le \tau} |E_t| &\le (2\tau \delta \delta_1 K^{2r} + 2\delta \tau^2 K^{1+r}|X_{0,n}| + \sqrt{\tau}) \exp(4\delta \tau^2 K^{1+r}) \\ 
	& \le C (K^{-2/7} + \delta|X_{0,n}|) \\ 
	& \le 2C \max(\delta,R^{-1})|X_{0,n}|,
	\end{align*}
	where $C$ is a constant that does not depend on $\delta$, $\delta_1$, or $K$. Putting in $R = \delta^{-1/2}$ we conclude 
	\begin{equation} \label{eq:L96approxCase3_1}
	\E \sup_{0 \le t \le \tau} |E_t| \le 2 C\sqrt{\delta} |X_{0,n}|.
	\end{equation}
	
	The goal is now to use \eqref{eq:L96approxCase3_1} to show that $X_{t,3}$ must grow. By \eqref{eq:L96approxCase3_1} and Chebyshev's inequality, for $\delta$ sufficiently small we have
	\begin{equation}
	\P \left(\sup_{0 \le t \le \tau} |E_t| \le \delta^{1/4}|X_{0,n}|\right) \ge \frac{1}{2}.
	\end{equation} 
	Suppose that $\omega \in \Omega$ is such that 
	\begin{equation} \label{eq:L96Case3Ebound1}
	\sup_{0 \le t \le \tau}|E_t(\omega)| \le \delta^{1/4}|X_{0,n}|.
	\end{equation} 
	We will show that in this case, for $\delta$ sufficiently small, there is $c_* \in (0,1]$ (independent of $\omega, X_0, K$) such that
	\begin{equation} \label{eq:L96Case3goal1}
	\int_0^\tau |X_{t,3}(\omega)|dt \ge c_* \tau K^{4/7},
	\end{equation}
	which is sufficient to imply \eqref{eq:L96Case3_growth} with $\beta = 1/2$. From \eqref{eq:L96case3X1} and \eqref{eq:L96Case3Ebound1} we have (suppressing now the dependence on $\omega$ from the notation)
	\begin{equation}
	|X_{t,1} - t X_{0,2} X_{0,n}| \le |X_{0,1}| + \delta^{1/4}|X_{0,2}||X_{0,n}|t \quad \forall t\in[0,\tau]. 
	\end{equation}
	Applying this bound in the formula for $X_{t,3}$ and using $|X_{0,3}| \le K^r$ we get, for $t \in [0,\tau]$, 
	\begin{align*}
	|X_{t,3}|  
	& \ge \left|\int_{0}^t sX_{0,2}^2X_{0,n} ds\right| - |X_{0,2}|\int_{0}^t (|X_{0,1}| + s\delta^{1/4}|X_{0,2}||X_{0,n}|)ds - K^r \\ 
	& \ge \frac{t^2}{2}|X_{0,2}|^2 |X_{0,n}|-t|X_{0,2}||X_{0,1}| - \delta^{1/4} t^2 |X_{0,2}|^2 |X_{0,n}| - K^r \\ 
	& \ge \frac{t^2}{4}|X_{0,2}|^2 |X_{0,n}|-t|X_{0,2}||X_{0,1}| -K^r,
	\end{align*}
	where in the last inequality we have assumed that $\delta$ is sufficiently small. We thus have
	\begin{equation} \label{eq:L96Case3X3lbd1}
	|X_{t,3}| \ge \frac{\tau^2}{16}|X_{0,2}|^2|X_{0,n}| - \tau|X_{0,2}||X_{0,1}| - K^r \quad \forall t \in [\tau/2,\tau].
	\end{equation}
	Using \eqref{eq:L96Case3X3lbd1}, $|X_{0,1}| \le K^r$, $\delta^{-1/2}K^{-2r} \le |X_{0,n}| \le 2\delta K^{r}$, and $K/\sqrt{32} \le |X_{0,2}| \le 2K$ it follows that for $t \in [\tau/2,\tau] = [K^{-4r}/2,K^{-4r}]$ and $\delta$ sufficiently small there holds
	\begin{align*}
	|X_{t,3}| & \ge \frac{\delta^{-1/2} K^{2-10r} }{512} - 2 K^{1-3r} - K^r  \ge \frac{1}{512} \delta^{-1/2} K^{4/7} - 3K^{4/7},
	\end{align*}
	where we have noted that the choice $r = 1/7$ implies $2-10r = 1-3r = 4/7$. Hence, for $\delta$ sufficiently small we have $|X_{t,3}| \ge K^{4/7}$ for $t \in [\tau/2,\tau]$, and so
	$$ \int_0^{\tau} |X_{t,3}| dt \ge \frac{1}{2} \tau K^{4/7},$$
	which proves \eqref{eq:L96Case3goal1}.
	
	\textbf{Case 2} ($|X_{0,n}| \le R \sqrt{\tau} = \delta^{-1/2}\sqrt{\tau}$): Let $X_{t,n} = X_{0,n} + W_t + E_t$, where again $E_t$ is an error to be bounded. Computations similar to those of Case 1 give 
	\begin{equation}  \label{eq:L96approxCase3_2}
	\E \sup_{0 \le t \le \tau} \frac{|E_t|}{\sqrt{t}} \le C R \delta \le C\sqrt{\delta},
	\end{equation}
	where $C$ is a constant that does not depend on $\delta$ or $K$. Now we justify the growth of $X_{t,3}$, which is also similar to above. We have
	\begin{equation}
	X_{t,1} = X_{0,1} +X_{0,2}X_{0,n}t + X_{0,2}\int_0^t W_s ds + X_{0,2}\int_0^t E_s ds.
	\end{equation}
	Without loss of generality we may assume that $X_{0,2}X_{0,n} \ge 0$. By the scaling and support theorems for Brownian motion, there exists $\alpha > 0$ that does not depend on $K$ such that 
	\begin{equation} \label{eq:sptthrm}
	\P\left(X_{0,2}\int_0^t W_s ds \ge |X_{0,2}| \tau^{3/2} \quad \forall t \in [\tau/4,\tau]\right) \ge \alpha. 
	\end{equation}
	By \eqref{eq:L96approxCase3_2} and \eqref{eq:sptthrm}, if $\delta$ is small enough we have 
	\begin{equation} \label{eq:L96Wienerapprox}
	\P\left( X_{0,2} \int_0^t W_s ds \ge |X_{0,2}| \tau^{3/2} \quad \forall t \in [\tau/4,\tau] \quad \text{and}\quad \sup_{0 \le t \le \tau}|E(t)|/\sqrt{t} \le \delta^{1/4})\right) \ge \frac{\alpha}{2}. 
	\end{equation}
	Let $\omega \in \Omega$ be such that the two bounds in \eqref{eq:L96Wienerapprox} hold true. We will prove that for such an $\omega$ one has 
	$$\int_0^\tau |X_{t,3}(\omega)|dt \ge c_* \tau K^{4/7}$$
	for $c_*$ sufficiently small. First, there is nothing to show if 
	$$ \int_{0}^\tau |X_{t,3}(\omega)| dt \ge \frac{\tau}{8000} K^{4/7},$$ 
	so suppose otherwise. In this case, there exists $t_0 \in [\tau/4,\tau/2]$ is such that $|X_{t_0,3}| \le K^{4/7}/2000$. Then, for $t \in [t_0,\tau]$ there holds 
	\begin{align*}
	|X_{t,3}| 
	& \ge \left|\int_{t_0}^t X_{0,2}^2\left(X_{0,n}s + \int_0^s W_{s'}ds'\right)ds\right| \\ 
	& \quad - \int_{t_0}^t |X_{0,2}|\left(|X_{0,1}| + |X_{0,2}|\int_0^s |E_{s'}|ds'\right)ds - \frac{K^{4/7}}{2000} \\ 
	& \ge |X_{0,2}|^2 (t-t_0)\tau^{3/2} - t|X_{0,2}||X_{0,1}| - \delta^{1/4}|X_{0,2}|^2 t^{5/2} - \frac{K^{4/7}}{2000},
	\end{align*} 
	where in obtaining the final inequality we have noted that $X_{0,n}$ and $\int_0^s W_{s'}ds'$ have the same sign for $s \ge t_0$ since $X_{0,n}X_{0,2} \ge 0$. Taking $\delta$ sufficiently small and $t \in [3\tau/4,\tau]$ to absorb the third term by the first we obtain
	\begin{equation}
	|X_{t,3}| \ge \frac{K^2 \tau^{5/2}}{256} - 2\delta_1\tau K^{1+r} - \frac{K^{4/7}}{2000} \ge \frac{K^{4/7}}{512} - 2\delta_1 K^{4/7},
	\end{equation}
	where we have recalled also that $K^2/32 \le X_{0,2}^2 \le 4K^2$, $|X_{0,1}| \le \delta K^r$, and $2-10r = 1-3r = 4/7$.
	For $\delta_1$ sufficiently small we conclude that $|X_{t,3}| \ge K^{4/7}/1024$ for $t \in [3\tau/4,\tau]$. Thus,
	$$ \P\left(\int_0^{\tau} |X_{t,3}|dt \ge \frac{1}{4096}\tau K^{4/7}\right)\ge \frac{\alpha}{2}, $$
	which completes the proof. 
\end{proof}

We are now ready to prove Proposition~\ref{lem:L96Assumption1}.

\begin{proof}[Proof of Proposition~\ref{lem:L96Assumption1}]
	Let $x_0 \in B_{K,\delta}$ for $K$ to be taken sufficiently large and $\delta,\delta_1$ chosen appropriately. As before, set $r = 1/7$ for the sake of simplifying the presentation of the estimates. Let $\tau_j(K)$ be as given in the statement of the proposition. There are three cases to consider.
	
	\textbf{Case 1} ($x_0 \in B_{K,\delta}^1)$: Consider the approximate solution $X_t$ defined by 
	\begin{equation}
	\begin{cases}
	dX_{t,n} = X_{t,1}X_{t,n-1}dt + \sigma_{n}dW_t^{(n)} \\ 
	dX_{t,n-1} = \sigma_{n-1}dW_t^{(n-1)} \\ 
	dX_{t,j} = 0 & j \not \in \{n,n-1\}
	\end{cases}
	\end{equation}
	and initial condition $X_0 = x_0$. We have 
	$$ X_{t,n} = X_{0,n} + t X_{0,1}X_{0,n-1} + \sigma_{n-1} X_{0,1} \int_0^t W_{s}^{(n-1)}ds + \sigma_n W_t^{(n)}. $$
	Similar to as in Case 2 from the proof of Lemma~\ref{lem:L96Case3_growth}, using the support theorem for Brownian motion we can show that
	\begin{equation}
	\P\left(|X_{t,n}| \ge K \tau_1^{3/2} \quad \forall t\in[\tau_1/2,\tau_1]\right)\ge \alpha
	\end{equation}
	for some $\alpha > 0$ that does not depend on $K$ or $\delta$. Consequently, since $K \tau_1^{3/2} = K^r$,
	\begin{equation} \label{eq:L96Case1_1}
	\P\left(\frac{1}{\tau_1}\int_0^{\tau_1}|X_{t,n}| dt \ge \frac{1}{2}K^r \right) \ge \alpha.
	\end{equation}
	
	Suppose now for the sake of contradiction that 
	\begin{equation} \label{eq:L96contr1}
	\E \int_0^{\tau_1} |\Pi_{\mathrm{ker}A^\perp} x_t|^2 dt = \E \sum_{j=3}^n\int_0^{\tau_1}|x_{t,j}|^2 \le \delta \tau_1 K^{2r}. 
	\end{equation}
	The error $X_{t,n} - x_{t,n}$ solves 
	\begin{equation}
	d(X_{t,n} - x_{t,n}) = x_{0,1}(X_{t,n-1} - x_{t,n-1})dt + x_{t,n-1}(x_{0,1} - x_{t,1})dt + a_n x_{t,n}dt + x_{t,n-2}x_{t,n-1}dt
	\end{equation}
	with zero initial condition, and so 
	\begin{equation}\label{eq:L96Case1error}
	\begin{aligned}
	|X_{t,n} - x_{t,n}| &\le 2K\int_0^t |X_{s,n-1} - x_{s,n-1}|ds + \int_0^t|x_{s,n-1}||x_{0,1} - x_{s,1}|ds \\ 
	& + |a_n|\int_0^t |x_{s,n}|ds + \int_0^t |x_{s,n-2}||x_{s,n-1}|ds.
	\end{aligned}
	\end{equation}
	We now obtain bounds on $|X_{t,n-1} - x_{t,n-1}|$ and $|x_{0,1} - x_{t,1}|$.
	By \eqref{eq:L96contr1} and $\E|x_t| \leqc K$ for $t \le 1$ we have
	\begin{equation} \label{eq:L96Case1_2}
	\E\sup_{0 \le t \le \tau_1} |x_{0,1} - x_{t,1}| \le C\left(\sqrt{\delta}\tau_1 K^{1+r} + \sqrt{\tau_1}\right) \le C (\sqrt{\delta}+K_*^{r-1}) K^{1-3r}.
	\end{equation}
	Moreover, a straightforward application of \eqref{eq:L96contr1} yields
	\begin{equation} \label{eq:L96Case1_3}
	\E \sup_{0 \le t \le \tau_1}|X_{t,n-1} - x_{t,n-1}| \le C \sqrt{\delta} K^{-2r}.
	\end{equation}
	Let
	\begin{align*} 
	\Omega_1 &= \left\{\omega \in \Omega: \sum_{j=3}^{n}\int_0^{\tau_1} |x_{s,j}|^2 ds \le \sqrt{\delta}K^{-2r}\right\}, \\ 
	\Omega_2 & = \left\{\omega \in \Omega: \sup_{0 \le t \le \tau_1} |x_{0,1} - x_{t,1}| \le K^{1-3r}\right\}, \\ 
	\Omega_3 & = \left\{ \omega \in \Omega: \sup_{0 \le t \le \tau_1}|X_{t,n-1} - x_{t,n-1}| \le \delta^{1/4}K^{-2r} \right\}, \\ 
	\tilde{\Omega} & = \Omega_1 \cap \Omega_2 \cap \Omega_3.	\end{align*}
	By \eqref{eq:L96contr1}, \eqref{eq:L96Case1_2}, and \eqref{eq:L96Case1_3} for $\delta$ sufficiently small and $K_*$ sufficiently large we have $\P(\tilde{\Omega}) \ge 1 - \alpha/2$. Now, by \eqref{eq:L96Case1error} and $\tau_1 = K^{-4r}$, for $\omega \in \tilde{\Omega}$ we have 
	$$
	\sup_{0 \le t \le \tau_1} |X_{t,n} - x_{t,n}| \le C\delta^{1/4}K^{1-6r} = C\delta^{1/4}K^r,
	$$
	and hence 
	\begin{equation} \label{eq:L96Case1_4}
	\P\left(\int_0^{\tau_1}|X_{t,n} - x_{t,n}|dt \le \tau_1 C\delta^{1/4}K^r\right) \ge 1-\frac{\alpha}{2}.
	\end{equation}
	Combining \eqref{eq:L96Case1_1} and \eqref{eq:L96Case1_4} we see that for $\delta$ sufficiently small there holds 
	$$ \P\left(\int_0^{\tau_1}|x_{t,n}|dt \ge \frac{\tau_1}{4}K^r\right)\ge \frac{\alpha}{2}. $$
	This is enough to yield a contradiction for $\delta$ sufficiently small. 
	
	\textbf{Case 2} ($x_0 \in B_{K,\delta}^2$): 
	Consider the approximate solution $X_t$ defined simply by 
	\begin{equation}
	\begin{cases}
	dX_{t,3} = -X_{t,1}X_{t,2}dt \\ 
	dX_{t,j} = 0 & j \neq 3
	\end{cases}
	\end{equation}
	with initial condition $X_0 = x_0$. We have then 
	$$X_{t,3} = X_{0,3} - tX_{0,1}X_{0,2}$$
	so that the bounds on $x_0 \in B_{K,\delta}^2$ imply
	$$\frac{1}{\tau_2}\int_0^{\tau_2}|X_{t,3}|dt \ge \frac{\tau_2}{64}\delta_1 K^{1+r} - \delta K^r = \left(\frac{\delta_1}{64}-\delta\right)K^r.$$
	Taking $\delta \ll \delta_1$ yields
	\begin{equation} \label{eq:L96Case2_growth}
	\frac{1}{\tau_2}\int_0^{\tau_2}|X_{t,3}| dt \ge \frac{\delta_1}{128}K^r.
	\end{equation}
	The error satisfies 
	\begin{equation}\label{eq:L96Case2_error}
	d(X_{t,3} - x_{t,3}) = (x_{t,1} - x_{0,1})x_{t,2}dt +x_{0,1}(x_{t,2}- x_{0,2})dt + a_3 x_{t,3}dt -x_{t,4}x_{t,2}dt - \sigma_3 dW_t^{(3)}. 
	\end{equation}
	Supposing for contradiction that 
	\begin{equation} \label{eq:L96contr2}
	\E \int_0^{\tau_2} |\Pi_{\mathrm{ker}A^\perp} x_t|^2 dt = \E \sum_{j=3}^n\int_0^{\tau_2}|x_{t,j}|^2 \le \delta \tau_2 K^{2r}
	\end{equation}
	we easily derive
	$$ \E \sup_{0\le t \le \tau_2}|x_{t,1} - x_{0,1}| + \E \sup_{0\le t \le \tau_2}|x_{t,2} - x_{0,2}| \le C(\sqrt{\delta}\tau_2 K^{1+r} + \sqrt{\tau_2})\le C\max(\sqrt{\delta},K_*^{-1/2})K^r. $$
	Therefore, choosing $K_* = \delta^{-1}$ and defining 
	\begin{align*}
	\Omega_1 & = \{\omega \in \Omega: \sup_{0\le t \le \tau_2}|x_{t,1} - x_{0,1}| + \sup_{0\le t \le \tau_2}|x_{t,2} - x_{0,2}| \le \delta^{1/4}K^r \},\\
	\Omega_2 & = \{\omega \in \Omega: \sup_{0\le t \le \tau_2}|\sigma_3||W_{t}^{(3)}|\le \delta^{1/4}K^r \}, \\ 
	\Omega_3 & = \{\omega \in \Omega: \sum_{j=3}^n \int_0^{\tau_2}|x_{t,j}|^2 dt \le \sqrt{\delta}\tau_2 K^{2r}\},
	\end{align*}
	we have $\P(\Omega_1 \cap \Omega_2 \cap \Omega_3) \ge 1/2$ for $\delta$ taken sufficiently small. Let $\omega \in \Omega_1 \cap \Omega_2 \cap \Omega_3$. Returning to \eqref{eq:L96Case2_error} we obtain 
	\begin{equation}
	\sup_{0 \le t \le \tau_2} |X_{t,3}(\omega) - x_{t,3}(\omega)| \le C \delta^{1/4}K^r.
	\end{equation}
	Hence,
	\begin{equation} \label{eq:L96Case2_errorsmall}
	\P\left(\int_0^{\tau_2} |X_{t,3} - x_{t,3}|dt \le C\tau_2 \delta^{1/4}K^r\right)\ge \frac{1}{2}.
	\end{equation}
	By choosing $\delta \ll \delta_1^4$, \eqref{eq:L96Case2_errorsmall} and \eqref{eq:L96Case2_growth} combined are enough to yield a contradiction. 
	
	\textbf{Case 3} ($x_0 \in B_{K,\delta}^3$): Now we turn to the final case. Let $X_t$ be as given in Lemma~\ref{lem:L96Case3_growth} and define $\bar{x}_{t,j} = X_{t,j} - x_{t,j}$. Observe that 
	\begin{equation}
	\begin{cases}
	d\bar{x}_{t,n} = x_{0,n-1}\bar{x}_{t,1} dt + x_{t,1}(x_{0,n-1} - x_{t,n-1})dt + a_n x_{t,n} + x_{t,n-2}x_{t,n-1}dt \\ 
	d\bar{x}_{t,1} = x_{0,2}\bar{x}_{t,n}dt + x_{t,n}(x_{0,2} - x_{t,2})dt + x_{t,n-1}x_{t,n}dt - \sigma_1 dW_t^{(1)}.
	\end{cases}
	\end{equation}
	Let
	$$F(t) = x_{t,1}(x_{0,n-1} - x_{t,n-1}) + a_n x_{t,n}+ x_{t,n-2}x_{t,n-1},$$
	$$ G(t) = x_{t,n}(x_{0,2} - x_{t,2}) + x_{t,n-1}x_{t,n},$$
	and
	$S(t)$ be the group generated by the corresponding (constant) linearization matrix:
	\begin{align*}
	S(t) := \exp \left( t \begin{pmatrix}
	0 & x_{0,n-1} \\ x_{0,2} & 0
	\end{pmatrix} \right). 
	\end{align*}
	Then, we have 
	\begin{equation} \label{eq:L96Case3_errorformula}
	\begin{pmatrix}
	\bar{x}_{t,n} \\ 
	\bar{x}_{t,1} \\ 
	\end{pmatrix}
	= 
	\int_0^t S(t-s)\begin{pmatrix}
	F(s) \\ 
	G(s) \\ 
	\end{pmatrix}ds 
	- 
	\int_0^t S(t-s)\begin{pmatrix}
	0 \\ 
	\sigma_1 dW_s^{(1)} \\ 
	\end{pmatrix}. 
	\end{equation}
	Note that since $\tau_3 K^{(1+r)/2} \leqc 1$, for any $s \le t \le \tau_3$ there holds
	\begin{equation} \label{eq:L96groupbound}
	\|S(t-s)\| \le \exp\left((t-s)\sqrt{|x_{0,n-1}||x_{0,2}|}\right) \le \exp\left((t-s)CK^{(1+r)/2}\right) \leqc 1.
	\end{equation}
	Thus,
	\begin{equation} \label{eq:L96Case3_convolution}
	\E\sup_{0 \le t \le \tau_3}\left|\int_0^t S(t-s)\begin{pmatrix}
	0 \\ 
	\sigma_1 dW_s^{(1)} \\ 
	\end{pmatrix}\right|^2 \leqc  \tau_3 = K^{-4r}.
	\end{equation}
	Suppose now for the sake of contradiction that 
	\begin{equation} \label{eq:L96Case3_contr}
	\E \int_0^{\tau_3} \sum_{j=3}^n|x_{t,j}|^2 dt \le \delta \tau_3 K^{2r}. 
	\end{equation}
	For $R \ge 1$, let $\Omega_0 \subseteq \Omega$ be the set where the following bounds hold:
	\begin{align*}
	\sup_{0 \le t \le \tau_3}\left|\int_0^t S(t-s)\begin{pmatrix}
	0 \\ 
	\sigma_1 dW_s^{(1)} \\ 
	\end{pmatrix}\right| &\le R \sqrt{\tau_3}, \\ 
	\int_0^{\tau_3} \sum_{j=3}^n |x_{t,j}|^2 dt & \le \sqrt{\delta} \tau_3 K^{2r}, \\
	\sup_{0 \le t \le \tau_3} |W_t| &\le R \sqrt{\tau_3}, \\ 
	\sup_{0 \le t \le \tau_3} |x_{t,1}| &\le (\delta_1^{1/4}+ \delta^{1/4})K^{4r} + 1.
	\end{align*}
	By \eqref{eq:L96Case3_convolution}, \eqref{eq:L96Case3_contr}, and 
	$$ \E \sup_{0 \le t \le \tau_3}|x_{t,1}| \leqc \delta_1 K^r + \sqrt{\delta} \tau_3 K^{1+r} + \sqrt{\tau_3} \leqc (\delta_1 + \sqrt{\delta})K^{4r} + K^{-2r}, $$
	for $R,K$ sufficiently large and $\delta, \delta_1$ sufficiently small we have $\P(\Omega_0) \ge 1-\beta/2$, where $\beta$ is as given in Lemma~\ref{lem:L96Case3_growth}. Note that for $\omega \in \Omega_0$ we have the additional estimates 
	\begin{align}
	\sup_{0 \le t \le \tau_3} |x_{t,n-1} - x_{0,n-1}| &\le CRK^{-2r}, \label{eq:L96dn-1} \\ 
	\sup_{0 \le t \le \tau_3} |x_{t,2} - x_{0,2}| &\le CR (\delta^{1/4}K^r + K^{-2r}). \label{eq:L96d2}
	\end{align}
	Moreover, by \eqref{eq:L96dn-1}, \eqref{eq:L96d2}, \eqref{eq:L96Case3_errorformula}, and \eqref{eq:L96groupbound} for $\omega \in \Omega_0$ there holds
	\begin{equation}
	\sup_{0\le t \le \tau_3}|\bar{x}_{t,n}| + \sup_{0\le t \le \tau_3}|\bar{x}_{t,1}| \le CR.
	\end{equation}
	Observe now that 
	\begin{equation}
	d(X_{t,3} - x_{t,3}) = -\bar{x}_{t,1}x_{t,2}dt + x_{t,1}(x_{t,2} - x_{0,2})dt +  \bar{x}_{t,1}(x_{t,2} - x_{0,2})dt - x_{t,4}x_{t,2}dt + a_3 x_{t,3} - \sigma_3 dW_t^{(3)},
	\end{equation}
	which together with the estimates above gives, for $\omega \in \Omega_0$,
	\begin{align*}
	\sup_{0 \le t \le \tau_3}|X_{t,3} - x_{t,3}| & \le \tau_3\sup_{0 \le t \le \tau_3}(|\bar{x}_{t,1}x_{t,2}| + |x_{t,1}||x_{t,2} - x_{0,2}| +  |\bar{x}_{t,1}||x_{t,2} - x_{0,2}|) \\ 
	& \quad + \int_0^{\tau_3}(|x_{t,4} x_{t,2}| + |a_3||x_{t,3}|)dt + \sup_{0\le t \le \tau_3}|\sigma_3 W_t^{(3)}| \\ 
	& \le CK^{4r}(\delta^{1/4}+R^2 K_*^{-r}).
	\end{align*}
	This error estimate (with $\delta$ taken sufficiently small and $K_*$ taken sufficiently large), together with Lemma \ref{lem:L96Case3_growth} on the growth of the approximate solution, allows us to obtain a contradiction as in our earlier arguments. This completes the proof of Proposition \ref{lem:L96Assumption1} (and hence also of Theorem \ref{thm:L96twomodes}). 
\end{proof}

\section{Stochastic triad model with non-trivial, invariant conservative dynamics in the kernel}

In this section we prove Theorem~\ref{thm:BadBad}. It is sufficient to prove the result after rotating coordinates so that $\mathrm{ker}A = \{x_2 = 0\}.$ In these new coordinates, the nonlinearity becomes 
\begin{equation} \label{eq:bbtriad}
B(x,y) = \begin{pmatrix}
x_1 y_3 \\ -x_2 y_3 \\ (x_2 - x_1)(y_2 + y_1) 
\end{pmatrix}. 
\end{equation}
Henceforth in this section, $x_t$ denotes the solution to \eqref{eq:SDE} with $n = 3$, $B$ given by \eqref{eq:bbtriad}, the non-negative definite matrix $A$ such that $\mathrm{ker}A = \{x_2 = 0\}$, and $\sigma \in \R^{3\times 3}$ satisfying $\mathrm{rank}(\sigma) = 3$.

The dynamical system $\dot{x} = B(x,x)$, with $B$ given by \eqref{eq:bbtriad}, has equilibria at $(0,0,\pm a)$ for any $a > 0$ and the stable/unstable manifold of each fixed point is joined to the other via a heteroclinic connection. The unstable manifold of $(0,0,a)$ is tangent to $\mathrm{ker}A$ and the associated heteroclinic connections with the stable manifold of $(0,0,-a)$ lie entirely in $\mathrm{ker}A$. The present example thus distinguishes itself from the previous ones in that there exist nontrivial conservative dynamics in $\mathrm{ker}A$. 

As in the earlier examples, our plan to prove Theorem~\ref{thm:BadBad} is to show that the Markov semigroup generated by \eqref{eq:SDE} satisfies Assumption~\ref{ass:time-average}. Again as before, we will deduce the growth required by \eqref{eq:time-averaged_main} by establishing it instead for a suitable approximate solution. The idea is to study the linearization of $\Pi_{\mathrm{ker}A^\perp} B(\cdot,\cdot)$ around $Z_t = (X_{t,1},0,X_{t,3}) \in \mathrm{ker}A$ solving 
\begin{equation} \label{eq:bbZt1}
\begin{cases}
\frac{d}{dt}X_{t,1}= X_{t,1}X_{t,3} \\ 
\frac{d}{dt}X_{t,3} = -X_{t,1}^2 \\ 
(X_{0,1},0,X_{0,3}) = \Pi_{\mathrm{ker}A} x_0.
\end{cases}
\end{equation}
Since $(0,0,-a)$ attracts all points on the circle $\{(x_1,0,x_3) \in \R^3: x_1^2 + x_3^2 = a^2\}$ except $(0,0,a)$ and has an unstable manifold perpendicular to $\mathrm{ker}A$, one expects this linearization to grow exponentially fast provided the noise has a nonzero projection onto both $(1,0,0)$ and $(0,1,0)$. Besides arguments analogous to those in previous sections used to study the linearization around \eqref{eq:bbZt1}, we construct a local Lyapunov function to estimate exit times of the process from the vicinity of the unstable fixed points $(0,0,a)$.

\subsection{Local Lyapunov function}

\begin{lemma} \label{lem:Lyapunov}
	For $K \ge 1$, let 
	$$\mathcal{B}_K = \{(x_1,x_2,x_3) \in \R^3: K/2 \le x_3 \le 2K, \quad |x_1| \le K, \quad  |x_2| \le K^{1/4}\}.$$
	There exists $\gamma \in (0,1)$ such that for all $K$ sufficiently large there is a smooth, strictly positive function $V_K:\R^3 \to \R$ such that for $x \in \mathcal{B}_{K}$,
	\begin{equation}
	\mathcal{L} V_K \le -\gamma K V_K
	\end{equation}
	and
	\begin{equation} \label{eq:VKpwbound}
	\gamma K^{-1} \le V_K \le \gamma^{-1} \sqrt{K}.
	\end{equation}
	Specifically, for some $R \ge 1$ sufficiently large, 
	\begin{align*}
	V_K = \frac{1}{\abs{x_1}}\chi_T(x_1) + \sqrt{K} \left(1 - \frac{K}{32}\frac{|x_1|^2}{R^2}\right) \chi_D(x_1), 
	\end{align*}
	where for an arbitrary smooth cutoff $\varphi:[0,\infty) \to [0,1]$ with $\varphi(y) = 1$ for $y \le 1/2$, $\varphi(y) = 0$ for $y \ge 1$, and $\varphi'(y) \le 0$, we define
	\begin{align*}
	\chi_D(x_1) = \varphi\left(\frac{\sqrt{K} |x_1|}{4R}\right), \quad \chi_T(x_1) = 1-\varphi\left(\frac{\sqrt{K} |x_1|}{R}\right). 
	\end{align*}
\end{lemma}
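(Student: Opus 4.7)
Since $V_K$ depends only on $x_1$, and since the symmetry of $A$ together with $\mathrm{ker}A=\{x_2=0\}=\mathrm{span}\{e_1,e_3\}$ forces $A=\alpha e_2e_2^{T}$ for some $\alpha\ge 0$ (so $(Ax)_1\equiv 0$), the generator collapses to the one-dimensional expression
\[
\mathcal L V_K(x)\;=\;\tfrac{1}{2}a\,V_K''(x_1)\;+\;x_1x_3\,V_K'(x_1),\qquad a:=(\sigma\sigma^T)_{11}>0,
\]
where strict positivity of $a$ follows from $\mathrm{rank}(\sigma)=3$. The pointwise bounds \eqref{eq:VKpwbound} are immediate from the explicit formula: on $|x_1|\ge 4R/\sqrt K$ only the $1/|x_1|$ piece is present and lies in $[1/K,\sqrt K/(4R)]$; on $|x_1|\le R/(2\sqrt K)$ only the parabola piece is present and lies in $[\sqrt K(1-1/128),\sqrt K]$; and on the overlap both pieces are non-negative with sum bounded by $2\sqrt K$. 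Thus $V_K\ge 1/K$ and $V_K\le 2\sqrt K$ throughout $\mathcal B_K$.

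The plan for verifying the drift inequality is to split into three regions according to $|x_1|$, treating each separately. In the \emph{pure transport region} $|x_1|\ge 4R/\sqrt K$, one has $V_K=1/|x_1|$, $V_K'=-\mathrm{sign}(x_1)/x_1^2$, $V_K''=2/|x_1|^3$, and using $x_3\ge K/2$ together with $1/|x_1|^2\le K/(16R^2)$,
\[
\mathcal L V_K\;\le\;\frac{a}{|x_1|^3}-\frac{x_3}{|x_1|}\;\le\;\Bigl(\frac{a}{16R^2}-\frac{1}{4}\Bigr)\frac{K}{|x_1|}\;\le\;-\frac{K}{8}V_K
\]
once $R^2$ is taken large enough in terms of $a$. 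In the \emph{pure diffusion region} $|x_1|\le R/(2\sqrt K)$, the downward parabola delivers $V_K''=-K^{3/2}/(16R^2)$ while the transport contribution $x_1x_3V_K'=-x_1^2x_3K^{3/2}/(16R^2)$ is also non-positive, giving
\[
\mathcal L V_K\;\le\;-\frac{aK^{3/2}}{32R^2}\;\le\;-\frac{a}{32R^2}KV_K
\]
since $V_K\le\sqrt K$ there. The point is that transport vanishes as $x_1\to 0$, so the strictly negative constant $V_K''$ supplied by the parabola is what rescues the drift inequality near $x_1=0$; this is the whole purpose of $\chi_D$.

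The \emph{overlap region} $R/(2\sqrt K)\le|x_1|\le 4R/\sqrt K$ is the main obstacle, and I plan to handle it by writing $V_K=V_{K,T}+V_{K,D}$ and expanding $\mathcal L V_K$ term by term. The crucial scaling is $|x_1|\sim R/\sqrt K$ together with $|\chi_\ast'|\lesssim\sqrt K/R$ and $|\chi_\ast''|\lesssim K/R^2$, which ensures that every term arising from differentiating a cutoff is $O(K^{3/2}/R^\beta)$ for some $\beta\ge 1$. By contrast, the ``body'' contributions $-\chi_T/x_1^2$ inside $V_{K,T}'$ and $-\sqrt K Kx_1\chi_D/(16R^2)$ inside $V_{K,D}'$ are single-signed (they produce non-positive advection $x_1x_3V_K'$ when $x_3>0$) and of order $K^{3/2}$ and $K^{3/2}/R$ respectively on the parts of the overlap where $\chi_T>0$ or $\chi_D>0$. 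Since every point of the overlap lies in one of the sub-regions A: $\chi_D\equiv 1$ with $\chi_T$ turning on, B: both cutoffs identically $1$, or C: $\chi_T\equiv 1$ with $\chi_D$ turning off, at least one body piece is active and, for $R$ chosen large depending on $a$, $\|\varphi'\|_\infty$ and $\|\varphi''\|_\infty$, absorbs all cutoff-derivative errors as well as the unfavorable $2\chi_T/|x_1|^3$ piece of $V_{K,T}''$. Finally, combining the three regions and taking $\gamma\in(0,1)$ equal to the minimum of the rates extracted from each yields $\mathcal L V_K\le-\gamma K V_K$ on $\mathcal B_K$.
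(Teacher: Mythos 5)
Your construction of $V_K$ and the decomposition into pure-transport, pure-diffusion, and overlap regions match the paper exactly. One simplification you make is valid and actually tidies the computation: since $A$ is positive semi-definite (hence symmetric) with $\mathrm{ker}A = \mathrm{span}\{e_1,e_3\}$, its range is $\mathrm{span}\{e_2\}$ and so $A = \alpha e_2 e_2^T$; in particular the advection by $-Ax$ is purely $\partial_{x_2}$ and vanishes on functions of $x_1$ alone. The paper keeps the general form $-x_2\sum_j a_j\partial_{x_j}$ and consequently has to absorb an extra $K^{5/4}\chi_D/R$ error from the $a_1x_1x_2$ term; your observation removes it. The pure-transport and pure-diffusion region computations are correct.

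The overlap analysis, however, has a real gap. You assert that every cutoff-derivative contribution to $\mathcal{L}V_K$ is $O(K^{3/2}/R^\beta)$ with $\beta\geq 1$, and this is the engine of your absorption argument. It fails for the advection of $\chi_D'g$: with $g(x_1)=\sqrt K\bigl(1-K|x_1|^2/(32R^2)\bigr)$, the term $x_1x_3\chi_D'(x_1)g(x_1)$ has size $\lesssim (R/\sqrt K)\cdot K\cdot(\sqrt K/R)\cdot\sqrt K = K^{3/2}$ — the positive power of $R$ from $|x_1|$ cancels the negative power from $\chi_D'$, so no power of $R$ is gained. This term is supported on $2R/\sqrt K\leq|x_1|\leq 4R/\sqrt K$, i.e. your region C, where $\chi_D$ may be arbitrarily small, so the only active body term there is $-x_3\chi_T/|x_1|\sim -K^{3/2}/R$, which cannot absorb an $O(K^{3/2})$ error once $R$ is large. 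The step that rescues the argument, and which the paper makes explicit, is a sign observation: since $\varphi'\leq 0$ one has $x_1\chi_D'(x_1)\leq 0$, and $x_3,g>0$ on $\mathcal{B}_K\cap\mathrm{supp}(\chi_D')$, hence $x_1x_3\chi_D'g\leq 0$ and this ``error'' is actually favorable and is simply dropped. Without that observation your scaling claim is false and the absorption breaks down in region C.

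As a smaller point, you describe the body advection contributions of $V_{K,T}$ and $V_{K,D}$ as ``of order $K^{3/2}$ and $K^{3/2}/R$ respectively,'' but these are swapped: on the overlap $-x_3\chi_T/|x_1|\sim K^{3/2}/R$ while $-K^{3/2}\chi_Dx_1^2x_3/(16R^2)\sim K^{3/2}$. The cross-absorption scheme — the $O(K^{3/2})$ piece from $V_{K,D}$ absorbs the $K^{3/2}/R$ error from $\chi_T'$ where $\chi_T$ is turning on, while the $O(K^{3/2}/R)$ piece from $V_{K,T}$ absorbs the $K^{3/2}/R^2$ errors from $\chi_D''g$ and $\chi_D'g'$ where $\chi_D$ is turning off — is the right idea, but the mislabeling and the missed sign observation indicate the bookkeeping has not been fully carried out.
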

\begin{remark}
	The cutoff $\chi_T$ refers to `transport' as it is in the region $\abs{x_1} \gtrsim K^{-1/2}$ wherein the conservative dynamics (i.e. the first order terms in the generator) will be the most significant. The cutoff $\chi_D$ refers to `diffusive', as it is in the region $\abs{x_1} \ll K^{-1/2}$ in which the noise (i.e. the second order terms in the generator) will be dominant. 
\end{remark}

\begin{proof}
	Since $\mathrm{ker}A = \{x_2 = 0\}$, there exist $a_1, a_2, a_3 \in \R$ such that
	$$A x \cdot \grad = x_2\sum_{j=1}^3 a_j \partial_{x_j}.$$
	Defining $\Lambda = \sigma \sigma^T$, we can thus write the generator as
	\begin{equation} \label{eq:bbgenerator}
	\mathcal{L} = \frac{1}{2}\sum_{i,j=1}^3 \Lambda_{ij} \partial_{x_i x_j}  +x_1 x_3 \partial_{x_1} - x_2 x_3 \partial_{x_2} + (x_2^2 - x_1^2)\partial_{x_3} - x_2\sum_{j=1}^3 a_j \partial_{x_j}.
	\end{equation}
	Note that $\Lambda_{11} > 0$ since $\sigma$ is assumed full rank. Let $\varphi:[0,\infty) \to [0,1]$ be a smooth cutoff with $\varphi(y) = 1$ for $y \le 1/2$, $\varphi(y) = 0$ for $y \ge 1$ and $\varphi'(y) \le 0$. For $R \ge 1$ to be chosen sufficiently large independently of $K$, define
	$$\chi_T(x_1) = 1-\varphi\left(\frac{\sqrt{K} |x_1|}{R}\right), \quad V_{K,T}(x_1) = \chi_T(x_1) |x_1|^{-1}.$$
	For $x \in \mathcal{B}_K$ we compute 
	\begin{align*} 
	\mathcal{L} V_{K,T} &= - x_3 V_{K,T} + \chi_T \frac{\Lambda_{11}}{|x_1|^3} + \chi_T \frac{a_1 x_1 x_2}{|x_1|^3} \\ 
	& \quad  \left(-\Lambda_{11}\frac{x_1}{|x_1|^3} + \frac{x_1 x_3}{|x_1|} - \frac{x_2 a_1}{|x_1|}\right) \partial_{x_1} \chi_T + \frac{\Lambda_{11}}{2|x_1|}\partial_{x_1}^2 \chi_T \\ 
	& \le -\frac{K}{2} V_{K,T} + \left(\frac{\Lambda_{11}}{|x_1|^2} + \frac{|a_1||x_2|}{|x_1|}\right) V_{K,T} \\ 
	& \quad + C \frac{\sqrt{K}}{R} \left(\frac{1}{|x_1|^2} + |x_3| + \frac{|x_2| }{|x_1|}\right)\mathbf{1}_{RK^{-1/2}/2 \le |x_1| \le RK^{-1/2}} \\ 
	& \quad  + C\frac{K}{R^2} \frac{1}{|x_1|} \mathbf{1}_{RK^{-1/2}/2 \le |x_1| \le RK^{-1/2}},
	\end{align*}
	where $C$ is a constant that depends only on $A$, $\sigma$, and the choice of cutoff $\varphi$. We will continue to denote by $C$ such a constant, though it may change line-to-line. From the support properties of $\chi_T$ and $x \in \mathcal{B}_K$ we then obtain that for $R$ large depending only on $\Lambda_{11}$ and $|a_1|$ there holds
	\begin{equation} \label{eq:LVT}
	\mathcal{L}V_{K,T} \le - \frac{K}{4} V_{K,T} + C \frac{K^{3/2}}{R} \mathbf{1}_{RK^{-1/2}/2 \le |x_1| \le RK^{-1/2}}.
	\end{equation}
	Now define 
	$$ \chi_D(x_1) = \varphi\left(\frac{\sqrt{K} |x_1|}{4R}\right), \quad V_{K,D}(x_1) = \chi_D(x_1) \sqrt{K}\left(1 - \frac{K}{32}\frac{|x_1|^2}{R^2}\right)
	$$
	and note that 
	$$ \frac{\sqrt{K}}{2}\chi_D \le  V_{K,D} \le \sqrt{K} \chi_D.$$
	For $x \in \mathcal{B}_K$ we now compute
	\begin{align*}
	\mathcal{L}V_{K,D} & = \frac{K^{3/2} \chi_D}{16 R^2} \left( -\frac{\Lambda_{11}}{2} - |x_1|^2 x_3 + a_1 x_1 x_2 \right) \\ 
	& \quad + \partial_{x_1}\chi_D \left(-\Lambda_{11} \frac{K^{3/2}}{16 R^2} x_1 + \sqrt{K}\left(1-\frac{K|x_1|^2}{32 R^2}\right)(x_1 x_3 - a_1 x_2)\right) \\ 
	& \quad + \partial_{x_1}^2 \chi_D \frac{\Lambda_{11}}{2} \sqrt{K}\left(1-\frac{K |x_1|^2}{32R^2}\right) \\ 
	& \le -\frac{K \Lambda_{11}}{32 R^2} V_{K,D} - \frac{K^{5/2} \chi_D}{32 R^2} |x_1|^2 + C\frac{K^{5/4}}{R} \chi_D \\ 
	& \quad + C \frac{\sqrt{K}}{R}\left( \frac{K^{3/2}}{R^2} |x_1| + \sqrt{K}|x_2| \right) \mathbf{1}_{2R K^{-1/2} \le |x_1| \le 4R K^{-1/2}} \\ 
	& \quad + C \frac{K^{3/2}}{R^2}\mathbf{1}_{2R K^{-1/2} \le |x_1| \le 4R K^{-1/2}},
	\end{align*}
	where in the inequality we noted that $x_1 x_3 \partial_{x_1} \chi_D \le 0$ for $x \in \mathcal{B}_K$. Taking $K$ large enough so that 
	$$ C \frac{K^{5/4}}{R} \le \frac{\Lambda_{11} K^{3/2}}{128 R^2} $$ 
	it follows that 
	\begin{equation} \label{eq:LVD}
	\mathcal{L} V_{K,D} \le - \frac{K \Lambda_{11} }{64 R^2} V_{K,D} - \frac{K^{5/2} \chi_D}{32 R^2} |x_1|^2 + C \left(\frac{K^{3/2}}{R^2} + \frac{K^{5/4}}{R} \right) \mathbf{1}_{2R K^{-1/2} \le |x_1| \le 4R K^{-1/2}}.
	\end{equation}
	The plan is now to add \eqref{eq:LVT} and \eqref{eq:LVD}. Upon doing this, for $K$ and $R$ sufficiently large the second term in \eqref{eq:LVD} absorbs the second term in \eqref{eq:LVT} and the first term in \eqref{eq:LVT} absorbs the third term in \eqref{eq:LVD}. In particular, defining 
	$$V_K = V_{K,D} + V_{K,T}$$
	we have 
	\begin{equation}
	\mathcal{L} V_K \le -K \mathrm{min}\left(\frac{1}{8}, \frac{\Lambda_{11}}{64 R^2}\right) V_K.
	\end{equation}
	This completes the proof.
\end{proof}

The next lemma uses Lemma \ref{lem:Lyapunov} to obtain estimates on the exit times from neighborhoods of the north pole equilibria $x_1 = x_2= 0$, $x_3 > 0$.  
\begin{lemma} \label{lem:bbstopping}
	Let $x_0 \in \R^3$ satisfy 
	$$ |x_0| = K, \quad x_{0,3} > 0, \quad |x_{0,1}| < \delta K, \quad\text{and} \quad |x_{0,2}| < \delta K^{1/4} $$
	for $K \ge 1$ and $\delta \in (0,1)$. Define the stopping time 
	$$\tau(\omega) = \inf\{t \ge 0: |x_{t,1}(\omega)| \ge \delta K, \text{  } |x_{t,2}(\omega)| \le K^{1/4}\}.$$
	There exists $C_0 \ge 1$ so that for all $K$ sufficiently large and $\delta$ sufficiently small there holds 
	\begin{equation}
	\P\left(\tau \le \frac{C_0 \log K}{K}\right) \ge \frac{1}{2}.
	\end{equation}
\end{lemma}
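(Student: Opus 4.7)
The strategy is to introduce a refined exit time $\sigma = \sigma_1 \wedge \sigma_2 \wedge \sigma_3$ from the open box $\{|x_1| < \delta K,\ |x_2| < K^{1/4},\ K/2 < x_3 < 2K\}\subset \mathcal{B}_K$, where
\[
\sigma_1 = \inf\{t : |x_{t,1}| \ge \delta K\}, \quad \sigma_2 = \inf\{t: |x_{t,2}| \ge K^{1/4}\}, \quad \sigma_3 = \inf\{t: x_{t,3} \notin [K/2,2K]\}.
\]
Set $T = C_0 \log K/K$. On the event $\{\sigma = \sigma_1 \le T\}$, path continuity forces $|x_{\sigma,1}| = \delta K$, $|x_{\sigma,2}| < K^{1/4}$, and $x_{\sigma,3} \in [K/2,2K]$, so $\tau \le \sigma \le T$. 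It therefore suffices to show $\P(\sigma = \sigma_1 \le T) \ge 1/2$, which I will obtain by combining three estimates.

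\textbf{Step 1 (Lyapunov bound on $\sigma$).} On $[0,\sigma]$ the process lies in $\mathcal{B}_K$, so by Lemma~\ref{lem:Lyapunov} the process $e^{\gamma K (t\wedge \sigma)} V_K(x_{t\wedge \sigma})$ is a bounded supermartingale. Using the two-sided bound $\gamma K^{-1} \le V_K \le \gamma^{-1}\sqrt{K}$, optional stopping and Markov's inequality give
\[
\P(\sigma \ge T) \le \gamma^{-2} K^{3/2} e^{-\gamma K T} = \gamma^{-2} K^{3/2 - \gamma C_0},
\]
which is at most $1/8$ once we take $C_0 > 3/\gamma$ and $K$ sufficiently large.

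\textbf{Step 2 (control of $x_2$).} Before $\sigma_3$ one has $x_{t,3} \ge K/2$, so $dx_{2,t} = -(x_{t,3}+a_2)x_{2,t}\,dt + (\sigma\,dW)_2$ is a linear SDE whose restoring rate exceeds $K/4$ for $K$ large. An integrating-factor computation expresses $x_{2,t}$ (conditionally on the $x_3$ path) as a Gaussian with variance uniformly bounded by $\Lambda_{22}/K$; a Gaussian tail estimate combined with Doob's maximal inequality then yields $\P(\sigma_2 \le T\wedge \sigma_3) = o(1)$ as $K\to\infty$.

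\textbf{Step 3 (control of $x_3$, the main obstacle).} A naive bound $|\dot x_3| \le \delta^2 K^2$ integrated against $T = C_0 \log K/K$ would give a change of order $\delta^2 K\log K$, which violates $x_3\in[K/2,2K]$ as $K\to\infty$. The way out is that $B$ conserves $x_1^2+x_3^2$ on $\mathrm{ker}A$, so It\^o's formula produces
\[
d(x_{1,t}^2+x_{3,t}^2) = \left(2 x_{3,t} x_{2,t}^2 - 2 a_1 x_{1,t} x_{2,t} - 2 a_3 x_{2,t} x_{3,t} + \Lambda_{11} + \Lambda_{33}\right)dt + dM_t,
\]
with drift of size $O(K^{3/2})$ and $\langle M\rangle_{t\wedge \sigma} \lesssim K^2 t$ before $\sigma$. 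Doob's inequality then yields
\[
\P\!\left(\sup_{0 \le t \le T\wedge \sigma}\bigl|x_{1,t}^2+x_{3,t}^2 - (x_{0,1}^2+x_{0,3}^2)\bigr| \ge K^{3/2}\right) = o(1),
\]
and on the complement, using $x_{0,1}^2+x_{0,3}^2 = K^2 - x_{0,2}^2 = K^2 - O(\sqrt{K})$ and $|x_{1,t}| < \delta K$, one extracts $x_{3,t} \in (K/\sqrt 2, K\sqrt{2})\subset(K/2,2K)$ for $\delta$ small and $K$ large, whence $\sigma_3 > T\wedge \sigma_1 \wedge \sigma_2$ with probability $1-o(1)$. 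Combining the three estimates, $\P(\sigma = \sigma_1 \le T) \ge 7/8 - o(1) \ge 1/2$ for $K$ sufficiently large, completing the argument.
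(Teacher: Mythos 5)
Your proposal is correct, and Step~1 (the Lyapunov exit-time estimate) matches the paper exactly. The two substantive differences are in Steps~2 and~3, and in both cases the paper takes a shorter route.

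For Step~2 (controlling $x_2$), the paper simply applies Dynkin's formula to $|x_2|^2$, drops the non-positive drift terms $-2x_3|x_2|^2$ and $-2a_2|x_2|^2$ (using $x_3>0$ before exit and $a_2\ge 0$), gets $\E|x_{t_*\wedge\tilde\tau,2}|^2\le |x_{0,2}|^2+\Lambda_{22}t_*$, and finishes with Chebyshev. Your integrating-factor computation is correct in spirit, but the phrase ``conditionally on the $x_3$ path $\ldots$ Gaussian'' is not quite right: the noise $\sum_j\sigma_{2j}dW^{(j)}$ driving $x_2$ is in general correlated with the noise driving $x_3$, so conditioning on the $x_3$ trajectory does not leave a Gaussian. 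The argument survives if you phrase it instead as a stochastic-integral bound (integrating factor representation, then It\^o isometry/Doob for the martingale part and the decaying bound $e^{-Kt/2}|x_{0,2}|$ for the initial-condition part), but the Dynkin route is cleaner.

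For Step~3 (controlling $x_3$), your approach is genuinely different and worth noting. The paper invokes the already-proved Lemma~\ref{lem:basicenergy}, which shows $\P\bigl(\sup_{t\le t_*}\bigl||x_t|^2-K^2\bigr|\ge K^2/2\bigr)\lesssim t_*/K^2$ using the \emph{full} energy conservation $B(x,x)\cdot x=0$. An exit through $x_3\ge 2K$ or $x_3\le K/2$ while $|x_1|\le\delta K$, $|x_2|\le K^{1/4}$ forces $\bigl||x_{\tilde\tau}|^2-K^2\bigr|\ge K^2/2$, so that lemma applies directly. You instead track the \emph{partial} invariant $x_1^2+x_3^2$, which is conserved by the kernel-restricted conservative dynamics, and show it stays within $O(K^{3/2})$ of its initial value via a hands-on It\^o computation (drift $O(K^{3/2})$, quadratic variation $\lesssim K^2 t$). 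This is correct but reproduces by hand a weaker version of what Lemma~\ref{lem:basicenergy} already delivers, and requires verifying that the cross-terms $a_1x_1x_2$, $a_3x_2x_3$ are harmless. (In fact $a_1=a_3=0$ here since $A$ is symmetric with $\ker A=\{x_2=0\}$, but you are not exploiting that.) The paper's route is preferable as it factors through a reusable estimate; yours is more self-contained but heavier.
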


\begin{proof}
	First note that taking at least $\delta^2 \le 7/32$ gives $3K/4 \le x_{0,3} \le K$, so we may assume that $x_0 \in \mathcal{B}_K$ as defined in Lemma~\ref{lem:Lyapunov}. Define $\mathcal{B}_{K,\delta} \subseteq \mathcal{B}_K$ by 
	$$ \mathcal{B}_{K,\delta} = \{x \in \R^3: K/2 < x_3 < 2K, \quad |x_1| < \delta K, \quad |x_2| <  K^{1/4}\} $$
	and let
	$$\tilde{\tau}(\omega) = \inf\{t \ge 0: x_t(\omega) \in \mathcal{B}_{K,\delta}^c\}. $$
	Let $V_K$ be as given in Lemma~\ref{lem:Lyapunov}. By Dynkin's formula, for any $t \ge 0$ there holds 
	\begin{equation}
	\E e^{\gamma K t \wedge \tilde{\tau}} V_K(x_{t \wedge \tilde{\tau}}) \le V_K(x_0) + \E\int_0^{t\wedge \tilde{\tau}}e^{\gamma K s}(\mathcal{L}+\gamma K)V_K(x_s)ds.
	\end{equation}
	Since $x_s(\omega) \in \mathcal{B}_K$ for $s \le \tilde{\tau}(\omega)$, it follows from Lemma~\ref{lem:Lyapunov} that 
	\begin{equation} \label{eq:tautilbound}
	\E e^{\gamma K t \wedge \tilde{\tau}} \le \gamma^{-2} K^{3/2}.
	\end{equation}
	From \eqref{eq:tautilbound} and Chebyshev's inequality we obtain, for any $C_0 > 0$,
	\begin{equation}
	\P\left(\tilde{\tau} \ge \frac{C_0 \log K}{K} \right) \le \gamma^{-2} K^{3/2 - \gamma C_0}.
	\end{equation}
	Hence, for $C_0 \ge 2/\gamma$ and $K$ sufficiently large there holds 
	\begin{equation} \label{eq:tautilbound2}
	\P\left(\tilde{\tau} \le \frac{C_0 \log K}{K} \right) \ge \frac{3}{4}.
	\end{equation}
	Now set $t_* = C_0\log(K)/K$. By \eqref{eq:tautilbound2} and the definitions of $\tilde{\tau}$ and $\tau$ we have
	\begin{align*}
	\P\left( \tau \le t_* \right) &\ge \frac{3}{4} - \P\left(\{|x_{\tilde{\tau},2}| \ge K^{1/4}\} \cap \left\{\tilde{\tau} \le t_*\right\}\right) \\ 
	& \quad - \P\left(\{x_{\tilde{\tau},3} \ge 2K \text{ or } x_{\tilde{\tau},3} \le K/2\}\cap \left\{\tilde{\tau} \le t_*\right\}\right),
	\end{align*}
	and so to complete the proof it suffices to show that 
	\begin{equation} \label{eq:tautilbounds}
	\P\left(\{|x_{\tilde{\tau},2}| \ge K^{1/4}\} \cap \left\{\tilde{\tau} \le t_*\right\}\right)  
	+ \P\left(\{x_{\tilde{\tau},3} \ge 2K \text{ or } x_{\tilde{\tau},3} \le K/2\}\cap \left\{\tilde{\tau} \le t_*\right\}\right) \le \frac{1}{4}.
	\end{equation}
	To bound the first term we begin by using Dynkin's formula to obtain
	\begin{equation}
	\E|x_{t_* \wedge \tilde{\tau},2}|^2 = |x_{0,2}|^2 + \E \int_0^{t_* \wedge \tilde{\tau}} \left(\Lambda_{22} - 2x_{s,3}|x_{s,2}|^2 - 2a_2 |x_{s,2}|^2\right)ds. 
	\end{equation}
	Since $a_2 \ge 0$ and $x_{s,3} > 0$ for $s \le \tilde{\tau}$ it follows that
	\begin{align*}
	\E |x_{t_* \wedge \tilde{\tau},2}|^2 &\le |x_{0,2}|^2 + \Lambda_{22} t_*  \le \delta^2 \sqrt{K} + \Lambda_{22}t_*.
	\end{align*}
	Thus, 
	\begin{equation}
	\P\left(\{|x_{\tilde{\tau},2}| \ge K^{1/4}\} \cap \left\{\tilde{\tau} \le t_*\right\}\right) \le \frac{1}{\sqrt{K}} \E|x_{t_* \wedge \tilde{\tau}}|^2 \le \delta^2 + CK^{-1/2},
	\end{equation}
	which implies 
	\begin{equation}
	\P\left(\{|x_{\tilde{\tau},2}| \ge K^{1/4}\} \cap \left\{\tilde{\tau} \le t_*\right\}\right) \le \frac{1}{8}
	\end{equation}
	for $\delta$ sufficiently small and $K$ sufficiently large. To bound the second term in \eqref{eq:tautilbounds}, observe that for $K \ge 8$ and $\delta^2 \le 1/8$ we have
	$$ \P\left(\{x_{\tilde{\tau},3} \ge 2K \text{ or } x_{\tilde{\tau},3} \le K/2\}\cap \left\{\tilde{\tau} \le t_*\right\}\right) \le \P\left(\sup_{0 \le t \le t_*} \left| |x_t|^2 - K^2\right| \ge K^2/2\right). $$
	Now, for $K$ sufficiently large, Lemma~\ref{lem:basicenergy} implies
	$$ \P\left(\sup_{0 \le t \le t_*} \left| |x_t|^2 - K^2\right| \ge K^2/2\right) \le \frac{1}{8}, $$
	which completes the proof.
\end{proof}

\subsection{Growth of approximate solution}

The next lemma gives the growth of an approximate solution for initial conditions that are in the vicinity of $\text{ker}A$ but not too close to the north pole equilibria.

\begin{lemma} \label{lem:bbgrowth}
	Fix $r, \epsilon,\delta \in (0,1/4)$ and let $X_t$ solve 
	\begin{equation}\label{eq:bbapprox}
	\begin{cases}
	d X_{t,1} = X_{t,1} X_{t,3} dt \\ 
	d X_{t,2} = - X_{t,2}X_{t,3}dt + \sum_{j=1}^3 \sigma_{2 j} dW_t^{(j)} \\ 
	d X_{t,3} = -X_{t,1}^2dt 
	\end{cases}
	\end{equation}
	with an initial condition $X_0 \in \R^3$ that satisfies 
	$$|X_{0,2}| \le (|X_{0,1}|^2 + |X_{0,3}|^2)^{r/2}$$
	and at least one the bounds
	\begin{equation} \label{eq:bbgrowth1}
	X_{0,3} \le 0 \quad \text{or} \quad |X_{0,1}| \ge \delta |X_0|.
	\end{equation}
	There exist $K_*(\epsilon) \ge 1$, $C_0(\epsilon,\delta)\ge 1$, $c_*(\epsilon,\delta) > 0$, and $\beta > 0$ so that for $|X_0| \ge K_*$ and
	$$\tau = \frac{C_0(\epsilon,\delta)}{|X_0|} + \frac{(1/2 + r+\epsilon)\log(|X_0|)}{(1-\epsilon)|
		X_0|} $$
	there holds 
	\begin{equation}
	\P\left(\frac{1}{\tau} \int_0^\tau |X_{t,2}|dt \ge c_* |X_0|^r\right)\ge \beta.
	\end{equation}
\end{lemma}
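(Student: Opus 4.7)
My plan is to exploit the decoupled structure of the approximate system \eqref{eq:bbapprox}: the pair $(X_{t,1},X_{t,3})$ evolves as an autonomous deterministic planar ODE preserving $X_{t,1}^2+X_{t,3}^2$, while $X_{t,2}$ obeys the linear SDE
\begin{equation*}
dX_{t,2} = -X_{t,3}X_{t,2}\,dt + (\sigma^T e_2)\cdot dW_t.
\end{equation*}
Setting $\phi(t) := \exp(\int_0^t X_{s,3}\,ds)$, which admits the explicit identity $\phi(t) = X_{t,1}/X_{0,1}$ when $X_{0,1}\ne 0$, the variation of constants formula yields $X_{t,2} = \phi(t)^{-1}(X_{0,2} + G_t)$, where $G_t := \int_0^t \phi(s)(\sigma^T e_2)\cdot dW_s$ is a Gaussian martingale. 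The strategy is therefore to use the deterministic dynamics to force $\phi(\tau)^{-1}$ to be exponentially large, and then use Gaussianity of $G_\tau$ to conclude that $|X_{\tau,2}|$ is large with positive probability.

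First I would parametrize $(X_{t,1},X_{t,3}) = \tilde K(\sin\theta_t, \cos\theta_t)$, where $\tilde K := \sqrt{X_{0,1}^2 + X_{0,3}^2} = K(1 + O(K^{2r-2}))$. Energy conservation reduces the planar dynamics to $\dot\theta_t = \tilde K\sin\theta_t$, with explicit solution $\tan(\theta_t/2) = \tan(\theta_0/2)e^{\tilde K t}$, so the orbit asymptotes to $\theta = \pi$ (the south pole $X_3 = -\tilde K$). Under each hypothesis of the lemma one has $(\pi-\theta_0)\tan(\theta_0/2) \ge c(\delta) > 0$: in the case $X_{0,3}\le 0$ we have $\theta_0\in[\pi/2,\pi)$ and the product is bounded below by an absolute constant, whereas $|X_{0,1}|\ge\delta K$ forces $\theta_0 \gtrsim \delta$. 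The degenerate case $X_{0,1} = 0$ is dispatched separately: the hypotheses then force $X_{0,3}\approx -K$ and $\phi(t) = e^{X_{0,3}t}$ directly. From the angle formula one computes
\begin{equation*}
\phi(\tau)^{-1} \ge c(\epsilon,\delta)\,e^{\tilde K \tau} \ge c(\epsilon,\delta)\,K^{(1/2+r+\epsilon)/(1-\epsilon)}
\end{equation*}
as soon as $|X_0| \ge K_*(\epsilon)$.

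Next, since $|X_{s,3}|\le \tilde K$ one has $\phi(s)^2 \ge 1/4$ on $s\in[0,\log 2/\tilde K]$, which gives the lower bound $\mathrm{Var}(G_\tau)\ge c/K$. A direct one-dimensional Gaussian computation then produces an absolute constant $\beta > 0$ with
\begin{equation*}
\P\Bigl(|X_{0,2}+G_\tau|\ge \tfrac14\max(|X_{0,2}|,\sqrt{\mathrm{Var}(G_\tau)})\Bigr) \ge \beta
\end{equation*}
regardless of the (deterministic) value of $X_{0,2}$, so on this event $|X_{\tau,2}| \ge c(\epsilon,\delta)K^{r+\epsilon'}$ where $\epsilon' = \epsilon(3/2 + r)/(1-\epsilon) > 0$ is the positive margin coming from $(1/2+r+\epsilon)/(1-\epsilon) - 1/2 = r + \epsilon'$. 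To pass from this pointwise bound at $\tau$ to the required time-averaged bound, I would localize to the window $[\tau-1/\tilde K,\tau]$: there $\phi(t)^{-1}$ is comparable to $\phi(\tau)^{-1}$ up to a universal multiplicative constant (since $|X_{s,3}|\le\tilde K$), and $G_\tau - G_t$ has variance negligible compared to $\mathrm{Var}(G_\tau)$, so $|X_{t,2}|\gtrsim |X_{\tau,2}|$ on the same event. This yields
\begin{equation*}
\frac{1}{\tau}\int_0^\tau|X_{t,2}|\,dt \ge \frac{c\,K^{r+\epsilon'}}{K\tau} \gtrsim \frac{K^{r+\epsilon'}}{\log K} \ge c_* K^r
\end{equation*}
for $K \ge K_*(\epsilon)$, absorbing the logarithmic loss into the positive margin $K^{\epsilon'}$.

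The main technical obstacle is making the exponential bound on $\phi(\tau)^{-1}$ uniform across the two hypothesis cases and, in particular, across the boundary regime where $|X_{0,1}|$ is small and $\theta_0$ is close to either $0$ or $\pi$; tracking the transient behavior of the pendulum-like ODE for $\theta_t$ on a time scale that is only logarithmic in $K$ requires allowing the transit-time constant $C_0(\epsilon,\delta)$ to absorb both the $\delta$-dependent time for $\theta_t$ to enter a neighborhood of $\pi$ and the $\epsilon$-dependent cushion beyond which $X_{s,3}$ is essentially $-\tilde K$.
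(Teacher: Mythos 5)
Your proposal is correct and takes a genuinely different route to the same conclusion. Let me compare the two.

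The paper observes that since $(X_{t,1},X_{t,3})$ is deterministic, the random variable $\int_0^\tau X_{t,2}\,dt$ is Gaussian, and computes its variance directly via the It\^{o} isometry and Fubini:
\begin{equation*}
\mathrm{Var}\Bigl(\int_0^\tau X_{t,2}\,dt\Bigr) = \int_0^\tau \Bigl|\int_s^\tau e^{-\int_s^t X_{s',3}\,ds'}\,dt\Bigr|^2\,ds \gtrsim_{\epsilon,\delta} \tau\,|X_0|^{2r+2\epsilon-1},
\end{equation*}
using only that $X_{s,3}\le |X_0|$ always and $X_{s,3}\le -(1-\epsilon)|X_0|$ after the transit time $\tau_1 = C_0/|X_0|$. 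The time-averaged bound then follows in one stroke, since the average of $|X_{t,2}|$ dominates the absolute value of the average of $X_{t,2}$. You instead obtain a pointwise lower bound on $|X_{\tau,2}|$ by writing $X_{\tau,2}=\phi(\tau)^{-1}(X_{0,2}+G_\tau)$, bounding $\phi(\tau)^{-1}$ from below via the explicit pendulum solution $\tan(\theta_t/2)=\tan(\theta_0/2)e^{\tilde K t}$, bounding $\mathrm{Var}(G_\tau)$ from below using only the initial window $[0,\log 2/\tilde K]$, and applying Gaussian anticoncentration; you then localize to $[\tau-1/\tilde K,\tau]$, where $\phi(t)^{-1}\approx\phi(\tau)^{-1}$ and the increment $G_\tau-G_t$ is negligible, to pass to the time average. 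Both hinge on the same structural facts (energy conservation on $\mathrm{ker}A$, transit to the south pole in time $O(1/K)$, exponential decay of $\phi$), but the paper's variance computation avoids the localization step and the explicit angle tracking, at the cost of being less geometrically transparent. Two small notational points in your sketch: the prefactor governing $\phi(\tau)^{-1}$ is $\sin\theta_0\tan(\theta_0/2)=1-\cos\theta_0$, not $(\pi-\theta_0)\tan(\theta_0/2)$ — these are comparable in the regime you need (where $\theta_0$ is bounded away from $0$) but the former is what actually appears; and when controlling $\sup_{t\in[\tau-1/\tilde K,\tau]}|G_\tau-G_t|$ you need a maximal inequality for the backward martingale, which you gesture at but should make explicit, since the anticoncentration event must be intersected with a high-probability bound on this supremum.
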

\begin{remark}
	Observe that the dynamics of $Z_t:=(X_{t,1},0,X_{t,3}) \in \mathrm{ker}A$ is decoupled from $X_{t,2}$ and satisfies 
	\begin{equation} \label{eq:Ztconservation}
	|Z_t|^2 = |Z_0|^2
	\end{equation}
	for all $t \ge 0$.
\end{remark}
\begin{proof}
	By \eqref{eq:Ztconservation} and $|X_{0,2}| \le |Z_0|^r$, for $K_*(\epsilon)$ sufficiently large we have $|Z_t| \ge (1-\epsilon/2)|X_0|$ for all $t\ge 0$. It follows then by \eqref{eq:bbgrowth1} and rescaling $Z_t$ back to the unit circle that there exists $C_0(\epsilon,\delta)$ such that for $\tau_1 = C_0/|X_0|$ there holds 
	\begin{equation} 
	X_{t,3} \le -\left(1-\epsilon\right)|X_0| 
	\end{equation}
	for all $t \ge \tau_1$. Now, we may assume without loss of generality that $\sum_{j=1}^3 \sigma_{2j}dW_t^{(j)} = dW_t$ for a standard Brownian motion $W_t$. The formula for $X_{t,2}$ then reads
	\begin{equation}
	X_{t,2} = e^{-\int_0^t X_{s,3}ds}X_{0,2} + \int_0^t e^{-\int_s^t X_{s',3}ds'}dW_s.
	\end{equation}
	Since $X_{t,3}$ is deterministic, we have that $\int_0^\tau X_{t,2}dt$ is a Gaussian random variable with variance 
	\begin{align*}
	\E\left|\int_0^\tau X_{t,2} dt - \int_0^\tau e^{-\int_0^t X_{s,3}ds} X_{0,2}dt\right|^2 & = \E \abs{ \int_0^\tau \int_0^t e^{-\int_s^{t} X_{s',3} ds'} dW_{s} dt }^2 \\
	& = \E  \int_0^\tau \abs{\int_s^\tau e^{-\int_s^{t} X_{s',3} ds'} dt}^2 ds. 
	\end{align*}
	Using $X_{s,3} \le |X_0|$ for any $s$ and $X_{s,3} \le -(1-\epsilon)|X_0|$ for $s \ge \tau_1$ we obtain, for $K_*$ sufficiently large, the lower bound 
	\begin{equation}
	\begin{aligned}
	\E  \int_0^\tau \abs{\int_s^\tau e^{-\int_s^{t} X_{s',3} ds'} dt}^2 ds \gtrsim_{\epsilon,\delta} \frac{\tau}{|X_0|^2} e^{2(1-\epsilon)|X_0|(\tau-\tau_1)}= \tau |X_0|^{2r+2\epsilon - 1}.
	\end{aligned}
	\end{equation}
	It follows that there are $c_*(\epsilon,\delta), \beta > 0$ sufficiently small so that
	\begin{equation}\label{eq:bbgrowth2}
	\P\left(\frac{1}{\tau} \int_0^\tau |X_{t,2}| dt \ge c_* \frac{|X_0|^{r+\epsilon}}{\sqrt{\log|X_0|}} \right) \ge \beta.
	\end{equation}
	This completes the proof for $K_*(\epsilon)$ sufficiently large.
\end{proof}

\subsection{Justifying the approximation}

Theorem~\ref{thm:BadBad} follows immediately from Lemma~\ref{lem:time-averaged_step1} and the following proposition.

\begin{proposition}
	Fix $r \in (0,1/4)$ and for $\delta \in (0,1/4)$ define
	$$B_{K}^\delta = \{(x_1,x_2,x_3) \in \R^3: |x_2|^2 \le \delta (|x_1|^{2} + |x_3|^{2})^r, \quad (1-\delta)K^2 \le |x|^2 \le (1+\delta)K^2\}.$$
	For $\delta$ sufficiently small there exist $\eta(K) \approx \log(K)/K$, $c_* > 0$, and $K_* \ge 1$ so that for any $K \ge K_*$ large there holds 
	\begin{equation} \label{eq:bbapprpoxgoal} 
	x_0 \in B_K^\delta \implies \frac{1}{\eta(K)}\E \int_0^{\eta(K)} |x_{t,2}|^2 dt \ge c_* K^{2r}.
	\end{equation}
\end{proposition}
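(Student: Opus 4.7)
The plan is to verify the coercivity estimate \eqref{eq:bbapprpoxgoal} by splitting $B_K^\delta$ into two sub-regions based on whether the initial condition is near the north pole equilibrium $x_3 \approx K$, $(x_1,x_2) \approx 0$ or away from it, and treating each case with a different approximation. Specifically, for some small $\delta_0 \in (0,1)$ to be chosen, write $B_K^\delta = B_{K,\mathrm{pole}}^\delta \cup B_{K,\mathrm{good}}^\delta$ where
\begin{equation*}
B_{K,\mathrm{pole}}^\delta = \{x \in B_K^\delta : x_{3} > 0 \text{ and } |x_{1}| < \delta_0 K\}, \qquad B_{K,\mathrm{good}}^\delta = B_K^\delta \setminus B_{K,\mathrm{pole}}^\delta.
\end{equation*}
On $B_{K,\mathrm{good}}^\delta$ either $x_{0,3} \leq 0$ or $|x_{0,1}| \geq \delta_0 K$, which is precisely the hypothesis needed to apply Lemma~\ref{lem:bbgrowth} to the approximate system \eqref{eq:bbapprox}; on $B_{K,\mathrm{pole}}^\delta$ we will instead use Lemma~\ref{lem:bbstopping} together with the strong Markov property to reduce to the good region.

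For $x_0 \in B_{K,\mathrm{good}}^\delta$, I would proceed by contradiction as in Lemmas~\ref{lem:1dspectralapprox} and \ref{lem:1dJordanapprox}: assume $\frac{1}{\eta(K)}\E\int_0^{\eta(K)} |x_{t,2}|^2 dt \leq \delta_1 K^{2r}$ with $\eta(K) \approx \log(K)/K$ chosen to match $\tau$ from Lemma~\ref{lem:bbgrowth} at energy $K$, and derive a contradiction for $\delta_1$ sufficiently small. Let $Z_t = (X_{t,1}, 0, X_{t,3})$ solve the conservative ODE in $\mathrm{ker}A$ from initial data $\Pi_{\mathrm{ker}A}x_0$; since $x_2 \cdot B(Z+x_2 e_2, \cdot)$ produces only quadratic-in-$x_2$ corrections to the $(x_1,x_3)$ equations, Cauchy--Schwarz and the contradiction hypothesis give $\E\sup_{t \leq \eta}|(x_{t,1},x_{t,3}) - Z_t| \ll K^r$ on a set of high probability. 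On this good set the linear-in-$X_2$ equation \eqref{eq:bbapprox} faithfully approximates $x_{t,2}$ (the error equation is a linear ODE driven by the small difference $x_{t,3} - X_{t,3}$ times $x_{t,2}$, with Grönwall factor bounded by $e^{C K \eta} \lesssim K^C$), so Lemma~\ref{lem:bbgrowth} transfers to give $\frac{1}{\eta}\int_0^\eta |x_{t,2}|dt \gtrsim K^r$ with positive probability $\beta$, contradicting the hypothesis.

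For $x_0 \in B_{K,\mathrm{pole}}^\delta$, the approximation above fails because $x_{t,3} > 0$ causes $X_{t,2}$ in \eqref{eq:bbapprox} to decay rather than grow. Instead, first apply Lemma~\ref{lem:bbstopping} (after possibly shrinking $\delta_0$ so that the hypothesis $|x_{0,1}| < \delta_0 K$, $|x_{0,2}| < \delta_0 K^{1/4}$ is met from $B_K^\delta$) to produce a stopping time $\tau_\ast \leq C_0 \log(K)/K$ with $\PP(\tau_\ast \leq C_0\log(K)/K) \geq 1/2$, at which $|x_{\tau_\ast,1}| \geq \delta_0 K$ and $|x_{\tau_\ast,2}| \leq K^{1/4} \ll K^r$; one also needs to control $|x_{\tau_\ast,3}|$ staying in $[K/2,2K]$ using Lemma~\ref{lem:basicenergy} as at the end of the proof of Lemma~\ref{lem:bbstopping}. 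Thus $x_{\tau_\ast} \in B_{K'}^{\delta'}$ for $K' \approx K$ and is now in the good region, so by the strong Markov property and the estimate already established on $B_{K,\mathrm{good}}^\delta$, applied over an additional time window of length $\eta(K) \gg C_0\log(K)/K$, we recover the desired time-averaged coercivity after enlarging $\eta(K)$ by a constant factor.

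The main obstacle is the approximation step on $B_{K,\mathrm{good}}^\delta$: unlike the spectrally unstable case where one linearizes around a true equilibrium, here $Z_t$ is a genuinely moving heteroclinic trajectory, so the error analysis must track $Z_t$ over a time interval long enough for \eqref{eq:bbapprox} to accumulate the instability from the negative-$X_3$ side of the orbit. The key quantitative point is that the time $\tau_1 = C_0/|X_0|$ needed by Lemma~\ref{lem:bbgrowth} for $X_{t,3}$ to become sufficiently negative is much shorter than the Grönwall-controlled approximation horizon $\log(K)/K$, so the noise-driven growth window of duration $\sim \log(K)/K$ is indeed available to $x_{t,2}$, and one needs only check that the quadratic back-reaction of $x_{t,2}$ on $(x_{t,1},x_{t,3})$ is negligible on this scale, which follows from the contradiction hypothesis exactly as in Section~\ref{sec:1dkernel}.
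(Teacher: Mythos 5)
Your high-level decomposition matches the paper exactly: split $B_K^\delta$ into a ``pole'' piece (near the unstable north-pole equilibrium, handled by the local Lyapunov function Lemma~\ref{lem:bbstopping} and the strong Markov property) and a ``good'' piece (where Lemma~\ref{lem:bbgrowth} applies to the approximate solution), with $\eta(K)\approx \log(K)/K$ absorbing both the exit time and the growth time. That part is right.

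The genuine gap is in the approximation step on $B_{K,\mathrm{good}}^\delta$. You assert that ``Cauchy--Schwarz and the contradiction hypothesis give $\E\sup_{t\le \eta}|(x_{t,1},x_{t,3})-Z_t|\ll K^r$'' and that the rest follows ``exactly as in Section~\ref{sec:1dkernel}.'' That analogy breaks down here. In Section~\ref{sec:1dkernel} the frozen reference point $z_0\in\mathrm{ker}A$ is a genuine equilibrium, so the equation for $z_t - z_0$ has \emph{no} homogeneous linear part and the error never propagates. Here $Z_t$ is a moving heteroclinic orbit, so the error $\bar z_t = Z_t - z_t$ satisfies
\begin{equation*}
d\bar z_t = \tilde L_{Z_t}\bar z_t\,dt - B(\bar z_t,\bar z_t)\,dt - \Pi_{\mathrm{ker}A}B(y_t,y_t)\,dt + \Pi_{\mathrm{ker}A}\sigma\,dW_t,
\end{equation*}
with a nontrivial time-inhomogeneous propagator $S_Z(t,s)$ generated by $\tilde L_{Z_t}$, and the \emph{noise} term (not just the quadratic-in-$y$ term you mention) is a major contributor to $\bar z_t$ once it is amplified by $S_Z$. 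A naive Gr\"onwall bound of the form ``$e^{CK\eta}\lesssim K^C$'' with unspecified $C$ does not close: the operator $\tilde L_{Z_t}$ has norm $\approx K$, so a priori one only gets $C$ of order one, and the noise contribution to $\bar z_t$ is then $\approx K^{C(1/2+r)-1/2}$, which need not be $\ll K^r$. The paper's proof relies on the specific observation that the top eigenvalue of the \emph{symmetric part} of $\tilde L_z$ equals $(z_3+|z|)/2$, so once $Z_{t,3}\le -(1-\epsilon)|Z_0|$ (which happens after the short time $\tau_1\approx 1/|X_0|$) the propagator $\|S_Z(t,s)\|$ is essentially $O(K^{\epsilon/2})$ rather than $O(K^{3/4})$; see the bound following \eqref{eq:ztbar1}. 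Without this (or at least without the weaker but still nontrivial remark that $Z_{t,3}\le Z_{0,3}<|Z_0|$ strictly on $B_{K,\mathrm{good}}^\delta$, forcing the effective Gr\"onwall exponent strictly below $1$), the error estimate for $\bar z_t$ is not justified, and neither is the subsequent bound on $\bar y_t = X_{t,2}-x_{t,2}$, which is driven by $\bar z_{t,3}\,y_t$ and itself carries a Gr\"onwall factor $e^{|Z_0|\tau}\approx K^{(1/2+r+\epsilon)/(1-\epsilon)}$ that must be cancelled against the smallness of $\bar z_{t,3}$. In short, the refined propagator bound for $\tilde L_{Z_t}$---not just Cauchy--Schwarz---is the new ingredient the proof needs, and your proposal omits it.
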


\begin{proof}
	Let 
	$$S_{\delta_1} = \{x \in \R^3: x_3 > 0, |x_1| < \delta_1 |x|\} $$
	and
	$$ B_K^{\delta,\delta_1} = S_{\delta_1}^c \cap \{x\in \R^3: |x_2|^2 \le (|x_1|^2 + |x_3|^2)^r, (1-2\delta)K^2 \le |x|^2 \le (1+2\delta)K^2\}.$$
	We first show that it is sufficient to prove that if $\delta$ sufficiently small then for every $\delta_1 \in (0,1/4)$ there is $\eta_1(K) \approx_{\delta_1} \log(K)/K$ and $c > 0$ so that for $K$ sufficiently large there holds
	\begin{equation} \label{eq:bbreduction1}
	x_0 \in B_K^{\delta,\delta_1} \implies \frac{1}{\eta_1(K)} \int_0^{\eta_1(K)} \Pt_t D(x_0) dt \ge c K^{2r},
	\end{equation}
	where $D: \R^3 \to \R^3$ is defined by $D(x) = |x_2|^2$. Fix $x_0 \in B_{K}^\delta$ and let
	$$\tilde{\tau}(\omega) = \inf\left\{t \ge 0: x_t(\omega) \in B_K^{\delta, \delta_1}\right\}.$$
	By Lemma~\ref{lem:bbstopping} and Lemma~\ref{lem:basicenergy}, there are $\delta_1,C_1 > 0$ so that for all $\delta$ small enough, if $K$ is taken sufficiently large depending on $\delta$ there holds 
	\begin{equation} \label{eq:tautil1}
	\P\left(\tilde{\tau}_1 \le \frac{C_1 \log(K)}{K} \right) \ge \frac{1}{4}.
	\end{equation}
	Let now $\eta_1$ and $c$ be as in \eqref{eq:bbreduction1} applied with $\delta_1$ chosen so that \eqref{eq:tautil1} holds. Define 
	$$\eta(K) = \eta_1(K) + \frac{C_1 \log(K)}{K}.$$
	By \eqref{eq:tautil1}, the strong Markov property and \eqref{eq:bbreduction1}, for $K$ sufficiently large we have
	\begin{align*} 
	\E \int_0^{\eta} |x_{t,2}|^2 dt 
	& \ge \int_{\Omega} \int_0^{\eta - \tilde{\tau}\wedge \eta} D(x_{\tilde{\tau} \wedge \eta + t}(\omega))dt d\P \\ 
	& \ge \frac{1}{4} \inf_{\tilde{\tau}(\omega) \le C_1 \log(K)/K} \int_0^{\eta_1} \Pt_t D(x_{\tilde{\tau}}(\omega)) dt \\ 
	& \ge \frac{c}{4} \eta_1(K) K^{2r}.
	\end{align*}
	The bound \eqref{eq:bbapprpoxgoal} then follows since $\eta_1(K) \approx \eta(K)$.
	
	We now prove \eqref{eq:bbreduction1}. Let $x_0 \in B_{K}^{\delta,\delta_1}$ for some $\delta_1 \in (0,1/4)$. For $\epsilon \in (0,1/4)$ to be chosen later, define
	$$ \tau = \frac{C_0(\epsilon,\delta_1)}{|x_0|} + \frac{(1/2+r+\epsilon) \log(|x_0|)}{(1-\epsilon)|x_0|}, $$
	where $C_0$ is as defined in Lemma~\ref{lem:bbgrowth}.
	Suppose now for contradiction that 
	\begin{equation} \label{eq:bbcontradiction}
	\frac{1}{\tau} \E \int_0^\tau |x_{t,2}|^2 dt \le \delta_2 K^{2r}
	\end{equation}
	for some $\delta_2 \in (0,1)$ to be chosen sufficiently small. Let $X_t$ solve \eqref{eq:bbapprox} with initial condition $x_0$. By Lemma~\ref{lem:bbgrowth} there exists $c_*(\epsilon,\delta_1), \beta > 0$ so that for $K$ sufficiently large (depending on $\epsilon$) there holds
	\begin{equation} \label{eq:bbapproxlemgrowth}
	\P\left( \frac{1}{\tau}\int_0^\tau |X_{t,2}| dt \ge c_* |x_0|^r \right) \ge \beta
	\end{equation}
	and $X_{t,3} \le -(1-\epsilon)|x_0|$ for all $t \ge \tau_1:=C_0/|x_0|$. As in our earlier proofs, the plan is now to proceed by deriving suitable estimates on the error $|X_{t,2} - x_{t,2}|$.
	
	We begin by estimating $|\Pi_{\mathrm{ker}A}(X_t-x_t)|$. This is slightly more involved than in earlier arguments since $\Pi_{\mathrm{ker}A}X_t$ is not constant. We denote $Z_t = (Z_{t,1},0,Z_{t,3})= (X_{t,1},0,X_{t,3}) \in \mathrm{ker}A$ and $Y_t = X_{t,2} \in \mathrm{ker}A^\perp$ and define $z_t$ and $y_t$ similarly. Moreover, for $z \in \mathrm{ker}A$ we define the linear operator $\tilde{L}_z:\R^3 \to \R^3$ by 
	$$\tilde{L}_z(x) = \Pi_{\mathrm{ker}A}(B(z,\Pi_{\mathrm{ker}A}x)+B(\Pi_{\mathrm{ker}A}x,z)) = \Pi_{\mathrm{ker}A}L_z \Pi_{\mathrm{ker}A}.$$	 
	The error $\bar{z}_t = Z_t - z_t$ then solves
	$$ d{\bar{z}_t} =  \tilde{L}_{Z_t}(\bar{z_t})dt - B(\bar{z}_t,\bar{z}_t)dt - B(y_t,y_t)dt
	+ \Pi_{\mathrm{ker}A}\sigma dW_t.
	$$
	For $f:[0,\infty) \to \mathrm{ker}A$ and $h \in \R^3$ we write $S_{f}(t,s)h$ for the solution to the problem 
	$$
	\begin{cases}
	\frac{d}{dt} S_{f}h = \tilde{L}_{f(t)} S_{f}h, & t > s \\ 
	S_{f}(s,s)h = h.
	\end{cases}
	$$
	With this notation, $\bar{z}_t$ satisfies
	$$\bar{z}_t = -\int_0^t S_{Z}(t,s)[B(\bar{z}_s,\bar{z}_s) + B(y_s,y_s)]ds
	+ \int_0^t S_{Z}(t,s) \Pi_{\mathrm{ker}A}\sigma dW_s.$$
	By H\"{o}lder's inequality and Fubini's theorem, for any $T \le \tau$ we have 
	\begin{equation} \label{eq:ztbar1}
	\begin{aligned}
	\|\bar{z}_t\|_{L^2(0,T)} &\lesssim \left(\int_0^T\int_0^t \norm{S_Z(t,s)}^2\left(\norm{\bar{z}_s}^2 + \norm{y_s}^2 \right) ds dt\right)^{1/2} \left(\|\bar{z}_t\|_{L^2(0,T)} + \|y_t\|_{L^2(0,T)}\right) \\
	& \quad +  \left(\int_0^\tau \left| \int_0^t S_{Z}(t,s) \Pi_{\mathrm{ker}A}\sigma dW_s\right|^2 dt\right)^{1/2} \\ 
	&\leqc  \left(\sup_{0 \le s \le \tau}\int_0^\tau \mathbf{1}_{s\le t} \|S_{Z}(t,s)\|^2 dt \right)^{1/2}\left(\|\bar{z}_t\|_{L^2(0,T)}^2 + \|y_t\|_{L^2(0,T)}^2\right) \\
	& \quad +  \left(\int_0^\tau \left| \int_0^t S_{Z}(t,s) \Pi_{\mathrm{ker}A}\sigma dW_s\right|^2 dt\right)^{1/2}.
	\end{aligned}
	\end{equation}
	To proceed we need estimates for $\|S_Z(t,s)\|$. A straightforward computation shows that the top eigenvalue of the symmetric part of $\tilde{L}_z$ is bounded above by $(z_3 + |z|)/2$. Thus, using that $Z_{t,3} \le -(1-\epsilon)|x_0|$ for $t \ge \tau_1$, we have 
	\begin{equation}
	\|S_{Z}(t,s)\| \le \exp\left(\int_s^t \frac{Z_{t',3}+|Z_{t'}|}{2} dt' \right) \le e^{C_0} e^{(t-\tau_1)\epsilon |x_0|/2}.
	\end{equation}
	Consequently,
	\begin{equation}\label{eq:bbsemigroupbound}
	\begin{aligned}
	\left(\sup_{0 \le s \le \tau}\int_0^\tau \mathbf{1}_{s\le t} \|S_{Z}(t,s)\|^2 dt \right)^{1/2} \le e^{C_0} \left(\int_0^{\tau} e^{(t-\tau_1) \epsilon |x_0|}dt\right)^{1/2}  \le \frac{e^{C_0}}{\sqrt{\epsilon}} |x_0|^{\epsilon - 1/2}
	\end{aligned}
	\end{equation}
	and
	\begin{equation} \label{eq:bbnoisebound}
	\E \int_0^\tau \left| \int_0^t S_{Z}(t,s) \Pi_{\mathrm{ker}A}\sigma dW_s\right|^2 dt \leqc \int_0^\tau \int_0^t \|S_{Z}(t,s)\|^2 ds dt \leqc \tau \frac{e^{2C_0}}{\epsilon}|x_0|^{2\epsilon - 1}.
	\end{equation}
	For $R \ge 1$, define 
	$$\Omega_1 = \left\{\omega \in \Omega: \left(\int_0^\tau\left|\int_0^t S_{Z}(t,s)\Pi_{\mathrm{ker}A}\sigma dW_s\right|^2 dt\right)^{1/2} \le R \sqrt{\tau} |x_0|^{\epsilon - 1/2}\right\}$$
	and 
	$$ \Omega_2 = \{\omega \in \Omega: \int_0^\tau |y_t|^2 dt \le \sqrt{\delta_2} \tau K^{2r}\}.$$
	By \eqref{eq:bbnoisebound} and \eqref{eq:bbcontradiction}, for $R(\epsilon,\delta_1,\beta)$ sufficiently large and $\delta_2(\beta)$ sufficiently small there holds
	\begin{equation}\label{eq:bbomega0}
	\P(\Omega_1 \cap \Omega_2) \ge 1 - \frac{\beta}{2}.
	\end{equation}
	By \eqref{eq:bbsemigroupbound}, \eqref{eq:ztbar1}, and $r < 1/4$, for $\omega \in \Omega_1 \cap \Omega_2$ there holds 
	\begin{align*}
	\|\bar{z}_t(\omega)\|_{L^2(0,T)} &\leqc \frac{e^{C_0}}{\sqrt{\epsilon}} K^{\epsilon - 1/2}\left(\|\bar{z}_t(\omega)\|^2_{L^2(0,T)} + \sqrt{\delta_2}\tau K^{2r}\right)+ R \sqrt{\tau} K^{\epsilon - 1/2} \\ 
	& \leqc \frac{e^{C_0}}{\sqrt{\epsilon}} K^{\epsilon - 1/2} \|\bar{z}_t(\omega)\|_{L^2(0,T)}^2 + R\frac{e^{C_0}}{\sqrt{\epsilon}}\sqrt{\tau}K^{\epsilon - 1/2}
	\end{align*}
	for any $T \le \tau$. From a standard continuity argument, for $K$ sufficiently large we have
	\begin{equation} \label{eq:ztbar2}
	\|\bar{z}_t(\omega)\|_{L^2(0,\tau)} \leqc R \frac{e^{C_0}}{\sqrt{\epsilon}} \sqrt{\tau} K^{\epsilon - 1/2}.
	\end{equation}
	
	Now we use \eqref{eq:ztbar2} to bound $\bar{y}_t:=Y_t - y_t$ for $\omega \in \Omega_1 \cap \Omega_2$. We have 
	\begin{equation}
	\frac{d}{dt}\bar{y}_t = -Z_{t,3}\bar{y}_t - \bar{z}_{t,3}y_t,
	\end{equation}
	so that 
	\begin{equation}
	\bar{y}_t = - \int_0^t \exp\left(-\int_s^t Z_{t',3}dt'\right)\bar{z}_{s,3}y_s ds.
	\end{equation}
	Using the rough bound 
	$$ \exp\left(-\int_s^t Z_{t',3}dt'\right) \le e^{(t-s)|x_0|}$$
	and Young's convolution inequality we obtain
	\begin{equation}
	\|\bar{y}_t\|_{L^1(0,\tau)} \leqc \frac{e^{|x_0|\tau}}{|x_0|} \int_0^\tau |\bar{z}_{t,3}| |y_t|dt.
	\end{equation}
	Thus, utilizing \eqref{eq:ztbar2}, for $\omega \in \Omega_1 \cap \Omega_2$ and $\epsilon$ small enough we have 
	\begin{equation} \label{eq:bbapprox2}
	\begin{aligned}
	\|\bar{y}_t(\omega)\|_{L^1(0,\tau)} &\leqc e^{C_0} K^{\frac{1/2 + r+\epsilon}{1-\epsilon} - 1} R\frac{e^{C_0}}{\sqrt{\epsilon}} \sqrt{\tau} K^{\epsilon -1/2} \delta_2^{1/4} \tau K^r \\ 
	& \leqc \delta_2^{1/4}R\frac{e^{2C_0}}{\sqrt{\epsilon}} \tau K^r.
	\end{aligned}
	\end{equation}
	As in our earlier proofs, \eqref{eq:bbapprox2}, \eqref{eq:bbapproxlemgrowth}, and \eqref{eq:bbomega0} together are enough to obtain a contradiction for $\delta_2$ sufficiently small. The result is that there is $c>0$ so that for $K$ sufficiently large there holds 
	\begin{equation} \label{eq:bbapprox3}
	\frac{1}{\tau}\E\int_0^{\tau}|x_{t,2}|^2dt \ge c K^{2r}.  
	\end{equation}
	For $K$ large and $\delta$ small one has $\tau \approx_{\delta_1} \log(K)/K$, and so from \eqref{eq:bbapprox3} the proof is completed by setting $\eta_1(K) = C_{\delta_1}\log(K)/K$ for some large constant $C_{\delta_1}$. 
\end{proof}

\appendix
\section{A basic energy estimate}
The following lemma quantifies how $B(x,x) \cdot x = 0$ and the additive nature of the noise imply that the energy level of a trajectory can only change a small amount in a short time.
\begin{lemma} \label{lem:basicenergy}
	Fix $\epsilon \in (0,1)$. There exist $K_*(\epsilon)\ge 1$, $\tau_*(\epsilon) \le 1$, and $C > 0$ (which does not depend on $\epsilon$) such that for $0 \le \tau \le \tau_*$ and any $x_0 \in \R^n$ with $|x_0| = K \ge K_*$ there holds
	\begin{equation}
	\P\left(\sup_{0\le t \le \tau} \left| |x_{t}|^2 - K^2\right| \ge \epsilon K^2 \right) \le \frac{C\tau}{\epsilon^2 K^2}.
	\end{equation}
\end{lemma}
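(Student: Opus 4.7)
The plan is to apply It\^{o}'s lemma to $|x_t|^2$ and exploit the key cancellation $B(x,x)\cdot x = 0$ from \eqref{eq:Bassumption} to drop the nonlinear term entirely. Since $\sigma$ is constant and $A$ is linear, this will leave us with
\begin{equation*}
|x_t|^2 - K^2 = \int_0^t \left(-2 x_s \cdot A x_s + \tr(\sigma\sigma^T)\right) ds + M_t, \qquad M_t := 2\int_0^t x_s \cdot \sigma \, dW_s,
\end{equation*}
which cleanly splits the deviation into a bounded-variation piece controlled by $|x_s|^2$ and a martingale whose quadratic variation is also controlled by $|x_s|^2$.

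Next I would introduce the natural stopping time $\tau_0 := \inf\{t \ge 0 : ||x_t|^2 - K^2| \ge \epsilon K^2\}$ so that on the interval $[0,\tau_0]$ one has the a priori bound $|x_t|^2 \le (1+\epsilon) K^2 \le 2K^2$. On this stopped interval, the drift term is bounded deterministically by $(2\|A\| + \tr(\sigma\sigma^T)/K^2)(1+\epsilon) K^2 \tau \le C_1 K^2 \tau$ for a constant $C_1$ depending only on $A$ and $\sigma$. Choosing $\tau_*(\epsilon) = \epsilon/(2C_1)$, the drift contributes at most $\epsilon K^2/2$ for $\tau \le \tau_*$, so the event in the statement forces the stopped martingale to satisfy $\sup_{0 \le t \le \tau \wedge \tau_0} |M_t| \ge \epsilon K^2/2$.

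The final step is Doob's $L^2$ maximal inequality combined with the It\^{o} isometry: on $[0, \tau\wedge\tau_0]$,
\begin{equation*}
\EE |M_{\tau\wedge\tau_0}|^2 = 4\EE \int_0^{\tau\wedge\tau_0} |\sigma^T x_s|^2 \, ds \le 4\|\sigma\|^2 (1+\epsilon) K^2 \tau \le 8 \|\sigma\|^2 K^2 \tau,
\end{equation*}
so that
\begin{equation*}
\PP\left(\sup_{0\le t\le \tau}||x_t|^2 - K^2| \ge \epsilon K^2\right) \le \PP\left(\sup_{0\le t \le \tau\wedge \tau_0} |M_t| \ge \epsilon K^2 /2\right) \le \frac{16 \, \EE |M_{\tau\wedge\tau_0}|^2}{\epsilon^2 K^4} \le \frac{C \tau}{\epsilon^2 K^2},
\end{equation*}
with $C$ depending only on $\sigma$. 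Taking $K_*(\epsilon) \ge 1$ only serves to ensure $K^2 \ge 1$ so that the constant $\tr(\sigma\sigma^T)$ piece of the drift is absorbed into the $C_1 K^2 \tau$ bound. I do not anticipate a serious obstacle here, as the argument is essentially the standard Itô-plus-Doob estimate; the only subtlety is using the stopping time $\tau_0$ to turn the a priori energy bound into a deterministic pathwise bound on the drift, which is what makes the role of $B(x,x)\cdot x = 0$ genuinely useful in this proof.
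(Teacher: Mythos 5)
Your proof is correct, and it takes a genuinely different route from the paper's. Both start from It\^{o}'s formula for $|x_t|^2$ and exploit $B(x,x)\cdot x = 0$ to kill the nonlinear term, and both finish with Doob's $L^2$ maximal inequality plus the It\^{o} isometry on the martingale $M_t = 2\int_0^t x_s\cdot\sigma\,dW_s$. The difference is in how the damping drift $-2Ax_s\cdot x_s$ is handled. The paper treats the two tails separately: for the upper tail it simply drops $-2Ax_s\cdot x_s \le 0$ and controls the It\^{o} isometry via the unconditional moment bound $\E|x_s|^2 \le |x_0|^2 + C_\sigma s$; for the lower tail, where the damping has the unfavorable sign, it introduces the rescaled process $\tilde{x}_t = e^{\lambda_A t}x_t$ (with $\lambda_A$ the top eigenvalue of $A$) so that the resulting $2\lambda_A|\tilde{x}_t|^2 - 2A\tilde{x}_t\cdot\tilde{x}_t \ge 0$ can again be discarded. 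You instead handle both tails at once by localizing with the exit time $\tau_0$ from the annulus $\{\,||x|^2-K^2|<\epsilon K^2\,\}$, which converts the energy constraint into a deterministic pathwise bound $|x_s|^2 \le 2K^2$ on $[0,\tau\wedge\tau_0]$, so the full drift integral is bounded by $C_1 K^2\tau$ and is absorbed into half the threshold by taking $\tau_*(\epsilon)$ small. Your version is somewhat more streamlined: one estimate covers both tails, the stopping time makes the stochastic integral manifestly a true $L^2$ martingale without appealing to the a priori moment bound, and the $e^{\lambda_A t}$ device is avoided entirely. The paper's version has the feature of bounding the two tails with slightly sharper constants when only one tail is needed, but for the statement at hand the outcomes are equivalent.
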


\begin{proof}
	It suffices to show 
	\begin{equation} \label{eq:energylemma1}
	\P\left(\sup_{0 \le t \le \tau} |x_t|^2 \ge (1+\epsilon/2)|x_0|^2 + R^2 \right) \le C \frac{\tau |x_0|^2}{R^4}
	\end{equation}
	and 
	\begin{equation} \label{eq:energylemma2}
	\P\left(\inf_{0 \le t \le \tau}|x_t|^2 \le (1-\epsilon/2)|x_0|^2 - R^2\right) C \frac{\tau |x_0|^2}{R^4}
	\end{equation}
	for some constant $C$ that does not depend on $\epsilon$. Indeed, the desired result follows immediately by taking $R = \sqrt{\epsilon/2}|x_0|$ in \eqref{eq:energylemma1} and \eqref{eq:energylemma2}.
	
	We begin with the proof of \eqref{eq:energylemma1}. By It\^{o}'s formula and $B(x,x)\cdot x = 0$, we have 
	\begin{equation} \label{eq:exptail1}
	|x_t|^2- |x_0|^2 = 2\int_0^t x_s \cdot \sigma dW_s - 2\int_0^t Ax_s \cdot x_s ds + t \sum_{i,j=1}^n |\sigma_{ij}|^2 .
	\end{equation}
	Thus, 
	\begin{equation} \label{eq:exptail2}
	\E |x_t|^2 \le |x_0|^2 + C_\sigma t,
	\end{equation}
	where we have set $C_\sigma = \sum_{i,j=1}^n |\sigma_{ij}|^2$. Using the martingale inequality followed by It\^{o} isometry and \eqref{eq:exptail2} in \eqref{eq:exptail1} gives
	\begin{align}
	&\P\left(\sup_{0 \le t \le \tau}  |x_t|^2 - |x_0|^2 - C_\sigma t \ge R^2 \right)  \le \P\left( 2\sup_{0 \le t \le \tau}\left| \int_0^t x_s \cdot \sigma dW_s \right|\ge R^2 \right) \nonumber \\ 
	& \qquad  \le \frac{4}{R^4}\E\left|\int_0^\tau x_s \cdot \sigma dW_s\right|^2  \le \frac{C}{R^4} \int_0^\tau \E |x_s|^2 ds \le C \frac{\tau |x_0|^2}{R^4}, \label{eq:exptail4}
	\end{align}
	where in the last inequality we assume that $K_*$ is sufficiently large. The bound \eqref{eq:energylemma1} then follows provided $K_* \ge \sqrt{2C_\sigma/\epsilon}$.
	
	Now we turn to the proof of \eqref{eq:energylemma2}. Let $\tilde{x}_t = e^{\lambda_A t}x_t$, where $\lambda_A > 0$ is the largest eigenvalue of $A$. Then, 
	\begin{equation} 
	d \tilde{x}_t = \lambda_A \tilde{x}_t dt + e^{\lambda_A t}B(x_t,x_t)dt - A\tilde{x}_t dt + e^{\lambda_A t}\sigma dW_t.
	\end{equation}
	Since $\tilde{x}_t \cdot e^{\lambda_A t}
	B(x_t,x_t) = e^{2\lambda_A t} B(x_t,x_t) \cdot x_t = 0$, another application of It\^{o}'s lemma gives
	\begin{equation} \label{eq:exptail3}
	d|\tilde{x}_t|^2 = 2\lambda_A |\tilde{x_t}|^2 dt - 2 A\tilde{x}_t \cdot \tilde{x}_t dt + 2 e^{\lambda_A t} \tilde{x}_t \cdot \sigma dW_t + C_\sigma e^{2\lambda_A t} dt.
	\end{equation}
	Using $\lambda_A |\tilde{x}_t|^2 \ge A\tilde{x}_t \cdot \tilde{x}_t$, \eqref{eq:exptail3} implies
	\begin{equation}
	|\tilde{x}_t|^2 \ge |x_0|^2 - 2 \left| \int_0^t e^{\lambda_A s}\tilde{x}_s \cdot \sigma dW_s\right|,
	\end{equation}
	and hence for $0\le t \le \tau$ there holds
	\begin{equation}
	|x_t|^2 \ge e^{-2 \lambda_A \tau}|x_0|^2 - 2\left| \int_0^t e^{2 \lambda_A s}x_s \cdot \sigma  dW_s\right|.
	\end{equation}
	Proceeding as in the proof of \eqref{eq:exptail4} we obtain
	\begin{align}
	\P\left(\inf_{0\le t \le \tau} |x_t|^2 \le e^{-2\lambda_A \tau}|x_0|^2 - R^2\right) \le \frac{C}{R^4}\int_0^\tau \E |x_s|^2 ds \le C \frac{\tau |x_0|^2}{R^4}.	
	\end{align}
	The desired bound follows provided that $\tau_*$ is small enough so that $e^{-2\lambda_A \tau_*} \ge 1-\epsilon/2$.
\end{proof}

\phantomsection
\addcontentsline{toc}{section}{References}
\bibliographystyle{abbrv}	
\begin{bibdiv}
	\begin{biblist}
		
		\bib{AKM12}{article}{
			author={Athreya, Avanti},
			author={Kolba, Tiffany},
			author={Mattingly, Jonathan},
			title={Propagating lyapunov functions to prove noise-induced
				stabilization},
			date={2012},
			journal={Electronic Journal of Probability},
			volume={17},
			pages={1\ndash 38},
		}
		
		\bib{BCG08}{article}{
			author={Bakry, Dominique},
			author={Cattiaux, Patrick},
			author={Guillin, Arnaud},
			title={Rate of convergence for ergodic continuous markov processes:
				Lyapunov versus poincar{\'e}},
			date={2008},
			journal={Journal of Functional Analysis},
			volume={254},
			number={3},
			pages={727\ndash 759},
		}
		
		\bib{BBPS19}{article}{
			author={Bedrossian, Jacob},
			author={Blumenthal, Alex},
			author={Punshon-Smith, Sam},
			title={The {Batchelor} spectrum of passive scalar turbulence in
				stochastic fluid mechanics at fixed reynolds number},
			date={2019},
			journal={To appear in Comm. Pure Appl. Math.},
		}
		
		\bib{BL21}{article}{
			author={Bedrossian, Jacob},
			author={Liss, Kyle},
			title={Quantitative spectral gaps for hypoelliptic stochastic
				differential equations with small noise},
			date={2021},
			journal={Probability and Mathematical Physics},
			volume={2},
			number={3},
			pages={477\ndash 532},
		}
		
		\bib{BirrellTransition}{article}{
			author={Birrell, J},
			author={Herzog, DP},
			author={Wehr, J},
			title={Transition from ergodic to explosive behavior in a family of
				stochastic differential equations. available on math},
			journal={arXiv preprint ArXiv.1105.2378},
		}
		
		\bib{BMOV05}{book}{
			author={Bohr, Tomas},
			author={Jensen, Mogens~H},
			author={Paladin, Giovanni},
			author={Vulpiani, Angelo},
			title={Dynamical systems approach to turbulence},
			publisher={Cambridge University Press},
			date={2005},
		}
		
		\bib{EvanThesis}{thesis}{
			author={Camrud, Evan},
			type={Ph.D. Thesis},
			date={Iowa State University, in preparation},
		}
		
		\bib{CCEH20}{article}{
			author={Ca{\~n}izo, Jos{\'e}~A},
			author={Cao, Chuqi},
			author={Evans, Josephine},
			author={Yolda{\c{s}}, Havva},
			title={Hypocoercivity of linear kinetic equations via harris's theorem},
			date={2020},
			journal={Kinetic \& Related Models},
			volume={13},
			number={1},
			pages={97},
		}
		
		\bib{CZH21}{article}{
			author={Coti~Zelati, Michele},
			author={Hairer, Martin},
			title={A noise-induced transition in the {Lorenz} system},
			date={2021},
			journal={Communications in Mathematical Physics},
			volume={383},
			number={3},
			pages={2243\ndash 2274},
		}
		
		\bib{DPZ96}{book}{
			author={Da~Prato, G.},
			author={Zabczyk, J.},
			title={Ergodicity for infinite-dimensional systems},
			series={London Mathematical Society Lecture Note Series},
			publisher={Cambridge University Press, Cambridge},
			date={1996},
			volume={229},
		}
		
		\bib{DaPratoZabczyk1996}{book}{
			author={Da~Prato, G.},
			author={Zabczyk, J.},
			title={Ergodicity for infinite dimensional systems},
			series={London Mathematical Society Lecture Note Series},
			publisher={Cambridge University Press},
			date={1996},
		}
		
		\bib{Ditlevsen2010}{book}{
			author={Ditlevsen, Peter~D},
			title={Turbulence and shell models},
			publisher={Cambridge University Press},
			date={2010},
		}
		
		\bib{DFG09}{article}{
			author={Douc, Randal},
			author={Fort, Gersende},
			author={Guillin, Arnaud},
			title={Subgeometric rates of convergence of f-ergodic strong markov
				processes},
			date={2009},
			journal={Stochastic processes and their applications},
			volume={119},
			number={3},
			pages={897\ndash 923},
		}
		
		\bib{FGHH21}{article}{
			author={F{\"o}ldes, Juraj},
			author={Glatt-Holtz, Nathan~E},
			author={Herzog, David~P},
			title={Sensitivity of steady states in a degenerately damped stochastic
				{Lorenz} system},
			date={2021},
			journal={Stochastics and Dynamics},
			volume={21},
			number={08},
			pages={2150055},
		}
		
		\bib{FGHV16}{article}{
			author={Friedlander, Susan},
			author={Glatt-Holtz, Nathan},
			author={Vicol, Vlad},
			title={Inviscid limits for a stochastically forced shell model of
				turbulent flow},
			date={2016},
			journal={Ann. Inst. Henri Poincar\'e Probab. Stat.},
			volume={52},
			number={3},
			pages={1217\ndash 1247},
		}
		
		\bib{Frisch1995}{book}{
			author={Frisch, Uriel},
			title={Turbulence: the legacy of {A.N. Kolmogorov}},
			publisher={Cambridge university press},
			date={1995},
		}
		
		\bib{Gledzer1973}{inproceedings}{
			author={Gledzer, Evgenii~Borisovich},
			title={System of hydrodynamic type admitting two quadratic integrals of
				motion},
			date={1973},
			booktitle={Sov. phys. dokl.},
			volume={18},
			pages={216\ndash 217},
		}
		
		\bib{HN04}{article}{
			author={H{\'e}rau, Fr{\'e}d{\'e}ric},
			author={Nier, Francis},
			title={Isotropic hypoellipticity and trend to equilibrium for the
				fokker-planck equation with a high-degree potential},
			date={2004},
			journal={Archive for Rational Mechanics and Analysis},
			volume={171},
			number={2},
			pages={151\ndash 218},
		}
		
		\bib{HM15}{article}{
			author={Herzog, David},
			author={Mattingly, Jonathan},
			title={Noise-induced stabilization of planar flows i},
			date={2015},
			journal={Electronic Journal of Probability},
			volume={20},
			pages={1\ndash 43},
		}
		
		\bib{herzog2019ergodicity}{article}{
			author={Herzog, David~P},
			author={Mattingly, Jonathan~C},
			title={Ergodicity and lyapunov functions for langevin dynamics with
				singular potentials},
			date={2019},
			journal={Communications on Pure and Applied Mathematics},
			volume={72},
			number={10},
			pages={2231\ndash 2255},
		}
		
		\bib{KP10}{article}{
			author={Karimi, Alireza},
			author={Paul, Mark~R},
			title={Extensive chaos in the {L}orenz-96 model},
			date={2010},
			journal={Chaos: An interdisciplinary journal of nonlinear science},
			volume={20},
			number={4},
			pages={043105},
		}
		
		\bib{lorenz1967nature}{book}{
			author={Lorenz, Edward~N},
			title={The nature and theory of the general circulation of the
				atmosphere},
			publisher={World Meteorological Organization Geneva},
			date={1967},
			volume={218},
		}
		
		\bib{Lorenz1996}{inproceedings}{
			author={Lorenz, Edward~N},
			title={Predictability: A problem partly solved},
			date={1996},
			booktitle={Proc. seminar on predictability},
			volume={1},
		}
		
		\bib{LK98}{article}{
			author={Lorenz, Edward~N},
			author={Emanuel, Kerry~A},
			title={Optimal sites for supplementary weather observations: Simulation
				with a small model},
			date={1998},
			journal={Journal of the Atmospheric Sciences},
			volume={55},
			number={3},
			pages={399\ndash 414},
		}
		
		\bib{LvovEtAl98}{article}{
			author={L'vov, Victor~S},
			author={Podivilov, Evgenii},
			author={Pomyalov, Anna},
			author={Procaccia, Itamar},
			author={Vandembroucq, Damien},
			title={Improved shell model of turbulence},
			date={1998},
			journal={Physical Review E},
			volume={58},
			number={2},
			pages={1811},
		}
		
		\bib{Majda16}{book}{
			author={Majda, Andrew~J},
			title={Introduction to turbulent dynamical systems in complex systems},
			publisher={Springer},
			date={2016},
		}
		
		\bib{MSE07}{article}{
			author={Mattingly, Jonathan~C},
			author={Suidan, Toufic},
			author={Vanden-Eijnden, Eric},
			title={Simple systems with anomalous dissipation and energy cascade},
			date={2007},
			journal={Communications in mathematical physics},
			volume={276},
			number={1},
			pages={189\ndash 220},
		}
		
		\bib{M20}{article}{
			author={Maurelli, Mario},
			title={Non-explosion by {Stratonovich} noise for odes},
			date={2020},
			journal={Electronic Communications in Probability},
			volume={25},
			pages={1\ndash 10},
		}
		
		\bib{MS61}{article}{
			author={Meshalkin, LD},
			author={Sinai, Ia~G},
			title={Investigation of the stability of a stationary solution of a
				system of equations for the plane movement of an incompressible viscous
				liquid},
			date={1961},
			journal={Journal of Applied Mathematics and Mechanics},
			volume={25},
			number={6},
			pages={1700\ndash 1705},
		}
		
		\bib{S93}{article}{
			author={Scheutzow, Michael},
			title={Stabilization and destabilization by noise in the plane},
			date={1993},
			journal={Stochastic Analysis and Applications},
			volume={11},
			number={1},
			pages={97\ndash 113},
		}
		
		\bib{Villani2009}{book}{
			author={Villani, C.},
			title={Hypocoercivity},
			series={Hypocoercivity},
			publisher={American Mathematical Society},
			date={2009},
			number={nos. 949-951},
			ISBN={9780821844984},
			url={https://books.google.com/books?id=JtrNAwAAQBAJ},
		}
		
		\bib{WilliamsonThesis}{thesis}{
			author={Williamson, Brendan},
			title={On sdes with partial damping inspired by the navier-stokes
				equations},
			type={Ph.D. Thesis},
			date={Duke University, May 2019.
				https://hdl.handle.net/10161/187732019},
		}
		
		\bib{YO87}{article}{
			author={Yamada, Michio},
			author={Ohkitani, Koji},
			title={Lyapunov spectrum of a chaotic model of three-dimensional
				turbulence},
			date={1987},
			journal={Journal of the Physical Society of Japan},
			volume={56},
			number={12},
			pages={4210\ndash 4213},
		}
		
	\end{biblist}
\end{bibdiv}

\end{document}